\newcommand{\limss}[2]{\displaystyle \lim_{{#1} \rightarrow {#2}}\mbox{ }}
\newcommand{\NN}{\mathbb{N}}
\newcommand{\RR}{\mathbb{R}}
\newcommand{\QQ}{\mathbb{Q}}
\newcommand{\UU}{\mathbb{U}}
\newcommand{\ZZ}{\mathbb{Z}}
\newtheorem{thm}{Theorem}
\newtheorem{Theorem}{Theorem}[section]
\newtheorem{Proposition}[Theorem]{Proposition}
\newtheorem{Lemma}[Theorem]{Lemma}
\newtheorem{Fact}[Theorem]{Fact}
\newtheorem{Corollary}[Theorem]{Corollary}
\newtheorem{Remark}[Theorem]{Remark}
\newtheorem{defin}[Theorem]{Definition}
\newenvironment{defi}{\begin{defin} \rm}{\end{defin}}
\newtheorem{defins}[Theorem]{Definitions}
\newenvironment{defis}{\begin{defins} \rm}{\end{defins}}
\newtheorem{example}[Theorem]{Example}
\newtheorem{notat}[Theorem]{Notation}
\newenvironment{nota}{\begin{notat} \rm}{\end{notat}}
\newtheorem{notats}[Theorem]{Notations}
\title{Model theory of divisible abelian cyclically ordered groups and minimal c.\ o.\ g.}
\author{G. Leloup}
\address{G. Leloup, 
Laboratoire Manceau de Math\'ematiques, 
LMM EA 3263, 
Le Mans Universit\'e, 
avenue Olivier Messiaen, 
72085 LE MANS CEDEX, 
FRANCE. 
ORCID: 0000-0003-4034-9402}
\email{gerard.leloup-At-univ-lemans.fr}
\keywords{Cyclically ordered groups, first-order theory, minimality, spines of Schmitt, circular orders.}
\subjclass[2010]{03C64, 06F15, 06F99, 20F60.}
\date{November 13, 2021}
\begin{document}
\begin{abstract} We make available some results about model theory cyclically ordered groups. 
We start with a classification of complete theories of divisible abelian cyclically ordered groups. 
Then we look at the cyclically ordered groups where the only parametrically 
definable subsets are finite unions of singletons and open intervals, and those where the definable subsets are finite union of 
singletons and c-convex subsets, where being c-convex is the analogue of being convex in the linearly ordered case. 
\end{abstract}
\maketitle
%
%
\section{Introduction.}
\indent 
Cyclically ordered groups continue to interest mathematicians since they arise naturally in lower-dimensional 
topology (see, for example \cite{B15}) and in studying groups which act on a circle (e.\ g.\ \cite{C04}). 
In these papers, they are called circularly ordered groups. 
By studying the topological structure of the space of circular orders on a group, it was 
proved that this space is either finite or uncountable (\cite{CMR18}). 
This generalizes a result of Fran\c{c}ois Lucas which proved that there are $2^{\aleph_0}$ 
non isomorphic cyclic orders on the additive group $(\QQ,+)$ of rational numbers, by giving a description of these 
cyclic orders. In fact Lucas proved that there are 
$2^{\aleph_0}$ elementary classes of discrete divisible cyclic orders on $\QQ$ 
(we will generalize this proposition here). 
This result was part of a paper that he wanted to publish, however, Lucas died before 
achieving this task.
In this paper he used the spines of Schmitt of 
ordered abelian groups to study the model theory divisible abelian cyclically ordered groups. He also 
had studied different notions of minimality concerning cyclically 
ordered groups. This work may apply to 
multiplicative groups of fields since the multiplicative group of any field is cyclically orderable 
(\cite[Proposition 5.8]{Le}). We make available these results here. The present paper is written in order to make clear 
which results come from Lucas's preprint. 
To reach an uninitiated audience we have added 
cyclically ordered groups reminders. 
Section \ref{sec3} is similar to the original work of Lucas, however, the proofs have been drafted in detail 
and gaps have been filled. 
Section \ref{n4} has been completely overhauled, so that the theorems 
are written in terms of the cyclically ordered groups, instead of their unwounds, 
and without requiring the definitions of Schmitt (\cite{Sc1}, \cite{Sc2}). 
Furthermore, we obtain a classification of complete theories of 
divisible abelian cyclically ordered groups. 
Section \ref{sec5} has been revisited, and the proofs have been carefully written. 
This led to several errors being corrected. Thus the statements of lemmas have been changed. 
However we attribute to Lucas the authorship of these lemmas. 
\subsection{Cyclically ordered groups.}\label{subsec11} 
Let $G$ be a group and $R$ be a ternary relation on $G$. 
We say that $(G,R)$ is a {\it cyclically ordered group}, or that $R$ is a {\it cyclic order} on $G$, 
if the following holds. \\ 
$R$ is strict: for every  $g_1$, $g_2$, $g_3$, $R(g_1,g_2,g_3) \Rightarrow g_1\neq g_2\neq g_3
\neq g_1$.\\
$R$ is cyclic: 
for every  $g_1$, $g_2$, $g_3$, $R(g_1,g_2,g_3) \Rightarrow R(g_2,g_3,g_1)$.\\
An element $g$ being fixed, $R(g,\cdot,\cdot)$ 
induces a linear order relation on the set $G\backslash\{g\}$. \\
$R$ is compatible: for every $g_1$, $g_2$, $g_3$, $h$, $h'$,  
$R(g_1,g_2,g_3) \Rightarrow R(hg_1h',hg_2h',hg_3h')$ (\cite{R}). \\[2mm]
\indent 
For example, the group of complex numbers of norm $1$ is a cyclically ordered group by setting 
$R(\mbox{exp}(i\theta_1),\mbox{exp}(i\theta_2),\mbox{exp}(i\theta_3))$ if and only if either 
$0\leq\theta_1<\theta_2<\theta_3<2\pi$ or 
$0\leq\theta_2<\theta_3<\theta_1<2\pi$ or $0\leq\theta_3<\theta_1<\theta_2<2\pi$ (e.\ g.\ \cite{Sw59}). 
This condition is equivalent to saying that $0\leq\theta_{\sigma(1)}<\theta_{\sigma(2)}<\theta_{\sigma(3)}<2\pi$, 
for some permutation $\sigma$ in the alternating group $A_3$ of degree $3$. \\
\setlength{\unitlength}{1mm}
\begin{picture}(200,25)(0,0)
\put(20,13){\circle{14}}
\put(20,19){\line(0,1){2}}
\put(23.5,15.5){\line(1,1){4}}
\put(26,13){\line(1,0){2}}
\put(20,5){\line(0,1){2}}
\put(22,24){\makebox(0,0){$\mbox{exp}(i\theta_2)$}}
\put(34,21){\makebox(0,0){$\mbox{exp}(i\theta_1)$}}
\put(30,13){\makebox(0,0){$1$}}
\put(20,2){\makebox(0,0){$\mbox{exp}(i\theta_3)$}}
\put(14,23){\vector(-1,-1){4}}
\put(70,13){\circle{14}}
\put(70,19){\line(0,1){2}}
\put(76,13){\line(1,0){2}}
\put(73.5,10.5){\line(1,-1){4}}\put(67.5,10.5){\line(-1,-1){4}}
\put(72,24){\makebox(0,0){$\mbox{exp}(i\theta_2)$}}
\put(80,13){\makebox(0,0){$1$}}
\put(84,5){\makebox(0,0){$\mbox{exp}(i\theta_1)$}}
\put(57,5){\makebox(0,0){$\mbox{exp}(i\theta_3)$}}
\put(64,23){\vector(-1,-1){4}}
\put(120,13){\circle{14}}
\put(126,13){\line(1,0){2}}
\put(123.5,15.5){\line(1,1){4}}
\put(123.5,10.5){\line(1,-1){4}}
\put(117.5,10.5){\line(-1,-1){4}}
\put(107,5){\makebox(0,0){$\mbox{exp}(i\theta_1)$}}
\put(134,21){\makebox(0,0){$\mbox{exp}(i\theta_3)$}}
\put(134,5){\makebox(0,0){$\mbox{exp}(i\theta_2)$}}
\put(130,13){\makebox(0,0){$1$}}
\put(114,23){\vector(-1,-1){4}}
\end{picture}
\indent 
We denote by $\mathbb{U}$ the cyclically ordered group $\{\mbox{exp}(i\theta)\; :\; \theta\in [0,2\pi[\}$ 
of unimodular complex numbers, and by 
$T(\mathbb{U})$ its torsion subgroup. Note that every finite cyclically ordered group is cyclic 
(\cite[Theorem 1]{Zh}, \cite[Lemma 3.1]{BS18}), hence it embeds in $T(\mathbb{U})$. 
 We know that, for every positive integer $n$, 
$T(\UU)$ contains a unique subgroup of $n$ elements, which is the subgroup of $n$-th roots of $1$. 
We denote it by $T(\UU)_n$. \\[2mm]
\indent The language of cyclically ordered groups is $\{\cdot,R,e,^{-1}\}$, where the first predicate 
stands for the group operation, $R$ for the ternary relation, $e$ for the group identity and $^{-1}$ 
for the inverse function. When we consider abelian cyclically ordered  groups we will also use the usual symbols 
$+,0,-$. If necessary, the unit element of $G$ will be denoted by $e_G$ (or $0_G$) in order to avoid 
any confusion. For every $g_0$, $g_1,\dots ,g_n$ in $G$, we will let $R(g_0,g_1,\dots,g_n)$ stand for 
$\bigvee_{0\leq i<j<k\leq n}R(g_i,g_j,g_k)$. One can check that this is also equivalent to 
$\bigvee_{0<i<n}R(g_0,g_i,g_{i+1})$. \\[2mm]
\indent 
A {\it c-homomorphism} from a cyclically ordered group $(G,R)$ to a cyclically ordered group $(G',R')$ 
is a group homomorphism $f$ such that for 
every $g_1$, $g_2$, $g_3$ in $G$, if $R(g_1,g_2,g_3)$ holds and $f(g_1)\neq f(g_2)\neq 
f(g_3)\neq f(g_1)$, then $R'(f(g_1),f(g_2),f(g_3))$ holds (e.\ g.\ \cite{JPR}).\\[2mm]
\indent A linearly ordered group is cyclically ordered by the relation given by: $R(g_1,g_2,g_3)$ if, 
and only if, either $g_1<g_2<g_3$ or $g_2<g_3<g_1$ or $g_3<g_1<g_2$ (e.\ g.\ \cite{Sw}, \cite{BS18}). \\
\indent 
The cyclic order defined on a linearly ordered group as above is called a 
{\it linear cyclic order}. One also says that the group is {\it linearly cyclically ordered}. 
A cyclically ordered group which is not linearly ordered will be called a {\it nonlinear} 
cyclically ordered group. \\[2mm]
\indent A proper subgroup $H$ of a cyclically ordered group is said to be {\it c-convex} if it 
does not contain a non unit element of order $2$ and it satisfies:
$$\forall h \in H\; \forall g\in G\;(R(h^{-1},e,h) \;\&\; R(e,g,h))\Rightarrow g\in H \mbox{ (\cite{JPR})}.$$
\indent For every proper normal c-convex subgroup $H$ of a cyclically ordered group $G$, one 
can define canonically a cyclic order on $G/H$ such that the canonical epimorphism is 
a c-homomorphism. \\[2mm]
\indent 
If $(G,R)$ is a nonlinear cyclically ordered group, then 
it contains a largest c-convex proper subgroup $l(G)$ 
(\cite[Corollaries 3.6 and 4.7, Lemmas 4.2 and 4.6]{JPR}, \cite{ZP}). 
It is a normal subgroup of $G$. The restriction to $l(G)$ of the cyclic order is a linear cyclic order. 
The subgroup $l(G)$ is called the {\it linear part} of $(G,R)$. 
We denote by $<$ the linear order induced by $R$ on $l(G)$. 
If the cyclic order is linear, then we let $l(G)=G$\\
\indent The cyclically ordered group $G/l(G)$ embeds in a unique way in $\UU$ (\cite[Lemma 5.1, Theorem 5.3]{JPR}). 
We denote by $U(G)$ the image of 
$G/l(G)$ in $\UU$, and for $g\in G$, $U(g)$ denotes the image of $g\!\cdot\!l(G)$. \\[2mm]
\indent An abelian group $G$ is {\it $n$-divisible}, for some positive integer $n$, 
if for each $g\in G$ there is some $h\in G$ satisfying $nh=g$, this $h$ is not always unique. 
The group $G$ is {\it divisible} if it is $n$-divisible for each integer $n$. \\
\indent An abelian cyclically ordered group $(G,R)$ which is $n$-divisible and 
contains a subgroup isomorphic to the group of $n$-th roots of $1$ in the field of complex number 
is said to be {\it c-$n$-divisible}. 
It is called {\it c-divisible} if it is c-$n$-divisible for every positive integer $n$. This equivalent to 
being divisible and containing 
a subgroup c-isomorphic to the group $T(\UU)$ of torsion elements of $\UU$. 
\begin{Proposition}\label{sup}(\cite[Proposition 2.26]{Le})\label{aprop27}
Let $n\in \NN\backslash \{0\}$. An abelian cyclically ordered group $(G,R)$ is c-$n$-divisible  
if, and only if, $\mbox{uw}(G)$ is $n$-divisible.
\end{Proposition}
\indent We say that an abelian cyclically ordered group
$(G,R)$ is {\it discrete} if there exists $g\neq 0$ such that for all $h$ in $G$ we have $\neg R(0,h,g)$. This element 
will be denoted by $1_G$. We say that $(G,R)$ is {\it dense} if it is not discrete.  
\subsection{Outline of the content} 
Section \ref{In} is dedicated to cyclically ordered groups reminders and to properties that we will use 
in the former sections.  \\[2mm] 
\indent One easily sees that the theory of divisible abelian cyclically ordered groups is not complete, 
the cyclic order does not need to be linear and the group can have torsion elements 
(\cite[Corollary 6.13]{GiLL}). In Section \ref{sec3} we prove the 
following theorem.  
\begin{thm}\label{thm31}(Lucas)
The theory TcD of c-divisible abelian cyclically ordered groups: 
\begin{enumerate}
\item is complete and model-complete 
\item
has the amalgamation property
\item
admits the quantifiers elimination
\item
has a prime model $T(\mathbb{U})$
\item
is the model completion of the theory of abelian cyclically ordered groups 
(we prove that each abelian cyclically ordered group admits a c-divisible hull). 
\end{enumerate}
\end{thm}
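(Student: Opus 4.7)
The plan is to prove (5) and (3) directly and derive (1), (2), and (4) as consequences. Throughout I would exploit the unwinding of a cyclically ordered group: every abelian cyclically ordered group $H$ is canonically isomorphic to $\tilde H/\langle z\rangle$ for a linearly ordered abelian group $\tilde H$ containing a distinguished strictly positive central element $z$, with the cyclic order on $H$ read off from the linear order on $\tilde H$. This is the bridge through which results from the theory of divisible ordered abelian groups (DOAG) are transferred.

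To construct the c-divisible hull, form the divisible hull $D(\tilde H)$ in DOAG, noting that $\langle z\rangle$ remains pure; then $D(\tilde H)/\langle z\rangle$ is divisible as an abelian group, carries the induced cyclic order, and acquires torsion from the cosets of the elements $z/n$. A short verification gives the universal property: any c-homomorphism from $H$ into a divisible abelian cyclically ordered group factors uniquely through this quotient. In particular every abelian cyclically ordered group embeds into a model of TcD, which is half of (5).

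For (3), I would first observe that in any model $G$ of TcD the torsion subgroup equals $T(\mathbb{U})$. Since the torsion subgroup of $G$ meets $l(G)$ only at $0$, it injects into $U(G)\subseteq\mathbb{U}$; a single nonzero torsion element together with divisibility forces, by induction on the order, torsion elements of every order to appear; and a divisible subgroup of $T(\mathbb{U})$ containing elements of every order must be $T(\mathbb{U})$. This yields (4) and furnishes the canonical substructure $T(\mathbb{U})$ present in every model. Quantifier elimination is then proved by a back-and-forth between $\aleph_1$-saturated $M,N\models\mathrm{TcD}$ over a common finitely generated cyclically ordered substructure $A\supseteq T(\mathbb{U})$. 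Given $b\in M$, its type over $A$ should be determined by two pieces of data: the image $U(b)\in\mathbb{U}$, pinned down via saturation by the cuts it makes in $U(A)$ refined by the dense $T(\mathbb{U})$; and, after choosing a lift of $U(b)$, the cut of $b$ in $l(M)$ over $l(A)$, a $1$-type over a divisible ordered abelian group and hence controlled by the DOAG quantifier elimination. Saturation plus the density of $T(\mathbb{U})$ in $\mathbb{U}$ and the divisibility of $l(N)$ produce a witness $c\in N$ realizing both pieces of data.

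Items (1) and (2) now follow. Quantifier elimination immediately gives model-completeness, and combined with the common substructure $T(\mathbb{U})$ this forces completeness: each embedding $T(\mathbb{U})\hookrightarrow G$ is elementary, so all models share the same theory, and $T(\mathbb{U})$ is the prime model. Amalgamation is proved by unwinding a span $B\hookleftarrow A\hookrightarrow C$ of models of TcD to a span of divisible linearly ordered abelian groups with a common central element $z$, invoking DOAG amalgamation, and quotienting by $\langle z\rangle$ to obtain the amalgam in TcD; together with the c-divisible hull this identifies TcD as the model completion of the theory of abelian cyclically ordered groups. I expect the main obstacle to be the back-and-forth step: one must match the image in $\mathbb{U}$ and the cut in the linear part by a single element and verify that no further invariant has been overlooked --- the delicate point is that the two components interact through the choice of lift modulo $l(G)$, and the arithmetic of $T(\mathbb{U})$ must remain compatible with the ordered structure of $l(G)$ on both sides of the back-and-forth.
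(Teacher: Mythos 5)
Your outline is sound but follows a genuinely different logical route from the paper. The paper does \emph{not} prove quantifier elimination by back-and-forth: it first gets completeness as a corollary of Theorem \ref{thm215} (the classification of dense c-regular divisible abelian cyclically ordered groups via \cite[Theorem 1.9]{Le}), gets model-completeness from the purity criterion of \cite[Theorem 1.8]{Le} (an elementary substructure iff pure with the same numbers of congruence classes mod $p$ --- conditions that are automatic for c-divisible groups), proves amalgamation directly, and only then deduces quantifier elimination from the conjunction ``model-complete $+$ amalgamation''. You invert this: QE is your primitive, obtained by a saturated back-and-forth, and (1), (2), (4) are corollaries. Both routes use the same bridge for (5) and for amalgamation, namely Rieger's unwound and the divisible hull of ${\rm uw}(G)$; your amalgamation sketch (amalgamate the unwounds in DOAG and quotient by $\langle z\rangle$) is in fact slightly more direct than the paper's, which splits ${\rm uw}(G_i)\cong({\rm uw}(G_i)/l(G_i)_{uw})\overrightarrow{\times}l(G_i)_{uw}$, normalizes the archimedean quotients into $\RR$ with $z\mapsto 1$, and amalgamates inside $\RR\overrightarrow{\times}L$; your shortcut works because cofinality of $z$ survives in the subgroup generated by the two images ($b\leq mz$, $c\leq kz$ give $b+c\leq(m+k)z$), but you should say so explicitly. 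What the paper's route buys is that the hardest step (completeness) is outsourced to an existing classification theorem; what your route buys is self-containedness and a direct description of types.

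Two points in your plan need more care than you give them. First, quantifier elimination requires the back-and-forth over an \emph{arbitrary} common substructure $A$, not only those containing $T(\UU)$; you must therefore check that the canonical extension of a partial isomorphism from $A$ to $\langle A, T(M)\rangle$ is forced --- this holds because each $e^{2i\pi/n}$ is the unique $n$-torsion element $t$ with $R(0,t,2t,\dots,(n-1)t)$, and the sign in $l(M)$ of an element $a+t$ with $U(a+t)=1$ is recovered quantifier-freely from $n(a+t)=na+0$ --- but it is not automatic. Second, your description of the $1$-type of $b$ over $A$ by ``the cut of $U(b)$ in $U(A)$ plus a cut in $l(M)$'' silently assumes $b$ is not in the divisible hull of $\langle A,T(M)\rangle$; the algebraic case $nb\in\langle A,T(M)\rangle$ must be treated separately (there $b$ is one of $n$ roots distinguished by torsion cosets, and the witness in $N$ exists by c-divisibility). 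Neither point is fatal, but both must appear in a complete write-up.
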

\indent In Section \ref{n4} we get a characterization of elementary equivalence of 
divisible abelian cyclically ordered groups by mean of the following family of 
c-convex subgroups. Let $(G,R)$ be a divisible abelian cyclically ordered group. 
For every prime $p$ such that $G$ is not c-$p$-divisible, 
we let $H_p$ be the greatest $p$-divisible convex subgroup of the ordered group $l(G)$. Overwise, 
we let $H_p=G$. 
\begin{Proposition}\label{n128}(Lucas) Let $(G,R)$ be a divisible nonlinear abelian 
cyclically ordered group, and 
$p$ be a prime such that the cyclically ordered quotient group $G/H_p$ is discrete. 
Then $H_p\subsetneq l(G)$ and for every $r\in\NN\backslash\{0\}$ 
there exists an integer $k\in\{1,\dots, p^r-1\}$ which satisfies: for every $g\in l(G)$, $h\in G$ such that 
$g+H_p=1 _{G/H_p}$ and $g=p^rh$, we have
$\displaystyle{U(h)=e^{\frac{2ik\pi}{p ^r}}}$. The integer $k$ is determined by the first-order theory of $(G,R)$. 
\end{Proposition}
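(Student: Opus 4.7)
The plan is to work in four steps: (1) verify $H_p \subsetneq l(G)$; (2) prove existence of $k \in \{1,\dots,p^r-1\}$ with $U(h) = e^{2ik\pi/p^r}$; (3) show that $k$ is independent of the choice of $(g,h)$; (4) extract a first-order sentence of $(G,R)$ that determines $k$.

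For step (1), I argue by contradiction. If $H_p = l(G)$, then $G/H_p = G/l(G)$ embeds in $\UU$ and, as a quotient of the divisible group $G$, is itself divisible. A finite divisible abelian group is trivial, which would force $G = l(G)$ to be linearly ordered and contradict nonlinearity; and an infinite divisible subgroup of $\UU$ is dense, since any nontrivial element has $p^n$-th roots accumulating at $1$, so has no smallest positive element, contradicting discreteness of $G/H_p$.

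For step (2), divisibility of $G$ gives $h$ with $p^r h = g$, whence $p^r U(h) = U(g) = 1$ and $U(h) = e^{2ik\pi/p^r}$ for some $k \in \{0,\dots,p^r-1\}$. The key point is ruling out $k = 0$: if $h \in l(G)$, then $h + H_p$ lies in the torsion-free linearly ordered group $l(G)/H_p$ and satisfies $p^r(h + H_p) = 1_{G/H_p}$. Torsion-freeness plus $1_{G/H_p} > 0$ force $h + H_p > 0$, and discreteness forces $h + H_p \ge 1_{G/H_p}$ (the smallest positive element of $G/H_p$ is also the smallest positive element of $l(G)/H_p$), so $p^r(h+H_p) > 1_{G/H_p}$, a contradiction. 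The same argument applied with $p^{r-1}h$ in place of $h$ and $p$ in place of $p^r$ shows $U(h)$ has order exactly $p^r$, i.e.\ $\gcd(k,p) = 1$. For step (3), if $(g_1,h_1)$ and $(g_2,h_2)$ both qualify, then $g_2 - g_1 \in H_p$, and $p^r$-divisibility of $H_p$ (iterated $p$-divisibility) yields $h' \in H_p$ with $p^r h' = g_2 - g_1$, unique by torsion-freeness. Then $p^r(h_2 - h_1) = p^r h'$, and torsion-freeness forces $h_2 - h_1 = h' \in H_p \subseteq l(G)$, so $U(h_2) = U(h_1)$.

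For step (4), I exploit $\gcd(k,p^r) = 1$: the images $U(jh) = e^{2ikj\pi/p^r}$ for $j = 0,\dots,p^r-1$ are pairwise distinct and exhaust the $p^r$-th roots of unity. The canonical map $G \to U(G) \subseteq \UU$ being a c-homomorphism, the cyclic order of $\{jh\}$ in $G$ agrees with the standard cyclic order of these roots in $\UU$; the order in which the roots are visited is governed by the permutation $j \mapsto kj \pmod{p^r}$, and so determines $k$. Concretely, the unique $m \in \{1,\dots,p^r-1\}$ for which $mh$ is the cyclic successor of $0$ in $\{jh : 0 \le j < p^r\}$ satisfies $km \equiv 1 \pmod{p^r}$, and this is a first-order condition on $h$. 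Assembling a sentence $\forall g\,\forall h\,(\psi(g,h) \to \phi_k(h))$, where $\psi$ formalizes ``$g \in l(G)$, $g + H_p = 1_{G/H_p}$, $p^r h = g$'' and $\phi_k$ captures the cyclic-order pattern of $\{jh\}$, then pins down $k$. The main obstacle is step (4): expressing $\psi$ as a genuine first-order formula in the language of cyclically ordered groups, which demands first-order definitions of membership in $H_p$ and of discreteness of $G/H_p$. These should be furnished by the spines of Schmitt machinery invoked elsewhere in the paper; granted them, all other steps are routine.
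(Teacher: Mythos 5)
Your steps (1)--(3) are correct and follow essentially the same route as the paper (they reproduce the content of part (1) of Lemma \ref{rk240}): $G/l(G)\simeq U(G)$ is divisible hence dense, so $H_p\subsetneq l(G)$; the case $U(h)=1$ is excluded because it would place $h+H_p$ strictly between $0$ and $1_{G/H_p}$ in the discrete group $l(G)/H_p$; and independence of the choice of $g$ uses $p$-divisibility of $H_p$ together with torsion-freeness. Your step (4) idea is also the paper's: since $\gcd(k,p^r)=1$ it suffices to pin down the inverse $m$ of $k$ modulo $p^r$, and $m$ is detected by the cyclic position of $mh$ among the $jh$'s (the paper encodes ``$U(mh)=e^{2i\pi/p^r}$'' by the formula ${\rm argbound}_{p^r-1}(mh)$, treating $p=2$, $r=1$ separately).

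The genuine gap is the one you flag yourself at the end: the first-order definability, in the language of cyclically ordered groups, of the hypothesis ``$g+H_p=1_{G/H_p}$'', hence of membership in $H_p$ and of discreteness of $G/H_p$. This is not supplied by Schmitt's spines (those enter only later, for the elementary-equivalence classification); the paper proves it directly in Lemma \ref{propHp}, and the proof is not routine. The natural formula ``every $g'$ in $I(0_G,g)$ is $p$-divisible within $I(0_G,g')$'' (together with its mirror image for $-g$) a priori defines only the largest $p$-divisible c-convex \emph{subset} $H$ of $G$; one must then show (i) $H\subseteq l(G)$, which requires the density Lemma \ref{ouf} (both the set of elements $p$-divisible within the short arc and its complement have dense image in $\UU$, so no element outside $l(G)$ can satisfy the formula), and (ii) $H$ is a subgroup, whence $H=H_p$. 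Only then can discreteness of $G/H_p$ and ``$g+H_p=1_{G/H_p}$'' be written as formulas, and without this your sentence $\forall g\,\forall h\,(\psi(g,h)\to\phi_k(h))$ is not a sentence of the theory of $(G,R)$, so the final claim of the proposition is not established.
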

\indent The integer $k$ defined above is denoted $f_{G,p}(r)$. \\[2mm]
\indent Let $(G,R)$ be a divisible abelian cyclically ordered group. 
We denote by $\mathcal{CD}(G)$ the family $\{\{0_G\},\; G,\;H_p\;:\; p \;{\rm prime}\}$. It is equipped with 
the linear preorder relation $\subseteq$.
\begin{thm}\label{thm221} Let $(G,R)$ and $(G',R')$ be divisible nonlinear abelian cyclically ordered groups.  
\begin{itemize}
\item If, for every prime $p$, $G/H_p$ is dense, then  $(G,R)\equiv (G',R')$ if, 
and only if, the mapping $H_p\mapsto H_p'$ induces an isomorphism from $\mathcal{CD}(G)$ onto 
$\mathcal{CD}(G')$. 
\item 
If, for some prime $p$, $G/H_p$ is discrete, then  $(G,R)\equiv (G',R')$ if, and only if, the following holds: \\ 
the mapping $H_p\mapsto H_p'$ induces an isomorphism from $\mathcal{CD}(G)$ onto 
$\mathcal{CD}(G')$, \\ 
for every prime $p$, $G/H_p$ is discrete if, and only if, $G'/H_p'$ is discrete, 
and if this holds, then the mappings $f_{G,p}$ and $f_{G',p}$ are equal. \end{itemize}
\end{thm}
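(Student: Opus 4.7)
\medskip

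\noindent\textbf{Proof plan.} I will treat the two implications separately. For the direction ``$\equiv$ implies matching invariants'', the main task is to verify that each item in the list is determined by the first-order theory of $(G,R)$. For each prime $p$, the subgroup $H_p$ is uniformly definable without parameters: in a divisible ordered abelian group, $g\in H_p$ if and only if $g$ lies in some $p$-divisible convex subgroup of $l(G)$, and this is expressed by the single first-order formula stating the existence of an ``envelope'' $h$ with $|h|\geq|g|$ inside which every element is $p$-divisible. Consequently, for distinct primes $p,q$, the inclusion $H_p\subseteq H_q$ is first-order, and so the isomorphism type of the preorder $(\mathcal{CD}(G),\subseteq)$ transfers under $\equiv$. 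Whether $G/H_p$ is discrete is the first-order assertion that there is a $g\notin H_p$ such that no $h$ with $R(0,h,g)$ lies outside $H_p$; and when this holds, the equality $f_{G,p}=f_{G',p}$ is exactly the content of Proposition~\ref{n128}.

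For the converse, the plan is to pass to $\aleph_1$-saturated elementary extensions and construct an isomorphism by back-and-forth, using the c-divisible hull machinery of Section~\ref{sec3} to fill in whatever is needed without altering the invariants. The base step is to match $l(G)$ with $l(G')$: since $\mathcal{CD}(G)\cong \mathcal{CD}(G')$ as preorders, Schmitt's classification of divisible ordered abelian groups by spines yields $l(G)\equiv l(G')$ in the language of ordered groups, and saturation upgrades this to an isomorphism of the linear parts of the extensions. The induction step takes a finite partial isomorphism $\varphi$ and a new element $g\in G$, and produces a matching $g'\in G'$. Decompose along the short exact sequence $0\to l(G)\to G\to U(G)\to 0$: determine first the target of $U(g)$ in $U(G')$, then adjust by an element of $l(G')$ to place $g'$ correctly in the cyclic order relative to $\varphi$-images of previously chosen elements.

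Determining $U(g)$ splits into two cases. If $U(g)$ is not a root of unity, or is a root of unity not forced by any discrete $G/H_p$, then one uses density of $U(G')\subseteq\mathbb{U}$ at the relevant location, together with the Schmitt-style matching of $l(G)$ with $l(G')$, to pick any suitable $g'$. The delicate case is when $U(g)$ is a $p^r$-th root of unity and $G/H_p$ is discrete; here the hypothesis $f_{G,p}=f_{G',p}$ is essential, because it prescribes exactly which root of unity the $p^r$-divisions of the generator $1_{G/H_p}$ land on, and hence pins down the only admissible $U(g')$ in $G'$. Once $U(g')$ is fixed, choosing the remaining additive coordinate inside $l(G')$ is a routine application of saturation and the Schmitt theory of $l(G')$.

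The main obstacle is this last back-and-forth step in the discrete case: one must check that the candidate $g'\in G'$ produced by matching $U$-values and $l$-coordinates is genuinely compatible with the partial isomorphism $\varphi$ on \emph{all} the first-order data that a cyclic order encodes, namely the triangular relations $R$ among $g$, the $\varphi$-preimages and the successive $p$-divisions of $1_{G/H_p}$. This is precisely where the invariants $f_{G,p}$ are needed: without them, two cyclically ordered structures could share the same $\mathcal{CD}$-data and the same dense/discrete pattern yet disagree about which root of unity carries the image of a $p^r$-division, and the back-and-forth would fail at the first element whose $U$-coordinate is a non-trivial $p^r$-th root.
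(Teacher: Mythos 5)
Your forward direction coincides with the paper's (Proposition \ref{propunsens}): definability of $H_p$, of the inclusions $H_p\subseteq H_q$, of discreteness of $G/H_p$, and Proposition \ref{n128} for $f_{G,p}$. That part is fine.

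The converse is where your proposal has a genuine gap. You propose a back-and-forth between $\aleph_1$-saturated models in which a finite partial isomorphism is extended by matching $U$-values and then adjusting inside $l(G')$. But theories of (cyclically) ordered abelian groups do not eliminate quantifiers in the basic language: the definable sets involve congruence conditions relativized to the convex subgroups $A_n(x)$ and $F_n(x)$ and the auxiliary predicates $M_{n,k}$, $D_{p,r,i}$, $E_{p,r,k}$ of Schmitt. A finite partial map preserving only the group operation, the cyclic order, divisibility and $U$-coordinates is not automatically elementary, and your plan gives no closure condition on the partial isomorphisms that would make the extension step provable; you name this as ``the main obstacle'' but do not resolve it, and resolving it is essentially the entire content of the theorem. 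There is also a concrete error in your base step: $l(G)$ need not be divisible even though $G$ is (e.g.\ the discrete divisible example of Remark \ref{r} has $l(G)\simeq\ZZ$), so you cannot invoke a classification of \emph{divisible} ordered abelian groups to get $l(G)\equiv l(G')$; the elementary type of $l(G)$ has to be extracted from the $\mathcal{CD}(G)$ data by an argument like Proposition \ref{corr222a}.

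The paper's route avoids the back-and-forth entirely: by \cite[Theorem 4.1]{GiLL}, $(G,R)\equiv(G',R')$ reduces to $({\rm uw}(G),\leq_R,z_G)\equiv({\rm uw}(G'),\leq_{R'},z_{G'})$; Schmitt's Theorem \ref{relelimquant} then reduces every formula $\Psi(z_G)$ to a statement about the spines $Sp_n({\rm uw}(G))$ together with quantifier-free data about $z_G$ in the expanded language; Proposition \ref{prop222} computes the spines from $\mathcal{CD}(G)$ (using $[p]{\rm uw}(G)=p$), and Theorem \ref{autresens} computes the predicates $M_{n,k}$, $D_{p,r,i}$, $E_{p,r,k}$ at $\alpha z_G$ explicitly from $\mathcal{CD}(G)$ and the functions $f_{G,p}$. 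If you want to keep a back-and-forth argument, you would need to build into the partial isomorphisms exactly the data these predicates encode, which amounts to redoing Schmitt's analysis.
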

\indent Following Lucas, the proof of this theorem  is based on the works of  Schmitt (\cite{Sc1}, \cite{Sc2}).\\
\indent Then we describe the cyclic orders on the additive group $\QQ$ (Proposition \ref{prop418}). 
This gives rise to a description of the functions $f_{G,p}$ (Proposition \ref{lm241}) and the following theorem. 
\begin{thm}\label{uncount} Let $\Gamma$ be a divisible abelian group. 
\begin{enumerate}
\item 
It is cyclically orderable if, and only if,
its torsion subgroup $T(\Gamma)$ is isomorphic to a divisible subgroup of $T(\UU)$. 
\item If $\Gamma$ is isomorphic to a divisible subgroup of $T(\UU)$, then it admits uncountably many cyclic orders, 
but they are pairwise c-isomorphic. 
\item If $\Gamma$ contains non-torsion elements, and its torsion subgroup is not trivial, 
then it admits uncountably many pairwise non-c-isomorphic cyclic orders. If $T(\Gamma)$ is isomorphic to 
$T(\UU)$, then all of them are pairwise elementarily equivalent. 
\item If $\Gamma$ is torsion-free, then there are uncountably many pairwise non elementarily equivalent cyclic orders 
on $\Gamma$. 
\end{enumerate}
\end{thm}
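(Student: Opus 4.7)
My plan is to combine the structural results of the earlier sections (Theorems \ref{thm31} and \ref{thm221}, and Propositions \ref{n128}, \ref{prop418}, \ref{lm241}) with a few hands-on constructions of cyclic orders. The four items are largely independent, so I attack them in turn.

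For item (1), the "if" direction I handle by construction. When $\Gamma$ is torsion-free divisible, it is a $\QQ$-vector space, so any linear order on a Hamel basis extends lexicographically to a linear, hence cyclic, order. When $T(\Gamma)\cong T(\UU)$, I split $\Gamma = T(\UU)\oplus V$ as divisible abelian groups, order $V$ linearly as above, and assemble the cyclic order by taking $V$ as the linear part and $\Gamma/V\cong T(\UU)\hookrightarrow \UU$ as the quotient. The converse is the delicate step: given a divisible cyclic order with nontrivial torsion, the linear part $l(\Gamma)$ is torsion-free, so $T(\Gamma)$ embeds in the divisible subgroup $\Gamma/l(\Gamma)$ of $\UU$; I then use the divisibility of $\Gamma$, by lifting $p$-th roots of torsion elements and tracking them through the canonical map to $\UU$, to force every primary component $\ZZ(p^\infty)$ to occur in $T(\Gamma)$. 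This last step is the main obstacle.

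For item (2), since $\Gamma\cong T(\UU)$ is all torsion, the linear part of any cyclic order on $\Gamma$ is trivial, and the canonical embedding identifies $\Gamma$ with $T(\UU)\subset \UU$. Any two cyclic orders on $\Gamma$ therefore differ only by an automorphism of the abstract group $T(\UU)$, so they are pairwise c-isomorphic; uncountably many arise because the automorphism group of $T(\UU)$ is uncountable (being $\prod_p \operatorname{Aut}(\ZZ(p^\infty))$ with each factor uncountable), and distinct automorphisms pull back distinct cyclic orders. For item (3), write $\Gamma = T(\UU)\oplus V$ with $V$ a nonzero torsion-free divisible summand. Cyclic orders on $\Gamma$ arise from the choice of a c-convex torsion-free subgroup $L$ together with a linear order on $L$ and an embedding of $\Gamma/L$ into $\UU$ extending the fixed inclusion $T(\UU)\hookrightarrow \UU$. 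Varying these choices---in particular the image of $V$ in $\UU$ modulo the action of $\operatorname{Aut}(\Gamma)$---produces uncountably many pairwise non-c-isomorphic orders. Elementary equivalence is then immediate from Theorem \ref{thm31}, since every such order has torsion and TcD is complete.

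For item (4), I invoke Theorem \ref{thm221}: on a torsion-free divisible $\Gamma$, the elementary class of a cyclic order is determined by the preordered family $\mathcal{CD}(G)$, the pattern of which quotients $G/H_p$ are discrete, and in the discrete case by the maps $f_{G,p}$. Proposition \ref{prop418} exhibits uncountably many cyclic orders on the additive group $\QQ$, and Proposition \ref{lm241} describes the corresponding functions $f_{G,p}$; these invariants can be chosen to differ pairwise. Transporting such orders to any torsion-free divisible $\Gamma\supseteq\QQ$---for instance by extending along a $\QQ$-linear embedding---then yields uncountably many pairwise non-elementarily-equivalent cyclic orders on $\Gamma$.
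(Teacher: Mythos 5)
The fatal gap is in item (1), precisely at the step you yourself flag as ``the main obstacle'': it cannot be closed, because the claim it is meant to establish fails. Divisibility of $\Gamma$ makes $T(\Gamma)$ divisible, hence a direct sum $\bigoplus_p \ZZ(p^{\infty})^{(\kappa_p)}$, and the existence of a cyclic order forces $T(\Gamma)$ to embed in $T(\UU)$, i.e.\ each $\kappa_p\leq 1$; but nothing forces every prime to occur. Concretely, the group $\ZZ(2^{\infty})$ of $2$-power roots of unity, cyclically ordered as a subgroup of $\UU$, is divisible and has nontrivial torsion, yet its torsion subgroup is not isomorphic to $T(\UU)$ (it has no element of order $3$); the lexicographic product $\ZZ(2^{\infty})\overrightarrow{\times}\QQ$ gives such an example with non-torsion elements. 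Your plan of ``lifting $p$-th roots of torsion elements'' breaks down because, when $\gcd(m,n)=1$, an $n$-th root of an element of order $m$ can again have order $m$ (in $\ZZ(2^{\infty})$ every element has an $n$-th root of the same order for every odd $n$), so divisibility does not propagate torsion across primes. Be aware that the paper's own proof of (1) quotes Zheleva's criterion ($\Gamma$ is cyclically orderable iff $T(\Gamma)$ embeds in $T(\UU)$ and $\Gamma/T(\Gamma)$ is orderable) and then asserts that a divisible group embeds in $T(\UU)$ iff it is trivial or isomorphic to $T(\UU)$ --- the same problematic step; the conclusion actually obtainable is the embedding criterion (each primary component trivial or Pr\"ufer), not the dichotomy stated in (1). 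In any case your argument for (1) is incomplete and not repairable in the form you describe.

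Items (2)--(4) follow the paper's route. For (2) your argument is essentially the paper's (uniqueness of the c-embedding of a c-archimedean group into $\UU$, plus uncountability of the automorphism group of $T(\UU)$); note that ``distinct automorphisms pull back distinct orders'' also relies on that uniqueness, which shows the only c-automorphism of $T(\UU)$ is the identity. For (3) the paper likewise splits $\Gamma=T(\Gamma)\oplus\Gamma'$, embeds a one-dimensional summand of $\Gamma'$ together with $T(\Gamma)$ into $\UU$ and takes a lexicographic product; your non-c-isomorphism claim again needs the uniqueness of the c-embedding to see that different images in $\UU$ yield non-c-isomorphic orders. For (4), ``transporting by extending along a $\QQ$-linear embedding'' is too loose: an arbitrary extension of a cyclic order from a copy of $\QQ$ to $\Gamma$ need not preserve the invariants of Theorem \ref{thm221}. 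The paper chooses a complement, $\Gamma\simeq\QQ\oplus\Gamma_1$ with $\Gamma_1$ linearly ordered, forms $G\overrightarrow{\times}\Gamma_1$ for a discrete cyclic order $G$ on $\QQ$, and verifies (Fact \ref{fctdd}) that the $H_p$, the discreteness of the quotients and the functions $f_{G,p}$ are unchanged; with that supplement your argument for (4) agrees with the paper's.
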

\indent Next, we prove that there is a one-to-one correspondence between the set of partitions of the set 
of prime numbers, equipped with an arbitrary linear order, and the set of families $\mathcal{CD}(G)$, 
where $G$ is a divisible abelian cyclically ordered group. 
\\[2mm]
\indent Finally, the minimality is studied in Section \ref{sec5}. We start with definitions. 
\begin{defis} Let $(G,R)$ be a cyclically ordered group. 
If $I$ is a subset of $G$, then $I$ is an {\it open interval} if there are $g\neq g'$ in $G$ such that 
$I=\{ h\in G\; :\; R(g,h,g')\}$. We denote this open interval by $I(g,g')$.\\
A subset $J$ is {\it c-convex} if either it has only one point, or, 
for each $g\neq g'$ in $J$, either $I(g,g')\subseteq J$ or $I(g',g)\subseteq J$. 
\end{defis}                      
\begin{Remark}\label{rk51}\begin{itemize}
\item A c-convex proper subgroup of a cyclically ordered group is a proper subgroup which is a c-convex subset. 
\item A c-convex subset is not 
necessarily a singleton or an open interval: the linear part of a cyclically ordered group 
is a c-convex subset and if it is nontrivial, then it is not an 
open interval. Indeed, if $g\notin l(G)$, then any open interval bounded by $g$ contains an element $g'$ such that 
$U(g')=U(g)$, hence $g'\notin l(G)$. If $g\in l(G)$, then $g$ doesn't belong to the open intervals bounded by itself. 
\item The bounded open intervals of $(l(G),<)$ are open intervals of $(G,R)$, since for $h$ and $g<g'$ in $l(G)$ we have 
$g<h<g'\Leftrightarrow R(g,h,g')$. 
\end{itemize} 
\end{Remark}
\begin{defis} Let $(G,R)$ be an infinite cyclically ordered group. 
We say that $(G,R)$ is:
\begin{enumerate}
\item
{\it cyclically minimal} if each definable subset of $G$ is a finite union of singletons and open intervals.
\item
{\it strongly cyclically minimal} if for each $(G',R')$ which is elementary e\-qui\-va\-lent to $(G,R)$, 
$(G',R')$ is cyclically minimal.
\item
{\it weakly cyclically minimal} if each definable subset of $G$ is a finite union of c-convex subsets.
\end{enumerate}
\end{defis}
\indent 
These notions correspond respectively to the notions of o-minimal, strongly o-minimal and weakly 
o-minimal in the linearly ordered case. 
In the linearly ordered case the three notions coincide and are satisfied in $(G,R)$ if, 
and only if, $(G,R)$ is abelian and divisible. For the equivalence between o-minimal and strongly o-minimal see 
\cite{KnPiSt} and \cite{PiSt}. The equivalence with weakly o-minimal follows from \cite{Di} corollary 1.3. \\
\indent
Clearly if $(G,R)$ is strongly cyclically minimal, then it is cyclically minimal, and if it is cyclically minimal, then it is weakly 
cyclically minimal. \\
\indent Note that in a finite structure every subset is a finite union of singletons. 
\begin{thm}\label{th513}(\cite[Theorem 5.1]{MSt}) 
A cyclically ordered group is cyclically minimal if, and only if, it is abelian and c-divisible.
\end{thm}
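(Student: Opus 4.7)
The plan is to invoke the quantifier elimination of Theorem~\ref{thm31} for the implication from the three conditions to cyclic minimality, and in the reverse direction to exhibit, under the failure of each condition, a parameter-definable subset of $G$ that is not a finite union of singletons and open intervals.

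For the ``if'' direction, assume $(G,R)$ is abelian, divisible, and has torsion. Then $G \models \mathrm{TcD}$, and by Theorem~\ref{thm31} this theory admits quantifier elimination. Every definable subset of $G$ in one variable with parameters is therefore a Boolean combination of atomic formulas. These are either equations $nx + a = 0$, each defining a (possibly empty) finite coset of the $n$-torsion subgroup $T(G)_n \subseteq T(\UU)$, or cyclic-order formulas $R(n_1 x + a_1, n_2 x + a_2, n_3 x + a_3)$. After translation the latter reduces to $R(0, m_1 x + b_1, m_2 x + b_2)$, whose solution set is controlled by finitely many critical values of $x$ where some $m_i x + b_i$ equals $0$ or where the two non-trivial expressions coincide, yielding a finite union of open intervals. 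The family of finite unions of singletons and open intervals is closed under Boolean operations in $(G,R)$, since the complement of $I(g,g')$ is $\{g,g'\} \cup I(g',g)$ and the intersection of two open intervals is again a finite union of open intervals. Thus every parameter-definable subset has the required form, and $(G,R)$ is cyclically minimal.

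For the ``only if'' direction, suppose $(G,R)$ is infinite and cyclically minimal. For the \emph{torsion} claim, if $G$ is torsion-free the formula $R(0, x, 2x)$ defines, in the linearly cyclically ordered case, the positive cone $P = \{g : g >_l e\}$; $P$ is bounded below by $e$ but unbounded above, while each open cyclic interval is either a bounded linear interval or a wrap-around union of two rays, so $P$ cannot be a finite union of singletons and open intervals, contradiction. In the extremal non-linear case $l(G) = \{e\}$, so $G$ embeds as a dense torsion-free divisible subgroup of $\UU$, the same formula defines $G \cap A$ where $A$ is the upper-half arc of $\UU$; the $\UU$-boundary of $A$ is $\{e, -e\}$ but $-e \notin G$ by torsion-freeness, so no finite union of open cyclic intervals of $G$ (whose $\UU$-boundaries lie in $G$) can exactly equal $G \cap A$, as such a union must miss or over-include infinitely many $G$-points in any neighborhood of $-e$. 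Intermediate non-linear cases reduce to these via $l(G)$ and $G/l(G)$. For \emph{divisibility}, a proper $nG$ would be a proper definable infinite subgroup; using that open intervals in discrete cyclically ordered groups are either finite or cofinite while in dense ones are genuinely open, one shows that $nG$ being simultaneously a proper subgroup and a finite union of singletons and open intervals is impossible. \emph{Abelianness} follows by an analogous argument applied to a proper centralizer $C_G(a)$, in the spirit of Pillay's argument in the o-minimal setting.

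The main obstacle is the torsion-free non-linearly cyclically ordered case of the torsion argument: the positive-cone trick does not apply directly, and one must instead exploit that torsion-freeness forces the absence of half-period elements (like $-e$) from $G$, so that a finite union of cyclic intervals of $G$ (whose endpoints necessarily lie in $G$) cannot exactly approximate the upper-half arc of $\UU$. Combining this with the linear-case argument applied to $l(G)$ is what handles the general torsion-free case.
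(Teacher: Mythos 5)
Your ``if'' direction follows the same route as the paper's Proposition \ref{n51} (quantifier elimination from Theorem \ref{thm31}, then a case analysis of atomic formulas), and your sketch there is acceptable, though the paper's actual reduction of $R(ax^m,bx^n,cx^p)$ to intervals is a genuinely delicate computation in the unwound that uses c-divisibility to produce the $n$-th roots serving as interval endpoints. The problem is in the ``only if'' direction, at exactly the place you identify as the main obstacle: the torsion-free nonlinear case. Your proposed fix does not work. You claim that torsion-freeness of $G$ forces the absence of ``half-period elements'' so that the set defined by $R(e,x,x^2)$ (the positive cone) cannot be a finite union of open intervals and singletons, and that the remaining nonlinear cases ``reduce'' to the c-archimedean and linear ones via $l(G)$ and $G/l(G)$. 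But torsion-freeness of $G$ does not imply torsion-freeness of $U(G)=G/l(G)$. Take $G=(\QQ\overrightarrow{\times}\ZZ)/\langle(1,1)\rangle$ (the example in Remark \ref{r}): it is torsion-free, divisible, abelian and nonlinear, yet $U(G)\simeq T(\UU)$ contains $-1$, and the positive cone equals $\{e\}\cup I(e,b)$ where $b$ is the class of $(1/2,1)$ --- a singleton together with a single open interval. So the positive-cone test is passed by this group, the arc-boundary argument is unavailable, and $l(G)\simeq\ZZ$ is not definable by the formulas you have at hand (since $U(G)$ is dense). No reduction to the two extremal cases is in sight.

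What is actually needed here is the paper's Lemma \ref{ouf} together with Lemma \ref{wBB} (packaged as Corollary \ref{cor412}): for a torsion-free divisible abelian nonlinear $(G,R)$ and a prime $p$, the definable set $D$ of elements $g$ admitting a $p$-th root $h$ inside the half-interval determined by $g$ (i.e.\ $R(e,h,g,g^{-1})$ or $R(g^{-1},g,h,e)$, and $h^p=g$) and its complement both have image dense in $\UU$ under $U$; a finite union of c-convex sets cannot have this property, so $(G,R)$ is not even weakly cyclically minimal. This is the key new idea your proposal is missing, and it cannot be replaced by the linear/archimedean arguments. Your divisibility and abelianness steps are likewise only sketched; the paper derives them from the same density mechanism (Lemma \ref{wBB}, Propositions \ref{wdiv}, \ref{lem59}, \ref{lmfinite}), and in the non-divisible case one must first prove the structure result $G\cong T(\UU)_n\overrightarrow{\times}l(G)$ before exhibiting $l(G)$ as a definable set that is not a finite union of intervals and singletons. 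As written, the proposal does not constitute a proof of the ``only if'' direction.
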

\begin{Corollary}\label{cor15}(Lucas) A cyclically ordered group is strongly cyclically minimal 
if, and only if, it is cyclically minimal.
\end{Corollary}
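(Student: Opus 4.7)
The plan is to observe that the property of being cyclically minimal, for cyclically ordered groups, has already been completely axiomatized (up to the class of structures) by Theorem \ref{th513}, and that this axiomatization coincides with a theory known to be complete.

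First I would dispose of the trivial direction: if $(G,R)$ is strongly cyclically minimal, then by applying the definition with $(G',R')=(G,R)$, $(G,R)$ is cyclically minimal.

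For the converse, suppose $(G,R)$ is cyclically minimal. By Theorem \ref{th513}, $(G,R)$ is an abelian divisible cyclically ordered group with torsion elements, i.e.\ a model of the theory TcD introduced in Theorem \ref{thm31}. Now the key input is Theorem \ref{thm31}(1), which asserts that TcD is \emph{complete}. Consequently, any $(G',R')$ that is elementarily equivalent to $(G,R)$ is also a model of TcD, and therefore abelian, divisible, and contains torsion elements. Applying Theorem \ref{th513} in the other direction to $(G',R')$, we conclude that $(G',R')$ is cyclically minimal. Since this holds for every $(G',R')\equiv(G,R)$, $(G,R)$ is strongly cyclically minimal.

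The only point that might look delicate is verifying that "has torsion elements" transfers under elementary equivalence; since this is an infinite disjunction it is not obviously first-order. The clean way to handle this is precisely through Theorem \ref{thm31}: completeness of TcD bypasses the issue entirely, because once we know $(G,R)\models$ TcD the complete theory $\mathrm{Th}(G,R)$ coincides with TcD and hence implies the existence of a non-trivial element of any fixed finite order (for instance order $2$, as witnessed in $T(\UU)$). This is the only step that uses nontrivial content from the earlier sections; the rest is unpacking of definitions.
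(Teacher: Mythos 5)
Your proof is correct and takes essentially the same route as the paper, whose entire argument is the one-line observation that the theory TcD of divisible abelian cyclically ordered groups with torsion elements is complete (Theorem \ref{thm31}(1)), combined with the characterization of cyclic minimality in Theorem \ref{th513}. Your extra remark explaining why ``has torsion elements'' transfers under elementary equivalence (it is first-order for divisible groups, via an element of order $2$) is a detail the paper leaves implicit, but the approach is identical.
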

\begin{proof} Indeed, the theory of c-divisible abelian cyclically ordered groups is complete. 
\end{proof}
\indent The characterization of weakly cyclically minimal cyclically ordered groups requires to define the 
construction of cyclically ordered groups by mean of the lexicographic product (\cite{Sw}). \\
\indent 
Let $(G,R)$ be a cyclically ordered group and $\Gamma$ be a linearly ordered group.  
One can define a cyclic order on 
$G\times \Gamma$ as follows:  $R'((g_1,x_1),(g_2,x_2),(g_3,x_3))$ holds if, and only if, 
\begin{itemize}
\item either $g_1=g_2=g_3$ and $x_{\sigma(1)}<x_{\sigma(2)}<x_{\sigma(3)}$ for some 
$\sigma$ in the alternating group $A_3$ of degree $3$ 
\item or $g_{\sigma(1)}=g_{\sigma(2)}\neq g_{\sigma(3)}$ and 
$x_{\sigma(1)}<x_{\sigma(2)}$ for some $\sigma\in A_3$ 
\item or $R(g_1,g_2,g_3)$. 
\end{itemize}
\begin{defi}\label{def124}
Let $(G,R)$ be a cyclically ordered group and $\Gamma$ be a linearly ordered group. 
We let $G\overrightarrow{\times} \Gamma$ denote the cyclically ordered group defined 
above. It is called the 
{\it lexicographic product} of $(G,R)$ and $(\Gamma,\leq)$. 
\end{defi}
\begin{thm}\label{th514}(Lucas) 
A cyclically ordered group $(G,R)$ is weakly cyclically minimal if, and only if, 
either $(G,R)$ is abelian and c-divisible, or $l(G)$ is infinite, abelian and divisible and 
there exist a positive integer $n$ such that $G\simeq T(\UU)_n\overrightarrow{\times}l(G)$, where $T(\UU)_n$ is the 
subgroup of $n$-th roots of $1$ in $T(\UU)$. 
\end{thm}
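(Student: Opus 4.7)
The plan is to prove both directions, reducing the forward direction to Theorem \ref{th513} and to o-minimality of divisible abelian ordered groups, and analysing the structure of the linear part and the quotient $G/l(G)$ for the converse.

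For the \emph{if} direction, if $(G,R)$ is abelian divisible with torsion then Theorem \ref{th513} gives cyclic minimality, and since open intervals are c-convex this already implies weak cyclic minimality. Suppose instead $G \simeq T(\UU)_n \overrightarrow{\times} l(G)$ with $l(G)$ infinite divisible abelian. The $n$ cosets of $l(G)$ in $G$ partition it into c-convex pieces, each one order-isomorphic to the linearly ordered group $l(G)$, the cyclic order restricting within each coset to this linear order. A definable set $X \subseteq G$ intersects each coset in a set which, after translation to $l(G)$, is definable in the divisible abelian ordered group $l(G)$; by o-minimality this is a finite union of singletons and convex intervals, each remaining c-convex when viewed in $G$. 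Summing over the $n$ cosets yields the desired decomposition.

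For the \emph{only if} direction, assume $(G,R)$ is weakly cyclically minimal. If $l(G) = \{0_G\}$, then $G$ embeds in $\UU$, where c-convex subsets are singletons or arcs, so weak c-minimality coincides with c-minimality and Theorem \ref{th513} puts us in case (a). If $l(G)$ is infinite, then the linear order on $l(G)$ is $\emptyset$-definable in $(G,R)$, so $l(G)$ inherits a weakly o-minimal linearly ordered group structure, and by the cited result of Dickmann \cite{Di} together with the equivalence in the group case between weak o-minimality and divisibility, $l(G)$ is abelian and divisible.

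The main obstacle is then to prove that $G/l(G)$ is finite and that the short exact sequence $0 \to l(G) \to G \to G/l(G) \to 0$ splits as a lexicographic product. For finiteness I would argue by contradiction: if $U(G) = G/l(G)$ were an infinite and hence dense subgroup of $\UU$, one could pull back a definable set in $U(G)$ with infinitely many scattered pieces (for instance the image of a coset of a proper infinite subgroup of $U(G)$) along the projection $G \to U(G)$ to produce a definable subset of $G$ that is not a finite union of c-convex sets, contradicting weak c-minimality. Once $G/l(G) = T(\UU)_n$ is known to be finite cyclic, divisibility of the torsion-free group $l(G)$ lets one solve $ny = -ng_0$ in $l(G)$ for any lift $g_0$ of a generator of $T(\UU)_n$, producing a torsion element $g_0 + y \in G$ of order exactly $n$ and hence a section $T(\UU)_n \hookrightarrow G$, which yields the lexicographic decomposition $G \simeq T(\UU)_n \overrightarrow{\times} l(G)$.
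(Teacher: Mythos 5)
Your \emph{if} direction is essentially the paper's: cyclic minimality of the c-divisible case (Theorem \ref{th513} / Proposition \ref{n51}) plus the coset-by-coset reduction of $T(\UU)_n\overrightarrow{\times}l(G)$ to o-minimality of the divisible ordered group $l(G)$ (Proposition \ref{n52}). The \emph{only if} direction, however, rests on two steps that fail. First, you assert that $l(G)$ (equivalently, the linear order with domain $l(G)$) is $\emptyset$-definable in $(G,R)$, and you later use the projection $G\to U(G)$ as if it were definable. Neither is true in general: the linear part is only an intersection of a (typically infinite) family of definable c-convex sets (e.g.\ the sets cut out by the formulas ${\rm argbound}_n$), and for $G=\UU\overrightarrow{\times}\QQ$ it is not first-order definable; a fortiori the quotient map onto $U(G)$ is not definable, and a coset of an arbitrary infinite proper subgroup of $U(G)$ is not a definable set you can pull back. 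So both your transfer of weak o-minimality to $l(G)$ and your finiteness argument for $U(G)$ have no definable sets to feed into the hypothesis of weak cyclic minimality. The paper avoids this entirely: Lemma \ref{wk} works with a definable $H\subseteq G$ and shows directly, via c-convex components, that if $H\cap l(G)$ is a subgroup then it is a \emph{convex} subgroup of $l(G)$ --- applied to $Com(g)$ and to the definable set $I\!D_n(G)$ of ``$n$-th powers reached inside a half-arc'' this yields that $l(G)$ is abelian and divisible (Proposition \ref{wdiv}); and finiteness of $U(G)$ comes from Lemma \ref{wBB} (a definable set and its complement cannot both have dense image in $\UU$) applied to the concrete definable sets $D_n$ of $n$-divisible elements (Proposition \ref{lmfinite}) and, in the divisible torsion-free case, to the set of Lemma \ref{ouf} (Corollary \ref{cor412}).

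Two further gaps: (i) you never prove that $G$ itself is abelian, which is part of the conclusion and is nontrivial (the paper's Proposition \ref{lem59} again uses the sets $Com(g)$, $Ncom(g)$ and Lemma \ref{wBB}); your splitting argument already writes $G$ additively, presupposing this. (ii) In the c-archimedean case your claim that c-convex subsets of a subgroup of $\UU$ are ``singletons or arcs,'' so that weak cyclic minimality coincides with cyclic minimality, is not correct as stated: for $G$ dense in $\UU$ the set $\{g\in G: 0<\theta(g)<\alpha\}$ with $e^{i\alpha}\notin G$ is c-convex but is not a finite union of singletons and open intervals $I(g,g')$ with endpoints in $G$, so the reduction to Theorem \ref{th513} needs a different argument (the paper again goes through Corollary \ref{cor412} and Proposition \ref{lmfinite}). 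Your final splitting step (solving $ny=-ng_0$ in the divisible group $l(G)$ to produce a torsion section) is correct and is the same device as in Proposition \ref{lmfinite}, though one should still invoke the embedding of $G$ into $U(G)\overrightarrow{\times}l(G)$ to see that the resulting group-theoretic direct sum is a c-isomorphism with the lexicographic product.
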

\indent The author thanks Viktor Verbosvskiy and Melissa Nalbandiyan for their comments. 
\section{Properties of cyclically ordered groups.}\label{In}
\indent Before to list properties of cyclically ordered groups, we point out that 
in \cite{Z82} an equivalent definition of cyclically ordered groups by mean of a mapping 
from $G\times G\times G$ to $\{-1,0,1\}$ appears. Later, the conditions of this definition are made simpler and 
the cyclic orderings are called circular orderings.  
A \textit{compatible circular order} on $G$ is a mapping $c$ from $G\times G\times G$ to $\{-1,0,1\}$ 
which is a cocycle, that is for all $g_1$, $g_2$, $g_3$, $g_4$ in $G$ we have 
$c(g_2, g_3, g_4)-c(g_1, g_3, g_4)+c(g_1, g_2, g_4)-c(g_1, g_2, g_3)=0$ (e.\ g.\ \cite{J89}). Now, one can check 
that $R$ is a compatible cyclic order on $G$ if, and only if, the mapping $c$ defined by 
$c(g_1,g_2,g_3)=1$ if $R(g_1,g_2,g_3)$ holds, $c(g_1,g_2,g_3)=-1$ if $R(g_3,g_2,g_1)$ holds, and $c(g_1,g_2,g_3)=0$ 
if $g_1=g_2$ or $g_2=g_3$ or $g_3=g_1$, is a compatible circular order on $G$. The reader can find a similar proof 
in \cite[Lemma 2.3]{BS18}. 
Many papers about circularly ordered groups do not refer to the classical ones about cyclically ordered groups 
(except \cite{CMR18} for example), and several results are re-proved. 
\subsection{Wound-round cyclically ordered groups.}\label{subsect21}
If $(\Gamma,\leq)$ is a linearly ordered  group and $z\in \Gamma$, $z>e$,  
is a central and cofinal element of $\Gamma$, 
then the quotient group $\Gamma/\langle z\rangle$ can be cyclically ordered by setting 
$R(x_1\!\cdot\!\langle z\rangle,x_2\!\cdot\!\langle z\rangle,x_3\!\cdot\!\langle z\rangle)$ if, and only if, 
there are $x_1',\;x_2',\;x_3'$  
such that \\
\indent $x_1\!\cdot\!\langle z\rangle=x_1'\!\cdot\!\langle z\rangle$, $x_2\!\cdot\!\langle z\rangle=
x_2'\!\cdot\!\langle z\rangle$, 
$x_3\!\cdot\!\langle z\rangle=x_3'\!\cdot\!\langle z\rangle$ and \\ 
\indent either 
 $e\leq x_1'<x_2'<x_3'<z $ or $e\leq x_2'<x_3'<x_1'<z$ or $e\leq x_3'<x_1'<x_2'<z$. \\ 
\indent With this cyclic order, $\Gamma/\langle z\rangle$ is called the 
{\it wound-round} cyclically ordered 
group associated to $\Gamma$ and $z$. \\
\indent If $\Gamma=\ZZ$ and $n$ is a positive integer, then we get a natural cyclic order on the group 
$\ZZ/n\ZZ$. This cyclically ordered group is c-isomorphic to the the subgroup $T(\UU)_n$ of $n$-th roots of 
$1$ in the cyclically ordered group $T(\UU)$. \\
\indent Assume that $\Gamma$ is a lexicographic product of linearly ordered groups $\ZZ\overrightarrow{\times}\Gamma_1$, 
and $z=(1,e)$. For any $x\in \Gamma_1$ and $n\in\ZZ$, we have $(0,e)<(n,x)<(1,e)=z \Leftrightarrow 
n=0$ and $x>e$ or $n=1$ and $x<e$. So, one can check that the cyclically ordered group $\Gamma/\langle z\rangle$ 
is linearly cyclically ordered, and isomorphic to the ordered group $\Gamma_1$. \\
\indent A  theorem of Rieger (\cite{R}, \cite[Theorem 21 p.\ 64]{F}) states that for every  
cyclically ordered group $(G,R)$ 
there exists a linearly ordered  group $({\rm uw}(G),\leq_R)$ and a positive element $z_G\in {\rm uw}(G)$ 
which is central and cofinal such that $(G,R)$ is c-isomorphic to the cyclically ordered  group 
${\rm uw}(G)/\langle z_G\rangle$. The structure $({\rm uw}(G),\leq_R,z_G)$ is uniquely defined, up to isomorphism. 
The group $({\rm uw}(G),\leq_R)$ is called the {\it unwound} of $(G,R)$. \\
\indent For example, the unwound of $\UU$ is $\RR$, the unwound of $(G,R)$ is isomorphic to $\ZZ$ if, and only if, 
$G$ is finite. \\
\indent For further purposes, we give the definition of the order relation and of the group operation on 
${\rm uw}(G)$. \\
\indent 
The underlying set of ${\rm uw}(G)$ is $\mathbb{Z}\times G$. \\
\indent The relation $\leq_R$ is defined by 
$(n,g)\leq_R (n',g')$ if, and only if, either $n<n'$ or $[n=n'$ and $(g=e$ or $ R(e,g,g')$ or $g=g')]$. \\
\indent Turning to the group operation, $(n,g)\! \cdot \! (n',g')$ is equal to 
$(n+n',gg')$ if either $g=e$ or $g'=e$ or $R(e,g,gg')$ holds, and it is equal to 
$(n+n'+1,gg')$ if $g\neq e$ and $gg'=e$ or $R(e,gg',g)$ holds. \\
\setlength{\unitlength}{1mm}
\begin{picture}(200,22)(0,0)
\put(20,10){\circle{14}}
\put(20,16){\line(0,1){2}}
\put(26,10){\line(1,0){2}}
\put(23.5,7.5){\line(1,-1){4}}
\put(17.5,7.5){\line(-1,-1){4}}
\put(20,19.5){\makebox(0,0){$g$}}
\put(29.5,10){\makebox(0,0){$e$}}
\put(29,3){\makebox(0,0){$gg'$}}
\put(13,2){\makebox(0,0){$g'$}}
\put(14,19){\vector(-1,-1){4}}
\put(42,10){\makebox(0,0){$R(e,g,gg')$}}
\put(70,10){\circle{14}}
\put(73.5,12.5){\line(1,1){4}}
\put(76,10){\line(1,0){2}}
\put(70,2){\line(0,1){2}}
\put(67.5,7.5){\line(-1,-1){4}}
\put(79.5,10){\makebox(0,0){$e$}}
\put(79,19){\makebox(0,0){$gg'$}}
\put(70,0){\makebox(0,0){$g'$}}
\put(62,2){\makebox(0,0){$g$}}
\put(64,19){\vector(-1,-1){4}}
\put(92,10){\makebox(0,0){$R(e,gg',g)$}}
\end{picture}
\indent Furthermore, $z_G=(1,e)$. \\
\indent For $x\in {\rm uw}(G)$, we denote by $\bar{x}$ its 
image in ${\rm uw}(G)/\langle z_G\rangle\simeq G$. \\
\indent 
There is a set embedding of $G$ in $\{0\}\times G$ such that for any $g,g'$ in $G \backslash \{e\}$, 
$({\rm uw}(G),\leq_R)$ satisfies $(0,g)<_R (0,g')$ if, and only if, $(G,R)$ satisfies $R(e,g,g')$.\\[2mm] 
\indent We end this subsection with a lemma which will be useful in the proof of Proposition \ref{prop211} 
and in subsection \ref{nsec33}. 
For every abelian group $\Gamma$ and every prime $p$, we denote by $[p]\Gamma$ the number of classes 
modulo $p$ in $\Gamma$ (without distinguishing between infinites). 
Note that $\Gamma$ being divisible is equivalent to $[p]\Gamma=1$ for every prime $p$. 
\begin{Lemma}(\cite[Lemma 4.16]{Le})\label{pclass} 
Let $(G,R)$ be an abelian cyclically ordered group. If $G$ contains a $p$-torsion element, 
then $[p]{\rm uw}(G)=[p]G$. Otherwise, $[p]{\rm uw}(G)=p[p]G$. 
\end{Lemma}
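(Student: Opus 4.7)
The plan is to exploit the Rieger exact sequence $0\to Z\to U\to G\to 0$, where $U={\rm uw}(G)$ and $Z=\langle z_G\rangle\simeq\ZZ$, and to express both $[p]U=[U:pU]$ and $[p]G=[U:pU+Z]$ as indices of subgroups of $U$. The second identification comes from noting that $p(U/Z)=(pU+Z)/Z$, so by the third isomorphism theorem $G/pG\simeq U/(pU+Z)$. Inserting the intermediate subgroup $pU+Z$ and applying the second isomorphism theorem $(pU+Z)/pU\simeq Z/(Z\cap pU)$, I obtain
\[
[p]U \;=\; [U:pU+Z]\cdot [pU+Z:pU] \;=\; [p]G\cdot [Z:Z\cap pU].
\]
So the lemma reduces entirely to computing $[Z:Z\cap pU]$, and the two cases split according to whether $z_G$ is divisible by $p$ in $U$.

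If $G$ has no nontrivial $p$-torsion, I claim $Z\cap pU=pZ$, so that $[Z:Z\cap pU]=p$ and $[p]U=p\,[p]G$. Indeed, if $kz_G=pu$ for some $u\in U$, then $\bar u\in G$ satisfies $p\bar u=0_G$, so $\bar u=0_G$ by hypothesis, hence $u\in Z$; writing $u=mz_G$ forces $k=pm$, i.e.\ $kz_G\in pZ$. The reverse inclusion is trivial.

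The substantive case is the opposite one. Assume $G$ contains an element $g$ of order exactly $p$ (any nontrivial $p$-torsion element has this order, since $p$ is prime). I will show $z_G\in pU$; this gives $Z\subseteq pU$, so $[Z:Z\cap pU]=1$ and $[p]U=[p]G$. Let $L$ be the preimage of $\langle g\rangle$ in $U$, equivalently $L=Z+\ZZ w$ for any lift $w\in U$ of $g$. Then $L$ is finitely generated, and since it sits inside the linearly ordered, hence torsion-free, group $U$, it is free abelian of some rank $r\le 2$. But $L/Z\simeq\ZZ/p\ZZ$ is finite, so $L$ and $Z$ have the same rank, namely $1$. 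Therefore $L$ is infinite cyclic, $Z$ is its unique subgroup of index $p$, and so $Z=pL\subseteq pU$.

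The main obstacle is precisely this last paragraph: recognizing that the preimage in the unwound of the cyclic subgroup $\langle g\rangle$ of order $p$ is itself infinite cyclic, which forces $z_G$ to be $p$-divisible in $U$. Once that structural observation is in hand, the remainder is routine index arithmetic.
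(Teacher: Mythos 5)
Your argument is correct. Note that the paper does not actually prove this lemma in-text: it is imported from \cite[Lemma 4.16]{Le}, so there is no in-paper proof to compare against line by line. Your reduction via the exact sequence $0\to\langle z_G\rangle\to{\rm uw}(G)\to G\to 0$, the identification $G/pG\simeq {\rm uw}(G)/(p\,{\rm uw}(G)+\langle z_G\rangle)$, and the index factorization $[p]{\rm uw}(G)=[p]G\cdot[\langle z_G\rangle:\langle z_G\rangle\cap p\,{\rm uw}(G)]$ with the last factor in $\{1,p\}$ is sound (and the ``without distinguishing between infinites'' convention makes the multiplicativity harmless when $[p]G$ is infinite). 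The torsion-free case correctly uses that ${\rm uw}(G)$, being linearly ordered, is torsion-free, so $kz_G=pu$ forces $u\in\langle z_G\rangle$. The substantive step --- that a $p$-torsion element in $G$ forces $z_G\in p\,{\rm uw}(G)$ --- is exactly the content of the paper's Lemma \ref{cdiv} (Lucas), which the paper proves by a gcd/B\'ezout manipulation on the relation $nx=kz_G$; your alternative, computing the rank of the preimage $L$ of $\langle g\rangle$ and observing that $\langle z_G\rangle$ must be the unique index-$p$ subgroup $pL$ of the infinite cyclic group $L$, is a clean structural substitute that avoids the explicit B\'ezout identity, at the cost of working only for prime exponent (which is all this lemma needs) rather than general $n$ as in Lemma \ref{cdiv}.
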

\subsection{Linear part of a cyclically ordered group.}\label{subsec21}
Recall that if $(G,R)$ is a nonlinear cyclically ordered group, then its linear part $l(G)$ is its largest c-convex 
proper subgroup and if the cyclic order is linear, then we let $l(G)=G$.\\[2mm] 
\indent 
Let $(G,R)$ be a cyclically ordered group. 
An element $g$ is said to be {\it positive} if $R(e,g,g^2)$ holds. This condition is equivalent to 
$R(e,g,g^{-1})$. 
We denote by $P$ the union of $\{ e\}$ 
and of the set of positive elements of $(G,R)$ (\cite{Z82}). We say that $P$ is the {\it positive cone} 
of $(G,R)$. Both of $P$ and $P^{-1}$ are c-convex subsets of $G$, and $g\in P\cup P^{-1}$ if, and only if, 
either $g=e$ or $g^2\neq e$. 
The set $P\cap l(G)$ is the positive cone of the linear order on $l(G)$ (\cite[p.\ 547]{ZP}). \\
\indent 
One can prove that the map from $G$ in ${\rm uw}(G)$ defined by $f(g)=(0,g)$ if $ g\in P$ and 
$f(g)=(-1,g)$ if not, when restricted to a c-convex proper subgroup $H$, is a group homomorphism and an order 
isomorphism between $H$ and $f(H)$. Furthermore, $f(H)$ is a convex subgroup of ${\rm uw}(G)$. 
\begin{nota}\label{nota27} Let $(G,R)$ be a cyclically ordered group. 
We set $G_{uw}={\rm uw}(G)$, and if $H$ is a c-convex proper subgroup of $G$, then  set $H_{uw}=f(H)$. 
\end{nota}
\indent 
The epimorphism from ${\rm uw}(G)$ to ${\rm uw}(G)/\langle z_G\rangle\simeq G$,  
when restricted to $H_{uw}$, is the inverse of the isomorphism $f$ from $H$ onto $H_{uw}$. 
It follows that the  set of c-convex proper subgroups of $(G,R)$ is linearly ordered 
by inclusion and when $R$ is nonlinear we have an order isomorphism between the 
inclusion-ordered set of c-convex proper subgroups of $(G,R)$ and the inclusion-ordered set of proper 
convex subgroups of $({\rm uw}(G),\leq_R)$. 
Therefore, the set of c-convex proper subgroups is closed under arbitrary
unions and intersections. For each $g\in G$ there is a smallest c-convex proper subgroup of 
$(G,R)$ containing $g$. 
Note that $l(G)_{uw}$ is the largest proper convex subgroup of $({\rm uw}(G),\leq_R))$, since  
it is the largest convex subgroup which doesn't contain the cofinal element $z_G$. 
Hence ${\rm uw}(G)/l(G)_{uw}$ embeds in $\RR$. \\
\indent Recall that for every proper normal c-convex subgroup $H$ of a cyclically ordered group $G$, one 
can define canonically a cyclic order on $G/H$ such that the canonical epimorphism is 
a c-homomorphism. The linear part of $G/H$ is isomorphic to $l(G)/H$, and its unwound is 
isomorphic to ${\rm uw}(G)/H_{uw}$. \\[2mm]
\indent 
A cyclically ordered  group $(G,R)$ is said to be {\it c-archimedean} if for every $g$ 
and every $h$ there is an integer $n$ such that $R(e,g^n,h)$ is not satisfied (\cite{Sw59}, \cite[Section 3.3]{BS18}).
\begin{Fact}\label{fact17} \begin{enumerate}
\item $(G,R)$ is c-archimedean if, and only if, it can be embedded in $\UU$. This in turn holds 
if, and only if, $R$ is not linear and $G$ has no nontrivial c-convex proper subgroup (\cite[p.\ 162]{Sw}).
\item $(G,R)$ is nonlinearly cyclically ordered and is c-archimedean if, and only if, its unwound $({\rm uw}(G),\leq_R)$ is 
ar\-chi\-me\-dean. 
This follows from (1) and the order isomorphism between the inclusion-ordered set of 
c-convex proper subgroups of $(G,R)$ and the inclusion-ordered set of convex subgroups of 
$({\rm uw}(G),\leq_R)$, in the nonlinear case.\end{enumerate} \end{Fact}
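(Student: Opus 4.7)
The plan is to reduce everything to the unwound $({\rm uw}(G),\leq_R)$ via Rieger's theorem, using the order-isomorphism between c-convex proper subgroups of $(G,R)$ and proper convex subgroups of ${\rm uw}(G)$ (recorded in Notation~\ref{nota27} and the discussion that follows), together with H\"older's theorem for totally ordered groups.

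For the equivalence between ``$G$ embeds in $\UU$'' and ``$R$ is nonlinear and $G$ has no nontrivial c-convex proper subgroup'' in part~(1), I would argue both directions through ${\rm uw}(G)$. If the subgroup condition holds, the lattice correspondence gives that ${\rm uw}(G)$ has no nontrivial proper convex subgroup, so by H\"older it is abelian, archimedean, and order-embeds into $(\RR,+)$; after rescaling the embedding so that $z_G\mapsto 1$, passing to the quotient by $\langle z_G\rangle$ delivers a c-embedding $G\hookrightarrow\RR/\ZZ\simeq\UU$. Conversely, any c-embedding $\phi:G\hookrightarrow\UU$ lifts (by functoriality of the unwound construction on injective c-homomorphisms) to an order-embedding $\tilde\phi:{\rm uw}(G)\hookrightarrow{\rm uw}(\UU)=\RR$ with $\tilde\phi(z_G)=1$. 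Thus ${\rm uw}(G)$ is archimedean, giving via the lattice correspondence that $G$ has no nontrivial c-convex proper subgroup, and $R$ is nonlinear because, for nontrivial~$G$, $\langle z_G\rangle$ is not a convex subgroup of the archimedean ${\rm uw}(G)$.

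Next, for the equivalence of c-archimedeanness with these conditions, the translation also goes through ${\rm uw}(G)$. Identifying $[0_{{\rm uw}(G)},z_G)$ with $\{0\}\times G$, the relation $R(e,g^n,h)$ becomes ``the canonical representative of $(0,g)^n$ in $[0,z_G)$ lies strictly between $0$ and $(0,h)$'', i.e., $(0,g)^n-m_n z_G\in(0,(0,h))$ where $m_n$ is the wrap-around integer in $(0,g)^n=(m_n,g^n)$. If ${\rm uw}(G)$ is archimedean, taking $g\in P\setminus\{e\}$ one finds some $n\geq 1$ with $\neg R(e,g^n,h)$ (handling separately the case $g$ torsion, where some $n$ forces $(0,g)^n=m_n z_G$, and the case $(0,g)$ $\QQ$-independent of $z_G$ in $\RR$, where density of the orbit does the job). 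Conversely, non-archimedean ${\rm uw}(G)$ produces $u,v>0$ with $nu<v$ for all $n\geq 1$; by $z_G$-cofinality one arranges $v<z_G$, and the images $g,h$ in $G$ satisfy $R(e,g^n,h)$ for every $n\geq 1$, contradicting c-archimedeanness.

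Part~(2) is then immediate from~(1): for nonlinear $(G,R)$, c-archimedean is equivalent to the absence of nontrivial c-convex proper subgroups, which by the lattice correspondence is equivalent to ${\rm uw}(G)$ having no nontrivial proper convex subgroup, which by H\"older is equivalent to ${\rm uw}(G)$ being archimedean. The main technical obstacle throughout is the bookkeeping inside~${\rm uw}(G)$: tracking the wrap-around count $m_n$ in $(0,g)^n=(m_n,g^n)$ and translating $R(e,g^n,h)$ cleanly into an order comparison; once this translation is in place, both parts follow directly from H\"older's theorem and the established lattice correspondence.
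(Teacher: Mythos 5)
The paper does not actually prove this Fact: part (1) is quoted from Swierszkowski \cite[p.\ 162]{Sw}, and part (2) is obtained in one line from (1) via the order isomorphism between c-convex proper subgroups of $(G,R)$ and proper convex subgroups of $({\rm uw}(G),\leq_R)$. Your part (2) is exactly that one-line derivation. For part (1) you supply an actual proof where the paper has only a citation, and your route is genuinely different from Swierszkowski's: he builds the embedding into $\UU$ directly from the c-archimedean hypothesis by a H\"older-type limiting argument carried out inside the cyclic order, whereas you push everything into the unwound, invoke H\"older's theorem there, and translate $R(e,g^n,h)$ back and forth through the wrap-around count. Your version buys uniformity with the rest of the paper (it reuses the Rieger unwound, the convex-subgroup correspondence and the explicit description of $\leq_R$, all already set up in Section 2) at the cost of some bookkeeping; the original is self-contained within the cyclic order. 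The overall argument is sound.

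Two places should be tightened. First, c-archimedeanness quantifies over all $g$, not just $g\in P\setminus\{e\}$: the cases $g=e$ and $g^2=e$ are immediate (some power of $g$ equals $e$ and $R$ is strict, so $\neg R(e,g^n,h)$), and a negative $g$ is covered by the same rational/irrational dichotomy applied to the image of $(0,g)$ in $[0,1)\subset\RR$, but this needs saying. Second, your justification of nonlinearity is off: $\langle z_G\rangle$ is essentially never a convex subgroup of ${\rm uw}(G)$, since $z_G$ is cofinal, so its failure to be convex proves nothing. What you want is that if $R$ were linear on a nontrivial $G$, then ${\rm uw}(G)\simeq\ZZ\overrightarrow{\times}G$ would have the nontrivial proper convex subgroup $\{0\}\times G$, contradicting the archimedeanness of ${\rm uw}(G)$ you have just established; this also shows that nonlinearity should be established \emph{before} invoking the lattice correspondence, which the paper states only for nonlinear $R$.
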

\indent 
Let $g$, $h$ be elements of a cyclically ordered  group $(G,R)$. 
If $1\neq U(g)\neq U(h)\neq 1$, then $R(e,g,h)$ holds 
in $(G,R)$ if, and only if, $R(1,U(g),$ $U(h))$ holds in $U(G)$. 
Now, if $U(g)=1\neq U(h)$ (so $g\in l(G)$), 
then $R(e,g,h)$ holds if, and only if, $g>e$ in $l(G)$, and 
$R(e,h,g)$ holds if, and only if, $g<e$ in $l(G)$ (\cite[Remark 5.6]{GiLL}). 
\subsection{Dense, discrete, divisible abelian cyclically ordered groups.} 
One can prove that an abelian cyclically ordered group $(G,R)$ is dense if, and only if, 
$$\forall (g_1,g_2) \in G\times G\; g_1 \neq g_2\Rightarrow (\exists h\in G\; R(g_1,h,g_2)).$$
\indent 
Let $(G,R)$ and $(G',R')$ be two abelian cyclically ordered groups such that $(G',R')$ is a substructure of $(G,R)$.
We say that $(G',R')$ is {\it dense in} $(G,R)$ if 
$$\forall (g_1,g_2) \in G\times G\;  (\exists h \in G \;
R(g_1,h,g_2)) \Rightarrow (\exists h' \in G'\; R(g_1,h',g_2)).$$
\indent The group $\mathbb{U}$ is dense, a subgroup of $\UU$ is dense if, and only if, it is infinite 
(this is also equivalent to saying that it is dense in $\mathbb{U}$). In particular, each divisible subgroup of
$\mathbb{U}$ is dense. \\
\indent 
For every positive integer $n$, $T(\UU)_n$ is discrete, and $1_{T(\UU)_n}=e^{2i\pi/n}$. 
We give the proof of the following 
lemma for completeness. 
\begin{Lemma}\label{lm12} The abelian cyclically ordered group 
$(G,R)$ is discrete if, and only if, its unwound $({\rm uw}(G), \leq_R)$ is a discrete linearly ordered group. 
More generally, let $H$ be a c-convex subgroup of $(G,R)$. Then the cyclically ordered group $G/H$ is discrete 
if, and only if, the linearly ordered group ${\rm uw}(G)/H_{uw}$ is discrete. 
\end{Lemma}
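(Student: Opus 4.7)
The plan is to split the argument into two pieces. First, the ``more generally'' statement reduces immediately to the base case $H = \{0_G\}$ applied to the cyclically ordered quotient $G/H$, using the fact (already recorded in the excerpt) that for a proper normal c-convex subgroup $H$ one has $\mathrm{uw}(G/H) \simeq \mathrm{uw}(G)/H_{uw}$. So the crux is the first equivalence.

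For the base case, I would use Rieger's presentation $G \simeq \mathrm{uw}(G)/\langle z_G\rangle$ together with the canonical fundamental domain $\{0\}\times G = [(0,e),z_G)$ inside $\mathrm{uw}(G)$. The explicit definition of $\leq_R$ recalled in subsection~\ref{subsect21} shows that the restriction of $\leq_R$ to $\{0\}\times G$ is precisely the linear order on $G$ having $(0,e)$ as minimum and ordered on $G\setminus\{e\}$ by $R(e,\cdot,\cdot)$. In particular, for $g\neq e$, the condition ``no $h\in G$ satisfies $R(0,h,g)$'' is equivalent to ``no element of $\mathrm{uw}(G)$ lies strictly between $(0,e)$ and $(0,g)$.'' From this correspondence both directions fall out: if $1_G\in G$ witnesses discreteness, then $(0,1_G)$ is the least positive element of $\mathrm{uw}(G)$, since any $(n,h)$ with $n\geq 1$ satisfies $(n,h)\geq z_G > (0,1_G)$; conversely, if $z$ is the least positive element of $\mathrm{uw}(G)$, then $z\leq z_G$, so either $z=z_G$ (forcing $\mathrm{uw}(G)=\langle z_G\rangle$ and $G$ trivial, a degenerate case) or $z=(0,g_0)$ with $g_0\neq e$, and then the translation forces $g_0=1_G$.

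The main technical point to verify is the description of $\leq_R$ on the fundamental domain $\{0\}\times G$, which is a routine unwinding of the definitions of subsection~\ref{subsect21}. A minor caveat is the trivial group $G=\{e\}$: there $\mathrm{uw}(G)\cong\ZZ$ is discrete but $G$ fails to be discrete for purely definitional reasons, so this degenerate case should be excluded at the outset.
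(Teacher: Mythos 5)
Your reduction of the ``more generally'' clause to the base case via $\mathrm{uw}(G/H)\simeq \mathrm{uw}(G)/H_{uw}$ is exactly what the paper does. For the base case itself you take a genuinely different and more direct route: you work inside Rieger's explicit presentation, observe that the elements of $\mathrm{uw}(G)$ lying strictly between $(0,e)$ and $(0,g)$ are precisely the $(0,h)$ with $R(e,h,g)$, and then match least positive elements on the fundamental domain $\{0\}\times G$; this yields both directions uniformly, with no case distinction. The paper instead splits on whether $l(G)$ is trivial: when $l(G)$ is nontrivial it argues that $(G,R)$ is discrete if, and only if, the linearly ordered group $l(G)$ is, and then uses the order embedding of $l(G)$ onto a convex subgroup of $\mathrm{uw}(G)$; when $l(G)$ is trivial it invokes Fact~\ref{fact17} to embed $G$ in $\UU$ and uses that $\mathrm{uw}(G)\cong\ZZ$ exactly when $G$ is finite. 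Your argument is more elementary (it needs only the definition of $\leq_R$) and bypasses the step the paper leaves to the reader (``one can see that $(G,R)$ is discrete iff $l(G)$ is''); the paper's version has the side benefit of locating $1_G$ as the least positive element of $l(G)$ in the nontrivial case, a fact it reuses later (e.g.\ in the proof of Lemma~\ref{discrete}). Your caveat about the trivial group is well taken: $G=\{e\}$ is not discrete under the paper's definition while $\mathrm{uw}(G)\cong\ZZ$ is, so the statement implicitly assumes $G$ (resp.\ $G/H$) nontrivial --- a degenerate case the paper passes over silently.
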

\begin{proof} If $l(G)$ is nontrivial, then one can see that $(G,R)$ is discrete if, and only if, 
$l(G)$ is a discrete linearly ordered group. If this holds, then $1_G$ is the smallest positive element of $l(G)$. 
Since $l(G)$ embeds in a convex subgroup of $({\rm uw}(G),\leq_R)$, it 
follows that $(G,R)$ is discrete if, and only if, $({\rm uw}(G),\leq_R)$ is discrete. If $l(G)$ is trivial, then, 
by Fact \ref{fact17}, $(G,R)$ embeds in $\UU$. Now, the unwound of $\UU$ is $\RR$ and the unwound of $(G,R)$ 
is isomorphic to $\ZZ$ if, and only if, 
$G$ is finite. It follows 
that $(G,R)$ is discrete if, and only if, its unwound is a discrete linearly ordered group. \\ 
\indent Now, let $H$ be a c-convex subgroup of $G$. We saw before Fact \ref{fact17} that the unwound of 
$G/H$ is ${\rm uw}(G)/H_{uw}$. Therefore $G/H$ is discrete 
if, and only if, ${\rm uw}(G)/H_{uw}$ is discrete. 
\hfill \end{proof}
\begin{Remark}\label{r}\begin{itemize}
\item A divisible abelian cyclically ordered group is not necessarily dense, as shows the following example of Lucas. 
Let $G$ be the wound-round $(\mathbb{Q} \overrightarrow{\times} \mathbb{Z}) /\langle(\alpha,1)\rangle$ with $\alpha$ positive, 
where $\overrightarrow{\times}$ denotes the lexicographic product. 
The ordered group $\mathbb{Q} \overrightarrow{\times} \mathbb{Z}$ is discrete with smallest positive 
element $(0,1)$. By Lemma \ref{lm12}, $(G,R)$ is discrete. Let $n\in \NN\backslash \{0\}$ and 
$(x,m) \in \QQ \overrightarrow{\times}\ZZ$. We have: 
$$\displaystyle{n\left( \frac{x+(n-m)\alpha}{n},1\right)=(x+(n-m)\alpha,n)=(x,m)+(n-m)(\alpha,1)}.$$ 
\indent Hence $(x,m)$ is $n$-divisible modulo $(\alpha,1)$. So $(G,R)$ is divisible. 
\item It can happen that $(G,R)$ is divisible and ${\rm uw}(G)$ is not. This holds in the previous example. 
One can also take $\Gamma=\mathbb{Q}+\mathbb{Z} \pi $ in $\mathbb{R}$ and 
$G=\Gamma/\langle\pi \rangle$.
\item If $(G,R)$ is divisible, then so is $U(G)$ (since $h^n=g\Rightarrow U(h)^n=U(g)$). Therefore, if 
$(G,R)$ c-archimedean and divisible, then $G\simeq U(G)$ is either dense or trivial. Hence it is not discrete. 
\end{itemize}
\end{Remark}
\begin{Lemma}\label{cdiv}(Lucas) Let $n\in \NN\backslash \{0,1\}$. An abelian cyclically ordered 
group $(G,R)$ has an element of finite order $n$ if, and only if, $z_G$ is $n$-divisible within ${\rm uw}(G)$.
\end{Lemma}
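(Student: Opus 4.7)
The plan is to work in the Rieger presentation $G \simeq {\rm uw}(G)/\langle z_G\rangle$ and exploit the key fact that ${\rm uw}(G)$, being linearly ordered, is torsion-free. I handle the two implications separately, writing the operation on ${\rm uw}(G)$ additively since $G$ is abelian, so that $n$-divisibility of $z_G$ means $ny = z_G$ for some $y \in {\rm uw}(G)$.

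\textbf{Forward direction.} Suppose $g \in G$ has order exactly $n$, and pick any lift $x \in {\rm uw}(G)$ of $g$ under the quotient map. Then $nx$ projects to $g^n = e_G$, so $nx = m z_G$ for some integer $m$. The crux is to show $\gcd(m,n) = 1$: if $d := \gcd(m,n) > 1$, writing $n = dn'$ and $m = dm'$ would give $d(n'x - m'z_G) = 0$ in ${\rm uw}(G)$, and torsion-freeness would force $n'x = m'z_G$, whence $g^{n'} = e_G$ with $n' < n$, contradicting the order of $g$. Once coprimality is secured, a B\'ezout identity $um + vn = 1$ lets me set $y := ux + v z_G$ and compute $ny = u(nx) + vn\, z_G = (um + vn)z_G = z_G$, so $z_G$ is $n$-divisible.

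\textbf{Backward direction.} Conversely, suppose $ny = z_G$ for some $y \in {\rm uw}(G)$, and set $g := \bar y$. Projecting gives $g^n = e_G$, so $g$ has finite order $d$ dividing $n$. Writing $n = dm$, the element $dy$ projects to $g^d = e_G$, hence $dy = k z_G$ for some integer $k$; multiplying by $m$ and comparing with $ny = z_G$ yields $(mk - 1) z_G = 0$. Since $z_G \neq 0$ and ${\rm uw}(G)$ is torsion-free, $mk = 1$, forcing $m = 1$ and thus $d = n$, so $g$ has order exactly $n$, as required.

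The only delicate step is the coprimality argument in the forward direction; everything else is routine back-and-forth between $G$ and its unwound, with torsion-freeness of ${\rm uw}(G)$ doing the essential work in both directions.
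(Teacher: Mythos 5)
Your proof is correct and follows essentially the same route as the paper: both work in the unwound, and the main (forward) direction is the identical gcd-plus-B\'ezout argument, with torsion-freeness of ${\rm uw}(G)$ justifying the cancellation $d(n'x-m'z_G)=0\Rightarrow n'x=m'z_G$. The only (minor) divergence is in the backward direction, where the paper observes directly that $0<n'y<z_G$ for $1\leq n'\leq n-1$ so that $n'\bar y\neq 0_G$, while you reach the same conclusion by a purely algebraic divisibility-and-torsion-freeness computation; both are valid.
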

\begin{proof} Assume that there is $x\in {\rm uw}(G)$ such that $nx=z_G$. Then in $G$ we have $n\bar{x}=0_G$. 
Now, for every $n'\in \{1,\dots,n-1\}$ we have $0_{{\rm uw}(G)}<n'x<z_G$, hence $n'\bar{x}\neq 0_G$. 
Conversely, assume that there is $x\in {\rm uw}(G)$ such the order of $\bar{x}$ is $n$, i.e.\ 
$n\bar{x}=0_G$ and, for every $n'\in \{1,\dots,n-1\}$, $n'\bar{x}\neq 0_G$. So there is $k\in \ZZ$ such that 
$nx=kz_G$. Let $d$ be the gcd of $n$ and $k$, and say, $n=dn'$, $k=dk'$. Therefore, 
we have $n'x=k'z_G$, so $n'\bar{x}=0_G$. By minimality of $n$, we have $n'=n$, so $d=1$. Therefore, 
there exist $u$, $v$ in $\ZZ$ such that $uk+vn=1$. 
Hence $z_G=ukz_G+vnz_G=unx+vnz_G=n(ux+vz_G)$. Consequently, $z_G$ is $n$ divisible. 
\end{proof}
\begin{Lemma}\label{finiteodense}(Lucas) If a divisible abelian cyclically ordered group has an 
element whose order is finite, then it is dense.
\end{Lemma}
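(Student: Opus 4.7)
The plan is to route everything through the unwound ${\rm uw}(G)$ and apply the characterization of discreteness in Lemma \ref{lm12}. Concretely, I want to show that $(G,R)$ being divisible with a nontrivial torsion element forces ${\rm uw}(G)$ to be a divisible linearly ordered abelian group, and such a group cannot be discrete, so $(G,R)$ is dense.

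The first step is to upgrade ``has a nontrivial torsion element'' to ``contains a c-isomorphic copy of $T(\UU)$''. Since $(G,R)$ is divisible abelian and has a torsion element, Proposition \ref{rek119} applies and yields $T(G)\simeq T(\UU)$ as cyclically ordered groups. In particular $G$ contains a subgroup c-isomorphic to $T(\UU)$, so by the definition of c-divisibility recorded just before Lemma \ref{cdiv}, the group $(G,R)$ is c-divisible; that same passage states the equivalence between c-divisibility of $(G,R)$ and divisibility of ${\rm uw}(G)$, hence ${\rm uw}(G)$ is divisible.

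The second step is purely about linearly ordered abelian groups: a nontrivial divisible linearly ordered abelian group has no smallest positive element (if $a>0$, then $a/2>0$ and $a/2<a$), so it is not discrete. Applying Lemma \ref{lm12} in the reverse direction gives that $(G,R)$ is not discrete, i.e.\ is dense.

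There isn't really a hard step here; the only small thing to double check is that the premise legitimately activates Proposition \ref{rek119} (it does, since a ``finite order'' element is a torsion element of $G$) and that one is allowed to invoke the c-divisibility/unwound equivalence without a separate argument (it is stated immediately after Proposition \ref{sup} and relies on Lemma \ref{cdiv}, which in turn uses exactly the divisibility of $z_G$ by every integer $n\geq 2$ guaranteed by $T(G)\simeq T(\UU)$). So the proof should be just a few lines chaining these citations.
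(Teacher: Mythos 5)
Your proof is correct and follows essentially the same route as the paper: establish that $(G,R)$ is c-divisible (you via Proposition \ref{rek119} directly, the paper via Lemma \ref{cdiv} and Corollary \ref{corapcdiv}, which itself rests on Proposition \ref{rek119}), conclude that ${\rm uw}(G)$ is divisible hence dense, and transfer back through Lemma \ref{lm12}. No gaps.
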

\begin{proof} 
Let $(G,R)$ a divisible abelian cyclically ordered group and $p$ be a prime such that $G$ 
contains an element $g$ of torsion $p$. If $G$ is c-archimedean, then it is infinite so it is dense. 
Let $G>0$ in $l(G)$ and $h$ such that $ph=g$. 
By \cite[Proposition 3]{Zh}, $T(G)$ c-embeds in $T(\UU)$, hence it contains a subgroup isomorphic 
to $T(U)_p$. Therefore, by multiplying $h$ by a $p$-torsion element we can assume that $h\in l(G)$. 
So we have $0<h<g$ in $l(G)$. Hence $(G,R)$ is dense. 
\end{proof} 
\begin{Corollary}\label{cor210} A discrete divisible abelian cyclically ordered group is torsion-free. 
\end{Corollary}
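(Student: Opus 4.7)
The plan is simply to take the contrapositive of Lemma \ref{finiteodense}. Suppose $(G,R)$ is a discrete divisible abelian cyclically ordered group. If $(G,R)$ had a non-identity element of finite order, then Lemma \ref{finiteodense} would force $(G,R)$ to be dense, contradicting the discreteness hypothesis. Hence no such element exists, i.e. $(G,R)$ is torsion-free.

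There is essentially no obstacle here; the content is entirely absorbed into Lemma \ref{finiteodense}. The only thing to be careful about is the logical structure: the definition of discrete (there exists $1_G \neq 0$ with no element strictly between $0$ and $1_G$) is explicitly the negation of dense (between any two distinct elements there is a third), as noted in the paper just after Lemma \ref{lm12}. So density and discreteness are mutually exclusive for abelian cyclically ordered groups, which is what makes the contrapositive work cleanly.
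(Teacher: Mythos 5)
Your proof is correct and is exactly the intended argument: the paper states this as an unproved corollary immediately after Lemma \ref{finiteodense}, and the contrapositive of that lemma (together with the fact that dense is defined as not discrete) is all that is needed.
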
 
\subsection{C-regular cyclically ordered groups.}\label{n3}
We start with the definition of regular linear groups. They were introduced in \cite{RoZ}, \cite{Za}, but 
later equivalent definitions were given (\cite{Co62}, \cite{DP11}). Let $n$ be a positive integer. 
An abelian linearly ordered group is  {\it $n$-regular} if it satisfies 
$\forall x_1,\dots,\forall x_n \; 0<x_1<\dots <x_n\Rightarrow \exists y\; (x_1\leq ny \leq x_n).$\\  
\indent A useful criterion is the following. A linearly ordered abelian group 
$(\Gamma,\leq)$ is  $n$-regular if, and only if, for each proper convex subgroup $C$, the quotient group 
$\Gamma/C$ is  $n$-divisible (\cite{Co62}). The group  
$(\Gamma,\leq)$ being  $n$-regular is also equivalent to: 
for each prime number $p$ dividing $n$, $(\Gamma,\leq)$ is $p$-regular (\cite{RoZ}, \cite{Za}). 
We say that the group 
$(\Gamma,\leq)$ is {\it regular} if it is  $n$-regular for each $n$. If $(\Gamma,\leq)$
is divisible or archimedean, then it is regular. \\
If $(\mbox{uw}(G),\leq_R)$ is regular, then we say that $(G,R)$ is 
{\it c-regular}. This is also equivalent to: for every c-convex proper subgroup $C$, 
$G/C$ is divisible and to: either $G$ is c-archimedean or $l(G)$ is regular and $U(G)$ is divisible and 
contains a subgroup isomorphix to $T(\UU)$ (\cite[Theorem 3.5]{Le}). \\[2mm]
\indent Let $(G,R)$ be a divisible abelian cyclically ordered group. 
If $(G,R)$ is linearly cyclically ordered, then it is a regular linearly 
ordered group, since it is divisible. By \cite[Corollary 3.15]{Le}, if $(G,R)$ is nontrivial and linearly ordered, 
then it is not c-regular. It follows that $(G,R)$ is linearly ordered if, and only if, it is regular (as linearly 
ordered group) and not c-regular. This is first-order definable. If this holds, then it is 
elementarily e\-qui\-va\-lent to the linearly ordered group $\QQ$ (\cite[Theorem 4.3.2]{Ro56}). \\
\indent A dense divisible nonlinear abelian cyclically ordered group 
is not necessarily c-regular, as shows the following example of Lucas. Let $(G,R)$ be the wound-round 
$((\mathbb{Q}+\pi \mathbb{Z})\overrightarrow{\times} \mathbb{Q})/\langle (\pi,0)\rangle$. It is divisible. 
We have ${\rm uw}(G)=(\mathbb{Q}+\pi \mathbb{Z})\overrightarrow{\times} \mathbb{Q}$. 
Now, $((\mathbb{Q}+\pi \mathbb{Z})\overrightarrow{\times} \mathbb{Q})/(\{0\}\overrightarrow{\times}\QQ)$ 
is isomorphic to $\mathbb{Q}+\pi \mathbb{Z}$, so it is not divisible. Hence 
${\rm uw}(G)$ is not regular, therefore, $(G,R)$ is not c-regular. \\
\indent Note that a c-divisible cyclically ordered group is dense and c-regular, since ${\rm uw}(G)$ is divisible. \\
\indent Now, the next theorem shows that the theory of c-divisible abelian 
cyclically ordered groups is complete. 
\begin{Theorem}\label{thm215}
Two dense c-regular divisible abelian cyclically ordered groups are elementarily e\-qui\-va\-lent if, 
and only if, they have c-isomorphic torsion subgroups. 
\end{Theorem}
\begin{proof} This is a consequence of \cite[Theorem 1.9]{Le}, which states that any two dense c-regular 
abelian cyclically ordered groups are elementarily e\-qui\-va\-lent if, and only if, they have c-isomorphic 
torsion subgroups and, for every prime $p$, the same number of classes of congruence modulo $p$. 
\end{proof}
\indent Our Theorem \ref{thm221} involves dense and discrete divisible abelian cyclically ordered 
groups. However, the discrete case also follows from \cite{Le}. We turn to this case in the remainder of this subsection. 
\begin{Lemma}\label{discrete}
Let $(G,R)$ be a discrete abelian nonlinear cyclically ordered group. Then either $G$ is finite, 
or $U(1_G)=1$.
\end{Lemma}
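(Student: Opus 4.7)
The plan is to pass to the unwound $({\rm uw}(G),\leq_R)$ and exploit the correspondence between c-convex proper subgroups of $G$ and proper convex subgroups of ${\rm uw}(G)$. Assume $G$ is infinite; the goal is then to show $1_G\in l(G)$, which is equivalent to $U(1_G)=1$.

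By Lemma \ref{lm12}, ${\rm uw}(G)$ is a discrete linearly ordered group; let $z_0$ denote its smallest positive element. A standard observation is that $\ZZ z_0$ is the smallest nontrivial convex subgroup of ${\rm uw}(G)$, since any nontrivial convex subgroup must contain some positive element $c\geq z_0$ and therefore the whole interval $[-c,c]\ni z_0$.

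Next, I would identify $\bar z_0$, the image of $z_0$ in $G\simeq{\rm uw}(G)/\langle z_G\rangle$, with $1_G$. In the representative-based description of the cyclic order, the only choices from $[0,z_G)$ are $x_1'=0$ and $x_3'=z_0$ (noting $0<z_0\leq z_G$), after which $R(0,h,\bar z_0)$ would force some representative $x_2'$ of $h$ to satisfy $0<x_2'<z_0$, which is impossible by the choice of $z_0$; the other two $A_3$-orderings produce $z_0<0$ or $x_3'<x_1'=0$, both absurd. Hence $\bar z_0$ satisfies the defining condition of $1_G$, and the uniqueness of $1_G$ (immediate from the fact that $R(0,\cdot,\cdot)$ induces a linear order on $G\setminus\{0\}$) gives $1_G=\bar z_0$.

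To finish, split on whether $l(G)$ is trivial. If so, $G$ has no nontrivial c-convex proper subgroup, so by Fact \ref{fact17}(1) the group $(G,R)$ is c-archimedean, and by Fact \ref{fact17}(2) ${\rm uw}(G)$ is archimedean. A discrete archimedean linearly ordered group is isomorphic to $\ZZ$, which forces $G\simeq \ZZ/n\ZZ$ to be finite, contradicting infinitude. Therefore $l(G)$ is nontrivial, so $l(G)_{uw}$ is a nontrivial proper convex subgroup of ${\rm uw}(G)$ and must contain the minimal nontrivial convex subgroup $\ZZ z_0$. In particular $z_0\in l(G)_{uw}$, hence $1_G=\bar z_0\in l(G)$ and $U(1_G)=1$. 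The main technical point is the identification $1_G=\bar z_0$; everything else is convex-subgroup bookkeeping in the unwound.
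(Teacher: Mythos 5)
Your proof is correct, but it routes everything through the unwound, whereas the paper argues directly in $G$. The paper's (two-line) proof splits on $l(G)$: if $l(G)$ is trivial then $G\simeq U(G)$ embeds in $\UU$, and an infinite subgroup of $\UU$ is dense, so a discrete such $G$ must be finite; if $l(G)$ is nontrivial it simply cites the proof of Lemma \ref{lm12}, where it is asserted that $1_G$ is the smallest positive element of $l(G)$, whence $U(1_G)=1$. You instead invoke the statement of Lemma \ref{lm12} to get that ${\rm uw}(G)$ is discrete with smallest positive element $z_0$, verify by chasing representatives in the wound-round that $\overline{z_0}=1_G$, dispose of the trivial-$l(G)$ case via Fact \ref{fact17} (archimedean $+$ discrete $\Rightarrow{\rm uw}(G)\simeq\ZZ\Rightarrow G$ finite), and in the remaining case observe that the nontrivial convex subgroup $l(G)_{uw}$ must contain $z_0$. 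What your version buys is that the key identification $1_G=\overline{z_0}$ --- which the paper leaves inside the ``one can see'' of Lemma \ref{lm12}'s proof --- is made explicit and checked; what it costs is the representative bookkeeping and a reliance on the (true, but worth noting) fact that $z_0<z_G$, which holds because $G$ is nontrivial (being nonlinear), so that $\overline{z_0}\neq 0_G$. Both arguments are sound; yours is more self-contained, the paper's is shorter because it leans on the density of infinite subgroups of $\UU$.
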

\begin{proof} If $U(G)$ is infinite, then it is a dense cyclically ordered group. Therefore, 
if $l(G)=\{ 0_G\}$, then $(G,R)$ is discrete if, and only if, it is finite. 
If $l(G)$ is infinite, then $1_G\in l(G)$ (see proof of Lemma \ref{lm12}). Hence $U(1_G)=1$.  \end{proof}
\begin{Proposition}\label{prop211}(Lucas)
Each discrete divisible abelian cyclically ordered group is c-regular.
\end{Proposition}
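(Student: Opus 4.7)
The plan is to show directly that $\Gamma:={\rm uw}(G)$ is a regular linearly ordered group, since by the definition in Subsection~\ref{n3} this is exactly what c-regularity of $(G,R)$ means. By the Robinson--Zakon criterion cited in that subsection, it suffices to prove $\Gamma$ is $p$-regular for every prime~$p$.

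First I would dispose of the linear case: a nontrivial divisible abelian linearly ordered group is dense (halving any positive element produces a smaller positive), so if $R$ were linear the group $(G,R)$ would fail to be discrete. Hence one may assume $(G,R)$ is nonlinear. Corollary~\ref{cor210} then gives that $G$ is torsion-free, and Lemma~\ref{lm12} gives that $\Gamma$ is discrete; write $1_\Gamma$ for its smallest positive element.

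Fix a prime $p$. Applying Lemma~\ref{pclass} to $G$, which is divisible and has no $p$-torsion, yields $[p]\Gamma=p\cdot[p]G=p$, so $\Gamma/p\Gamma$ has order $p$ and is cyclic of prime order. I would then observe the key fact that $1_\Gamma\notin p\Gamma$: if $py=1_\Gamma$ for some $y\in\Gamma$, then $y>0$ because $p\geq 2$, so discreteness forces $y\geq 1_\Gamma$, and then $py\geq p\cdot 1_\Gamma>1_\Gamma$, a contradiction. Consequently the class $\bar 1_\Gamma$ is a nonzero element of $\Gamma/p\Gamma\cong\ZZ/p\ZZ$ and therefore generates this quotient.

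To conclude $p$-regularity, take any positive $x_1<\cdots<x_p$ in $\Gamma$. Discreteness gives $x_i-x_{i-1}\geq 1_\Gamma$, hence $x_p\geq x_1+(p-1)\cdot 1_\Gamma$, so the $p$ elements $x_1+k\cdot 1_\Gamma$ ($k=0,\ldots,p-1$) all lie in $[x_1,x_p]$. Their cosets modulo $p\Gamma$ are $\bar x_1+k\,\bar 1_\Gamma$, which range over all $p$ classes because $\bar 1_\Gamma$ generates $\Gamma/p\Gamma$; in particular some $x_1+k\cdot 1_\Gamma$ lies in $p\Gamma$, yielding $y\in\Gamma$ with $x_1\leq py\leq x_p$. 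This gives $p$-regularity, hence regularity of $\Gamma$, hence c-regularity of $(G,R)$.

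I expect the main obstacle to be coordinating the two ingredients that together do the work: discreteness by itself gives only $1_\Gamma\notin p\Gamma$, and the count $[p]\Gamma=p$ by itself only tells us $\Gamma/p\Gamma$ is cyclic of order $p$; it is the combination that promotes $\bar 1_\Gamma$ to a generator of $\Gamma/p\Gamma$, which is what allows the $p$ consecutive steps of size $1_\Gamma$ to realize every residue class and, in particular, the zero class. Once this is in place the regularity argument is essentially the classical argument for $p$ consecutive integers in $\ZZ$.
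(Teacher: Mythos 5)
Your proof is correct and rests on exactly the same two pillars as the paper's: Lemma \ref{pclass} applied to a torsion-free (Corollary \ref{cor210}) divisible group to get $[p]\,{\rm uw}(G)=p$, and discreteness of the unwound (Lemma \ref{lm12}) to see that $1_\Gamma$ is not $p$-divisible, so that the classes of $0,1_\Gamma,\dots,(p-1)1_\Gamma$ exhaust $\Gamma/p\Gamma$. The only divergence is the finishing move: the paper deduces that $\Gamma/\langle 1_\Gamma\rangle$ is $p$-divisible and then invokes Conrad's criterion (every quotient by a nontrivial proper convex subgroup is $p$-divisible), whereas you verify the defining sentence of $p$-regularity directly by sliding through the $p$ consecutive elements $x_1+k\cdot 1_\Gamma$ inside $[x_1,x_p]$. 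Your version is marginally more self-contained (no appeal to the convex-subgroup characterization), while the paper's version makes visible the structural fact that $\langle 1_\Gamma\rangle$ is the smallest nontrivial convex subgroup; both are valid and the difference is cosmetic.
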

\begin{proof} Let $(G,R)$ be a discrete divisible abelian cyclically ordered group and 
$n\in \NN\backslash \{0\}$. The cyclic order is not linear, since every linearly ordered divisible 
abelian group is dense. Since $G$ is torsion-free (Corollary \ref{cor210}), and divisible, by 
Lemma \ref{pclass} we have $[p]{\rm uw}(G)=p$. 
Since $(0,1_G)$ is the smallest positive element of 
$({\rm uw}(G),\leq_R)$, the classes $(0,0_G),(0,1_G),(0,2\!\cdot\! 1_G), \dots, (0,(p-1)\!\cdot\! 1_G)$ are pairwise 
distinct. Therefore, the classes of ${\rm uw}(G)$ modulo $p$ are the classes of 
 $(0,0_G),(0,1_G),(0,2\!\cdot\! 1_G), \dots, (0,(p-1)\!\cdot\! 1_G)$. Consequently,  
${\rm uw}(G)/\langle (0,1_G)\rangle$ is $p$-divisible. Since $\langle (0,1_G)\rangle$ is the smallest 
convex subgroup of $({\rm uw}(G),\leq_R)$, 
it follows that the quotient of ${\rm uw}(G)$ by any proper convex subgroup is $p$-divisible, and that 
$({\rm uw}(G),\leq_R)$ is $p$-regular. Consequently $(G,R)$ is c-regular. 
\end{proof}
\indent The classification of complete theories of discrete c-regular abelian cyclically ordered groups 
follows from \cite[Theorem 4.33 and Corollary 4.34]{Le}. 
If $(G,R)$ is a c-regular discrete abelian cyclically ordered group, then for every $p$ and $n$ there 
exists a unique $k$ such that the formula $\exists g\; R(0_G,g,2g,\dots,(p^n-1)g)\;\&\;p^ng=k1_G$ 
holds. The element $g$ in this formula is such that                                    
the $jg$'s where $1\leq j\leq p^n-1$ ``do not turn full the circle'', but $p^n g$ ``turns full the circle'', and it is 
equal to $1_G$. Let us denote this $k$ by $\varphi_{G,p}(n)$. \\
\indent For every c-regular discrete abelian cyclically ordered group
$(G,R)$ and every prime $p$ there exists a unique mapping $\varphi_p$ from $\NN\backslash \{0\}$ to $\{0,\dots,p-1\}$ 
such that for every $n\in \NN\backslash \{0\}$, 
$\varphi_{G,p}(n)=\varphi_p(1)+p\varphi_p(2)+\cdots+p^{n-1}\varphi_p(n)$ 
(\cite[Lemma 4.29]{Le}). The sequence of the $\varphi_p$'s is called 
the {\it cha\-rac\-te\-ris\-tic sequence} of $(G,R)$. \\
\indent For every sequence of $\varphi_p$'s there exists a 
discrete c-regular abelian cyclically ordered group $(G,R)$ such that this sequence is the characteristic sequence of 
$(G,R)$ (\cite[Proposition 4.36]{Le}). \\ 
\indent Finally, $(G,R)$ is divisible if, and only if, for 
every $p$ we have $\varphi_p(1)\neq 0$ (\cite[Proposition 4.36]{Le}). 
\begin{Theorem}\label{thm213}(\cite[Corollary 4.34]{Le}) 
Any two discrete divisible abelian cyclically ordered groups are e\-le\-men\-ta\-ri\-ly 
e\-qui\-va\-lent if, 
and only if, they have the same characteristic sequences. 
\end{Theorem}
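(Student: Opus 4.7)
The plan is to reduce the statement to the general classification of complete theories of discrete c-regular abelian cyclically ordered groups provided by \cite[Corollary 4.34]{Le}. First I would observe that both hypotheses (discrete and divisible) place us inside the c-regular framework: by Corollary \ref{cor210} a discrete divisible abelian cyclically ordered group is torsion-free, and by Proposition \ref{prop211} it is c-regular. Thus the general classification from \cite{Le} applies to both $(G,R)$ and $(G',R')$. In the general classification the invariants of the complete theory of a discrete c-regular abelian cyclically ordered group are the torsion subgroup, the values $[p]G$ for $p$ prime, and the characteristic sequence. In our divisible setting the torsion is trivial in both groups and $[p]G=[p]G'=1$ for every $p$, so all these auxiliary invariants coincide automatically, and the only remaining discriminant is the characteristic sequence.

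Next, for the forward direction, I would check that the characteristic sequence is a first-order invariant. Since $(G,R)$ is discrete, the element $1_G$ is definable as the unique element $x\neq 0_G$ with $\neg R(0_G,y,x)$ for all $y$. For each prime $p$ and each positive integer $n$, the value $\varphi_{G,p}(n)=k$ is then witnessed by the first-order sentence
\[
\exists g\; R(0_G,g,2g,\dots,(p^n-1)g)\;\&\;p^n g=k\cdot 1_G,
\]
and this $k\in\{0,\dots,p^n-1\}$ is uniquely determined. Consequently, elementary equivalence forces $\varphi_{G,p}(n)=\varphi_{G',p}(n)$ for all $p$ and $n$, and then the recursion defining the $\varphi_p$'s from the $\varphi_{G,p}(n)$'s yields that $(G,R)$ and $(G',R')$ have identical characteristic sequences.

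For the converse, the plan is to run a back-and-forth argument between $\aleph_1$-saturated elementary extensions of $(G,R)$ and $(G',R')$. The key ingredient that makes this possible is c-regularity together with the agreement of the characteristic sequences: c-regularity guarantees that quotients of ${\rm uw}(G)$ and ${\rm uw}(G')$ by any proper convex subgroup are divisible (Proposition \ref{sup} and the discussion in Subsection \ref{n3}), so partial c-isomorphisms can be extended one element at a time within any prescribed cyclic interval; agreement of $\varphi_p(n)$ for all $p,n$ ensures that the position of $1_G$ relative to the $p^n$-divisible scaffolding is reproduced inside $(G',R')$, so the order-type and $p$-divisibility type of any candidate extension can be matched. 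This is exactly the content of the back-and-forth carried out in \cite[Section 4]{Le} that culminates in Corollary 4.34 there.

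The main obstacle is entirely hidden in the back-and-forth for the converse: one must keep track simultaneously of the cyclic order (equivalently, the linear order on ${\rm uw}(G)$), of the discretness witness $1_G$, and of the $p$-divisibility classes in every convex quotient, and show that the characteristic sequence captures exactly the obstruction to matching these data. Once this is granted from \cite{Le}, the rest of the argument is the bookkeeping reduction above: torsion-freeness via Corollary \ref{cor210}, c-regularity via Proposition \ref{prop211}, and the automatic vanishing of the remaining $[p]G$ invariants by divisibility.
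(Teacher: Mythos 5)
Your proposal is correct and follows essentially the same route as the paper, which gives no independent proof but simply invokes \cite[Corollary 4.34]{Le} after establishing that a discrete divisible abelian cyclically ordered group is torsion-free (Corollary \ref{cor210}) and c-regular (Proposition \ref{prop211}), so that the classification by characteristic sequences applies. Your additional remarks on the first-order definability of $\varphi_{G,p}(n)$ and the deferral of the back-and-forth to \cite{Le} are consistent with how the surrounding text treats these invariants.
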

\indent 
Since there are $2^{\aleph_0}$ distinct characteristic 
sequences of discrete divisible abelian cyclically ordered groups, there are $2^{\aleph_0}$ elementary classes of 
discrete divisible abelian cyclically ordered groups. This was originally proved by Lucas, 
using constructions of cyclic orders on $\QQ$. \\[2mm]
\indent Note that if $\varphi_p(1)\neq 0$, then $p$ does not divide $\varphi_{G,p}(n)$, hence 
$\varphi_{G,p}(n)$ has an inverse modulo $p^n$. 
We see that the function $f_{G,p}$ satisfies a similar property. In order to keep the consistency of the paper, 
we start by proving the existence of $f_{G,p}(n)$. 
\begin{Lemma}\label{rk240} Let $(G,R)$ be a dense divisible abelian cyclically ordered group. 
Assume that there is a prime $p$ such that $G/H_p$ is discrete. \begin{enumerate}
\item We have $H_p\subsetneq l(G)$, and for every $n\in\NN\backslash\{0\}$ 
there exists an integer $f_{G,p}(n)\in\{1,\dots, p^n-1\}$ which satisfies: for every $g\in l(G)$, $h\in G$ such that 
$g+H_p=1_{G/H_p}$ and $g=p^nh$, we have
$\displaystyle{U(h)=e^{\frac{2if_{G,p}(n)\pi}{p^n}}}$. 
\item For every positive integers $m$, $n$, 
$f_{G,p}(n)$ is the remainder of the euclidean division 
of $f_{G,p}(m+n)$ by $p^{n}$. It follows that $p$ and $f_{G,p}(n)$ are coprime. 
\end{enumerate} 
\end{Lemma}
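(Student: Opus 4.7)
The plan is to prove Lemma \ref{rk240} in three steps: first establish $H_p \subsetneq l(G)$; then construct $f_{G,p}(n)$ and verify well-definedness; finally derive the congruence relation in (2), from which coprimality with $p$ follows at once.

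For step 1, suppose toward contradiction that $H_p = l(G)$. Then $G/H_p = G/l(G) = U(G)$ embeds in $\UU$; being divisible, it is either trivial or dense by Remark \ref{r}, and both cases contradict discreteness of $G/H_p$ (our convention rules out trivial groups as discrete since a witness $g \neq 0$ is required). Thus $H_p \subsetneq l(G)$. Observing that $G/H_p$ is divisible, discrete, hence nontrivial and therefore infinite, and that it is nonlinear (since $U(G/H_p) = U(G)$ is nontrivial, because $G$ itself cannot be linear under these hypotheses: a divisible linearly cyclically ordered group would satisfy $H_p = G$), Lemma \ref{discrete} gives $U(1_{G/H_p}) = 1$, i.e.\ $1_{G/H_p} \in l(G/H_p) = l(G)/H_p$.

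For step 2, I lift $1_{G/H_p}$ to $g \in l(G)$ and use divisibility of $G$ to choose $h \in G$ with $p^n h = g$. From $U(g) = 1$ we get $U(h)^{p^n} = 1$, so $U(h) = e^{2ik\pi/p^n}$ for a unique $k \in \{0, \dots, p^n - 1\}$. To see $k \neq 0$: if $h \in l(G)$ then $h + H_p \in l(G/H_p)$ would be strictly positive and satisfy $p^n(h + H_p) = 1_{G/H_p}$, giving $0 < h + H_p < 1_{G/H_p}$ and contradicting minimality. Well-definedness of $k$ splits into two checks: torsion-freeness of $G$ makes $h$ unique given $g$; and for a second lift $g' \in l(G)$ of $1_{G/H_p}$, the difference $g - g'$ lies in $H_p$ and is $p^n$-divisible within $H_p$ (by the defining $p$-divisibility of $H_p$), so $g - g' = p^n h''$ with $h'' \in H_p \subseteq l(G)$, whence the two candidates for $h$ differ by $h''$, and $U(h'') = 1$. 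Setting $f_{G,p}(n) := k$ proves (1).

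For (2), fix $g \in l(G)$ above $1_{G/H_p}$ and $h_1 \in G$ with $p^{m+n} h_1 = g$. Then $h_2 := p^m h_1$ satisfies $p^n h_2 = g$, and
\[
e^{2if_{G,p}(n)\pi/p^n} = U(h_2) = U(h_1)^{p^m} = e^{2if_{G,p}(m+n)\pi/p^n},
\]
so $f_{G,p}(n) \equiv f_{G,p}(m+n) \pmod{p^n}$; both lying in $\{0, \dots, p^n - 1\}$, the first is the euclidean remainder of the second by $p^n$. Taking $n = 1$ gives $f_{G,p}(r) \equiv f_{G,p}(1) \pmod{p}$ for every $r \geq 1$, and since $f_{G,p}(1) \in \{1, \dots, p - 1\}$ is coprime to $p$, so is each $f_{G,p}(n)$. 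The only subtle point is the well-definedness of $f_{G,p}(n)$ in step 2, which turns on $H_p$ being $p$-divisible by definition and on the torsion-freeness of $G$; the rest is short arithmetic inside $U(G) \subseteq \UU$.
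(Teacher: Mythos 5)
Your proof is correct and follows essentially the same route as the paper's: the same contradiction with the minimality of $1_{G/H_p}$ to rule out $h\in l(G)$, the same use of torsion-freeness and $p$-divisibility of $H_p$ for well-definedness, and the same computation in $\UU$ for the congruence and coprimality. The only cosmetic differences are that you run the minimality argument inside the quotient $G/H_p$ (citing Lemma \ref{discrete}) where the paper works in $l(G)$ before passing to the quotient.
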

\begin{proof} Note that if $G/H_p$ is discrete, then $H_p\neq G$, so $G$ is not c-$p$-divisible. In particular, $G$ 
does not contain $p$-torsion elements. 
Therefore, for every $g\in G$, and every positive integer $n$, there is a unique $h$ such that $g=p^nh$. \\
\indent (1) By the definition of $H_p$ we have 
$H_p\subseteq l(G)$. Now, $G/l(G)\simeq U(G)$, which is dense since it is divisible and nontrivial. 
Hence if $G/H_p$ is discrete, then $H_p\neq l(G)$ (if $(G,R)$ is discrete, then $l(G)$ is nontrivial 
(Remark \ref{r}) and $H_p=\{0_G\}$). \\
\indent Now, we assume that $G/H_p$ is discrete. Let $g\in G$ be such that $g+H_p=1_{G/H_p}$, 
and $h$ be the unique element of $G$ such that $g=p^nh$. Note that $1_{G/H_p}$ belongs to the 
positive cone of $G/H_p$, hence $g$ belongs to the positive cone of $G$. 
Since $H_p\subsetneq l(G)$, the cyclically 
ordered group $G/H_p$ is not c-archimedean, and its linear part is isomorphic to $l(G)/H_p$. Since  
$1_{G/H_p}$ belongs to its linear part, we have $g\in l(G)$, and $g>0_G$ in $l(G)$. 
Since $p^nh\in l(G)$, we have $U(p^nh)=1$. Hence $U(h)$ is a $p^n$-th root of $1$ in $\UU$. 
Assume that $h\in l(G)$. Since $H_p$ is a convex subgroup of $l(G)$ and $p^nh\notin H_p$, we have 
$h\notin H_p$. 
By properties of linearly ordered groups we have $0_G<h<g$.  
This contradicts the minimality of $g+H_p$. Hence $h\notin l(G)$, that is $U(h)\neq 1$. 
It follows that there exists 
$k\in \{1,\dots,p^n-1\}$ such that $\displaystyle{U(h)=e^{\frac{2ik\pi}{p^n}}}$. \\
\indent Let $g'\in G$ be such that $g'+H_p=1_{G/H_p}$, and $h'$ be the unique element of $G$ such 
that $g'=p^nh'$. Then $g-g'\in H_p$, and $p^n(h-h')=g-g'$. Since $H_p$ is $p$-divisible and $h-h'$ is the unique 
element of $G$ such that $p^n(h-h')=g-g'$, $h-h'$ belongs to $H_p$. Therefore $U(h-h')=1$, hence 
$U(h')=U(h+h'-h)=U(h)+U(h'-h)=U(h)$. It follows that $k$ doesn't depend of the choice of $g$. We let 
$f_{G,p}(n)=k$. \\
\indent 
(2) Let $g\in G$ such that $g+H_p=1_{G/H_p}$, $m$, $n$ in $\NN\backslash \{0\}$ and $h$, $h'$ be the elements of $G$ 
such that $g=p^{n} h=p^{m+n}h'$. 
Then $h=p^mh'$. It follows that $U(h)=U(p^mh')$, so $\frac{2f_{G,p}(n)\pi}{p^{n}}$ is congruent to 
$\frac{p^m2f_{G,p}(m+n)\pi}{p^{m+n}}$ modulo $2\pi$. This in turn is equivalent to saying that 
$f_{G,p}(m+n)-f_{G,p}(n)$ belongs to $p^{n}\ZZ$. Say, $f_{G,p}(m+n)=ap^{n}+f_{G,p}(n)$. Since $f_{G,p}(n)<p^{n}$ 
and $f_{G,p}(m+n)>0$, we have $a>0$. Hence $f_{G,p}(n)$ is the remainder of the euclidean division 
of $f_{G,p}(m+n)$ by $p^{n}$. \\ 
\indent By Proposition \ref{n128}, $f_{G,p}(n)\geq 1$, hence, since $f_{G,p}(1)\in \{1,\dots, p-1\}$, 
$p$ and $f_{G,p}(1)$ are coprime. If $n\geq 2$, then $p^{n-1}$ divides $f_{G,p}(n)-f_{G,p}(1)$. Therefore, 
$p$ does not divide $f_{G,p}(n)$. 
\end{proof}
\indent  
If $(G,R)$ is a discrete divisible abelian cyclically ordered group, then for every prime $p$ we have 
$H_p=\{0_G\}$. Hence $G/H_p\simeq G$. The  
the proof of \cite[Proposition 4.36]{Le}, shows that for every  $p$, $n$ the integer $\varphi_{G,p}(n)$ is 
the inverse of $f_{G,p}(n)$ modulo $p^n$. 
Now, Theorem \ref{thm213} can be reformulated in the following way. 
\begin{Theorem}\label{nth1} Let $(G,R)$ and $(G',R')$ be two discrete divisible abelian 
cyclically ordered groups. Then they are elementarily equivalent if, and only if, for every prime $p$ 
the functions $f_{G,p}$ and $f_{G',p}$ are equal. 
\end{Theorem}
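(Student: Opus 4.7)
The plan is to deduce Theorem \ref{nth1} directly from Theorem \ref{thm213} by showing that, for each prime $p$, the sequences $(\varphi_{G,p}(n))_n$ and $(f_{G,p}(n))_n$ determine one another. The key identity to establish is
\[
\varphi_{G,p}(n)\cdot f_{G,p}(n)\equiv 1\pmod{p^n}
\]
for every $n\in\NN\backslash\{0\}$; once this is proved, the characterisation of elementary equivalence via characteristic sequences provided by Theorem \ref{thm213} translates immediately into the equality of the $f_{G,p}$'s, since inversion is a bijection on the group of units modulo $p^n$.

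First I would verify that in a discrete divisible abelian cyclically ordered group one has $H_p=\{0_G\}$ for every prime $p$, so that $G/H_p\simeq G$ and $f_{G,p}$ is indeed defined via Lemma \ref{rk240}. By Corollary \ref{cor210}, $G$ is torsion-free; by Lemma \ref{lm12} and Remark \ref{r}, $l(G)$ is nontrivial and discrete with smallest positive element $1_G$. Any nontrivial convex subgroup of $l(G)$ must therefore contain $1_G$, but the unique $h\in G$ with $ph=1_G$ satisfies $U(h)\neq 1$ (by the argument used in the proof of Lemma \ref{rk240}(1)), so $h\notin l(G)$; consequently no nontrivial convex subgroup of $l(G)$ can be $p$-divisible, and $H_p=\{0_G\}$.

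For the congruence itself, fix $p$ and $n$, and choose $h\in G$ with $p^nh=1_G$, so that $U(h)=e^{2if_{G,p}(n)\pi/p^n}$. By Lemma \ref{rk240}(2), $f_{G,p}(n)$ is a unit modulo $p^n$; let $j\in\{1,\dots,p^n-1\}$ be its inverse and set $g=jh$. Then $p^ng=j\cdot 1_G$ and $U(kg)=e^{2ik\pi/p^n}$ for $k=1,\dots,p^n-1$. Using the description of $R$ through $U$ recalled at the end of subsection \ref{subsec21}, together with the fact that the $p^n$-th roots of unity are in cyclic order on $\UU$, one obtains $R(0_G,g,2g,\dots,(p^n-1)g)$. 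The uniqueness clause in the definition of $\varphi_{G,p}(n)$ then forces $\varphi_{G,p}(n)=j$, which is the desired congruence. The only slightly delicate point is the verification of the cyclic ordering of the $kg$'s, but this is immediate from the natural cyclic arrangement of the $p^n$-th roots of unity on $\UU$, so the remainder of the argument is pure bookkeeping.
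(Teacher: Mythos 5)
Your proposal is correct and follows essentially the same route as the paper: the paper also reduces Theorem \ref{nth1} to Theorem \ref{thm213} by observing that $H_p=\{0_G\}$ in the discrete case and that $\varphi_{G,p}(n)$ is the inverse of $f_{G,p}(n)$ modulo $p^n$. The only difference is that the paper cites the proof of \cite[Proposition 4.36]{Le} for this congruence, whereas you verify it directly via the witness $g=jh$ — a verification that is sound and makes the argument more self-contained.
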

\section{C-divisible abelian cyclically ordered groups.}\label{sec3}
\indent Recall that an abelian cyclically ordered group is c-divisible if it is divisible and 
its torsion subgroup is c-isomorphic to $(T(\UU),\cdot)$.
This is also equivalent to $\mbox{uw}(G)$ being divisible.  \\[2mm] 
\noindent 
{\it Proof of Theorem \ref{thm31}}. (1) The fact that $TcD$ is complete follows from Theorem \ref{thm215}. Now, 
Let $(G_1,R_1)$ and $(G_2,R_2)$ be two c-divisible abelian cyclically ordered groups such that $G_1$ is a 
subgroup of $G_2$. Recall that by Lemma \ref{finiteodense} a c-divisible abelian cyclically ordered group is dense. 
By \cite[Theorem 1.8]{Le}, $(G_1,R_1)$ is an elementary subextension of $(G_2,R_2)$ if, and only if, 
$G_1$ is pure in $G_2$ and, for every prime $p$, $[p]G_1=[p]G_2$. Now, if $(G_1,R_1)$ and $(G_2,R_2)$ are 
c-divisible, then 
$G_1$ is pure in $G_2$, and for every prime $p$ we have $[p]G_1=[p]G_2=1$. It follows that $(G_1,R_1)$ is an 
elementary substructure of $(G_2,R_2)$. So, $TcD$ is model complete. \\
\indent (2) Let $(G,R)$ be a c-divisible abelian cyclically ordered group (so it is dense). 
We already saw that the ordered group ${\rm uw}(G)/l(G)_{uw}$ embeds in $\RR$. 
Since ${\rm uw}(G)$ is abelian torsion-free and divisible, it is a $\QQ$-vector space. 
Hence there is a subspace $A$ of ${\rm uw}(G)$ such that the restriction to $A$ 
of the canonical epimorphism $x\mapsto x+l(G)_{uw}$ is one-to-one, and so ${\rm uw}(G)=A\oplus l(G)_{uw}$. 
Note that $A$ is also a divisible abelian subgroup. 
The canonical epimorphism is order pre\-ser\-ving because 
$l(G)_{uw}$ is a convex subgroup of ${\rm uw}(G)$. Hence its restriction to $A$ is order preserving, and 
every positive element of $A$ is greater than $l(G)_{uw}$. It follows that the ordered group ${\rm uw}(G)$ is isomorphic to 
the lexicographic products $A\overrightarrow{\times}l(G)_{uw}$ and $({\rm uw}(G)/l(G)_{uw})
\overrightarrow{\times}l(G)_{uw}$. Let $(G_1,R_1)$ and $(G_2,R_2)$ be two c-divisible abelian cyclically ordered groups, 
$\varphi_1$ be a c-embedding of $(G,R)$ in $(G_1,R_1)$ and $ \varphi_2$ be a c-embedding of $(G,R)$ in 
$(G_2,R_2)$. These c-embeddings 
induce embeddings $\varphi_{1uw}$ of $({\rm uw}(G),\leq_R)$ in 
$({\rm uw}(G_1),\leq_{R_1})$ and $\varphi_{2uw}$ of $({\rm uw}(G),\leq_R)$ in 
$({\rm uw}(G_2),\leq_{R_2})$, where $\varphi_{1uw}(z_G)=z_{G_1}$ and $\varphi_{2uw}(z_G)=z_{G_2}$. 
The restriction of 
$\varphi_{1uw}$ (resp.\ $\varphi_{2uw}$) to $l(G)$ is an embedding of $l(G)$ in $l(G_1)$ (resp.\ $l(G_2)$). 
Since the theory of divisible abelian linearly ordered groups has the amalgamation property, 
there are 
an abelian linearly ordered group $L$ and embeddings $\Phi_1$ of $l(G_1)$ in $L$ and $\Phi_2$ of $l(G_2)$ in $L$ 
such that the restrictions of $\Phi_1\circ \varphi_{1uw}$ and $\Phi_2\circ \varphi_{2uw}$ are equal. Now, 
since ${\rm uw}(G_1)/l(G_1)_{uw}$ and ${\rm uw}(G_2)/l(G_2)_{uw}$ are archimedean, there is a unique 
embedding $\Psi_1$ (resp.\ $\Psi_2$) of ${\rm uw}(G_1)/l(G_1)_{uw}$ (resp.\ ${\rm uw}(G_2)/l(G_2)_{uw}$) 
in $\RR$ such that $\Psi_1(z_{G_1}+l(G_1)_{uw})=1$ (resp.\ $\Psi_2(z_{G_2}+l(G_1)_{uw})=1$). Hence we 
get embeddings of $({\rm uw}(G_1),\leq_{R_1})$ and $({\rm uw G_2},\leq_{R_2})$ in 
$\RR\overrightarrow{\times} L$ such that 
$z_{G_1}$ and $z_{G_2}$ has the same image $Z$, which is positive and cofinal. These embeddings induce 
c-embeddings $\varphi_1'$ of $(G_1,R_1)$ in $(\RR\overrightarrow{\times} L)/\langle Z\rangle$ and 
$\varphi_2'$ of $(G_2,R_1)$ in $(\RR\overrightarrow{\times} L)/\langle Z\rangle$ such that 
$\varphi_1'\circ\varphi_1=\varphi_2'\circ\varphi_2$. \\
\indent (3)  Since $TcD$ is model-complete and has the amalgamation property, it admits the quantifiers elimination. \\
\indent (4) Let $(G,R)$ be a c-divisible abelian cyclically ordered group. Then its torsion subgroup 
is c-isomorphic to $T(\UU)$. Hence 
$T(\UU)$ embeds in $(G,R)$, and this embedding is elementary, since $TcD$ is model complete. \\
\indent (5) Clearly, every model of $TcD$ is an abelian cyclically ordered group. Since $TcD$ is 
model-complete, to prove that it is the model completion of the theory of abelian cyclically ordered groups, 
it remains to prove that every abelian cyclically ordered group embeds in a unique way in a minimal c-divisible one. 
Let $(G,R)$ be an abelian cyclically ordered group, and $\overline{{\rm uw}(G)}$ be the divisible hull 
of ${\rm uw}(G)$. Then $(G,R)\simeq \mbox{uw}(G)/\langle z_G\rangle$ c-embeds in the c-divisible abelian cyclically ordered group 
$\overline{{\rm uw}(G)}/\langle z_G\rangle$. Now, if $(G,R)$ c-embeds in a c-divisible abelian cyclically ordered group 
$(G',R')$,  then $({\rm uw}(G),\leq_R)$ embeds in the divisible linearly ordered group ${(\rm uw}(G'),\leq_{R'})$. Hence 
$\overline{{\rm uw}(G)}$ embeds in ${\rm uw}(G')$. 
Therefore $\overline{{\rm uw}(G)}/\langle z_G\rangle$ c-embeds 
in ${\rm uw}(G')/\langle z_{G'}\rangle\simeq G'$. It follows that $\overline{{\rm uw}(G)}/\langle z_G\rangle$ 
is the c-divisible hull of $(G,R)$. 
\hfill $\qed$
\section{General case.}\label{n4}

\indent 
By \cite[Theorem 4.3.2]{Ro56}, every linearly ordered divisible abelian group is elementarily equivalent to 
the ordered group $(\QQ,+)$. Therefore, 
if $(G,R)$ is a torsion-free divisible abelian linearly cyclically ordered group, then 
it is elementarily equivalent to the linearly cyclically ordered group $(\QQ,+)$. In 
this section we look at the nonlinear cyclically ordered case. 
\subsection{The preordered family $\mathcal{CD}(G)$.}
We recall that if $(G,R)$ is a divisible abelian nonlinear cyclically ordered group, then 
for every prime $p$ such that $G$ is not c-$p$-divisible, $H_p$ denotes either the greatest $p$-divisible convex 
subgroup of $l(G)$. If $G$ is c-$p$-divisible, then $H_p=G$. 
The family $\{\{0_G\},\; G,\; H_p\;:\; p \;{\rm prime}\}$ is denoted by 
$\mathcal{CD}(G)$. 
\begin{Remark}\label{rkcorresp} \begin{enumerate}
\item 
The group $H_p$ is a $p$-divisible c-convex proper subgroup of $(G,R)$, and the ordered  
convex subgroup $H_p$ of $l(G)$ is isomorphic to the proper convex 
subgroup $(H_p)_{uw}$ of ${\rm uw}(G)$. 
\item
The subgroup $(H_p)_{uw}$ is the greatest $p$-divisible convex subgroup of ${\rm uw}(G)$, 
since if $H_p\neq G$, then $z_G$ is not divisible by any integer within ${\rm uw}(G)$ (Lemma \ref{cdiv}). 
\item The quotient cyclically ordered group $G/H_p$ is dense (resp.\ discrete) if, and only if, \\ 
${\rm uw}(G)/(H_p)_{uw}$ is a dense (resp.\ discrete) ordered group (this follows from Lemma \ref{lm12}). 
\end{enumerate}
\end{Remark}
\indent We now prove a lemma which will also be crucial in the study of weakly cyclically minimal 
cyclically ordered groups, for replacing a lemma of Lucas that was false (it brought a contradiction). 
For $g$ in a cyclically ordered group, we let $\theta(g)$ be the element of $[0,2\pi [$ such that 
$U(g)={\rm exp}(i\theta(g))$. 
\begin{Lemma}\label{ouf} Let $(G,R)$ be a divisible abelian nonlinear cyclically ordered group and 
$p$ be a prime such that $G$ does not contain any $p$-torsion element. 
Then the sets 
$$\{U(g) \; :\; g\in G,\; \exists h\in G,\; (R(0_G,h,g,-g) \mbox{ or } 
R(-g,g,h,0_G)) \mbox{ and } ph=g\}$$ 
$$\mbox{and } \{U(g) \; :\; g\in G,\; \forall h\in G,\; (R(0_G,h,g,-g) 
\mbox{ or } R(-g,g,h,0_G)) \Rightarrow ph\neq g\}$$ 
are both dense in $\UU$. 
\end{Lemma}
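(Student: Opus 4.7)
The plan is to translate the arc condition in $(G,R)$ into a condition on $U(g/p)\in\UU$, and then to control which of the $p$ possible $p$-th roots of $u=U(g)$ in $\UU$ is actually realised as $U(g/p)$, according to whether $H_p=l(G)$ or not. First, $U(G)$ is a nontrivial divisible subgroup of $\UU$ (by nonlinearity and divisibility of $(G,R)$), hence dense in $\UU$. For $g\in G$ with $U(g)=u\neq 1$, let $h=g/p$ (the unique $p$-th root, by torsion-freeness and divisibility of $G$). Provided $U(h)\notin\{1,u,u^{-1}\}$, the conditions $R(0_G,h,g,-g)$ or $R(-g,g,h,0_G)$ in $G$ are, by the characterisation of the cyclic order on $G$ in terms of $U(G)$ and $l(G)$ recalled at the end of Subsection \ref{subsec21}, equivalent to $U(h)$ lying on the short arc from $1$ to $u$ in $\UU$ (the one not crossing $u^{-1}$). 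Since $U(G)$ is torsion-free divisible, $u^{-1}\neq u$ and the exceptional cases $U(h)\in\{u,u^{-1}\}$ do not arise for the $u$'s of interest below.

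\textbf{Case 1:} $H_p\subsetneq l(G)$. Then $l(G)$ is not $p$-divisible, so pick $k\in l(G)\setminus pl(G)$. Then $k/p\in G\setminus l(G)$ and $U(k/p)\in T(\UU)_p\setminus\{1\}$; since $T(\UU)_p$ has prime order $p$, $U(k/p)$ generates it. For any $u_0\in U(G)$ and any lift $g_0\in G$, the elements $g_j=g_0+jk$ for $j=0,\dots,p-1$ all satisfy $U(g_j)=u_0$, while $U(g_j/p)=U(g_0/p)\cdot U(k/p)^j$ runs through every $p$-th root of $u_0$ in $\UU$. Hence some $j$ makes $U(g_j/p)$ lie on the short arc, and another $j$ makes it lie on the long arc. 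Combined with density of $U(G)\cap I$ in any open arc $I\subset\UU$, this shows both sets in the lemma contain $U(G)\cap I$, settling the case.

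\textbf{Case 2:} $H_p=l(G)$. Now $l(G)$ is $p$-divisible, so $g_1-g_2\in l(G)$ implies $(g_1-g_2)/p\in l(G)$, whence $U(g/p)$ depends only on $u=U(g)$. This defines a homomorphism $\rho\colon U(G)\to U(G)$ with $p\rho(u)=u$. To see that $U(g/p)$ nevertheless switches between short and long arcs as $u$ varies, pass to the unwound: set $C=l(G)_{uw}$ and normalise the archimedean embedding ${\rm uw}(G)/C\hookrightarrow\RR$ so that $z_G+C\mapsto 1$; identify ${\rm uw}(G)/C$ with a dense subgroup $D\subset\RR$ satisfying $\ZZ\subset D$ and $D/\ZZ\simeq U(G)$. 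Define $\kappa\colon D\to\ZZ/p\ZZ$ by $\kappa(d)=pd'-d\bmod p$, where $d'\in D$ is any lift of $\rho(d+\ZZ)$. One checks this is a well-defined group homomorphism; since $\kappa(1)\equiv-1\bmod p$, $\kappa$ is surjective. Its kernel contains $pD$ and is therefore dense in $\RR$, and so is every coset of $\ker\kappa$. Hence for every $k_0\in\{0,\dots,p-1\}$ the set $\{d\in D\cap I:\kappa(d)=k_0\}$ is dense in every open subinterval $I\subset(0,1)$. A direct computation shows that for $I\subset(0,1/2)$ the value $k_0=0$ realises the short-arc root $d'=d/p$ while $k_0=p-1$ realises the long-arc root $d'=(d+p-1)/p$; symmetric statements hold for $I\subset(1/2,1)$. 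Subdividing any open arc of $\UU$ into sub-arcs lying in one open half-circle then concludes the proof.

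The principal obstacle is Case 2: isolating the intrinsic group-theoretic invariant $\kappa$ that records which of the $p$ possible $p$-th roots of $u$ in $\UU$ actually lies in $U(G)$, and showing that its fibres are dense in $\RR$, so that both ``short'' and ``long'' arc choices can be realised while keeping $U(g)$ constrained to lie in $I$.
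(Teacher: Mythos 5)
Your argument is essentially correct, but it takes a genuinely different and considerably heavier route than the paper. The paper's proof needs no case distinction on $H_p$ and never consults the linear part: to populate the first set it simply takes $h$ with $0<\theta(h)<\pi/p$ and sets $g=ph$, so that $0<\theta(h)<\theta(g)<\pi$ forces $R(0_G,h,g,-g)$; to populate the second set it takes $h$ with $2\pi-\tfrac{2\pi}{p}<\theta(h)<2\pi-\tfrac{\pi}{p}$, so that $g=ph$ wraps around into the upper half-circle while the (unique, by torsion-freeness) $p$-th root $h$ lies outside $I(0_G,g)$. Density of $U(G)$ in $\UU$ then gives density of both images of half-circles, and $-A$, $-A'$ handle the lower half. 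Your proof instead fixes the target value $u_0=U(g)$ and analyses which of the $p$ possible $p$-th roots of $u_0$ in $\UU$ is realised by $U(g/p)$: when $l(G)$ is not $p$-divisible you move among all $p$ roots by translating $g$ by an element $k$ with $U(k/p)$ a primitive $p$-th root of $1$, and when $l(G)$ is $p$-divisible you extract the invariant $\kappa:D\to\ZZ/p\ZZ$ and use density of the cosets of its kernel. Both steps check out (in particular $\kappa$ is well defined, surjective because $\kappa(1)=-1$, and a finite-index subgroup of a dense subgroup of $\RR$ is dense), and your analysis yields slightly finer information than the lemma asks for, at the cost of the detour through the unwound.

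There is, however, one false supporting claim: ``Since $U(G)$ is torsion-free divisible, \dots the exceptional cases $U(h)\in\{u,u^{-1}\}$ do not arise.'' Torsion-freeness of $G$ does \emph{not} pass to $U(G)=G/l(G)$; the paper's own construction of cyclic orders on $\QQ$ (Subsection 4.4) produces torsion-free divisible nonlinear $(G,R)$ with $U(G)$ contained in $T(\UU)$, i.e.\ consisting entirely of torsion elements, so in general \emph{every} $u\in U(G)\setminus\{1\}$ is torsion. Your Case 1 therefore cannot dismiss the degenerate configurations this way. The step is repairable: the values $u_0$ for which some $p$-th root of $u_0$ equals $1$, $u_0$ or $u_0^{\pm 1}$, or for which $u_0=\pm 1$ (where positivity of $g$ is decided by the linear part rather than by $U(g)$), satisfy $u_0^{m}=1$ for some fixed $m$ depending only on $p$, hence form a finite subset of $\UU$; removing them from $U(G)$ preserves density, and the rest of your Case 1 goes through on the complement. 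You should state the exclusion this way rather than asserting torsion-freeness of $U(G)$. (Case 2 is unaffected: your explicit formulas $d'=d/p$ and $d'=(d+p-1)/p$ for $d$ in an open subinterval of $(0,1/2)$ already rule out $d'\equiv 0$ and $d'\equiv d$ modulo $\ZZ$.)
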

\begin{proof} By Remark \ref{r}, $U(G)$ is divisible, hence it is dense in $\UU$. Let 
$A=\{ h\in G \; :\; 0<\theta(h)<\frac{\pi}{p}\}$. Then $U(A)$ is dense in the the subset 
$\left\{e^{i\theta}\; :\; 0<\theta< \frac{\pi}{p}\right\}$ of $\UU$, hence $U(pA)$ is dense in 
$\left\{e^{i\theta}\; :\; 0<\theta< \pi\right\}$. Let $h\in A$ and $g=ph$. 
Since $0<\theta(h)<p\theta(h)=\theta(g)<\pi$, both of $g$ and $h$ belong to the positive cone of $(G,R)$, and 
$g$ satisfies $\exists h\in G,\; R(0_G,h,g,-g) \mbox{ and } ph=g$ (recall that $R(0_G,h,g,-g)$ 
stands for $R(0_G,h,g)$ and $R(0_G,g,-g)$, see Subsection \ref{subsec11}). In the same way, 
$U(p(-A))$ is dense in $\left\{e^{i\theta}\; :\; \pi<\theta< 2\pi\right\}$ and 
every $g\in p(-A)$ satisfies $\exists h\in G,\; R(-g,g,h,0_G)\mbox{ and } ph=g$. Hence 
$\{U(g) \; :\; g\in G,\; \exists h\in G,\; (R(0_G,h,g,-g)$ 
or $R(-g,g,h,0_G))$ and $ph=g\}$ is dense in $\UU$. \\
\indent Let $A'=\{h\in G\; :\; 2\pi-\frac{2\pi}{p}<\theta(h)<2\pi-\frac{\pi}{p}\}$. 
Then $U(A')$ is dense in the the set 
$\left\{e^{i\theta'}\; :\; \frac{2(p-1)\pi}{p}<\theta< \frac{(2p-1)\pi}{p}\right\}$. Now, 
$2(p-1)\pi$ is congruent to $0$ modulo $2\pi$, and $(2p-1)\pi$ is congruent to $\pi$. 
Hence $U(pA')$ is dense in 
$\left\{e^{i\theta}\; :\; 0<\theta< \pi\right\}$. Let $g\in pA'$ and $h\in A'$ be such that 
$g=ph$. Since $g$ belongs to the positive cone of $G$, and $h$ does not, we have $R(0,g,-g)$, 
and $R(0,g,h)$.  
Since $G$ does not contain $p$-torsion elements, this element $h$ such that $g=ph$ is unique. 
Hence $g$ belongs to the set 
$\{U(g) \; :\; g\in G,\; \forall h\in G,\; (R(0_G,h,g,-g) \mbox{ or } R(-g,g,h,0_G)) \Rightarrow ph\neq g\}$. 
In the same way, $U(p(-A'))$ is a subset of 
$\{U(g) \; :\; g\in G,\; \forall h\in G,\; (R(0_G,h,g,-g) \mbox{ or } R(-g,g,h,0_G)) \Rightarrow ph\neq g\}$, and 
$U(p(-A'))$ is dense in $\displaystyle{\left\{e^{i\theta}\; :\; \pi<\theta< 2\pi\right\}}$.
Hence  $\{U(g) \; :\; g\in G,\; \forall h\in G,\; (R(0_G,h,g,-g) \mbox{ or } R(-g,g,h,0_G)) \Rightarrow ph\neq g\}$ 
is dense in $\UU$. 
\end{proof}
\indent In order to show that the family $\mathcal{CD}(G)$ depends on the first-order 
theory of $G$, we start with a lemma and we prove Proposition \ref{n128}. 
\begin{Lemma}\label{propHp} Let $(G,R)$ be a divisible abelian nonlinear cyclically ordered group.  
\begin{enumerate} 
\item Let $p$ be a prime such that $G$ is not c-$p$-divisible. The subgroup $H_p$ is definable by 
the formula 
$g=0_G$ 
$$\mbox{or }(R(0_G,g,-g)\mbox{ and }\forall g'\in G \; 
(g'=g \mbox{ or } R(0_G,g',g))\Rightarrow \exists h'\; R(0_G,h',g')\mbox{ and }g'=ph')$$ 
$$\mbox{or }(R(-g,g,0_G)\mbox{ and }\forall g'\in G \;
(g'=g \mbox{ or } R(g,g',0_G))\Rightarrow \exists h'\; R(g',h',0_G)\mbox{ and }g'=ph').$$ 
\item The inclusion $H_p\subseteq H_q$ depends on the first-order theory of $(G,R)$. 
\item The cyclically ordered group $G/H_p$ being discrete is determined by the 
first-order theory of $(G,R)$. 
\end{enumerate}
\end{Lemma}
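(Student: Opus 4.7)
The plan is to prove (1) first, since (2) and (3) will then follow from the uniform definability of $H_p$ in all models of the theory. For (1), the positive and negative disjuncts are symmetric, so I focus on the positive one and assume $R(0_G,g,-g)$; the disjunct $g=0_G$ is immediate. The easy direction is that if $g\in H_p$, then $g\in l(G)$ and any $g'$ with $g'=g$ or $R(0_G,g',g)$ belongs to $[0_G,g]\subseteq H_p$ by c-convexity. Since $H_p$ is $p$-divisible and torsion-free, $g'$ has a unique positive $p$-th root $h'\in H_p$ satisfying $0_G<h'<g'$ in the linear order on $l(G)$, and the $U$-comparison rules at the end of Subsection~\ref{subsec21} translate this to $R(0_G,h',g')$.

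For the hard direction I proceed in two sub-steps. Sub-step (a) forces $g\in l(G)$: otherwise $U(g)\ne 1$ and $R(0_G,g,-g)$ places $U(g)$ in the open upper half-arc of $\mathbb{U}$. Lemma~\ref{ouf} yields a dense set of $U$-values of ``bad'' elements $g'$ (those for which no $h'$ with $R(0_G,h',g',-g')$ or $R(-g',g',h',0_G)$ satisfies $ph'=g'$); inspection of its proof shows that bad witnesses with $U$-value in the upper half-arc lie in $P$. Choose such a positive bad $g'$ with $U(g')$ in the short arc from $1$ to $U(g)$. The $U$-comparison rule gives $R(0_G,g',g)$ in $G$, and positivity of $g'$ collapses the disjunction $R(0_G,h',g',-g')\vee R(-g',g',h',0_G)$ to $R(0_G,h',g')$, so the ``no such $h'$'' clause directly contradicts the formula. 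Sub-step (b): with $g\in l(G)$ and $g>0$, let $C_g\subseteq l(G)$ be the convex subgroup generated by $g$; it suffices to show $C_g$ is $p$-divisible, so that $C_g\subseteq H_p$. For $g'\in l(G)$ with $0<g'\leq g$, the formula yields $h'\in G$ with $ph'=g'$ and $R(0_G,h',g')$; applied to the positive $g'\in l(G)$, the $U$-comparison rule forces $h'\in l(G)$ with $0<h'<g'$, so $h'\in C_g$. For a general positive $x\in C_g$, pick $n\geq 1$ with $x\leq ng$ and induct on $n$, splitting $x=g+(x-g)$ when $x>g$ and combining $p$-th roots supplied by the inductive hypothesis.

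For (2), the sentence $\forall v\,(\phi_p(v)\to\phi_q(v))$, with $\phi_p$ the formula of~(1), expresses $H_p\subseteq H_q$ and its truth depends only on the theory of $(G,R)$. For (3), I claim that $G/H_p$ is discrete iff there exists $g$ with $R(0_G,g,-g)$ and $\neg\phi_p(g)$ holding, while every $h$ with $R(0_G,h,g)$ and $R(0_G,h,-h)$ satisfies $\phi_p(h)$. The equivalence is checked by transferring to the unwound via Lemma~\ref{lm12}: for $g\in P\setminus H_p$ the element $f(g)=(0,g)$ lies strictly above the convex subgroup $(H_p)_{uw}$, and within $\{0\}\times G$ representatives of elements of $P$ precede those of elements of $-P$ in the linear order, so any element of ${\rm uw}(G)$ strictly between $(H_p)_{uw}$ and $f(g)$ is of the form $(0,h)$ with $h\in P$, hence lifts to a witness $h\in P\setminus H_p$ with $R(0_G,h,g)$. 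Substituting $\phi_p$ thus produces a first-order sentence capturing discreteness of $G/H_p$.

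The main obstacle is sub-step (a) of (1): one must simultaneously arrange that the bad witness from Lemma~\ref{ouf} lies in the correct open arc of $\mathbb{U}$ and is positive in $G$, so that the two-term disjunction in the definition of badness collapses to exactly the negated existential quantifier appearing in the formula.
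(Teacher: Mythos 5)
Parts (1) and (2) of your argument are sound and essentially follow the paper's route: the easy inclusion comes from c-convexity and $p$-divisibility of $H_p$, and Lemma \ref{ouf} is used exactly as in the paper to force any element satisfying the formula into $l(G)$. Your sub-step (b) (showing that the convex subgroup generated by $g$ is $p$-divisible, hence contained in $H_p$) is a clean variant of the paper's argument, which instead proves that the set defined by the formula is itself a $p$-divisible convex subgroup of $l(G)$; both work, and (2) is then immediate either way.

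Part (3), however, contains a genuine error. Your sentence asserts the existence of a positive $g$ with $\neg\phi_p(g)$ such that \emph{every} positive $h$ with $R(0_G,h,g)$ satisfies $\phi_p(h)$, i.e., lies in $H_p$. If $H_p\neq\{0_G\}$, pick $0_G<h'\in H_p$; since $g\notin H_p$ and $g>0_G$ we get $0_G<g-h'<g$, so $g-h'$ is positive and satisfies $R(0_G,g-h',g)$, yet $g-h'\notin H_p$ (otherwise $g=(g-h')+h'\in H_p$). Hence your sentence is satisfiable only when $H_p=\{0_G\}$, so it expresses ``$l(G)$ is discrete and $H_p=\{0_G\}$'' rather than ``$G/H_p$ is discrete''. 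These are genuinely different conditions: Fact \ref{fctdd} produces a dense torsion-free divisible abelian $(G',R')$ with $H_p'\neq\{0_{G'}\}$ and $G'/H_p'$ discrete, and there your sentence is false. The source of the error is visible in your unwound justification: discreteness of ${\rm uw}(G)/(H_p)_{uw}$ means that no \emph{coset} lies strictly between $(H_p)_{uw}$ and $f(g)+(H_p)_{uw}$, not that no \emph{element} of ${\rm uw}(G)$ lies strictly between $(H_p)_{uw}$ and $f(g)$; the elements $f(g)-h'$ with $0_G<h'\in (H_p)_{uw}$ always lie strictly between, but they belong to the coset of $f(g)$. The correct first-order condition, as in the paper, is: there exists a positive $g\notin H_p$ such that for every $h$ with $R(0_G,h,g)$, either $h\in H_p$ or $g-h\in H_p$.
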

\begin{proof}(1) Since $G$ is not c-$p$-divisible, it does not contain $p$-torsion element. Hence 
for every $g$ in $G$ there is only one $h$ in $G$ such that $ph=g$. 
The formula of this statement can be reformulated as $g=0_G$ or (*) $g$ belongs to the 
positive cone $P$ of $(G,R)$ and every $g'$ in $I(0_G,g)$ is $p$-divisible within $I(0_G,g')$, or 
(**) $g\in -P$ and every $g'$ in $I(g,0_G)$ is $p$-divisible within $I(g',0_G)$. 
Let $H$ be the set of elements which satisfy this formula. For $g\neq 0_G$ in $H$, we have 
either $g\in P$ and $I(0_G,g)\subseteq H$, or $g\in -P$ and $I(g,0_G)\subseteq H$, and clearly, 
$g\in H\Leftrightarrow -g\in H$. Therefore, $H$ is equal to the union of the c-convex subsets 
$\{-g\}\cup I(-g,g)\cup\{g\}$ where $g\in (P\cap H)\backslash\{0_G\}$. It follows that $H$ 
is a c-convex subset of $G$. \\
\indent We assume that $G$ does not contain $p$ torsion elements, and we prove that $H\subseteq l(G)$. 
It follows that, for every $g$ in $G$ there is only one $h$ in $G$ such that $ph=g$. \\
\indent 
Let $g\notin l(G)$, then $U(g)\neq 1$. By Lemma 
\ref{ouf}, the set $U(G\backslash H)$ is dense in $U(G)$. 
Hence there exist 
$h$, $h'$ in $G\backslash H$ such that $R(1,U(h),U(g),U(h'))$ holds in $\UU$. 
Therefore, we have $R(0_G,h,g,h')$. So $I(0_G,g)\nsubseteq H$ and $I(g,0_G)\nsubseteq H$, which proves that 
$g\notin H$. Therefore $H\subseteq l(G)$. \\
\indent Now, it follows from (*) and (**) that $H$ is the greatest  $p$-divisible c-convex subset of $l(G)$. 
In particular, $H_p\subseteq H$. Therefore, 
in order to prove that $H=H_p$, it is sufficient to prove that $H$ is a subgroup of $l(G)$. 
The set $H$ is nonempty since it contains $0_G$. 
Assume that $0_G<g\in l(G)$ 
and every $h\in l(G)$ such that $0_G<h\leq g$ is $p$-divisible within $l(G)$. Then every $h\in l(G)$ such 
that $-g<h<0_G$ also is $p$-divisible within $l(G)$, so $-g\in H$. In the same way, if $0_G>g\in l(G)$, then $-g\in l(G)$. 
Let $g$, $g'$ in $H$. If $g'<0_G<g$, then $g'<g+g'<g$, so $g+g'\in H$, since $H$ is convex. 
If $0_G<g'<g$, then every $h\in l(G)$ such that $g<h<g+g'$ can be written as $h=g'+(h-g')$, where 
$h-g'$ is $p$-divisible within $l(G)$ since $0_G<h-g'<g$. Therefore, $h$ is $p$-divisible within $l(G)$, 
which proves that $g+g'\in H$. 
In the same way, if $g<g'<0_G$, then $g+g'\in H$. So, we proved that $H$ is a subgroup of $l(G)$. 
Consequently, $H=H_p$. \\
\indent (2)  Since $H_p$ and $H_q$ are definable, the inclusion $H_p\subseteq H_q$ also is definable.  \\
\indent (3) 
The quotient group $G/H_p$ being discrete is definable by the formula 
$\exists g\in G\; R(0_G,g,-g)\; g\notin H_p\; {\rm and}\; \forall h\in G\; 
R(0_G,h,g)\Rightarrow (h\in H_p\; {\rm or}\; g-h\in H_p)$. 
\end{proof}
\indent For each positive integer $n$, the formula: 
$${\rm argbound}_n(g)=R(0_G,g,2g, \dots,ng)\;\&\;\neg R(0_G,g,2g, \dots,2ng,(n+1)g)$$ 
is satisfied in $(G,R)$ exactly by those elements $g$ such that either 
$\displaystyle{\theta(g)=\frac{2\pi}{n+1}\;\&\; 0_G<(n+1)g}$ (in $l(G)$), 
or $\displaystyle{\frac{2\pi}{n+1}<\theta(g)<\frac{2\pi}{n}}$, or 
$\displaystyle{\theta(g)=\frac{2\pi}{n}\;\&\; 0_G>ng}$ (in $l(G)$) 
(see \cite[Lemmas 6.9 and 6.10]{GiLL}). \\[2mm]
\noindent 
{\it Proof of Proposition \ref{n128}}. 
The first part of this proposition has been proved in (1) of Lemma 
\ref{rk240}. It remains to prove that the function $f_{G,p}$ is determined by the first-order theory of 
$(G,R)$. 
By Proposition \ref{propHp}, the quotient group $G/H_p$ being discrete is definable. \\
\indent  If $p=2$ and $r=1$, then $f_{G,2}(1)=1$. In the following, we assume that $p>2$, or $p=2$ and $r>1$. 
By (2) of Lemma \ref{rk240}, $p$ and  $f_{G,p}(r)$ are coprime. Hence 
$f_{G,p}(r)$ has a unique inverse modulo $p^r$ in $\{1,\dots,p^{r}-1\}$. 
Now, the first-order theory of $(G,R)$ determines $f_{G,p}(r)$ if, and only if, it determines its inverse 
modulo $p^r$. 
Let $l$ in $\{1,\dots,p^{r}-1\}$. Note that $l$ being the inverse of  $f_{G,p}(r)$ modulo $p^r$ is 
equivalent to $\displaystyle{U(lh)=e^{\frac{2i\pi}{p^{r}}}}$.
We prove that this holds if, and only if, ${\rm argbound}_{p^{r}-1}(lh)$ holds. With above notations, 
${\rm argbound}_{p^{r}-1}(lh)$ holds if, and only if,  
$$\displaystyle{\mbox{either }\theta(lh)=\frac{2\pi}{p^{r}}\;\&\; 0_G<p^{r}lh\; (\mbox{in }l(G)), 
\mbox{ or }\frac{2\pi}{p^{r}}<\theta(lh)<\frac{2\pi}{p^{r}-1},}$$  
$$\displaystyle{\mbox{ or }\theta(lh)=\frac{2\pi}{p^{r}-1}\;\&\; 0_G>(p^{r}-1)lh\; (\mbox{in }l(G)).}$$ 
\indent Now, $\theta(lh)$ belongs 
to the set $\left\{0,\frac{2\pi}{p^{r}},\dots,\frac{2(p^{r}-1)\pi}{p^{r}}\right\}$, and since $p>2$ we have 
$\frac{4\pi}{p^{r}}>\frac{2\pi}{p^{r}-1}$. Hence if ${\rm argbound}_{p^{r}-1}(lh)$ holds, then 
$\displaystyle{\theta(lh)=\frac{2\pi}{p^{r}}\;\&\; 0_G<p^{r}lh}$. Conversely, assume that 
$\displaystyle{\theta(lh)=\frac{2\pi}{p^{r}}}$. Then $2\pi$ divides $\displaystyle{\theta(p^{r}lh)}$, so 
$p^{r}lh$ belongs to $l(G)$, and $p^{r}lh=lg>0_G$. Hence $p^{r}lh$ is positive in 
$l(G)$, so ${\rm argbound}_{p^{r}-1}(lh)$ holds. 
\hfill $\qed$ 
\begin{Proposition}\label{propunsens}
Let $(G,R)$ and $(G',R')$ be divisible abelian cyclically 
ordered groups such that $(G',R')\equiv (G,R)$. Then the 
mapping $H_p\mapsto H_p'$ defines an isomorphism from $\mathcal{CD}(G)$ onto $\mathcal{CD}(G')$, for 
every prime $p$ the cyclically ordered group $G/H_p$ is discrete if, and only if, $G'/H_p'$ is discrete, 
and if this holds, then $f_{G,p}=f_{G',p}$. 
\end{Proposition}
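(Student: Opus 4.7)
The plan is to observe that the proposition is essentially an assembly of the definability results already established in Lemma \ref{propHp} and Proposition \ref{n128}, so no new technical work is needed; I just need to verify that each clause follows from first-order preservation.

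First, I would use part (1) of Lemma \ref{propHp}: for each prime $p$, there is a formula $\phi_p(g)$ in the language of cyclically ordered groups whose solution set in any torsion-free divisible abelian nonlinear cyclically ordered group is exactly $H_p$. Since this formula depends only on $p$ (not on $G$), the elementary equivalence $(G,R)\equiv(G',R')$ implies that the solution set of $\phi_p$ in $G'$ is precisely $H_p'$. Hence the correspondence $H_p\mapsto H_p'$ is well-defined. The endpoints $\{0_G\}$ and $G$ are trivially sent to $\{0_{G'}\}$ and $G'$.

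Second, to check that this correspondence is an isomorphism of the preordered set $(\mathcal{CD}(G),\subseteq)$ onto $(\mathcal{CD}(G'),\subseteq)$, I would observe that the inclusion $H_p\subseteq H_q$ is equivalent to the first-order sentence $\forall g\,(\phi_p(g)\rightarrow\phi_q(g))$, which is exactly the content of part (2) of Lemma \ref{propHp}. By elementary equivalence, $H_p\subseteq H_q$ holds in $G$ if and only if $H_p'\subseteq H_q'$ holds in $G'$. In particular $H_p=H_q$ if and only if $H_p'=H_q'$, so the map descends to a bijection between the underlying sets of $\mathcal{CD}(G)$ and $\mathcal{CD}(G')$ preserving $\subseteq$ in both directions.

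Third, for the discreteness clause, I would appeal directly to part (3) of Lemma \ref{propHp}, which exhibits an explicit first-order sentence expressing that $G/H_p$ is discrete; this sentence holds in $G$ if and only if it holds in $G'$. Finally, under the assumption that $G/H_p$ is discrete, Proposition \ref{n128} already asserts that the integer $f_{G,p}(r)$ is determined by the first-order theory of $(G,R)$ for every $r\in\NN\backslash\{0\}$; applied both in $G$ and $G'$, this gives $f_{G,p}(r)=f_{G',p}(r)$. There is no genuine obstacle here: the work was done in establishing the definability of $H_p$, of inclusions among the $H_p$'s, of discreteness of $G/H_p$, and of the values $f_{G,p}(r)$, and the proposition is merely the statement that each of these first-order facts transfers under elementary equivalence.
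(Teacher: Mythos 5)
Your proposal is correct and follows essentially the same route as the paper, whose entire proof is the one-line remark that the statement follows from Lemma \ref{propHp} and Proposition \ref{n128}; you have simply spelled out how each clause transfers under elementary equivalence via the definability results of those two statements. No gap.
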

\begin{proof} This follows from Lemma \ref{propHp} and Proposition \ref{n128}. 
\end{proof}
\indent The converse of this proposition will be proved in Subsection \ref{ssssec233}. First we turn 
to reminders about model theory of linearly ordered abelian groups. 
\subsection{The spine of Schmitt of a linearly ordered abelian group.}\label{nsec32} 
By \cite[Theorem 4.1]{GiLL}, any two abelian cyclically ordered groups $(G,R)$ 
and $(G',R')$ are elementary e\-qui\-va\-lent if, and only if, their Rieger unwounds 
$({\rm uw}(G),\leq_R,z_G)$ and $({\rm uw}(G'),\leq_{R'},z_{G'})$ are elementarily 
e\-qui\-va\-lent in the language of ordered groups together with a specified element interpreted by $z_G$: 
$$(G,R)\equiv (G',R')\mbox{ if, and only if, }({\rm uw}(G),\leq_R,z_G)\equiv 
({\rm uw}(G'),\leq_{R'},z_{G'}).$$
\indent So, determining  
the first-order theory of $(G,R)$ is equivalent to determined the first-order theory of 
$({\rm uw}(G),z_G)$. Lucas based its study of the first-order theory of 
divisible abelian cyclically ordered groups on Schmitt papers about model theory of 
linearly ordered groups 
(cf.\  \cite[p.\ 15 and the following ones]{Sc1}). \\
\indent We let $(\Gamma,\leq)$ be a linearly ordered abelian group. \\
\indent For every positive integer $n$, the $n$-spine of $(\Gamma,\leq)$ is a family of convex subgroups of $\Gamma$: 
$$Sp_n(\Gamma)=
\{A_n(x)\;:\; x\in \Gamma\backslash \{0_{\Gamma}\}\}\cup \{ F_n(x)\;:\; x\in \Gamma\backslash n\Gamma \},$$
where the convex subgroups $A_n(x)$ and $F_n(x)$ are defined as follows. \\
\indent 
For every $x\in \Gamma$, $F_n(x)$ is the greatest convex subgroup of $(\Gamma,\leq)$ such that $F_n(x)\cap 
(x+n \Gamma)=\emptyset$ if any, otherwise $F_n(x)=\emptyset$. By \cite[Lemma 2.8, p.\ 25]{Sc1},  $F_n(x)=\emptyset$ 
is equivalent to $x\in n\Gamma$. \\ 
\indent  We define $A_n(0_{\Gamma})=\emptyset$. For $x\in \Gamma\backslash \{0_{\Gamma}\}$ we denote by 
$A(x)$ the greatest convex subgroup of $\Gamma$ which doesn't contain $x$, and by $B(x)$ the 
convex subgroup 
generated by $x$. We know that $B(x)/A(x)$ is an archimedean linearly ordered abelian group, so it is regular 
(regular ordered groups have been defined in Subsection \ref{n3}). \\
\indent 
For $n\in \NN\backslash \{0\}$ we let $A_n(x)$ be the smallest convex subgroup of $(\Gamma,\leq)$ such 
that $B(x)/A_n(x)$ is $n$-regular. \\[2mm]
\indent For $x$, $y$ in $\Gamma$, $A_n(y)\subsetneq A_n(x)$ is definable by the first-order formula 
$$|y|<|x|\;\&\; \exists u\;(|y|\leq u\leq n|x|)\;\&\; \forall v(|v|<n|y|\Rightarrow v-u\notin n\Gamma)
\mbox{ (\cite[Lemma 2.6, p.\ 21]{Sc1})}.$$
\indent (Here, $|y|\leq u$ stands for: $0_{\Gamma}\leq y\leq u$ or $0_{\Gamma}<-y\leq u$.) \\
\indent For $x\in \Gamma\backslash\{0_{\Gamma}\}$ we also define $B_n(x)$ to be the greatest convex subgroup of $\Gamma$ such that 
$B_n(x)/A_n(x)$ is $n$-regular, and 
$C_n(x)=B_n(x)/A_n(x)$ (so, $C_n(x)$ is $n$-regular). \\[2mm]
\indent The language of $Sp_n(\Gamma)$ consists in a binary predicate $<_S$ and unary predicates $A$, $F$, $Dk$, 
$\{ \beta_{p,m} \; :\; p \mbox{ prime, } m\in \NN\}$, $\{ \alpha_{p,k,m} \; :\; p \mbox{ prime, } k\in \NN,\; m\in \NN\}$. 
These predicates are interpreted as follows. \\
\indent $<_S$: $C_1<_SC_2\Leftrightarrow C_2\subsetneq C_1$. \\
\indent $A(C)$: $\exists x\in \Gamma,\; C=A_n(x)$.\\
\indent $F(C)$: $\exists x\in \Gamma,\; C=F_n(x)$. \\
\indent $Dk(C)$: $\exists x\in \Gamma,\; C=A_n(x)\; \&\; C_n(x)$ is discrete.\\
\indent If $C$ is an abelian group, then for every prime $p$ and non-negative integer 
$r$ the quotient group $p^rC/p^{r+1}C$ is a $\ZZ/p\ZZ$-vector space. In the following two formulas, 
dim denotes the dimension of this vector space. Furthermore, ${\rm Tor}_p(C)$ denotes the 
subgroup of elements of $C$ which torsion is an exponent of $p$. 
One can check that 
the sets $\{y\in \Gamma\;:\; F_n(y)\subseteq F_n(x)\}$ and $\{y\in \Gamma\;:\; F_n(y)
\subsetneq F_n(x)\}$ are subgroups of $\Gamma$. \\
\indent $\beta_{p,m}(C)$: $\exists x\in \Gamma,\; C=A_n(x)$ and $\limss{r}{+\infty}{\rm dim}
\displaystyle{\left(p^rC_n(x)/p^{r+1}C_n(x)\right)\geq m }$.\\
\indent $\alpha_{p,k,m}(C)$: $\exists x\in \Gamma,\; C=A_n(x)$ and $m\leq {\rm dim}
\displaystyle{\left({\rm Tor}_p (p^kF_n^*(x))/{\rm Tor}_p (p^{k+1}F_n^*(x))\right)}$, \\
\indent where 
$\displaystyle{F_n^*(x)=\frac{\{y\in \Gamma\;:\; F_n(y)\subseteq F_n(x)\}}{\{y\in \Gamma\;:\; F_n(y)
\subsetneq F_n(x)\}} }$, 
or $F_n^*(x)=\{0_{\Gamma}\}$ if $\{y\in \Gamma\;:\; F_n(y)\subsetneq F_n(x)\}=\emptyset$. 
\begin{Theorem}(\cite[Corollary 3.2, p.\ 33]{Sc1}).\label{thspine} 
If $\Gamma'$ is an other abelian linearly ordered group, 
then $\Gamma \equiv \Gamma'$ if, and only if, for every integer $n$ we have 
$Sp_n(\Gamma)\equiv Sp_n(\Gamma')$. \end{Theorem}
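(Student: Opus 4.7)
The forward direction is reasonably routine: one verifies that $Sp_n(\Gamma)$ is \emph{uniformly interpretable} in $\Gamma$ in the language of ordered abelian groups. The domain of $Sp_n(\Gamma)$ is the set of convex subgroups of the form $A_n(x)$ and $F_n(x)$, which is a definable quotient of $\Gamma$: the relation $A_n(y)\subsetneq A_n(x)$ is first-order definable (as displayed after the definition of $A_n(x)$), and a directly analogous formula handles $F_n(y)\subseteq F_n(x)$. Each predicate of the spine language---$<_S$, $A$, $F$, $Dk$, $\beta_{p,m}$, $\alpha_{p,k,m}$---admits a first-order definition in $\Gamma$: the discreteness of the archimedean quotient $C_n(x)$ is definable, the dimensions ${\rm dim}(p^r C_n(x)/p^{r+1}C_n(x))\geq m$ are expressed by asserting the existence of $m$ witnesses whose pairwise differences fall outside the relevant definable subgroups, and likewise for the $p$-torsion dimensions of $F_n^*(x)$. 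Hence $\Gamma\equiv\Gamma'$ transfers to $Sp_n(\Gamma)\equiv Sp_n(\Gamma')$ for every $n$ by the standard transfer principle for interpretations.

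For the converse, the plan is to argue by Ehrenfeucht--Fra\"\i ss\'e games. Given a sentence $\varphi$ in the language of ordered abelian groups of quantifier rank $k$, I would choose an integer $n$ (for instance $n=k!$) large enough that every divisibility predicate of complexity $\leq k$ can be expressed using cosets modulo $n$, and then show that a winning strategy for Duplicator in the $k$-round EF game between $\Gamma$ and $\Gamma'$ can be extracted from the elementary equivalence $Sp_n(\Gamma)\equiv Sp_n(\Gamma')$. The hypothesis on $Sp_n$ supplies exactly the combinatorial and dimensional data that Duplicator needs: the isomorphism type of the ordered skeleton of convex subgroups relevant to the elements already played, the discrete/dense pattern of each archimedean quotient $C_n(x)$ (via $Dk$), the $\mathbb{F}_p$-dimensions of the $p$-adic quotients of $C_n(x)$ and of the torsion layers of $F_n^*(x)$ (via $\beta_{p,m}$ and $\alpha_{p,k,m}$), and the order between all $A_n$- and $F_n$-classes (via $<_S$, $A$, $F$).

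The technical heart---and the principal obstacle---is a uniform \emph{extension lemma}: given a finite partial isomorphism between tuples from $\Gamma$ and $\Gamma'$ that is \emph{compatible} with the identification of their $n$-spines, and given a new element $\gamma\in\Gamma$, one must produce some $\gamma'\in\Gamma'$ matching $\gamma$ in all relevant invariants. The invariants to match are: the order of $\gamma$ with respect to the previously played elements, the residues of $\gamma$ in every coset modulo $n$ determined by those elements and the spine predicates, the convex subgroup $A_n(\gamma)$ (position in the chain of archimedean classes and its internal type), and $F_n(\gamma)$ (the obstruction to $\gamma\in n\Gamma$ at the appropriate convex level). The nontrivial input is that matching $\beta_{p,m}$ guarantees sufficient cosets in the $p$-adic quotient to realise any prescribed residue, and matching $\alpha_{p,k,m}$ guarantees that enough torsion witnesses exist in $\Gamma'$ to realise the required $F_n$-invariant of $\gamma$; the $Dk$ predicate handles the discrete case, where the minimal positive element of $C_n(x)$ plays a distinguished role. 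Once this extension lemma is established, iterating it for $k$ rounds yields $\Gamma\models\varphi \Leftrightarrow \Gamma'\models\varphi$, and letting $\varphi$ range over all sentences gives the theorem. The delicate work lies entirely in verifying that the catalogue of predicates in $Sp_n$ is both necessary and sufficient for the extension step to go through uniformly across all possible types of new element.
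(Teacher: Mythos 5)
The paper does not actually prove this statement: it is imported verbatim from Schmitt's Habilitationsschrift (\cite[Corollary 3.2, p.\ 33]{Sc1}), so there is no in-paper argument to compare yours against. Judged on its own terms, your proposal correctly describes the overall architecture one would expect (definability of the spine for the forward direction, a back-and-forth argument for the converse), but it is an outline with the decisive steps asserted rather than carried out. In the forward direction, the claim that $Sp_n(\Gamma)$ is ``uniformly interpretable'' and that a ``standard transfer principle'' applies is too quick: the predicate $\beta_{p,m}$ is defined by the condition $\limss{r}{+\infty}{\rm dim}\left(p^rC_n(x)/p^{r+1}C_n(x)\right)\geq m$, which is a countable conjunction of first-order conditions (a type), not a single formula, and likewise $A_n(x)$ is characterized as a \emph{smallest} convex subgroup with a certain property, so one must argue separately that the displayed formula for $A_n(y)\subsetneq A_n(x)$ really captures it. Elementary equivalence of $\Gamma$ and $\Gamma'$ can still be transferred to the spines, but this requires an argument (saturated models, or a sentence-by-sentence syntactic analysis), not an appeal to interpretations.

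The more serious gap is in the converse. Your ``extension lemma'' --- given a partial isomorphism compatible with the spines, extend it to any new $\gamma$ --- \emph{is} the theorem; everything else is routine. You name the invariants to be matched but give no argument that the catalogue of spine predicates suffices to match them, and the difficulties are nontrivial: $A_n$ and $F_n$ are not additive in their arguments, the discrete case forces tracking of minimal positive elements of the $C_n(x)$ (which is why Schmitt needs the extra predicates $M_{n,k}$, $D_{p,r,i}$, $E_{p,r,k}$ appearing in Theorem \ref{relelimquant} even after the spines are fixed), and the choice $n=k!$ must be justified against the quantifier structure of $\varphi$, not just its congruence content. Your closing sentence concedes that ``the delicate work lies entirely'' in this verification; since that work occupies the bulk of Schmitt's memoir, the proposal as written is a plan for a proof rather than a proof, and the correct course in the present paper is to cite Schmitt.
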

\indent There is also a relative quantifiers elimination when adding the predicates 
$M_{n,k}(x)$, $D_{p,r,i}(x)$ and $E_{p,r,i}(x)$ ($i<r$) interpreted as follows. \\
\indent 
$M_{n,k}(x)$ holds if, and only if, $C_n(x)$ is discrete and if the class $x_0+A_n(x)$ of some 
$x_0\in \Gamma$ is the 
smallest positive element of $C_n(x)$, then $x+A_n(x)=k(x_0+A_n(x))$.\\
\indent 
$D_{p,r,i}(x)$ holds if, and only if, either $x\in p^r\Gamma$ or there is some $y\in \Gamma$ 
such that $F_{p^r}(x)\subsetneq 
F_{p^r}(y)$, $x-y\in p^i\Gamma$ and  $F_{p^r}\left(\frac{x-y}{p^i}\right)\subseteq F_{p^r}(x)$.\\
\indent 
$E_{p,r,k}(x)$ holds if, and only if, there exists $y\in \Gamma$ such that $F_{p^r}(x) = A_{p^r}(y)$ and 
$C_{p^r}(y)$ is discrete with smallest positive element the class of $y$ and $F_{p^r}(x-ky)
\subsetneq F_{p^r}(x)$.\\
\indent 
The relative quantifiers elimination is the following. 
\begin{Theorem}(\cite[Theorem 4.5, p.\ 66]{Sc1})\label{relelimquant} 
Let $\Psi(Z_1,\dots,Z_m)$ be a formula in the language of 
ordered groups. Then there exist an integer $n$, a formula $\varphi_0(X_1,\dots,X_k,Y_1,\dots,Y_k)$ 
in the language $Sp_n$, some terms $t_1,\dots,t_k$ with variables $Z_1,\dots,Z_m$ and a 
quantifier-free formula 
$\varphi_1(Z_1,\dots,Z_m)$ in above language such that for every abelian ordered group 
$(\Gamma,\leq)$ and every 
sequence $\overrightarrow{x}=(x_1,\dots,x_m)$ of elements of $\Gamma$ we have 
$(\Gamma,\leq)\models \Psi(\overrightarrow{x})$ if, and only if, 
$$Sp_n(\Gamma)\models \varphi_0(A_n(t_1(\overrightarrow{x})),\dots,
A_n(t_k(\overrightarrow{x})),F_n(t_1(\overrightarrow{x})),\dots,F_n(t_k(\overrightarrow{x}))))\; 
{\rm and}\; (\Gamma,\leq)\models \varphi_1(\overrightarrow{x}).$$ 
\end{Theorem}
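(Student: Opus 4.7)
The plan is to proceed by induction on the complexity of the formula $\Psi(Z_1,\dots,Z_m)$, since the theorem is really a relative quantifier elimination result. The atomic case and the Boolean cases are immediate: if $\Psi$ is atomic then one sets $\varphi_0 \equiv \top$ and $\varphi_1 = \Psi$; conjunctions and disjunctions are combined componentwise (working at a common index $n$ obtained from the individual indices by taking a suitable common multiple), and negations are handled because both the spine language and the ordered group language are closed under Booleans. All the work is therefore concentrated in the existential step $\Psi(\vec{x}) = \exists y\, \chi(y,\vec{x})$.

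For that step, by the inductive hypothesis $\chi$ decomposes as $\varphi_0'(A_n(t_j),F_n(t_j)) \wedge \varphi_1'(\vec{x},y)$ where the terms $t_j$ involve $\vec{x}$ and $y$, and $\varphi_1'$ is quantifier-free in $\{+,-,0,\leq\}$. I would first put $\varphi_1'$ in disjunctive normal form, so that each disjunct is a conjunction of conditions of the form $s(\vec{x},y) \;\square\; 0$ with $\square \in \{=,<\}$, and of congruences $s(\vec{x},y) \in n\Gamma$ (these latter conditions, though not atomic in the pure ordered-group language, are equivalent to atomic ones in an expanded language and are absorbed by passing to a larger $n$). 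Solving a single disjunct for $y$ amounts to locating $y$ inside a unique bounded or unbounded interval with endpoints of the form $\frac{k-s(\vec{x})}{\ell}$, subject to finitely many congruence conditions modulo multiples of $n$. Thus the question ``does such a $y$ exist?'' reduces to a finite disjunction of assertions about whether intervals of $\Gamma$ determined by linear forms in $\vec{x}$ contain elements in prescribed congruence classes, while simultaneously placing the auxiliary terms $t_j(\vec{x},y)$ in the prescribed spine cells.

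The crux, and the main obstacle, is to translate these interval/congruence existence assertions into the spine language. Given an interval $I$ and a residue $r+n\Gamma$, whether $I\cap(r+n\Gamma)\neq\emptyset$ depends only on: the convex hull of the endpoints, the subgroups $A_n(\cdot)$, $F_n(\cdot)$ attached to the endpoints, and on whether the quotients $C_n(\cdot)$ are discrete or dense (and in the discrete case, on the position of the smallest positive element, which is exactly what $M_{n,k}$ records). Analogously, compatibility with a prescribed value of $A_n(t_j(\vec{x},y))$ and $F_n(t_j(\vec{x},y))$ on a moving $y$ is encoded by the predicates $D_{p,r,i}$ and $E_{p,r,k}$: these were designed precisely so that the assertion ``there is $y$ with $F_n(t_j(y,\vec{x}))$ equal to a prescribed convex subgroup and $y$ in a prescribed residue class'' is expressible quantifier-free using $A_n(\cdot)$, $F_n(\cdot)$, $M_{n,k}$, $D_{p,r,i}$, $E_{p,r,k}$ applied to terms in $\vec{x}$ alone. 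Establishing the completeness of this list of predicates—that is, verifying that every existence condition arising in the reduction can indeed be captured—requires the bulk of the case analysis.

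Combining these translations produces, at a possibly larger index $n' \geq n$, a formula $\varphi_0(A_{n'}(t_1(\vec{x})),\dots,F_{n'}(t_k(\vec{x})))$ in the spine language together with a residual quantifier-free ordered-group formula $\varphi_1(\vec{x})$ (recording order-type information among the new linear forms in $\vec{x}$). By construction $\Psi(\vec{x})$ is equivalent to $Sp_{n'}(\Gamma) \models \varphi_0 \wedge (\Gamma,\leq) \models \varphi_1$, which is the desired conclusion. The technical difficulty, as sketched above, is entirely in verifying that $\{M_{n,k}, D_{p,r,i}, E_{p,r,k}\}$ together with the basic spine predicates $\{<_S, A, F, Dk, \beta_{p,m}, \alpha_{p,k,m}\}$ form a set rich enough to carry out the elimination uniformly; this is essentially Schmitt's analysis of the $1$-types over a parameter set in an abelian ordered group.
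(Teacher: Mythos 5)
There is a genuine gap, and it is worth noting first that the paper itself gives no proof of this statement: Theorem \ref{relelimquant} is quoted verbatim from Schmitt's Habilitationsschrift (\cite[Theorem 4.5, p.~66]{Sc1}) and used as a black box, so there is no in-paper argument to compare yours against. Your proposal correctly identifies the overall shape of such a proof (induction on formula complexity, with all difficulty in the single existential step, reduced after a disjunctive normal form to asking whether certain intervals meet certain congruence classes while the auxiliary terms land in prescribed spine cells), but it does not actually prove anything: the decisive step is asserted rather than established. Concretely, the sentence claiming that the predicates $D_{p,r,i}$ and $E_{p,r,k}$ ``were designed precisely so that'' the existence conditions become expressible quantifier-free in terms of $\vec{x}$ alone is circular --- it presupposes exactly the completeness of the predicate list that the theorem asserts. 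You acknowledge this yourself when you write that verifying it ``requires the bulk of the case analysis''; but that case analysis \emph{is} the theorem, and omitting it leaves a sketch, not a proof.

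A second, more technical gap: in the existential step you treat the spine part $\varphi_0'$ as if the terms $t_j$ could be frozen, but they contain the bound variable $y$, so the convex subgroups $A_n(t_j(y,\vec{x}))$ and $F_n(t_j(y,\vec{x}))$ move as $y$ ranges over the candidate interval. Controlling how these cells vary with $y$ --- and showing that the possible configurations are finite in number and detectable from data attached to $\vec{x}$ alone --- is precisely where the predicates $M_{n,k}$, $D_{p,r,i}$, $E_{p,r,k}$ earn their keep, and your outline does not engage with it. As it stands the proposal is a plausible roadmap for reading Schmitt's proof, not a substitute for it; the correct course in the context of this paper is simply to cite \cite{Sc1}, as the author does.
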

\subsection{Spines of ``almost divisible'' abelian groups.}\label{nsec33} 
\indent If $(\Gamma,\leq)$ is an $n$-divisible linearly ordered abelian group, then for every positive integer 
$n$ and every $x\in \Gamma$ we have $A_n(x)=\{ 0_{\Gamma}\}$, $B_n(x)=\Gamma$ and $F_n(x)=\emptyset$. It follows that 
the predicates $\alpha_{p,k,m}$ and $\beta_{p,m}$ don't make sense. Now we turn to a less trivial case. \\
\indent By Lemma \ref{pclass}, if $G$ is $p$-divisible without $p$-torsion element, 
then $[p]{\rm uw}(G)=p$, and if $G$ is c-$p$-divisible, then $[p]{\rm uw}(G)=1$. 
So in this subsection we focus on the case where for every prime $p$ we have $[p]\Gamma\in\{1,p\}$. \\
\indent For every linearly ordered abelian group $(\Gamma,\leq)$ and every prime $p$, we denote by $\Gamma_p$ the greatest 
$p$-divisible convex subgroup of $(\Gamma,\leq)$. 
The group 
$\Gamma_p$ is definable by the formula 
$$x\in \Gamma_p\Leftrightarrow \forall y\in \Gamma \; 
(0_{\Gamma}\leq y\leq x\mbox{ or } x\leq y\leq 0_{\Gamma}) \Rightarrow\exists z\in \Gamma \; y=pz.$$
\indent Indeed, Let $\Gamma_p'$ be the subset defined by this formula. Then, $0_{\Gamma}\in \Gamma_p'$ and 
for every $x\in \Gamma$ we have $x\in \Gamma_p'\Leftrightarrow -x\in \Gamma_p'$. So, in the same way as 
in the proof of (1) of Lemma \ref{propHp}, $\Gamma_p'$ is the greatest $p$-divisible convex subset of $\Gamma$. 
Finally, the proof of (1) of Lemma \ref{propHp} shows that $\Gamma_p'$ is a subgroup (in Lemma \ref{propHp}, 
If $H_p\neq G$, then 
$H_p\subseteq l(G)$, hence we considered a convex subset of a linearly ordered group). \\[2mm]
\indent This subsection is dedicated to prove the following proposition. 
\begin{Proposition}\label{corr222a} Let $(\Gamma,\leq)$ be a linearly ordered abelian group 
such that, for every prime $p$, $[p]\Gamma\in\{1,p\}$. Then, the spines of the $(\Gamma,\leq)$ are 
determined by the ordered set of $\Gamma_p$'s ($p$ prime), and if this set admits a maximal element $\Gamma_p$, 
the property $\Gamma/\Gamma_p$ being discrete or dense. 
\end{Proposition}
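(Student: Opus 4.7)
The plan is to show that every element of $Sp_n(\Gamma)$ and every value of its predicates can be read off from the ordered family $\{\Gamma_p : p \text{ prime}\}$, together with the discreteness of $\Gamma/\Gamma_p$ when $\Gamma_p$ is the maximum element. The key preparatory observation is a purity-type fact: for any convex subgroup $C$ of $\Gamma$ and any prime $p$, $C \subseteq p\Gamma$ if, and only if, $C$ is $p$-divisible, if, and only if, $C \subseteq \Gamma_p$. The forward implication uses convexity in the standard way: if $c = py \in C$ then $|y| \leq |c|$ forces $y \in C$. Iterating gives $C \subseteq p^k\Gamma \Leftrightarrow C \subseteq \Gamma_p$ for every $k$. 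Combined with the hypothesis, one deduces $\Gamma/p^k\Gamma \cong \ZZ/p^k\ZZ$: first $\Gamma/p^k\Gamma \cong (\Gamma/\Gamma_p)/p^k(\Gamma/\Gamma_p)$ since $\Gamma_p$ is absorbed; then, using that the smallest nontrivial archimedean quotient $V$ of $\Gamma/\Gamma_p$ satisfies $V/pV \cong \ZZ/p\ZZ$, one shows inductively that $V/p^kV \cong \ZZ/p^k\ZZ$. Consequently, the image of any convex $C$ in $\ZZ/p^k\ZZ$ is either $\{0\}$ (when $C \subseteq \Gamma_p$) or all of $\ZZ/p^k\ZZ$ (when $C \supsetneq \Gamma_p$): an intermediate image $p^m\ZZ/p^k\ZZ$ with $m \geq 1$ would force $C \subseteq p^m\Gamma \subseteq p\Gamma$, hence $C \subseteq \Gamma_p$, contradiction.

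Using this, for $n = \prod_i p_i^{k_i}$ and $x \in \Gamma$ I would derive
\[
A_n(x) = \max\{\Gamma_{p_i} : p_i \mid n,\ x \notin \Gamma_{p_i}\}, \qquad F_n(x) = \max\{\Gamma_{p_i} : p_i \mid n,\ x \notin p_i^{k_i}\Gamma\},
\]
with the convention $\max \emptyset = \{0_{\Gamma}\}$. For $A_n(x)$: the characterization of $n$-regularity via $n$-divisibility of quotients shows that $B(x)/A_n(x)$ is $n$-regular if, and only if, every convex $D$ strictly between $A_n(x)$ and $B(x)$ satisfies $D \supsetneq \Gamma_{p_i}$ for each $p_i \mid n$ with $x \notin \Gamma_{p_i}$; the smallest such convex is the stated maximum. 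For $F_n(x)$: the condition $C \cap (x + n\Gamma) = \emptyset$ translates via CRT on $\Gamma/n\Gamma \cong \prod_i \ZZ/p_i^{k_i}\ZZ$ to the image analysis above. An analogous computation identifies $B_n(x)$ as the intersection (taken as $\Gamma$ when empty) of $\{\Gamma_{p_i} : p_i \mid n,\ \Gamma_{p_i} \supsetneq A_n(x)\}$, so that $C_n(x) = B_n(x)/A_n(x)$ is always of the form $\Gamma_{p'}/\Gamma_p$ or $\Gamma/\Gamma_p$.

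With these explicit formulas each predicate of $Sp_n(\Gamma)$ is determined: $<_S$ is the reverse inclusion on $\Gamma_p$'s; $A(C)$ and $F(C)$ just record whether $C$ is realised by the formulas above; $Dk(C)$ holds if, and only if, $C = \Gamma_p$ is the maximum of the family and $\Gamma/\Gamma_p$ is discrete, because whenever $A_n(x) = \Gamma_p$ is not maximal the quotient $C_n(x)$ is either a $p'$-divisible $\Gamma_{p'}/\Gamma_p$ or contains a $p'$-divisible nontrivial convex subgroup of $\Gamma/\Gamma_p$, hence is dense. The most delicate step, which I expect to be the main obstacle, is the verification of $\beta_{p,m}(C)$ and $\alpha_{p,k,m}(C)$: one must check that under the hypothesis every $p^rC_n(x)/p^{r+1}C_n(x)$ is either $0$ (when $C_n(x)$ is $p$-divisible) or isomorphic to $\ZZ/p\ZZ$ (otherwise), so that the limit dimension is always $0$ or $1$, and that the torsion contributions from the iterated quotients $F_n^*(x)$ similarly reduce to combinatorial data on the chain of $\Gamma_p$'s. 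Once these quantities are seen to depend only on the ordered family $\{\Gamma_p\}$ and the discreteness at the maximum, the proposition follows.
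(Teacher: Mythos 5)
Your overall strategy coincides with the paper's: reduce everything to the chain of $\Gamma_p$'s via a purity/divisibility observation (your ``$C\subseteq p\Gamma$ iff $C$ is $p$-divisible iff $C\subseteq\Gamma_p$'' is essentially Lemma \ref{lmajoutea}, which the paper states as: for convex $C\subsetneq C'$, $C'/C$ is $p$-divisible iff $\Gamma_p\subsetneq C$ or $C'\subseteq\Gamma_p$), then compute $A_n(x)$, $F_n(x)$, $B_n(x)$ explicitly. Your formulas for these three sets agree with the ones derived in Proposition \ref{prop222a}, your CRT reading of $F_n(x)$ via $\Gamma/n\Gamma$ is a slightly cleaner packaging of the same computation, and your treatment of the predicates $<_S$, $A$, $F$ and $Dk$ is correct (for $Dk$ you do need, and essentially invoke, the observation of Corollary \ref{corr227} that discreteness of some $C'/C$ forces $C$ to be the maximum of the $\Gamma_p$'s over \emph{all} primes).

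The gap is exactly where you flag it, and it is a genuine one rather than a routine verification. For $\beta_{p,m}$ the dichotomy ${\rm dim}(p^rC_n(x)/p^{r+1}C_n(x))\in\{0,1\}$ does follow from torsion-freeness of $C_n(x)$ and $[p]C_n(x)\in\{1,p\}$, but you must still pin down \emph{when} the value is $1$, namely iff $A_n(x)\subseteq\Gamma_p\subsetneq B_n(x)$ (again the $p$-divisibility lemma); this is quick from your setup but is part of the statement. For $\alpha_{p,k,m}$, however, the sentence ``the torsion contributions from $F_n^*(x)$ similarly reduce to combinatorial data on the chain'' is a restatement of the proposition for those predicates, not an argument, and nothing you have written implies it. What is needed is an explicit identification of $F_n^*(x)$: writing $F_n(x)=\Gamma_{p_s}=\cdots=\Gamma_{p_t}$, $n_1=\prod_{i>t}p_i^{r_i}$ and $n_2=\prod_{s\leq i\leq t}p_i^{r_i}$, one checks that $\{y:F_n(y)\subseteq F_n(x)\}=n_1\Gamma$ and $\{y:F_n(y)\subsetneq F_n(x)\}=n_1n_2\Gamma$, whence $F_n^*(x)\cong n_1\Gamma/n_1n_2\Gamma\cong\Gamma/n_2\Gamma\cong\ZZ/n_2\ZZ$ (using your $\Gamma/p^k\Gamma\cong\ZZ/p^k\ZZ$ and torsion-freeness). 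Only then can one compute that ${\rm dim}\left({\rm Tor}_p(p^kF_n^*(x))/{\rm Tor}_p(p^{k+1}F_n^*(x))\right)$ equals $1$ precisely when $p=p_i$ for some $i\in\{s,\dots,t\}$ with $k<r_i$ and $0$ otherwise, so that (since $F_n(x)$ can be arranged to be any $\Gamma_{p_i}\subseteq C$) $\alpha_{p,k,1}(C)$ holds iff $p^{k+1}\mid n$ and $\Gamma_p\subseteq C$ --- data visible from the chain. This computation is the content of part (5) of Proposition \ref{prop222a}; without it the proposition is not established for the $\alpha$-predicates.
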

\indent In the remainder of this subsection, $(\Gamma,\leq)$ is a linearly ordered abelian group 
such that, for every prime $p$, $[p]\Gamma\in\{1,p\}$. 
\begin{Lemma}\label{lmajoutea} Let  
$C\subsetneq C'$ be two convex subgroups of $(\Gamma,\leq)$ and $p$ be a prime. 
Then $C'/C$ is $p$-divisible if, and only if, either $\Gamma_p\subsetneq C$, or $C'\subseteq \Gamma_p$. 
\end{Lemma}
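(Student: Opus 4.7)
The plan is to first make the elementary observation that a convex subgroup $C''$ of $\Gamma$ is $p$-divisible if and only if $C''\subseteq p\Gamma$: if $x\in C''$ and $x=py$ in $\Gamma$, then $|y|\leq|py|=|x|$, so convexity forces $y\in C''$. In particular, $\Gamma_p$ coincides with the largest convex subgroup of $\Gamma$ contained in $p\Gamma$, and in particular $\Gamma_p\subseteq p\Gamma$.

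For the ``if'' direction I would treat the two cases separately. If $C'\subseteq \Gamma_p$, then the observation above shows $C'$ itself is $p$-divisible, so $C'/C$ is too. If $\Gamma_p\subsetneq C$, pick any $c\in C\setminus\Gamma_p$; the convex subgroup it generates sits between $\Gamma_p$ and $C$, strictly contains $\Gamma_p$, and so is not $p$-divisible, which allows me to choose $a\in C\setminus p\Gamma$. The hypothesis $[p]\Gamma=p$ says that $\Gamma/p\Gamma$ is cyclic of order $p$, hence $a$ generates it. Given $x\in C'$, take $k\in\{0,\dots,p-1\}$ with $x-ka\in p\Gamma$ and write $x-ka=py$; since $ka\in C\subseteq C'$, the element $py$ lies in $C'$, and then $|y|\leq|py|$ together with convexity of $C'$ gives $y\in C'$. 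Thus $x+C=p(y+C)$.

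For the ``only if'' direction I would argue by contrapositive. If neither $\Gamma_p\subsetneq C$ nor $C'\subseteq\Gamma_p$ holds, then, because convex subgroups of $\Gamma$ are linearly ordered by inclusion, we must have $C\subseteq\Gamma_p\subsetneq C'$. Since $C'\not\subseteq\Gamma_p$, the opening observation yields $C'\not\subseteq p\Gamma$, so I can pick $x\in C'\setminus p\Gamma$. If we had $x+C=p(y+C)$ for some $y\in C'$, then $x-py\in C\subseteq\Gamma_p\subseteq p\Gamma$, forcing $x\in p\Gamma$, which is a contradiction.

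The main obstacle I anticipate is the subcase $\Gamma_p\subsetneq C$ of the ``if'' direction, where one must simultaneously exploit $[p]\Gamma=p$ to manufacture an element $a\in C$ whose class generates $\Gamma/p\Gamma$, and then ensure that the resulting witness $y$ of divisibility really belongs to $C'$; both steps hinge on the convexity estimate $|y|\leq|py|$ combined with the cyclicity of $\Gamma/p\Gamma$.
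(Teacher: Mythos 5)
Your proof is correct and follows essentially the same route as the paper's: both arguments hinge on producing an element of $C$ that is not $p$-divisible in $\Gamma$ (so that its class generates the order-$p$ group $\Gamma/p\Gamma$), writing an arbitrary element of $C'$ as a multiple of that element plus $p$ times a witness, and using the convexity estimate $|y|\leq|py|$ to keep the relevant witness inside the appropriate convex subgroup. The only cosmetic difference is that you phrase the key step via the cyclicity of $\Gamma/p\Gamma$, whereas the paper counts the cosets of $pC$ and $pC'$ directly; the treatment of the remaining case $C\subseteq\Gamma_p\subsetneq C'$ is identical.
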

\begin{proof} Clearly, if $C'\subseteq \Gamma_p$, then $C'/C$ is $p$-divisible. 
This holds if $\Gamma_p=\Gamma$, i.e.\ $[p]\Gamma=1$. Now, we assume $[p]\Gamma=p$. \\
\indent 
We have: $[p]C=1$ if, and only if, $C\subseteq \Gamma_p$. 
Otherwise, $[p]C=p$ (the same holds with $C'$). \\
\indent  Assume that $\Gamma_p\subsetneq C$. 
Then, there exists $x\in C\backslash pC$ such that the classes 
of $C$ modulo $p$ are $pC,x+pC,\dots, (p-1)x+pC$. 
Since $C$ is a convex subgroup of $C'$, $x$ is not 
$p$-divisible within $C'$ (if $x=py$, then $|y|\leq |x|$, hence $y\in C$). Now, $[p]C=[p]C'=p$, so 
the classes of $C'$ modulo $p$ are $pC',x+pC',\dots, (p-1)x+pC'$. 
Let $x'\in C'$ and $j\in \{0,\dots, p-1\}$ such that $x'\in jx+pC'$. Then, there is $y'\in C'$ such that $x'-py'=jx$. 
Since $jx\in C$, we have  
$x'+C=py'+C$, which proves that $C'/C$ is $p$-divisible. \\ 
\indent Assume that $C\subseteq \Gamma_p\subsetneq C'$. Let $x'\in C'$ such that the class of $x'$ modulo $C$ 
is $p$-divisible. Hence, there exists $y'\in C'$ such that $x'-py'\in C$. Now, $C=pC$, hence 
$x'-py'\in pC\subseteq pC'$, which proves that $x'\in pC'$. Since $C'\neq pC'$, we see that $C'/C$ is not $p$-divisible. 
\end{proof}
\begin{Corollary}\label{corr227} If there is some $C\subsetneq C'$ in the set 
$\{ \Gamma,\; \Gamma_p \; :\; p \mbox{ prime }\}$ such that $C'/C$ is discrete, 
then $C'=\Gamma$, and $C=\max \{ \Gamma_p\; :\; p \mbox{ prime }\}$  
\end{Corollary}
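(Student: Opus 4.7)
The plan is to leverage Lemma \ref{lmajoutea} to turn the discreteness of $C'/C$ into a uniform inclusion sandwich $C \subseteq \Gamma_p \subsetneq C'$ valid for every prime $p$, and then extract both $C' = \Gamma$ and the equality $C = \Gamma_p$ (for every $p$) from this.

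First, I would observe that $C \subsetneq C'$ makes $C'/C$ a nontrivial discrete linearly ordered abelian group; let $\bar{z}$ denote its smallest positive element. For any prime $p$, $\bar{z}$ cannot lie in $p(C'/C)$: writing $\bar{z} = p\bar{y}$ with $p \geq 2$ would give a positive $\bar{y}$ with $\bar{z} - \bar{y} = (p-1)\bar{y} \geq \bar{y} > 0$, i.e.\ $0 < \bar{y} < \bar{z}$, contradicting the minimality of $\bar{z}$. Hence $C'/C$ is not $p$-divisible for any prime $p$.

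Next, I invoke Lemma \ref{lmajoutea} in its contrapositive form. Its conclusion ``$C'/C$ is $p$-divisible iff $\Gamma_p \subsetneq C$ or $C' \subseteq \Gamma_p$'' negated, together with the fact that convex subgroups of $\Gamma$ are linearly ordered by inclusion, yields $C \subseteq \Gamma_p \subsetneq C'$ for every prime $p$. The strict inclusion $\Gamma_p \subsetneq C'$ forbids $C' = \Gamma_p$ for any $p$, so the only possibility left in $\{\Gamma,\; \Gamma_p : p \text{ prime}\}$ is $C' = \Gamma$.

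It remains to prove that $C = \Gamma_p$ for every prime $p$, which immediately gives $C = \max\{\Gamma_p : p \text{ prime}\}$. Suppose, toward a contradiction, that $C \subsetneq \Gamma_p$ for some $p$. Then $\Gamma_p/C$ is a nontrivial convex subgroup of $\Gamma/C = C'/C$, hence (since any nonzero convex subgroup of a discrete ordered group contains its smallest positive element) $\Gamma_p/C$ contains $\bar{z}$. But $\Gamma_p/C$ is $p$-divisible: for any $x \in \Gamma_p$ there is $y \in \Gamma_p$ with $x = py$, whence $x + C = p(y + C)$. Thus $\bar{z}$ would be $p$-divisible inside $\Gamma/C$, contradicting what was shown in the first step. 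So $\Gamma_p = C$ for every prime $p$, finishing the proof.

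The main obstacle is purely bookkeeping: correctly negating the disjunctive criterion of Lemma \ref{lmajoutea} using the linear order on convex subgroups, and keeping track of the fact that $C$ itself is forced to sit at the common value of every $\Gamma_p$ rather than merely being a lower bound — this last point relies crucially on the $p$-divisibility of the quotient $\Gamma_p/C$ clashing with the existence of a smallest positive element in $\Gamma/C$.
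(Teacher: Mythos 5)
Your proof is correct and follows essentially the same route as the paper: both reduce the statement to Lemma \ref{lmajoutea} combined with the observation that a nontrivial discrete linearly ordered group is not $p$-divisible for any prime (equivalently, a nontrivial $p$-divisible ordered group is dense). Your version is in fact slightly more precise, since the sandwich $C\subseteq\Gamma_p\subsetneq C'$ obtained for \emph{every} prime $p$ shows that all the $\Gamma_p$ coincide with $C$, a stronger conclusion than the stated $C=\max\{\Gamma_p\;:\;p\mbox{ prime}\}$.
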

\begin{proof} If there exists  $p_1$ and $p_2$ such that $\{0_G\}\subsetneq \Gamma_{p_1}\subsetneq \Gamma_{p_2}$, 
then $\Gamma_{p_2}/\Gamma_{p_1}$ is $p_2$-divisible. It follows that it is dense, and that $\Gamma/\Gamma_{p_1}$ 
is dense. Consequently, if some group $C'/C$ is discrete, then $C=\max \{ \Gamma_p\; :\; p \mbox{ prime }\}$. 
Since $C'\in \{ \Gamma,\; \Gamma_p \; :\; p \mbox{ prime }\}$, we have $C'=\Gamma$. 
\end{proof}
\indent Note that if $C=\{0_{\Gamma}\}$ and $C'/C$ is discrete, then $C'$ is discrete. 
Hence $\Gamma$ is discrete and for every prime $p$ we have $\Gamma_p=\{ 0_{\Gamma}\}$. \\[2mm]
\indent Before to prove the next Proposition, we review some properties of the sets $A_n(x)$ and 
$F_n(x)$.  
\begin{Lemma}\label{npropav411}
Assume that $n=p_1^{r_1}\cdots p_k^{r_k}$, where $p_1,\dots,p_k$ 
are the primes which divide $n$ and let $x\in \Gamma$.
\begin{enumerate}[label={(\alph*)}]
\item\label{pta} 
We have $F_n(x)\subseteq A_n(x)$ and $(F_n(x)=\emptyset 
\Leftrightarrow x\in n\Gamma)$ (\cite[Lemma 2.8, p.\ 25]{Sc1}). One can check that if $x\notin n\Gamma$, then 
$F_n(x)$ contains the greatest $n$-divisible convex subgroup of $\Gamma$. 
\item\label{ptb}  The set 
$A_n(x)$ is the maximum of the $A_{p_j}(x)$ (\cite[Lemma 2.3 p.\ 19]{Sc1}). 
\item\label{ptc} The set $F_n(x)$ is the maximum of the 
$F_{p_j^{r_j}}(x)$'s, where $1\leq j\leq k$ (\cite[Lemma 2.3 p.\ 19]{Sc1}). 
If $m$ is an other integer, then 
$F_{mn}(mx)=F_n(x)$, and if $(m,n)=1$, then $F_n(x)=F_n(mx)$ (\cite[3 and 4 of Lemma 2.10, p.\ 26]{Sc1}). 
\end{enumerate}
\end{Lemma}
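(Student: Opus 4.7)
The plan is minimal: parts (b) and (c), together with the two assertions $F_n(x)\subseteq A_n(x)$ and $F_n(x)=\emptyset \Leftrightarrow x\in n\Gamma$ of (a), are quoted verbatim from Schmitt's \cite[Lemma~2.3 p.~19, Lemma~2.8 p.~25, Lemma~2.10 p.~26]{Sc1}, so nothing needs to be reproved there. The only new content is the parenthetical sentence in (a): if $x\notin n\Gamma$, then $F_n(x)$ contains the greatest $n$-divisible convex subgroup of $\Gamma$. I will verify just this.

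First I would make sure that a greatest $n$-divisible convex subgroup $D$ of $\Gamma$ is well-defined. The union of any chain of $n$-divisible convex subgroups is again convex, and is $n$-divisible because each of its elements already lies in one of the terms of the chain; so $D$ exists as the union of all $n$-divisible convex subgroups of $\Gamma$.

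The core step is to show $D\cap (x+n\Gamma)=\emptyset$. Assume for a contradiction that some $d\in D$ can be written as $d=x+ny$ with $y\in\Gamma$. Since $D$ is $n$-divisible there is $d'\in D$ with $d=nd'$, hence $x=nd'-ny=n(d'-y)\in n\Gamma$, contradicting the hypothesis $x\notin n\Gamma$. Thus $D$ is a convex subgroup of $\Gamma$ which is disjoint from $x+n\Gamma$, and by the very definition of $F_n(x)$ as the largest such convex subgroup we get $D\subseteq F_n(x)$.

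There is no real obstacle: the only subtlety is to notice that $n$-divisibility of $D$ immediately pulls any hypothetical element of $D\cap(x+n\Gamma)$ back into $n\Gamma$, forcing $x\in n\Gamma$. The rest of the lemma is purely a bookkeeping citation from \cite{Sc1}.
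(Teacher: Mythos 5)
Your proof is correct and does exactly what the lemma requires: the paper itself gives no proof, citing Schmitt for (b), (c) and the first part of (a) and leaving the parenthetical claim as ``one can check,'' and your verification of that claim (the greatest $n$-divisible convex subgroup $D$ is disjoint from $x+n\Gamma$ because any $d=x+ny\in D$ would give $x=n(d'-y)\in n\Gamma$ via $n$-divisibility of $D$, whence $D\subseteq F_n(x)$ by maximality of $F_n(x)$) is the intended argument. Nothing is missing.
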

\begin{Proposition}\label{prop222a} Let $n=p_1^{r_1}\cdots p_k^{r_k}$ be a positive integer, 
where $p_1,\dots,p_k$ are the primes which divide $n$. 
\begin{enumerate}
\item 
$Sp_n(\Gamma)=\{\Gamma_{p_i} \; :\; 1\leq i\leq k \}\cup \{ \,\{0_{\Gamma}\}\}$. We denote by  
$\{0_{\Gamma}\}=C_1\subsetneq C_2\subsetneq \cdots\subsetneq C_l\subsetneq C_{l+1}=\Gamma$, the elements of 
$Sp_n(\Gamma)\cup \{\Gamma\}$. \\ 
Let $x\in \Gamma\backslash \{0_{\Gamma}\}$ and $j$ be the integer such that $x\in C_{j+1}\backslash C_j$.  
Then $A_n(x)=C_j$ and $B_n(x)=C_{j+1}$. In particular, for every $\alpha\in \ZZ\backslash\{0\}$ we have 
$A_n(\alpha x)=A_n(x)$ and $B_n(\alpha x)=B_n(x)$. \\
If $x\notin n\, {\rm uw}(G)$, then 
$F_n(x)=\max \{\Gamma_{p_i} \; :\; 1\leq i\leq k,\; p_i^{r_i}\nmid x\}$. 
\item Let $C$, $C'$ in $Sp_n(\Gamma)$. Then $A(C)$, $F(C')$ hold if, and only if, $C$ 
and $C'$ belong to $\{\Gamma_{p_1},\dots ,\Gamma_{p_k}\}$. Furthermore, there exists $x$ such that 
$C=A_n(x)$ and $C'=F_n(x)$ if, and only if, $C'\subseteq C$. 
\item Let $C$, in $Sp_n(\Gamma)$. If $Dk(C)$ holds, then $C=C_l$, and for any $x$ such that $A_n(x)=C$ we have $B_n(x)=\Gamma$. 
\item Let $j\in \{2,\dots,l\}$. 
Then $\beta_{p,0}(C_j)$ holds, for $m\geq 2$, $\beta_{p,m}(C_j)$ does not hold, and $\beta_{p,1}(C_j)$ holds if, and 
only if, $C_j\subseteq \Gamma_p\subsetneq C_{j+1}$. 
\item Let $C\in Sp_n(\Gamma)\backslash \{\;\{0_{\Gamma}\}\;\}$. 
If $m>1$, then $\alpha_{p,k,m}(C)$ does'nt hold. If 
$m=0$ then it holds. In the case $m=1$ it holds if, and only if, $p^{k+1}$ divides $n$ and 
$\Gamma_p\subseteq C$. 
\end{enumerate}
\end{Proposition}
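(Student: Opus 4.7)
The plan is to handle the five assertions in order, each being a direct computation with $A_n(x)$, $B_n(x)$, or $F_n(x)$ that reduces to Lemma \ref{lmajoutea} (characterizing $p$-divisibility of quotients of convex subgroups) combined with the multiplicativity formulas for $A_n$ and $F_n$ over the prime factorization of $n$ (Lemma \ref{npropav411}).

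For (1), I would compute $A_p(x)$ for each prime $p \mid n$. If $x \in \Gamma_p$ then $B(x) \subseteq \Gamma_p$ is itself $p$-divisible and hence $p$-regular, so $A_p(x) = \{0_\Gamma\}$. If $x \notin \Gamma_p$, Lemma \ref{lmajoutea} shows $B(x)/C$ can be $p$-regular only when $\Gamma_p \subseteq C$, while $C = \Gamma_p$ itself does make $B(x)/\Gamma_p$ $p$-regular, giving $A_p(x) = \Gamma_p$. Taking the maximum over $p \mid n$ produces $A_n(x) = C_j$ where $j$ is determined by $x \in C_{j+1} \setminus C_j$. The same analysis applied to the largest $C$ with $C/A_n(x)$ still $p_i$-regular for every $p_i \mid n$ forces $C \subseteq \min\{\Gamma_{p_i} : \Gamma_{p_i} \supsetneq C_j\} = C_{j+1}$, giving $B_n(x) = C_{j+1}$. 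For $F_n(x)$ I use $F_n(x) = \max_j F_{p_j^{r_j}}(x)$: the inclusion $\Gamma_p \subseteq F_{p^r}(x)$ holds because $\Gamma_p$ is $p^r$-divisible, and the reverse inclusion because any larger convex $C \supsetneq \Gamma_p$ has $C/\Gamma_p$ not $p$-divisible by Lemma \ref{lmajoutea}, forcing an element of $C$ to be congruent to $x$ modulo $p^r \Gamma$.

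Parts (2) and (3) then follow directly. For (2), $A(C)$ and $F(C)$ merely say that $C$ arises as some $A_n(x)$ or $F_n(x)$, so both predicates pick out the set $\{\,\{0_\Gamma\}\,\} \cup \{\Gamma_{p_i}\}$; the joint existence statement reduces to $F_n(x) \subseteq A_n(x)$ via Lemma \ref{npropav411}, with sufficiency by choosing $x$ having the appropriate divisibility at the relevant primes. For (3), Corollary \ref{corr227} says a quotient of consecutive spine elements can be discrete only at the top jump, forcing $C = C_l$ and $B_n(x) = \Gamma$.

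For (4), $C_n(x) = C_{j+1}/C_j$, and because $[p]\Gamma = p$ together with convexity (which gives $C \cap p\Gamma = pC$ for every convex subgroup $C$, hence $[p]C \in \{1,p\}$), one obtains $[p](C_{j+1}/C_j) \in \{1,p\}$; torsion-freeness of $\Gamma$ then makes $\dim(p^r(C_{j+1}/C_j)/p^{r+1}(C_{j+1}/C_j))$ constant in $r$, equal to $0$ or $1$. Hence $\beta_{p,0}$ always holds, $\beta_{p,m}$ for $m \geq 2$ never holds, and $\beta_{p,1}$ holds exactly when $C_{j+1}/C_j$ is not $p$-divisible, i.e.\ $C_j \subseteq \Gamma_p \subsetneq C_{j+1}$ by Lemma \ref{lmajoutea}. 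Part (5) is the most delicate: using (1) one describes $F_n^*(x)$ concretely as a subquotient attached to the chain position of $C$, and the $p$-torsion of the successive $p^k$-versus-$p^{k+1}$ quotient measures whether $p^{k+1} \mid n$ (so the filtration is nontrivial within $n$-divisibility) and whether $\Gamma_p \subseteq C$ (so $p$-structure is relevant at $C$); the dimension is again forced into $\{0,1\}$ by $[p]\Gamma = p$, yielding the stated trichotomy on $m$. The main obstacle is this last part, which requires tracking both the $F_n$-filtration and the $p$-adic filtration of $F_n^*(x)$ simultaneously; the other parts reduce cleanly to applications of Lemma \ref{lmajoutea} and Corollary \ref{corr227}.
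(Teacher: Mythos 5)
Your treatment of parts (1)--(4) follows essentially the same route as the paper: compute $A_p(x)$ and $B_p(x)$ case by case from Lemma \ref{lmajoutea}, assemble $A_n$, $B_n$, $F_n$ via Lemma \ref{npropav411}, invoke Corollary \ref{corr227} for the predicate $Dk$, and reduce $\beta_{p,m}$ to the observation that $\dim(p^rC_n(x)/p^{r+1}C_n(x))$ is constantly $0$ or $1$ because $[p]C_n(x)\in\{1,p\}$ and $C_n(x)$ is torsion-free. Two small imprecisions there: for $F_{p^r}(x)$ with $x=p^dx'$, $0<d<r$, your ``reverse inclusion'' argument (that every convex $C\supsetneq\Gamma_p$ meets $x+p^r\Gamma$) is asserted rather than proved; the paper avoids this by the sandwich $\Gamma_p\subseteq F_{p^r}(x')\subseteq A_{p^r}(x')=\Gamma_p$ for $p\nmid x'$ together with the identity $F_{p^r}(p^dx')=F_{p^{r-d}}(x')$ from Lemma \ref{npropav411}(c). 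Also, $F_n(x)$ is always a maximum of a nonempty subset of $\{\Gamma_{p_1},\dots,\Gamma_{p_k}\}$ when $x\notin n\Gamma$, so the predicate $F$ does not pick out $\{0_\Gamma\}$ unless $\{0_\Gamma\}$ happens to equal some $\Gamma_{p_i}$; your description of the range of $F$ is off on this point.

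The genuine gap is part (5), which you flag as the main obstacle but then only assert. What is actually needed, and what the paper supplies, is the explicit identification of $F_n^*(x)$. Writing $\Gamma_{p_1}\subseteq\cdots\subseteq\Gamma_{p_{s-1}}\subsetneq\Gamma_{p_s}=\cdots=\Gamma_{p_t}\subsetneq\Gamma_{p_{t+1}}\subseteq\cdots\subseteq\Gamma_{p_k}$ with $F_n(x)=\Gamma_{p_s}$, one uses part (1) to show $\{y:F_n(y)\subseteq F_n(x)\}=n_1\Gamma$ and $\{y:F_n(y)\subsetneq F_n(x)\}=n_1n_2\Gamma$, where $n_1=p_{t+1}^{r_{t+1}}\cdots p_k^{r_k}$ and $n_2=p_s^{r_s}\cdots p_t^{r_t}$; hence $F_n^*(x)=n_1\Gamma/n_1n_2\Gamma$. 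From this one computes ${\rm Tor}_{p_i}(p_i^kF_n^*(x))=\left(n_1\frac{n_2}{p_i^{r_i-k}}\Gamma\right)/\left(n_1n_2\Gamma\right)$ for $i\in\{s,\dots,t\}$ and $k<r_i$, so the successive quotients have dimension $1$ exactly when $p=p_i$ for some $i\in\{s,\dots,t\}$ (i.e.\ $\Gamma_p=F_n(x)$, which can be arranged for any $\Gamma_p\subseteq C$ by part (2)) and $k<r_i$ (i.e.\ $p^{k+1}\mid n$), and dimension $0$ otherwise. Without this computation the trichotomy on $m$ in (5) is not established; your sketch names the two conditions $p^{k+1}\mid n$ and $\Gamma_p\subseteq C$ but gives no argument that they are exactly what the torsion filtration of $F_n^*(x)$ detects.
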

\begin{proof} 
(1) Let $x\in \Gamma\backslash \{0_{\Gamma}\}$ and $p$ be a prime. \\
\indent Assume that $x\in \Gamma_p$. Then $B(x)\subseteq \Gamma_p$. Hence $A_p(x)=\{0_{\Gamma}\}$. 
By (a) of Lemma \ref{npropav411} we have $F_p(x)=\emptyset$. If $\Gamma_p\subsetneq C$ for some convex subgroup $C$, 
then by Lemma \ref{lmajoutea} 
$C/\Gamma_p$ is not $p$-divisible, so $C$ is not $p$-regular. Hence $C/A_p(x)= C/\{0_{\Gamma}\}$ is not $p$-regular. Therefore, 
$B_p(x)=\Gamma_p$. \\
\indent Assume that $x\notin \Gamma_p$. 
Then, $\Gamma_p\subsetneq B(x)$. The group $B(x)/C$ is $p$-regular if, and 
only if, for every $C'$ such that $C\subsetneq C'\subsetneq B(x)$, the group 
$\left(B(x)/C\right)/\left(C'/C\right)$ 
is $p$-divisible, i.e.\ $B(x)/C'$ is $p$-divisible. By Lemma \ref{lmajoutea}, this is e\-qui\-va\-lent to $\Gamma_p
\subsetneq C'$. It follows that $A_p(x)=\Gamma_p$, and $B_p(x)=\Gamma$. 
We deduce from (b) of Lemma \ref{npropav411} that 
$A_n(x)$ is equal to the maximum of the $\Gamma_{p_i}$ such that $x\notin \Gamma_{p_i}$, $1\leq i\leq k$ 
(or to $\{ 0_{\Gamma}\}$ if the preceding set is empty). \\
\indent Assume that $x=p_i^dx'$, where $x'\notin p_i\,\Gamma$, and let $r$ be a positive integer. 
By (a) of Lemma \ref{npropav411}, 
$\Gamma_{p_i}\subseteq F_{p_i^r}(x')\subseteq A_{p_i^r}(x')=\Gamma_{p_i}$. Hence  
$F_{p_i^r}(x')=\Gamma_{p_i}$. 
If $p_i^r$ divides $x$, then, by (a) of Lemma \ref{npropav411}, $F_{p_i^r}(x)=\emptyset$. 
Now, assume that $p_i^r$ doesn't divide $x$ (i.e.\ $r>d$). 
By (c) of Lemma \ref{npropav411}, $F_{p_i^r}(x)=F_{p_i^{r-d}p_i^d}(p_i^dx')=F_{p^{r-d}}(x')=\Gamma_{p_i}$. Therefore, 
if $x\notin n\, \Gamma$, then, by (c) of Lemma \ref{npropav411},  
$F_n(x)=\max \{\Gamma_{p_i} \; :\; 1\leq i\leq k,\; p_i^{r_i}\nmid x\}$. 
Hence $Sp_n(\Gamma)=\{\Gamma_{p_i} \; :\; 1\leq i\leq k \}\cup \{ \,\{0_{\Gamma}\}\}$, that we can write as  
$\{\,\{0_{\Gamma}\}=C_1\subsetneq C_2\subsetneq \cdots\subsetneq C_l\}$. The hypothesis $[p]\Gamma=p$, for every prime, 
implies that $\Gamma_p\neq \Gamma$. So 
$\Gamma\notin Sp_n(\Gamma)$, for convenience we let $C_{l+1}=\Gamma$.\\
\indent We let $C_j=A_n(x)$. Since $C_j$ is the maximum of the $\Gamma_{p_i}$ such that $x\notin \Gamma_{p_i}$, $1\leq i\leq k$ 
(or to $\{ 0_{\Gamma}\}$ if the preceding set is empty), we have $x\in C_{j+1}\backslash C_j$. 
Let $C$, $C'$ be convex subgroups such that $C_j\subsetneq C\subsetneq C'$, 
and $i\in \{1,\dots,k\}$. By Lemma \ref{lmajoutea}, $C'/C$ is $p_i$-divisible if, and only if, either 
$\Gamma_{p_i}\subsetneq C$ or $C'\subseteq \Gamma_{p_i}$. Hence $C'/C_j$ is $p_i$-regular if, and only if, either 
$\Gamma_i\subseteq C_j$ or $C'\subseteq \Gamma_{p_i}$. Consequently, $C'/C_j$ is $n$-regular if, and only if, 
$C'\subseteq C_{j+1}$. Therefore, $B_n(x)=C_{j+1}$.  This proves (1). \\ 
\indent (2) Predicates $A$ and $F$. Since $\emptyset \notin Sp_n(\Gamma)$, 
it follows from (1) that $A(C)$, $F(C')$ hold if, and only if, 
$C$ and $C'$ belong to $\{\Gamma_{p_1},\dots ,\Gamma_{p_k}\}$.
By (a) of Lemma \ref{npropav411}, if there exists $x$ such that $A_n(x)=C$ and $F_n(x)=C'$, 
then $C'\subseteq C$. In order to 
prove the converse, we assume that 
$\Gamma_{p_1}\subseteq \cdots \subseteq\Gamma_{p_{s-1}}\subsetneq \Gamma_{p_s}=\cdots =\Gamma_{p_t}
\subsetneq \Gamma_{p_{t+1}}\subseteq \cdots \subseteq \Gamma_{p_k}$ and $C=\Gamma_{p_s}$, we denote 
by $B$ the set $B_n(x)$ for any $x\in \Gamma$ such that $A_n(x)=C$ (this is equivalent to $B=\Gamma_{p_{t+1}}$ 
if $t<k$ and to $B=\Gamma$ if $t=k$). By the definition of the sets $\Gamma_p$, for $i\in \{1,\dots, t\}$ 
there exists an element 
$x_i \in B\backslash \Gamma_{p_i}=B\backslash C$ which is not divisible by $p_i$. We let 
$x=p_2\cdots p_t x_1+p_1p_3\cdots p_tx_2+\cdots + p_1\cdots p_{t-1}x_t$.  
Then $x$ is not divisible by any of $p_1,\dots, p_t$. In particular,  
$x\in B\backslash C$. Hence $A_n(x)=C$ and $F_n(x)=C$. The element 
$p_s^{r_s}\cdots p_t^{r_t}x$ belongs to $B\backslash C$. Hence $A_n(p_s^{r_s}\cdots p_t^{r_t}x)=C$, 
and $F_n(p_s^{r_s}\cdots p_t^{r_t}x)=
\Gamma_{p_{s-1}}$. In the same way, by multiplying by the other $p_i^{r_i}$'s, $i<s$, we get elements $y_i$ such 
$A_n(y_i)=C$ and $F_n(y_i)=\Gamma_{p_i}$. \\ 
\indent (3) Predicate $Dk$. 
If $C$ is not the maximum of the groups $\Gamma_p$, 
then $Dk(C)$ does not hold  (Corollary \ref{corr227}). 
Assume that $C$ is the maximum of the $\Gamma_p$'s. For every $x\in \Gamma$ such that $C=A_n(x)$, $x$ does not 
belong to any $\Gamma_p$.  Therefore, $B_n(x)=\Gamma$. \\
\indent (4) Predicates $\beta_{p,m}(C_j)$. Let $x\in \Gamma$ such that $C_j=A_n(x)$ 
(i.e.  $x\in C_{j+1}\backslash C_j$). 
If $C_n(x)$ is not $p$-divisible, 
then, $[p]C_n(x)=p$. Hence, for every $r$, ${\rm dim}(p^rC_n(x)/p^{r+1}C_n(x))=1$. Otherwise, 
${\rm dim}(p^rC_n(x)/p^{r+1}C_n(x))=0$. 
It follows that if $m\geq 2$, then $\beta_{p,m}(C_j)$ does not hold and 
if $m=0$, then it holds. If $m=1$, then $\beta_{p,1}(C_j)$ holds if, and only if, $C_n(x)$ is not 
$p$ divisible. 
By Lemma \ref{lmajoutea}, $C_n(x)$ is not $p$-divisible if, 
and only if, $A_n(x)\subseteq \Gamma_p\subsetneq B_n(x)$, that is, 
$C_i\subseteq \Gamma_p\subsetneq C_{i+1}$\\
\indent (5) Predicates $\alpha_{p,k,m}(C)$. 
We let $x\in \Gamma$ such that $A_n(x)=C$. We assume that 
$\Gamma_{p_1}\subseteq \cdots \subseteq \Gamma_{p_{s-1}}\subsetneq \Gamma_{p_s}=\cdots =\Gamma_{p_t}
\subsetneq \Gamma_{p_{t+1}}\subseteq \cdots \subseteq \Gamma_{p_k}$ and $F_n(x)=\Gamma_{p_s}$. 
By (1), for $i>t$, $p_i^{r_i}$ divides 
$x$, and there is some $i\in \{s,\dots,t\}$ such that $p_i^{r_i}$ doesn't divide $x$. 
Let $y\in \Gamma$. Then 
$F_n(y)\subsetneq F_n(x)$ if, and only if, for every $i\geq s$, $p_i^{r_i}$ divides $y$. 
We let $n_1=p_{t+1}^{r_{t+1}}\cdots p_k^{r_k}$ and $n_2=p_s^{r_s}\cdots p_t^{r_t}$. Then 
$\{y \in \Gamma \; :\; F_n(y)\subsetneq F_n(x)\}=n_1n_2\Gamma$. In the same way, 
$\{y \in \Gamma \; :\; F_n(y)\subseteq F_n(x)\}=n_1\Gamma$. 

So $F_n^*(x)=n_1\Gamma/n_1n_2\Gamma$. 
Let $y_0\in n_1\Gamma\backslash n_1n_2\Gamma$, $p$ be a prime, $d$ be the greatest integer  such that 
$p^d$ divides $y_0$, and $k$ be a po\-si\-ti\-ve integer. 
Then $p^ky_0\in n_1n_2\Gamma$ if, and only if, there is $i\in 
\{s,\dots,t\}$ such that $p=p_i$, 
$k\geq r_i-d$, and for $i'\neq i$ in $\{s,\dots,t\}$, $p_{i'}^{r_{i'}}$ divides $y$. 
Consequently, if $p\notin \{p_s,\dots,p_t\}$, then $F_n^*(x)$ doesn't contain any nontrivial 
$p$-torsion element. 
If this holds, then ${\rm dim}
\displaystyle{\left({\rm Tor}_p (p^kF_n^*(x))/{\rm Tor}_p (p^{k+1}F_n^*(x))\right)}=0$. \\
\indent Assume that $p=p_i$ with $i\in \{s,\dots,t\}$. Then ${\rm Tor}_{p_i} (F_n^*(x))=
\left(n_1\frac{n_2}{p_i^{r_i}}\Gamma\right)/\left(n_1n_2\Gamma\right)$, and 
${\rm Tor}_{p_i} (p_iF_n^*(x))=\left(n_1\frac{n_2}{p_i^{r_i-1}}\Gamma\right)/\left(n_1n_2\Gamma\right)$. \\ 
Hence 
${\rm dim}\displaystyle{\left({\rm Tor}_{p_i} (F_n^*(x))/{\rm Tor}_{p_i} (p_iF_n^*(x))\right)}=1$. \\
\indent More generally, if $k<r_i$, then 
${\rm dim}
\displaystyle{\left({\rm Tor}_{p_i} (p_i^kF_n^*(x))/{\rm Tor}_{p_i} (p_i^{k+1}F_n^*(x))\right)}=1$. \\ 
\indent If $k\geq r_i$, then both of ${\rm Tor}_{p_i} (p_i^kF_n^*(x))$ and ${\rm Tor}_{p_i} (p_i^{k+1}F_n^*(x))$ are trivial,  
hence \\
${\rm dim}
\displaystyle{\left({\rm Tor}_{p_i} (p_i^kF_n^*(x))/{\rm Tor}_{p_i} (p_i^{k+1}F_n^*(x))\right)}=0$. \\ 
\indent By (1), $F_n(x)$ can be any element of $Sp_n(\Gamma)$ which is a subgroup of $C$, i.e.\ 
any $\Gamma_{p_i}$ where $i\in\{1,\dots,k\}$ and $\Gamma_{p_i}\subset C$. 
\end{proof}
\indent Now, Proposition \ref{corr222a} follows from Proposition \ref{prop222a}. 
\subsection{Theories of divisible abelian cyclically ordered groups.}\label{ssssec233}
Let $(G,R)$ be a cyclically ordered group. 
It follows from Theorems \ref{thspine} and \ref{relelimquant} that determining the first-order theory of 
$({\rm uw}(G),z_G)$ is also equivalent 
to determining  $Sp_n({\rm uw}(G))$'s and the type of $z_G$. 
So, Fran\c{c}ois Lucas proved that if $G$ is divisible, then the type of $z_G$ is determined 
by the $Sp_n({\rm uw}(G))$'s and the functions $f_{G,p}$.  
%
%
Our approach here is more explicit and we give the links with the theory of $(G,R)$. We first state a lemma. 
\begin{Lemma}(Lucas)\label{lem413} Let $(G,R)$ be a nonlinear divisible cyclically ordered abelian group. 
If $C_n(z_G)$ is discrete, then it is not archimedean.
\end{Lemma}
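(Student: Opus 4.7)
The plan is to exhibit a nontrivial proper convex subgroup of $C_n(z_G)$ by pushing the linear part $l(G)_{uw}$ down to the quotient ${\rm uw}(G)/A_n(z_G)$; the discreteness hypothesis will be used to force the strict inclusion $A_n(z_G)\subsetneq l(G)_{uw}$, and then $l(G)_{uw}/A_n(z_G)$ will be the witness of nonarchimedeanness. Since $z_G$ is cofinal in ${\rm uw}(G)$, the convex subgroup $B(z_G)$ it generates equals ${\rm uw}(G)$; since $B_n(z_G)\supseteq B(z_G)$ by the definitions of $A_n$ and $B_n$, this forces $B_n(z_G)={\rm uw}(G)$, so $C_n(z_G)={\rm uw}(G)/A_n(z_G)$.

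I would first show $A_n(z_G)\subseteq l(G)_{uw}$. The quotient ${\rm uw}(G)/l(G)_{uw}$ embeds in $\RR$ (as recalled right after Notation \ref{nota27}), hence is archimedean, hence $n$-regular (cf.\ Subsection \ref{n3}, where it is noted that archimedean ordered groups are regular). By minimality in the definition of $A_n(z_G)$ --- the smallest convex subgroup $C$ with $B(z_G)/C={\rm uw}(G)/C$ being $n$-regular --- the inclusion follows.

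The crux is strictness. Suppose for contradiction $A_n(z_G)=l(G)_{uw}$, so that $C_n(z_G)={\rm uw}(G)/l(G)_{uw}$, which is canonically the unwound of $U(G)=G/l(G)\subseteq\UU$. By nonlinearity of $R$, $U(G)$ is a nontrivial subgroup of $\UU$; by divisibility of $G$, $U(G)$ is divisible, and hence infinite (the only finite divisible abelian group is trivial). As $U(G)$ is an infinite cyclically ordered subgroup of $\UU$ with trivial linear part, Lemma \ref{discrete} shows it is not discrete, and Lemma \ref{lm12} transfers this to the unwound ${\rm uw}(G)/l(G)_{uw}=C_n(z_G)$, contradicting the discreteness hypothesis. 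Therefore $A_n(z_G)\subsetneq l(G)_{uw}\subsetneq {\rm uw}(G)$, so $l(G)_{uw}/A_n(z_G)$ is a nontrivial proper convex subgroup of $C_n(z_G)$, and $C_n(z_G)$ is not archimedean. The main obstacle is precisely the strictness step, since it is there that both hypotheses --- the nonlinearity of $R$ and the divisibility of $G$ --- are jointly needed to produce the density contradiction.
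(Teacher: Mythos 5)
Your proof is correct and follows essentially the same route as the paper: both establish $B_n(z_G)={\rm uw}(G)$ from cofinality of $z_G$, identify archimedeanness of $C_n(z_G)$ with the equality $A_n(z_G)=l(G)_{uw}$, and rule that case out because $U(G)$ is nontrivial (nonlinearity) and divisible, hence infinite and dense, so ${\rm uw}(G)/l(G)_{uw}$ is dense by Lemma \ref{lm12}. The only difference is presentational (you argue by contradiction and exhibit $l(G)_{uw}/A_n(z_G)$ as the witnessing convex subgroup, while the paper states the equivalence directly), and your density claim for infinite subgroups of $\UU$ is better sourced from the remark in the subsection on dense and discrete groups than from Lemma \ref{discrete}.
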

\begin{proof} Since $z_G$ is cofinal in $({\rm uw}(G),\leq_R)$ and $z_G\in B(z_G)\subseteq B_n(z_G)$, 
it follows that $B_n(z_G)={\rm uw}(G)$. Hence, the condition ``$C_n(z_G)$ archimedean'' is e\-qui\-va\-lent 
to $A_n(z_G)=A(z_G)=l(G)_{uw}$. Hence $C_n(z_G)=  {\rm uw}(G)/l(G)_{uw}$. 
Since $(G,R)$ is nonlinear, we have $U(G)\neq \{1\}$. Now, $(G,R)$ is divisible, hence $U(G)$ is divisible. 
In particular it is infinite, so it is dense. By Lemma \ref{lm12}, ${\rm uw}(G)/l(G)_{uw}$ is dense. 
\end{proof}
\begin{Proposition}\label{prop222} 
Let $(G,R)$ be a divisible nonlinear cyclically ordered abelian group. 
The spines of Schmitt of $({\rm uw}(G),\leq_R)$ are determined by $\mathcal{CD}(G)$. 
\end{Proposition}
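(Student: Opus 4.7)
The plan is to deduce this from Proposition~\ref{corr222a} applied to the linearly ordered abelian group $\Gamma={\rm uw}(G)$, via the dictionary between c-convex proper subgroups of $G$ and proper convex subgroups of ${\rm uw}(G)$ recalled in Subsection~\ref{subsec21}.

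The first step is to verify that the hypothesis $[p]{\rm uw}(G)=p$ of Proposition~\ref{corr222a} holds for every prime $p$. Since $G$ is torsion-free, it has no $p$-torsion element, so Lemma~\ref{pclass} gives $[p]{\rm uw}(G)=p\cdot[p]G$; combined with divisibility of $G$, which forces $[p]G=1$, this yields $[p]{\rm uw}(G)=p$.

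The second step is to translate the conclusion of Proposition~\ref{corr222a} back to $G$. By Remark~\ref{rkcorresp}(2), the greatest $p$-divisible convex subgroup $({\rm uw}(G))_p$ of ${\rm uw}(G)$ coincides with $(H_p)_{uw}$, and the order-preserving bijection $H\mapsto H_{uw}$ identifies the ordered family $\{({\rm uw}(G))_p : p \text{ prime}\}$ with $\{H_p : p \text{ prime}\}\subseteq\mathcal{CD}(G)$. When this family admits a maximum $H_{p_0}$, Remark~\ref{rkcorresp}(3) ensures that ${\rm uw}(G)/(H_{p_0})_{uw}$ is discrete if and only if $G/H_{p_0}$ is discrete, and this discreteness is an intrinsic property of the pair $(G,H_{p_0})$ (and indeed is first-order definable, by Lemma~\ref{propHp}(3)), hence is readable from $\mathcal{CD}(G)$.

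Putting these two observations together, the data recorded in $\mathcal{CD}(G)$ is exactly what Proposition~\ref{corr222a} requires in order to determine each $Sp_n({\rm uw}(G))$, and the result follows. The work is essentially in setting up the unwound dictionary rather than in any new argument: the nontrivial content has already been packaged into Proposition~\ref{corr222a} together with Remark~\ref{rkcorresp}, and the present proposition is their direct combination. The one point that deserves care is that the family $\{H_p\}$ need not have a maximum, but this causes no difficulty since Proposition~\ref{corr222a} only invokes the discrete/dense dichotomy conditionally on such a maximum existing.
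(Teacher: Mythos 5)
Your proposal is correct and follows essentially the same route as the paper's own proof: verify $[p]{\rm uw}(G)=p$ via Lemma~\ref{pclass}, identify $({\rm uw}(G))_p$ with $(H_p)_{uw}$ (the paper re-derives this from the fact that $l(G)_{uw}$ is the largest convex subgroup, where you cite Remark~\ref{rkcorresp}(2), but the content is the same), and then invoke Proposition~\ref{corr222a} together with the transfer of the discrete/dense dichotomy given by Remark~\ref{rkcorresp}(3). No substantive difference.
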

\begin{proof} It follows from Lemma \ref{pclass} that, for every prime $p$, 
$[p]{\rm uw}(G)\in\{1,p\}$. If $[p]{\rm uw}(G)=1$, then ${\rm uw}(G)$ is $p$-divisible and 
${\rm uw}(G)_p=G_{uw}=(H_p)_{uw}$. Assume that $[p]{\rm uw}(G)=p$. 
Since $l(G)_{uw}$ is the greatest convex subgroup of ${\rm uw}(G)$ and 
this group is not $p$-divisible, we have ${\rm uw}(G)_p\subseteq l(G)_{uw}$. Therefore 
${\rm uw}(G)_p=(H_p)_{uw}$. 
Hence By Proposition \ref{corr222a}, the spines of $({\rm uw}(G),\leq_R)$ are 
determined by the set of $(H_p)_{uw}$'s, and if this set admits a maximal element $(H_p)_{uw}$, 
the property ${\rm uw}(G)/(H_p)_{uw}$ being discrete or dense. The preordered subset of the 
$(H_p)_{uw}$'s and $\mathcal{CD}(G)$ are isomorphic, and ${\rm uw}(G)/(H_p)_{uw}$ is discrete if, and 
only if, $G/H_p$ is (Remark \ref{rkcorresp}), and this is first-order definable 
(Lemma \ref{propHp} (3)). \\
\indent By Lemma \ref{propHp} (2), $H_p\subsetneq H_q$ depends on the first-order theory of $(G,R)$, hence 
the same holds for the predicate $<_S$ of $Sp_n({\rm uw}(G))$. 
\end{proof}
\begin{Lemma}\label{lm235} 
Let $(G,R)$ be a nonlinear cyclically ordered divisible abelian group and $p$ be a prime such that 
$G/H_p$ is discrete. Then for every $x\in l(G)_{uw}$ such that the class $x+(H_p)_{uw}$ is the smallest 
positive element of ${\rm uw}(G)/(H_p)_{uw}$ and every $r\in \NN\backslash \{0\}$, 
there exists $y\in {\rm uw}(G)$ such that $p^ry=x+f_{G,p}(r)z_G$. Furthermore, if there exists 
$y'\in {\rm uw}(G)$ and $k\in \ZZ$ such that $p^ry'=x+kz_{G}$, then $k$ is congruent to $f_{G,p}(r)$ modulo 
$p^r$. 
\end{Lemma}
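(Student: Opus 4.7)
The plan is to transfer the defining property of $f_{G,p}(r)$, stated in $G$, into the unwound via the canonical embedding ${\rm uw}(G)/l(G)_{uw}\hookrightarrow \RR$ sending $z_G+l(G)_{uw}$ to $1$.

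First I would set $g=\bar{x}\in G$, the image of $x$ under the epimorphism ${\rm uw}(G)\to G$. By hypothesis on $x$, the class $g+H_p$ is the smallest positive element of $G/H_p$, so in particular $g\in l(G)$ and $g+H_p=1_{G/H_p}$. Since $G$ is torsion-free and divisible, there is a unique $h\in G$ with $p^rh=g$. Proposition \ref{n128} then gives $U(h)=e^{2if_{G,p}(r)\pi/p^r}$. Now lift: pick any $y\in{\rm uw}(G)$ with $\bar y=h$. Then $\overline{p^ry}=p^rh=g=\bar x$, so there is an integer $k$ with $p^ry=x+kz_G$. The goal is to show $k\equiv f_{G,p}(r)\pmod{p^r}$; once this is established, the element $y-\frac{k-f_{G,p}(r)}{p^r}z_G$ satisfies the required identity (the adjustment is well-defined because $k-f_{G,p}(r)$ is divisible by $p^r$).

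To identify $k$ modulo $p^r$ I would use the embedding $\iota\colon {\rm uw}(G)/l(G)_{uw}\hookrightarrow \RR$ normalised by $\iota(z_G+l(G)_{uw})=1$. Since $x\in l(G)_{uw}$, its class in ${\rm uw}(G)/l(G)_{uw}$ is $0$, hence applying $\iota$ to the equation $p^ry=x+kz_G$ (after quotienting by $l(G)_{uw}$) yields $p^r\iota(y+l(G)_{uw})=k$, so $\iota(y+l(G)_{uw})=k/p^r$. On the other hand, the quotient map ${\rm uw}(G)/l(G)_{uw}\to G/l(G)\simeq U(G)$ factors through $\RR\to\RR/\ZZ\to\UU$, $t\mapsto e^{2i\pi t}$, because $z_G$ is sent to $1\in\RR$ and to $0$ in $G/l(G)$. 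Therefore $U(h)=U(\bar y)=e^{2i\pi k/p^r}$. Comparing with $U(h)=e^{2if_{G,p}(r)\pi/p^r}$ forces $k\equiv f_{G,p}(r)\pmod{p^r}$, as desired.

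For the furthermore part, the same computation applies verbatim to an arbitrary $y'$ with $p^ry'=x+kz_G$: the element $\bar{y'}$ satisfies $p^r\bar{y'}=g$, and uniqueness of division in the torsion-free divisible group $G$ gives $\bar{y'}=h$, whence $U(\bar{y'})=U(h)=e^{2i\pi f_{G,p}(r)/p^r}$ and the argument via $\iota$ concludes again that $k\equiv f_{G,p}(r)\pmod{p^r}$.

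The only delicate point is the step identifying $\iota$ with the ``angle map'', i.e.\ verifying that the composition ${\rm uw}(G)\to{\rm uw}(G)/l(G)_{uw}\hookrightarrow\RR\twoheadrightarrow\RR/\ZZ\xrightarrow{\ \exp\ }\UU$ coincides, through the quotient ${\rm uw}(G)\to G$, with the map $U\colon G\to\UU$ restricted to $G/l(G)$. This is where the normalisation $\iota(z_G+l(G)_{uw})=1$ is essential; everything else is a torsion-free divisibility calculation using Proposition \ref{n128} and the uniqueness of $p^r$-th roots in $G$.
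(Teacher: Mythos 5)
Your proposal is correct and follows essentially the same route as the paper: both produce a witness $y$ with $p^ry=x+kz_G$, identify $U(\bar y)$ with $e^{2i\pi k/p^r}$ via the normalisation sending $z_G$ to $1$, and compare with the value $e^{2if_{G,p}(r)\pi/p^r}$ forced by Proposition \ref{n128}. The only cosmetic difference is that the paper carries out the angle computation inside the divisible hull of ${\rm uw}(G)$ (writing $y=\frac{1}{p^r}x+\frac{k''}{p^r}z_G$) rather than through the explicit embedding ${\rm uw}(G)/l(G)_{uw}\hookrightarrow\RR$, but this is the same identification.
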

\begin{proof} Let $r\in \NN\backslash \{0\}$ and $x\in l(G)_{uw}$ such that the class $x+(H_p)_{uw}$ is the smallest 
positive element of ${\rm uw}(G)/(H_p)_{uw}$. 
Since $G$ is divisible, its unwound ${\rm uw}(G)$ is divisible modulo $\langle z_G\rangle$. 
Hence there e\-xis\-ts $y'\in {\rm uw}(G)$ and an integer $k$ such that $p^ry'=x+kz_G$. 
Let $k=p^rk'+k''$, with $k''\in \{0,\dots,p^r-1\}$ be the euclidean division, and $y=y'-k'z_G$. Then 
$p^ry=x+ k''z_G$. 
Let $h=\bar{y}$ and $g=\bar{x}$. Then $g=p^rh$, hence, by the definition of $f_{G,p}(r)$, we have 
$\displaystyle{U(h)=e^{\frac{2if_{G,p}(r)\pi}{p^r}}}$. 
In the divisible hull of ${\rm uw}(G)$, we have 
$\displaystyle{y=\frac{1}{p^r}x+\frac{k''}{p^r}z_G}$. Hence 
$\displaystyle{U(h)=U\left(\overline{\frac{k''}{p^r}z_G}\right)=e^{\frac{2ik''\pi}{p^r}}}$. 
Consequently, $k''=f_{G,p}(r)$, and $k$ is congruent to $f_{G,p}(r)$ modulo $p^r$. 
\end{proof}
\indent The proof of the following theorem is similar to that of an analogous Lucas result, but much read in detail here. 
\begin{Theorem}\label{autresens}
Let $(G,R)$ be a divisible nonlinear cyclically ordered abelian group. 
If every $G/H_p$ is dense, then the type of $z_G$ in the language of 
Schmitt is entirely determined by $\mathcal{CD}(G)$. 
Otherwise, it is entirely determined by $\mathcal{CD}(G)$ and the mappings $f_{G,p}$, where $H_p$ is the 
maximum of the set $\{H_q\; :\; q\mbox{ prime}\}$. 
\end{Theorem}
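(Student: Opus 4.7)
The approach is to invoke Schmitt's relative quantifier elimination (Theorem \ref{relelimquant}) and thereby reduce the type of $z_G$ in $({\rm uw}(G),\leq_R)$ to: (i) the spine theory $Sp_n({\rm uw}(G))$ together with the interpretations of $A_n(mz_G)$ and $F_n(mz_G)$; and (ii) the truth values of the extra predicates $M_{n,k}(mz_G)$, $D_{p,r,i}(mz_G)$, $E_{p,r,k}(mz_G)$ for $m\in\ZZ$. If $(G,R)$ is c-divisible, its theory is complete by Theorem \ref{thm31} and nothing remains; so assume $(G,R)$ is torsion-free. Then by Corollary \ref{corapcdiv}, $z_G\notin n\,{\rm uw}(G)$ for every $n>1$, and by Lemma \ref{pclass}, $[p]{\rm uw}(G)=p$ for every prime $p$, so Proposition \ref{corr222a} applies to ${\rm uw}(G)$ with $\Gamma_p=(H_p)_{uw}$.

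For ingredient (i), Proposition \ref{prop222a}(1) together with cofinality of $z_G$ gives, for $m\neq 0$, $A_n(mz_G)=A_n(z_G)=\max\{(H_p)_{uw}:p\mid n\}$ and $F_n(mz_G)=\max\{(H_{p_i})_{uw}:p_i^{r_i}\nmid m\}$; via the canonical isomorphism $(H_p)_{uw}\leftrightarrow H_p$ both are read off from $\mathcal{CD}(G)$ and $m$. By Proposition \ref{corr222a} the theory of $Sp_n({\rm uw}(G))$ is determined by $\mathcal{CD}(G)$ together with the list of primes $p$ for which ${\rm uw}(G)/(H_p)_{uw}$ is discrete, and by Corollary \ref{corr227} (transferred via Remark \ref{rkcorresp}) this is a single prime --- the one with $H_p$ maximal in $\mathcal{CD}(G)$ --- or none; Lemma \ref{propHp}(3) makes discreteness first-order in $(G,R)$.

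For ingredient (ii) I dispatch the three predicates in turn. $M_{n,k}(mz_G)$: false by definition when $C_n(z_G)$ is not discrete; when it is discrete, Lemma \ref{lem413} forces $C_n(z_G)$ to be non-archimedean, and the cofinal class $\overline{mz_G}$ (for $m\neq 0$) lies outside the archimedean subgroup generated by the smallest positive element and cannot equal any integer multiple of it. $D_{p,r,i}(mz_G)$: by Proposition \ref{prop222a}(1), $F_{p^r}(\cdot)\in\{\emptyset,(H_p)_{uw}\}$, so the second clause of the definition is unsatisfiable unless $F_{p^r}(mz_G)=\emptyset$, i.e.\ $p^r\mid m$, which is already the first clause; hence $D_{p,r,i}(mz_G)\Leftrightarrow p^r\mid m$. $E_{p,r,k}(mz_G)$: the definition requires $C_{p^r}(y)$ discrete, forcing $G/H_p$ discrete, which by Corollary \ref{corr227} pins $p$ to the maximal $H_p$; hence $E_{p,r,k}(mz_G)$ is uniformly false if every $G/H_q$ is dense. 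Otherwise, taking $y=x_0$ with $x_0+(H_p)_{uw}$ the smallest positive class of ${\rm uw}(G)/(H_p)_{uw}$, one checks $A_{p^r}(x_0)=(H_p)_{uw}=F_{p^r}(mz_G)$ for $p^r\nmid m$ and $C_{p^r}(x_0)$ discrete with smallest positive the class of $x_0$; the residual condition $F_{p^r}(mz_G-kx_0)\subsetneq(H_p)_{uw}$ collapses to $mz_G-kx_0\in p^r{\rm uw}(G)$, and Lemma \ref{lm235} translates this exactly into the congruence $m\equiv -k\,f_{G,p}(r)\pmod{p^r}$. Thus $E_{p,r,k}(mz_G)$ depends only on $m$ and $f_{G,p}(r)$.

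Collecting the steps, every ingredient in Schmitt's relative quantifier elimination for the type of $z_G$ is controlled by $\mathcal{CD}(G)$ in the all-dense case and by $\mathcal{CD}(G)$ together with $f_{G,p}$ at the maximal $H_p$ otherwise. The principal difficulty is the analysis of $E_{p,r,k}$: it bridges an ordered-group predicate inside ${\rm uw}(G)$, defined via spine manipulations, with the cyclic-order invariant $f_{G,p}$, defined inside $(G,R)$ through the map $U\colon G\to\UU$. Lemma \ref{lm235} --- proved by lifting through ${\rm uw}(G)/\langle z_G\rangle\simeq G$ and using uniqueness of $p^r$-th roots in the torsion-free setting --- is the key bridge that makes this translation precise.
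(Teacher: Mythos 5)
Your proof is correct and follows essentially the same route as the paper: Schmitt's relative quantifier elimination, the computation of $A_n(\alpha z_G)$ and $F_n(\alpha z_G)$ from Proposition \ref{prop222a}, and the case analysis of the predicates $M_{n,k}$, $D_{p,r,i}$, $E_{p,r,k}$, with Lemma \ref{lm235} supplying the bridge to $f_{G,p}$ in the last case. (Incidentally, your congruence $m\equiv -k\,f_{G,p}(r)\pmod{p^r}$ is the correct consequence of $p^ry=x+f_{G,p}(r)z_G$; the paper's displayed equivalence at that point carries a sign slip, which does not affect the conclusion.)
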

\begin{proof}  
Let $\Psi(Z)$ be a formula in the language of ordered groups. By Theorem \ref{relelimquant}, there 
exist an integer $n$, a formula $\varphi_0(X_1,\dots,X_k,Y_1,\dots,Y_k)$ 
in the language $Sp_n$, some terms $t_1,\dots,t_k$ with variables $Z$ and a 
quantifier-free formula 
$\varphi_1(Z)$ in the language defined before Theorem \ref{relelimquant} such that for 
every abelian ordered group $(\Gamma,\leq)$ and every $x$ in $\Gamma$ we have $(\Gamma,\leq)\models 
\Psi(\overrightarrow{x})$ if, and only if, 
$$Sp_n(\Gamma)\models \varphi_0(A_n(t_1(x)),\dots,A_n(t_k(x)),
F_n(t_1(x)),\dots,F_n(t_k(x)))) 
\mbox{ and }(\Gamma,\leq ) \models \varphi_1(\overrightarrow{x}).$$
\indent 
The only terms with variable $z_G$ look like $\alpha z_G$, where $\alpha\in \ZZ\backslash \{0\}$. 
Set $t_1(z_G)=\alpha_1z_G,\dots,t_k(z_G)=\alpha_kz_G$.  
Let $(n,\alpha_i)$ be the gcd of $n$ and $\alpha_i$. By (1) of Proposition \ref{prop222a}, we have 
$A_n(\alpha_i z_G)=A_n(z_G)$. By (c) of Lemma \ref{npropav411}, 
$F_n(\alpha_i z_G)=F_{\frac{n}{(n,\alpha_i)}}\left(\frac{\alpha_i}{(n,\alpha_i)}z_G\right)=
F_{\frac{n}{(n,\alpha_i)}}(z_G)$. 
Therefore, $\varphi_0$ can be written as 
$$\displaystyle{\varphi_0\left(A_n(z_G), F_{\frac{n}{(n,\alpha_1)}}(z_G), \dots,
F_\frac{n}{(n,\alpha_k)}(z_G)\right)}.$$ 
\indent Since $z_G$ is cofinal in ${\rm uw}(G)$, it follows from (1) of Proposition \ref{prop222a} 
that $A_n(z_G)$ is the greatest $(H_p)_{uw}$ such that $p$ divides $n$. 
If $z_G$ is $n$-divisible, then $\displaystyle{F_{\frac{n}{(n,\alpha_i)}}(z_G)}=\emptyset$.  
If $z_G$ is not divisible $n$-divisible, then by 
(1) of Proposition \ref{prop222a} $\displaystyle{F_{\frac{n}{(n,\alpha_i)}}(z_G)}$ 
is the greatest $(H_p)_{uw}$ such that $p$ divides $\displaystyle{\frac{n}{(n,\alpha_i)}}$.  
So all these subsets are determined by the preordered set of the $(H_p)_{uw}$'s. \\
\indent 
Now, we look at the quantifier-free formula $\varphi_1(z_G)$ in the language 
$(0,+,\leq_R,M_{n,k},D_{p,r,i},E_{p,r,i})$, where $n$, $k$, $r$, $i$ belong to $\NN$ and $p$ is a prime. \\ 
\indent (1) The formulas which contain only the symbols $0$, $+$, $\leq_R$ can be seen as formulas of $\ZZ$. 
Hence they do not depend on $z_G$ (for example, $\alpha z_G\geq_R (0,0_G)$ is e\-qui\-va\-lent to $\alpha \geq 0$). \\ 
\indent (2) 
By Lemma \ref{lem413}, if $C_n(z_G)$ is discrete, then it is not archimedean. Since the class of $z_G$ is 
cofinal, $z_G+A_n(z_G)$ does not belong to the smallest convex subgroup of $C_n(G)$ (which contains the 
smallest positive element). Hence $M_{n,k}(k'z_G)$ nether holds.  \\ 
\indent (3) 
Predicates $D_{p,r,i}(\alpha z_G)$. Assume that $z_G\notin p\, {\rm uw}(G)$. Hence 
$\alpha z_G\in p^r{\rm uw}(G)\Leftrightarrow 
\alpha\in p^r \ZZ$. Assume that $\alpha\notin p^r\ZZ$, say $\alpha=p^s\alpha'$, with 
$0\leq s<r$ and $(\alpha',p)=1$. By (c) of Lemma \ref{npropav411}, $F_{p^r}(\alpha z_G)=F_{p^{r-s}}(z_G)$. 
By (1) of Proposition \ref{prop222a}, $F_{p^{r-s}}(z_G)=(H_p)_{uw}$. 
Hence $F_{p^r}(x)\subsetneq F_{p^r}(z_G)\Rightarrow F_{p^r}(x)=\emptyset$, i.e.\ $x\in p^r{\rm uw}(G)$. 
Let $x\in p^r {\rm uw}(G)$. For $i\leq r$ we have: 
$$\alpha z_G-x\in p^i{\rm uw}(G)\Leftrightarrow \alpha z_G\in p^i{\rm uw}(G) \Leftrightarrow 
\alpha\in p^i\ZZ\Leftrightarrow i\leq s,$$
which does not depend on $(G,R)$.\\ 
\indent If this holds, then by (1) of Proposition \ref{prop222a} we have 
$$\displaystyle{F_{p^r}\left(\frac{\alpha z_G-x}{p^i}\right)=\emptyset}\mbox{ or }
\displaystyle{F_{p^r}\left(\frac{\alpha z_G-x}{p^i}\right)=(H_p)_{uw}}.$$
\indent In any case, 
$\displaystyle{F_{p^r}\left(\frac{\alpha z_G-x}{p^i}\right)\subseteq F_{p^r}(z_G)}$. 
So, $D_{p,r,i}(\alpha z_G)$ holds. \\
\indent Therefore, the formula $D_{p,r,i}(\alpha z_G)$ is determined by $\mathcal{CD}(G)$. \\ 
\indent (4) 
Predicates $E_{p,r,k}(\alpha z_G)$ (there exists some $x\in {\rm uw}(G)$ such that $A_{p^r}(x)=
F_{p^r}(\alpha z_G)$, 
$C_{p^r}(x)$ is discrete, the class of $x$ is the smallest positive element, and 
$F_{p^r}(\alpha z_G-kx)\subsetneq F_{p^r}(\alpha z_G)$). 
By Corollary \ref{corr227}, if $C_{p^r}(x)$ is discrete, then 
$B_{p^r}(x)={\rm uw}(G)$ and $A_{p^r}(x)$ is the maximum of the set of all the $(H_p)_{uw}$'s. 
This is determined by $\mathcal{CD}(G)$ and the set of all primes $p$ such that 
$G/H_p$ is discrete. \\
\indent We assume that ${\rm uw}(G)/(H_p)_{uw}$ is discrete. Then ${\rm uw}(G)$ is not $p$ divisible, 
so $G$ is not c-$p$-divisible. In particular $z_G$ is not $p$-divisible (Lemma \ref{cdiv}). 
We let $x$ with class in the smallest positive element. \\
\indent If $\alpha z_G\in p^r{\rm uw}(G)$, then we already saw that $F_{p^r}(\alpha z_G)=\emptyset$. It 
follows that $E_{p,r,k}(\alpha z_G)$ doesn't hold, since the sets $A_{p^r}(x)$ are nonempty. This is determined by $\mathcal{CD}(G)$. \\
\indent Otherwise, we have $F_{p^r}(\alpha z_G)=(H_p)_{uw}$, and the last condition is e\-qui\-va\-lent to 
$F_{p^r}(\alpha z_G-kx)=\emptyset$. This in turn is e\-qui\-va\-lent to $\alpha z_G-kx\in p^r{\rm uw}(G)$. 
By Lemma \ref{lm235}, there exists $y\in {\rm uw}(G)$ such that 
$p^ry=x+f_{G,p}(r)z_G$. Since $z_G$ is not $p$-divisible, we have: 
$$\alpha z_G-kx\in p^r{\rm uw}(G)\Leftrightarrow \alpha z_G-k f_{G,p}(r)z_G\in p^r{\rm uw}(G)
\Leftrightarrow \alpha -k f_{G,p}(r)\in p^r\ZZ, $$
\indent which is entirely determined by the function $f_{G,p}$. 
\end{proof}
\indent Now, Theorem \ref{thm221} follows from Proposition \ref{propunsens} and Theorem \ref{autresens}. \\[2mm]
\indent Note that a divisible abelian cyclically ordered group $G$ is c-regular if, and only if, 
for every prime $p$ we have either $H_p=\{0_G\}$ or $H_p=G$. 
If $H_p=G$ for every $p$, then it is c-divisible, and if $H_p=\{0_G\}$ 
for every $p$, then it is torsion-free c-regular. 
\subsection{Cyclic orders on the additive group $\QQ$ of rational numbers.}\label{sec4}
We describe the cyclic orders on $\QQ$, we deduce a characterization of the families of functions 
$f_{G,p}$, and we prove Theorem \ref{uncount}. Note that there is only one linear order on $\mathbb{Q}$. \\ 
\indent The description of the cyclic orders on $\QQ$ will give rise to the characterization of all 
functions $f_{G,p}$ because of the following proposition. 
\begin{Proposition}\label{prop215}(\cite[Proposition 4.38]{Le})  
Let $(G,R)$ be an abelian cyclically ordered group which is not c-archimedean. Then $(G,R)$ is discrete and 
divisible if, and only if, there exists a discrete cyclic order $R'$ on the group $\QQ$ such that $(\QQ,R')$ 
is an elementary substructure of $(G,R)$. 
\end{Proposition}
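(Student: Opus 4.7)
The plan is to handle both directions separately, using the classification of discrete divisible abelian cyclically ordered groups developed earlier in the paper.

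For the easy direction ($\Leftarrow$), suppose there is a discrete cyclic order $R'$ on $\QQ$ with $(\QQ,R')\preceq (G,R)$. Discreteness is the first-order statement $\exists z\,(z\neq 0\,\wedge\,\forall h\,\neg R(0,h,z))$, divisibility is first-order for each $n$, and being c-archimedean is captured by the scheme $\forall g,h\,\exists n\,\neg R(0,ng,h)$; each of these properties transfers under the elementary embedding. Now $(\QQ,R')$ is infinite, discrete, and divisible, so it cannot be c-archimedean: by Fact \ref{fact17} a c-archimedean cyclically ordered group embeds in $\UU$, and by Remark \ref{r} any divisible subgroup of $\UU$ is either trivial or dense. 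Hence $(\QQ,R')$, and therefore $(G,R)$, is not c-archimedean.

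For the main direction ($\Rightarrow$), assume $(G,R)$ is discrete, divisible, and not c-archimedean. By Corollary \ref{cor210}, $G$ is torsion-free; by Lemma \ref{discrete}, $1_G\in l(G)$ and is the least positive element of $l(G)$. Viewing $G$ as a $\QQ$-vector space, I set $H:=\QQ\cdot 1_G\simeq \QQ$. The key observation is $H\cap l(G)=\ZZ\cdot 1_G$: any $(a/b)\cdot 1_G\in l(G)$ with $b\geq 2$ and $\gcd(a,b)=1$ would force $(1/b)\cdot 1_G$ to be a positive element of $l(G)$ strictly below $1_G$, a contradiction. Applying the rule from the end of Subsection \ref{subsec21} (with $g=1_G$ and $U(1_G)=1\neq U(h)$) to any $h\in H\setminus l(G)$ yields $R(0,1_G,h)$, hence $\neg R(0,h,1_G)$; combined with the check inside $\ZZ\cdot 1_G$, this shows that $1_G$ is still the least positive element of $(H,R|_H)$. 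So $(H,R|_H)$ is a discrete, divisible, torsion-free abelian cyclically ordered group and, via $1_G\mapsto 1$, isomorphic to some $(\QQ,R')$ with $R'$ discrete.

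It remains to show that the inclusion $(H,R|_H)\hookrightarrow (G,R)$ is elementary. First I verify $f_{H,p}=f_{G,p}$ for every prime $p$: since $H_p=\{0\}$ on both sides, and the canonical witness $g=1_G$ lies in $H$, the unique $h\in G$ with $p^n h=1_G$ also lies in $H$ and its $U$-value is the same whether read off from $H$ or from $G$. By Theorem \ref{nth1} this already gives $(H,R|_H)\equiv (G,R)$. To promote this to elementarity of the inclusion, I pass to the Rieger unwounds $\mathrm{uw}(H)\hookrightarrow \mathrm{uw}(G)$ with $z_H=z_G$, and invoke Schmitt's relative quantifier elimination (Theorem \ref{relelimquant}). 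Proposition \ref{prop222a} shows that $Sp_n(\mathrm{uw}(H))=Sp_n(\mathrm{uw}(G))=\{\{0\}\}$, so the spine part of the reduction is trivial and any formula becomes a Boolean combination of conditions among the predicates $M_{n,k}, D_{p,r,i}, E_{p,r,i}$ applied to $\ZZ$-linear combinations of $z_G$ and parameters from $\mathrm{uw}(H)$; the analysis of those predicates carried out in the proof of Theorem \ref{autresens} shows that each of them is controlled solely by the shared functions $f_{H,p}=f_{G,p}$, so they take the same truth values in the two unwounds.

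The main obstacle will be this final upgrade from elementary equivalence to elementarity of the inclusion. Theorem \ref{nth1} only delivers $\equiv$, and extracting $\preceq$ forces one to track quantifier-free types of arbitrary parameter tuples from $\mathrm{uw}(H)$, not just of the distinguished element $z_G$. The work amounts to checking that every Schmitt-style witness needed to verify a predicate $D_{p,r,i}$ or $E_{p,r,i}$ in $\mathrm{uw}(G)$ is already available in $\mathrm{uw}(H)$, which rests on the fact that every element of $H$ is a rational multiple of $1_G$ and that its unique $p^n$-th root in $G$ lies in $H$ by torsion-freeness and divisibility of $H$.
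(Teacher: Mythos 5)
The paper does not actually prove this proposition: it is imported from \cite[Proposition 4.38]{Le}, so there is no in-paper argument to compare against, and your attempt must be judged on its own. Your construction is the right one. Taking $H=\QQ\cdot 1_G$ inside the $\QQ$-vector space $G$ (legitimate since $G$ is torsion-free by Corollary \ref{cor210} and nonlinear, so Lemma \ref{discrete} gives $1_G\in l(G)$), checking $H\cap l(G)=\ZZ\cdot 1_G$ via a B\'ezout argument and the minimality of $1_G$, and concluding that $(H,R|_H)$ is a discrete divisible cyclic order on $\QQ$ with the same $1_G$ and the same functions $f_{G,p}$ --- all of this is correct. The backward direction is also fine; your parenthetical claim that c-archimedeanness is first-order (hence transfers under $\preceq$) is false, as it is an infinitary scheme, but you neither need it (non-c-archimedeanness of $(G,R)$ is a hypothesis) nor rely on it, since you re-derive non-c-archimedeanness of $(\QQ,R')$ from discreteness and divisibility.

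The genuine gap is the final step, which you flag yourself as ``the main obstacle'' and then do not close: Theorem \ref{nth1} only yields $(H,R|_H)\equiv(G,R)$, and your route to $(H,R|_H)\preceq(G,R)$ remains a promissory note. The specific claim you lean on --- that the analysis of $M_{n,k}$, $D_{p,r,i}$, $E_{p,r,k}$ in the proof of Theorem \ref{autresens} shows these predicates are controlled solely by the shared functions $f_{H,p}=f_{G,p}$ --- is not accurate: that analysis treats only terms of the form $\alpha z_G$, i.e., it computes the type of the single element $z_G$, whereas elementarity of the inclusion requires evaluating these predicates on arbitrary $\ZZ$-linear combinations of parameters from ${\rm uw}(H)$. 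What actually makes the step work is a purity statement you never establish: since $G$ is torsion-free and $H$ is a $\QQ$-subspace, the unique $n$-th root in $G$ of any element of $H$ already lies in $H$, whence ${\rm uw}(H)$ is pure in ${\rm uw}(G)$ and the smallest positive element $(0,1_G)$ lies in ${\rm uw}(H)$; over the trivial spines of Proposition \ref{prop222a} the three predicates then reduce to conditions of the form $t(\vec a)\in p^r\Gamma$ or $t(\vec a)\equiv k(0,1_G)$ modulo $p^r\Gamma$, which are absolute between the two unwounds by that purity. You name the relevant fact in your last sentence, but the verification --- which is the entire content of the elementarity claim --- is left undone. (Alternatively, \cite{Le} supplies an elementary-substructure criterion for discrete c-regular cyclically ordered groups in terms of purity and characteristic sequences, which would let you conclude directly once purity of $H$ in $G$ is checked.)
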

\indent Note that in the statement of \cite[Proposition 4.38]{Le} the word ``c-regular'' 
is redundant, since by Proposition \ref{prop211} 
every discrete divisible abelian cyclically ordered group is c-regular. \\
\indent The description of the cyclic orders on $\QQ$ is based on the following mappings. 
\begin{Fact}\label{fact417} 
For every prime number $p$, let $f_p$ be a mapping from $\NN$ in 
$\{0,\dots,p-1\}$. Then one can define in a unique way a mapping from $\NN\backslash \{0\}$ to $\NN$ 
in the following way. 
Set $f(1)=0$. For $p$ prime and $r\in \NN\backslash \{0\}$, 
$f(p^r)=f_p(1)+pf_p(2)+\cdots+p^{r-1}f_p(r)$. For $n=p_1^{r_1}\cdots p_k^{r_k}$ with 
$r_1,\dots, r_k$ in $\NN\backslash \{0\}$, let $f(n)$ be the unique 
integer in $\{0,\dots,n-1\}$ which is congruent to every $f\left(p_i^{r_i}\right)$ modulo $p_i^{r_i}$ 
(the existence of $f(n)$ follows from the Chinese remainder theorem). 
\end{Fact}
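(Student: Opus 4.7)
The statement is really a well-definedness verification whose content is packaged as a definition, and the core of it is the Chinese Remainder Theorem. The plan would proceed in three stages.

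First, I would check that the prime-power clause makes sense: the formula $f(p^r) = f_p(1) + p f_p(2) + \cdots + p^{r-1} f_p(r)$ produces an element of $\{0, 1, \ldots, p^r - 1\}$. Since each $f_p(i) \in \{0, \ldots, p-1\}$, the sum is bounded above by $(p-1)(1 + p + \cdots + p^{r-1}) = p^r - 1$. This is the standard bound on base-$p$ expansions, and it ensures the prime-power assignment is unambiguous.

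Second, for a general $n = p_1^{r_1} \cdots p_k^{r_k}$ with distinct primes $p_1, \ldots, p_k$, the moduli $p_1^{r_1}, \ldots, p_k^{r_k}$ are pairwise coprime with product $n$. The Chinese Remainder Theorem then supplies a unique element $f(n) \in \{0, \ldots, n-1\}$ satisfying $f(n) \equiv f(p_i^{r_i}) \pmod{p_i^{r_i}}$ for every $i$. Uniqueness of $f(n)$ in the range $\{0, \ldots, n-1\}$, rather than merely modulo $n$, comes from the choice of residue system, and the construction does not depend on the ordering of the primes in the factorization since the system of congruences is symmetric in the indices.

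Third, I would verify internal consistency between the two clauses: when $n = p^r$ is itself a prime power, the CRT clause should recover the explicit formula. In that case ($k=1$), the CRT condition says $f(n)$ is the unique element of $\{0, \ldots, p^r - 1\}$ congruent to $f(p^r)$ modulo $p^r$; by the first step $f(p^r)$ already lies in this range, so $f(n) = f(p^r)$, and the two definitions agree. There is no substantive obstacle here — the fact is a repackaging of CRT together with the uniqueness of the prime factorization of $n$ — but pointing out consistency on prime powers and independence from the ordering of the $p_i$ is where care is required.
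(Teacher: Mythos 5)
Your argument is correct and follows the same route the paper takes: the paper offers no separate proof of this Fact beyond the parenthetical appeal to the Chinese Remainder Theorem, and your three checks (the bound $f(p^r)\leq(p-1)(1+p+\cdots+p^{r-1})=p^r-1$, uniqueness of the CRT solution in $\{0,\dots,n-1\}$, and consistency of the two clauses on prime powers) are exactly the routine verifications that make that appeal legitimate. Nothing further is needed.
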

\begin{Proposition}\label{prop418} Let $R$ be a ternary relation on $\QQ$. Then $R$ is a 
cyclic order if, and only if, there exist  mapping from $\NN\backslash \{0\}$ to $\NN$, defined as in 
Fact \ref{fact417}, $\theta\in \; [0,2\pi[$ and a non-negative $a\in\RR$ such that 
$(\QQ,R)$ is c-isomorphic to the subgroup of the lexicographic product $\UU\overrightarrow{\times} \QQ$ 
generated by 
$$\displaystyle{\left\{\left({\rm exp}\left(i\frac{\theta+2f(n)\pi}{n}\right),\frac{a}{n}\right)\; :\;  n\in 
\NN\backslash \{0\}\right\}}.$$
\end{Proposition}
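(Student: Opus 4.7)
The plan is to prove the biconditional by treating the two implications separately, with most of the effort devoted to necessity.

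For sufficiency, set $g_n = (\exp(i(\theta + 2f(n)\pi)/n), a/n)$ and let $H \subseteq \UU \overrightarrow{\times} \QQ$ be the subgroup these generators produce. Two short computations are needed: $n g_n = g_1$ for every $n \geq 1$ (using $f(1) = 0$), and $m g_{mn} = g_n$ for every $m, n \geq 1$. The latter reduces to the congruence $f(mn) \equiv f(n) \pmod{n}$, which is immediate from the CRT construction in Fact \ref{fact417}: for each prime power $p^r \mid n$, both $f(mn)$ and $f(n)$ reduce to $f(p^r) \bmod p^r$. These relations exhibit $H = \bigcup_n \tfrac{1}{n}\langle g_1 \rangle$ as a torsion-free rank-one divisible group, hence $H \simeq \QQ$ (outside the degenerate specialization $\theta = 0, a = 0, f \equiv 0$), with cyclic order inherited from the lex product.

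For necessity, given a cyclic order $R$ on $\QQ$, I would pass to the unwound $\Gamma = {\rm uw}(\QQ, R)$ with its cofinal central element $z$. By Corollary \ref{corapcdiv} applied to torsion-free $\QQ$, $z$ is not divisible by any integer in $\Gamma$, while $\Gamma/\langle z \rangle \simeq \QQ$ is divisible. Fix $x_1 \in \Gamma$ projecting to $1 \in \QQ$. Divisibility modulo $\langle z \rangle$ then gives, for each $n \geq 1$, an element $x_n \in \Gamma$ together with a uniquely determined $f(n) \in \{0, \ldots, n-1\}$ satisfying $n x_n \equiv x_1 + f(n) z \pmod{n z}$. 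The identity $m \cdot (1/(mn)) = 1/n$ in $\QQ$ lifts to $m x_{mn} \equiv x_n \pmod{z}$ and forces $f(mn) \equiv f(n) \pmod{n}$; extracting the $p$-adic digits $f_p(r) \in \{0, \ldots, p-1\}$ from $f(p^r)$ then identifies $f$ with the CRT reconstruction described in Fact \ref{fact417}.

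To build the embedding, let $L = l(\QQ)_{uw}$, the largest proper convex subgroup of $\Gamma$; then $\Gamma/L$ is archimedean and $L \simeq l(\QQ)$ is a subgroup of $\QQ$ of rank at most $1$. In the nonlinear case, embed $\Gamma/L \hookrightarrow \RR$ sending $z + L \mapsto 2\pi$, and let $\theta \in [0, 2\pi[$ be the image of $x_1 + L$. Passing to the divisible hull $\tilde\Gamma$, choose a $\QQ$-linear splitting $\tilde\Gamma = \tilde A \oplus \tilde L$; since $\tilde L$ is of rank at most $1$, this yields an embedding $\Gamma \hookrightarrow \RR \overrightarrow{\times} \QQ$ with $z \mapsto (2\pi, 0)$, and quotienting by $\langle z \rangle$ gives a c-embedding $\phi : \QQ \hookrightarrow \UU \overrightarrow{\times} \QQ$. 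Let $a$ be the $\QQ$-coordinate of $\phi(1)$; a direct verification shows $\phi(1/n) = g_n$. The linear case reduces to the specialization $\theta = 0, f \equiv 0, a > 0$ and the c-archimedean case to $a = 0$. The main obstacle is this embedding step: $\Gamma$ need not split as $A \oplus L$ over $\ZZ$, so we are forced to work in the divisible hull and then verify that the resulting $\QQ$-coordinate of $\phi(1/n)$ equals $a/n$ exactly --- not merely up to a divisible completion --- which is the delicate point that pins down $a$ canonically from $x_1$ and the chosen splitting.
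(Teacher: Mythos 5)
Your proof is essentially correct, but the necessity direction follows a genuinely different route from the paper's. The paper splits the proof into Lemmas \ref{lem418}, \ref{a236}, \ref{lm238} and \ref{lm239}: it first invokes Swierszkowski's representation theorem to embed $(\QQ,R)$ into $\UU\overrightarrow{\times}L$ for some linearly ordered $L$, observes that both components of the embedding are group homomorphisms on $\QQ$ and hence $\QQ$-linear (so the second component lands in a copy of $\QQ$ and sends $x_0/n$ to $a/n$ automatically), and then reads off $\theta$, $a$ and $f(n)$ directly from the two coordinates of the images of the $x_0/n$. You instead rebuild the embedding from scratch out of the Rieger unwound $\Gamma$, via the archimedean quotient $\Gamma/l(\QQ)_{uw}\hookrightarrow\RR$ and a $\QQ$-linear splitting of the divisible hull, and you extract $f(n)$ on the unwound side from the relation $nx_n=x_1+f(n)z$; your congruence argument $f(mn)\equiv f(n)\pmod n$ and the digit extraction reproduce the content of Lemma \ref{lm239}, and your sufficiency computation is Lemma \ref{lm238} in substance. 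What the paper's route buys is brevity (the hard structural work is outsourced to \cite{Sw}); what yours buys is self-containedness and a direct link between $f$ and divisibility of $z$ in the unwound, which is closer in spirit to how $f_{G,p}$ is used elsewhere in the paper (Lemma \ref{lm235}). Two small points: the ``delicate point'' you flag at the end is not actually delicate --- once $\phi:\QQ\to\UU\overrightarrow{\times}\QQ$ is a group homomorphism, its second component is a homomorphism $\QQ\to\QQ$, hence $\QQ$-linear, so $\varphi_2(1/n)=a/n$ exactly with no further argument; and your normalizations are slightly under-specified (you should adjust $x_1$ by a multiple of $z$ to force $\theta\in[0,2\pi[$, and handle $a<0$ by regenerating the subgroup from the $-x_n$, which replaces $f(n)$ by $n-1-f(n)$; also the degenerate parameter sets for which the generated subgroup fails to be isomorphic to $\QQ$ are all pairs with $a=0$ and $\theta\in\QQ\pi$, not just $\theta=0$, $f\equiv 0$). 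None of these affects the validity of the argument.
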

\indent The proof of this proposition consists of Lemmas \ref{lem418}, \ref{a236}, \ref{lm238}, \ref{lm239}. 
\begin{Lemma}(Lucas)\label{lem418} Let $R$ be a nonlinear cyclic order on $\QQ$. 
The cyclically ordered group $(\QQ,R)$ embeds in the lexicographic product $\UU\overrightarrow{\times} \QQ$. 
\end{Lemma}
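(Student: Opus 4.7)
The plan is to exhibit an explicit c-embedding $\phi:(\QQ,R)\to \UU\overrightarrow{\times}\QQ$ of the form $\phi(q)=(U(q),\epsilon q)$, where $U:\QQ\to U(\QQ)\subseteq\UU$ is the canonical c-homomorphism with kernel $l(\QQ)$ and $\epsilon\in\{-1,0,+1\}$ is chosen according to the structure of $l(\QQ)$. If $l(\QQ)=\{0_\QQ\}$, then $(\QQ,R)$ is c-archimedean by Fact \ref{fact17}(1), the map $U$ is injective, and $q\mapsto (U(q),0)$ already gives a c-embedding into $\UU\overrightarrow{\times}\QQ$ (the relation $R'$ reduces to its third defining clause). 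So assume $l(\QQ)\neq\{0_\QQ\}$.

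The subgroup $l(\QQ)$ is non-trivial, torsion-free of rank $1$, carrying a compatible total order $<_R$. On a rank-one torsion-free abelian group there are exactly two compatible total orders (determined by the sign of any nonzero element), and one of them is the restriction to $l(\QQ)$ of the standard order $<_{std}$ on $\QQ$. I would choose $\epsilon\in\{-1,+1\}$ so that multiplication by $\epsilon$ is an order-preserving group homomorphism $(l(\QQ),<_R)\to(\QQ,<_{std})$, and set $\phi(q)=(U(q),\epsilon q)$. This is clearly a group homomorphism, and injective since $\epsilon q=0\Rightarrow q=0$.

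It remains to verify, for distinct $q_1,q_2,q_3\in\QQ$, that $R(q_1,q_2,q_3)\Leftrightarrow R'(\phi(q_1),\phi(q_2),\phi(q_3))$. I would case-split on how many of the values $U(q_i)$ coincide, i.e.\ how many $q_i$ lie in a common coset of $l(\QQ)$. When all three coincide, $R$ restricts on the coset to the linear cyclic order induced by $<_R$ and $R'$ unfolds to the linear cyclic order on the second coordinates induced by $<_{std}$; these match by the choice of $\epsilon$. When all three $U(q_i)$ are distinct, a cyclic permutation (chosen to move out any $q_i\in l(\QQ)$ from the base position) together with the facts from Subsection \ref{subsec21} yields $R(q_1,q_2,q_3)\Leftrightarrow R_\UU(U(q_1),U(q_2),U(q_3))$, which is exactly the third clause of $R'$.

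The main obstacle is the mixed case where exactly two of the $U(q_i)$ coincide, say $U(q_1)=U(q_2)\neq U(q_3)$: one must treat subcases according to whether $q_1,q_2\in l(\QQ)$, or $q_3\in l(\QQ)$, or none of them is in $l(\QQ)$. In each subcase, a suitable cyclic shift reduces the problem to $R(0,g,h)$ with $g\in l(\QQ)$ and $h\notin l(\QQ)$, whence the fact in Subsection \ref{subsec21} gives $R(q_1,q_2,q_3)\Leftrightarrow q_2-q_1>_R 0$ in $l(\QQ)$; on the other side $R'(\phi(q_1),\phi(q_2),\phi(q_3))$ unfolds (only $\sigma=e\in A_3$ is compatible with the equal-first-coordinate pattern) to $\epsilon(q_2-q_1)>_{std}0$, and the two conditions agree by the choice of $\epsilon$. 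Once this bookkeeping is done, $\phi$ is the required c-embedding.
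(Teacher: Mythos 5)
Your proof is correct, but it takes a genuinely different route from the paper's. The paper disposes of the lemma in two lines: it invokes \'Swierczkowski's representation theorem (\cite[p.\ 161]{Sw}) to embed $(\QQ,R)$ into $\UU\overrightarrow{\times}L$ for \emph{some} linearly ordered group $L$, and then observes that the second component map, being a homomorphism out of the divisible group $\QQ$, is $\QQ$-linear, so its image is a rank-$\leq 1$ divisible ordered group, i.e.\ trivial or order-isomorphic to $\QQ$. You instead construct the embedding by hand as $q\mapsto(U(q),\epsilon q)$, using that $l(\QQ)$ is a rank-one subgroup of $\QQ$ carrying one of exactly two compatible orders to pin down the sign $\epsilon$, and then verify the three clauses of the lexicographic cyclic order against the facts of Subsection \ref{subsec21} (translation invariance, the linear cyclic order on $l(\QQ)$, and the criterion relating $R(e,g,h)$ to $U$ and to the sign of $g$). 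Your case analysis is complete — in particular the delicate subcase of two elements in a common nontrivial coset of $l(\QQ)$ is correctly reduced by a cyclic shift plus translation to the sign criterion, and only the even permutation matching the equal-coordinate pattern can fire in the second clause of $R'$. What each approach buys: the paper's argument is shorter and reuses a black-box theorem valid for all cyclically ordered groups; yours is self-contained and, notably, produces the embedding in the explicit form $\varphi_2(q)=\epsilon q$ that the paper only extracts afterwards in Lemma \ref{a236}, so it effectively merges the two steps. One caveat worth recording: your construction exploits that $G=\QQ$ comes with a canonical linear coordinate and that $l(\QQ)$ is a rank-one subgroup of it; for a general cyclically ordered group the analogous naive second coordinate does not exist (the extension $0\to l(G)\to G\to U(G)\to 0$ need not split in a way compatible with the order), which is why the general statement requires \'Swierczkowski's theorem.
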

\begin{proof} By \cite[p.\ 161]{Sw}, there is a linearly ordered group $L$ such that $(\QQ,R)$ embeds in 
the lexicographic product $\UU\overrightarrow{\times} L$. 
Let $x\mapsto (\varphi_1(x),\varphi_2(x))$ be this embedding. Then 
$\varphi_1$ and $\varphi_2$ are group isomorphisms. It follows that they are $\QQ$-linear. In particular,  the image of 
$\varphi_2$ is $\QQ\!\cdot\!\varphi_2(1)$. So either $\QQ\!\cdot\!\varphi_2(1)$ is trivial, or it is isomorphic to $\QQ$. 
In any case $(\QQ,R)$ embeds in $\UU\overrightarrow{\times} \QQ$. 
\end{proof}
\begin{Lemma}\label{a236} 
Let $G$ be a subgroup of $\UU\overrightarrow{\times} \QQ$ which is isomorphic to $\QQ$ in the language of groups. 
There exist $\theta\in \RR$, $0\leq a\in \QQ$, with $a=0\Rightarrow \theta\notin \QQ\!\cdot\!\pi$, such that 
$G$ is generated by 
$$\displaystyle{\left\{\left({\rm exp}\left(i\frac{\theta+2f(n)\pi}{n}\right),\frac{a}{n}\right)\; :\;  n\in 
\NN\backslash \{0\}\right\}},$$  
where $f$ is a mapping from $\NN\backslash \{ 0\}$ in $\NN$ which satisfies: \\
(*) for every $n\in \NN\backslash \{0\}$, $f(n)\in\{0,\dots,n-1\}$, \\
(**) for every $m$, $n$ in $\NN\backslash \{0\}$, $f(n)$ is the remainder of the euclidean division of $f(mn)$ by $n$. 
\end{Lemma}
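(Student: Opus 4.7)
My plan is to exploit the fact that $G\simeq\QQ$ is a torsion-free divisible abelian group of $\QQ$-rank one. I first examine the projection $\varphi\colon G\to\QQ$ onto the second coordinate: it is a group homomorphism whose image is divisible, and since any nonzero divisible subgroup of $\QQ$ must equal $\QQ$, we get $\varphi(G)=\{0\}$ or $\varphi(G)=\QQ$. In either case I choose a distinguished element $g_1$: if $\varphi\neq 0$, pick $g_1\in G$ with $a:=\varphi(g_1)>0$; otherwise pick any nonzero $g_1\in G$ and set $a=0$. Writing $g_1=(\exp(i\theta),a)$, I note that in the second case $g_1\neq 0$ in a torsion-free group forces $\exp(i\theta)$ to have infinite order in $\UU$, i.e.\ $\theta\notin\QQ\cdot\pi$, as required.

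Next I build the map $f$. For each $n\in\NN\backslash\{0\}$, divisibility and torsion-freeness of $G$ yield a unique $g_n\in G$ with $n g_n=g_1$; I write $g_n=(u_n,a/n)$. The equation $u_n^n=\exp(i\theta)$ forces $u_n=\exp(i(\theta+2k_n\pi)/n)$ for a unique $k_n\in\{0,\dots,n-1\}$, and I set $f(n):=k_n$, giving condition (*). By uniqueness of $n$-th roots in $G$, $m\,g_{mn}=g_n$ for all $m,n\in\NN\backslash\{0\}$, so $u_{mn}^m=u_n$; comparing arguments modulo $2\pi$ yields $f(mn)\equiv f(n)\pmod{n}$, and combined with $f(n)\in\{0,\dots,n-1\}$ this is precisely condition (**).

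Finally I verify that the elements $\bigl(\exp(i(\theta+2f(n)\pi)/n),\,a/n\bigr)=g_n$ generate $G$. The subgroup $H:=\langle g_n:n\in\NN\backslash\{0\}\rangle$ contains $g_n=\tfrac{1}{n}g_1$ for every $n$, hence contains the $\QQ$-span of $g_1$ in $G$; since $G\simeq\QQ$ has rank one over $\QQ$ and $g_1\neq 0$, this forces $H=\QQ\cdot g_1=G$. The only slightly delicate point is recognizing that the constraint ``$a=0\Rightarrow\theta\notin\QQ\cdot\pi$'' is not a free choice but a consequence of torsion-freeness of $G$, as handled in the first paragraph; everything else is an unwinding of divisibility and unique $n$-th roots.
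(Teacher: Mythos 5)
Your proof is correct and follows essentially the same route as the paper: both arguments fix a nonzero element $g_1=(\exp(i\theta),a)$ (adjusting its sign so $a\geq 0$ and noting that $a=0$ forces $\theta\notin\QQ\cdot\pi$ by torsion-freeness), use unique divisibility of $G\simeq\QQ$ to produce the elements $g_n$ with $ng_n=g_1$, read off $f(n)$ from the argument of the first coordinate to get (*), and compare $u_{mn}^m$ with $u_n$ modulo $2\pi$ to get (**). The only cosmetic difference is that the paper phrases this via a fixed isomorphism $x\mapsto(\varphi_1(x),\varphi_2(x))$ from $\QQ$ to $G$ and the images $\varphi(x_0/n)$, which are exactly your $g_n$.
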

\begin{proof} Let $x\mapsto (\varphi_1(x),\varphi_2(x))$ be the group isomorphism between $\QQ$ and $G$. 
We pick an $x_0\neq 0$ in $\QQ$ and we let 
$({\rm exp}(i\theta),a)=(\varphi_1(x_0),\varphi_2(x_0))$. Then $G$ is generated by the set 
$$\displaystyle{\left\{ \left(\varphi_1\left(\frac{x_0}{n}\right),\varphi_2\left(\frac{x_0}{n}\right) \right) 
\; :\; n\in \NN\backslash \{0\}\right\}}.$$ 
\indent Now, $\varphi_2\left(\frac{x_0}{n}\right)=\frac{a}{n}$, and 
there exists a unique $f(n)\in \{0,\dots, n-1\}$ such that 
$$\varphi_1\left(\frac{x_0}{n}\right)=
{\rm exp}\left(i\frac{\theta+2f(n)\pi}{n}\right).$$ 
\indent Consequently, $f$ satisfies (*). \\
\indent If $a<0$, then we can take $-x_0$ instead 
of $x_0$, so we can assume that $a\geq 0$. Now, $G$ is torsion-free, so if $a=0$, then,  
for every $m\in \ZZ\backslash \{0\}$ and every $n\in \NN\backslash \{0\}$, 
${\rm exp}\left(im\frac{\theta+2f(n)\pi}{n}\right)\neq 1$. This implies that $\theta \notin \QQ\!\cdot\!\pi$. Conversely, 
if $a\neq 0$ or $\theta \notin \QQ\!\cdot\!\pi$, then 
$\left({\rm exp}\left(im\frac{\theta+2f(n)\pi}{n}\right),\frac{m}{n}a\right)=(1,0)\Rightarrow m=0$. \\
\indent Let $m$, $n$ in $\NN\backslash \{0\}$. Then $\varphi_1\left(\frac{x_0}{n}\right)=m\varphi_1\left(\frac{x_0}{mn}\right)$. 
Hence $\frac{\theta+2f(n)\pi}{n}-m\frac{\theta+2f(mn)\pi}{mn}\in 2\pi\ZZ$. So 
$f(n)-f(mn)\in n\ZZ$, where $0\leq f(n)<n$ and $0\leq f(mn)<mn$. Hence $f(n)$ is the remainder of the euclidean 
division of $f(mn)$ by $n$. This proves that $f$ satisfies (**). 
\end{proof}
\begin{Lemma}\label{lm238} 
For every $\theta\in \RR$, $0\leq a\in \QQ$, with $a=0\Rightarrow \theta\notin \QQ\!\cdot\!\pi$, 
and every mapping $f$ from $\NN\backslash \{ 0\}$ in $\NN$ which satisfies 
(*) and (**), the set 
$$\displaystyle{G=
\left\{\left({\rm exp}\left(im\frac{\theta+2f(n)\pi}{n}\right),\frac{m}{n}a\right)\; :\; m\in \ZZ,\; n\in 
\NN\backslash \{0\}\right\}}$$
is a subgroup of $\UU\overrightarrow{\times} \QQ$, which is 
isomorphic to $\QQ$ in the language of groups. 
\end{Lemma}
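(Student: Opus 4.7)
The natural approach is to define the candidate group isomorphism
$$\varphi:\QQ\to \UU\overrightarrow{\times}\QQ,\qquad \varphi(m/n)=\left({\rm exp}\left(im\frac{\theta+2f(n)\pi}{n}\right),\frac{m}{n}a\right),$$
show it is a well-defined injective group homomorphism, and observe that $G=\varphi(\QQ)$, which makes $G$ a subgroup of $\UU\overrightarrow{\times}\QQ$ isomorphic to $\QQ$.

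The first step is well-definedness. Suppose $m/n=m'/n'$, i.e.\ $mn'=m'n$. The second coordinate clearly agrees. For the first coordinate, one must show
$$m\frac{\theta+2f(n)\pi}{n}-m'\frac{\theta+2f(n')\pi}{n'}\in 2\pi\ZZ.$$
The $\theta$-terms cancel since $mn'=m'n$, so it reduces to checking $mn'f(n)-m'nf(n')\equiv 0\pmod{nn'}$. Here property (**) is the key: $f(n)$ (resp.\ $f(n')$) is the remainder of $f(nn')$ modulo $n$ (resp.\ modulo $n'$). Substituting $f(n)=f(nn')-qn$ and $f(n')=f(nn')-q'n'$ for some $q,q'\in\ZZ$, the expression collapses to $nn'(q'm'-qm)$, which is in $nn'\ZZ$ as required.

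Next is the homomorphism property. Given $m_1/n_1$ and $m_2/n_2$, the second coordinates add in $\QQ$ without issue. For the first coordinate, comparing
$$(m_1n_2+m_2n_1)\frac{\theta+2f(n_1n_2)\pi}{n_1n_2}\qquad\text{with}\qquad m_1\frac{\theta+2f(n_1)\pi}{n_1}+m_2\frac{\theta+2f(n_2)\pi}{n_2}$$
leaves a discrepancy of $\displaystyle{\frac{2\pi m_1(f(n_1n_2)-f(n_1))}{n_1}+\frac{2\pi m_2(f(n_1n_2)-f(n_2))}{n_2}}$, which is in $2\pi\ZZ$ again by (**). Thus $\varphi$ is a group homomorphism, and in particular its image $G$ is a subgroup of $\UU\overrightarrow{\times}\QQ$.

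Finally, I handle injectivity by splitting on whether $a=0$. If $a>0$, then $\varphi(m/n)=(1,0)$ immediately forces $m=0$ from the second coordinate. If $a=0$, then $\theta\notin\QQ\!\cdot\!\pi$ by hypothesis, and $\varphi(m/n)=(1,0)$ reduces to $m(\theta+2f(n)\pi)/n\in 2\pi\ZZ$; any nonzero $m$ would then express $\theta$ as a rational multiple of $\pi$, a contradiction, so $m=0$. Hence $\ker\varphi=\{0\}$, $\varphi$ is an isomorphism onto its image, and $G\simeq\QQ$. I expect the main (though still routine) obstacle to be the well-definedness calculation, because it is where property (**) must be deployed with care to control both $f(n)$ and $f(n')$ simultaneously via the common value $f(nn')$.
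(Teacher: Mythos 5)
Your proof is correct and follows essentially the same route as the paper's: the same map $\varphi$, the same use of (**) via the Euclidean divisions of $f(nn')$ by $n$ and by $n'$ to control the first coordinate in the homomorphism computation, and the same case split on $a=0$ versus $a>0$ for injectivity. The only difference is bookkeeping: you verify well-definedness explicitly (equal rationals give equal images), whereas the paper proves the reverse implication (equal images force equal rationals) and treats well-definedness as implicit; both come down to the same congruence modulo $nn'$.
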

\begin{proof} Let $m$, $m'$ in $\ZZ$, $n$, $n'$ in $\NN\backslash\{0\}$ be such that 
$$\displaystyle{\left({\rm exp}\left(im\frac{\theta+2f(n)\pi}{n}\right),\frac{m}{n} a\right)=
\left({\rm exp}\left(im'\frac{\theta+2f(n')\pi}{n'}\right),\frac{m'}{n'}a\right)}.$$ 
Then $\left(\frac{m}{n}-\frac{m'}{n'}\right)\theta+2\left(\frac{mf(n)}{n}-\frac{m'f(n')}{n'}\right)\pi
\in 2\pi\ZZ$ and 
$\left(\frac{m}{n}-\frac{m'}{n'}\right)a=0$. If $a\neq 0$, then $\frac{m}{n}=\frac{m'}{n'}$. 
If $a=0$, then by hypothesis $\theta\notin \QQ\pi$. Hence 
$\left(\frac{m}{n}-\frac{m'}{n'}\right)\theta=0$, and $\frac{m}{n}=\frac{m'}{n'}$. \\
\indent Therefore, there is a one-to-one mapping between $\QQ$ and $G$: for $m\in \ZZ$ and 
$n\in \NN\backslash \{ 0\}$, 
$$\displaystyle{\varphi\left(\frac{m}{n}\right)= 
\left({\rm exp}\left(im\frac{\theta+2f(n)\pi}{n}\right),\frac{m}{n} a\right)}.$$
\indent Since $\displaystyle{\frac{m}{n}+\frac{m'}{n'}=\frac{mn'+m'n}{nn'}}$, to prove that 
$\varphi$ is an isomorphism in the language of groups, it suffices to prove that 
$\displaystyle{\frac{(mn'+m'n)f(nn')}{nn'}\!\cdot\!2\pi}$ is congruent to 
$\displaystyle{\frac{mn'f(n)+m'nf(n')}{nn'}\!\cdot\! 2\pi}$ modulo $2\pi$. 
By (**), the euclidean divisions of $f(nn')$ by $n$ and by $n'$ can be written as 
$f(nn')=qn+f(n)$ and $f(nn')=q'n'+f(n')$. Therefore 
$$\displaystyle{\begin{array}{rcl}
\frac{mn'f(n)+m'nf(n')}{nn'}\!\cdot\! 2\pi&=&\frac{(mn'+m'n)f(nn')-nn'(mq+m'q')}{nn'}\!\cdot\! 2\pi\\
&=&\frac{(mn'+m'n)f(nn')}{nn'}\!\cdot\! 2\pi-2(mq+m'q')\pi.
\end{array}}$$
It follows that $\varphi$ is an isomorphism in the language of groups, and so its image $G$ is a subgroup of 
$\UU\overrightarrow{\times} \QQ$. 
\end{proof}
\begin{Lemma}\label{lm239} 
There is a one-to-one correspondence between the set of mappings from $\NN\backslash \{ 0\}$ in $\NN$ 
which satisfies (*) and (**), and the set of families $\{ f_p\; : \; p \mbox{ a prime}\}$, where the $f_p$'s are 
mappings from $\NN\backslash \{0\}$ to $\{0,\dots,p-1\}$. 
\end{Lemma}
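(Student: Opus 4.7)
The plan is to exhibit two explicit maps between the two sets and check they are mutual inverses. In one direction, Fact \ref{fact417} already furnishes a map $\Phi : (f_p)_p \mapsto f$. In the other direction, given an $f$ satisfying (*) and (**), the plan is to recover $f_p(r)$ as the $(r{-}1)$-st digit in the base-$p$ expansion of $f(p^r)$, namely
\[
 f_p(r) \;=\; \left\lfloor \tfrac{f(p^r)}{p^{r-1}}\right\rfloor - p\left\lfloor \tfrac{f(p^r)}{p^{r}}\right\rfloor \pmod{p}.
\]
Write this second map $\Psi$. The values lie in $\{0,\dots,p-1\}$ by (*).

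First I would check that $\Psi$ is well defined, and more importantly, that the sequence of $p$-adic digits of $f(p^r)$ is stable as $r$ grows: applying (**) with $m=p$ and $n=p^r$ shows that $f(p^r)$ is the remainder of $f(p^{r+1})$ mod $p^r$, so the base-$p$ expansion of $f(p^r)$ is exactly the truncation of that of $f(p^{r+1})$. Therefore the digit extracted to define $f_p(r)$ does not depend on which $f(p^s)$ with $s\geq r$ one uses; equivalently, $f(p^r)=\sum_{j=1}^{r} p^{j-1}f_p(j)$ for all $r\ge 1$.

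Next I would verify that $\Phi(f_p)$ indeed satisfies (*) and (**). Condition (*) for prime powers follows from $\sum_{j=1}^r p^{j-1}(p-1)=p^r-1$, and for general $n=p_1^{r_1}\cdots p_k^{r_k}$ it is built into the Chinese Remainder Theorem construction. For (**), fix $m,n$ with $n=p_1^{r_1}\cdots p_k^{r_k}$. By CRT it suffices to check $f(mn)\equiv f(n)\pmod{p_i^{r_i}}$ for each $i$. Writing $mn = p_i^{r_i+s_i}\cdot(\text{coprime part})$, the definition of $\Phi$ gives $f(mn)\equiv f(p_i^{r_i+s_i})\pmod{p_i^{r_i+s_i}}$ and $f(n)\equiv f(p_i^{r_i})\pmod{p_i^{r_i}}$, and the explicit formula $f(p_i^{r_i+s_i})=\sum_{j=1}^{r_i+s_i} p_i^{j-1}f_{p_i}(j)$ shows $f(p_i^{r_i+s_i})\equiv f(p_i^{r_i})\pmod{p_i^{r_i}}$. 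Combining, $f(mn)\equiv f(n)\pmod{p_i^{r_i}}$, and since $0\leq f(n)<n$, $f(n)$ is the Euclidean remainder.

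Finally I would check $\Psi\circ\Phi=\mathrm{id}$ and $\Phi\circ\Psi=\mathrm{id}$. The first identity is immediate from the stability argument above, since the base-$p$ digits of $\Phi((f_p))(p^r)=\sum_{j=1}^r p^{j-1}f_p(j)$ are exactly the $f_p(j)$'s. For the second, set $f' = \Phi(\Psi(f))$. On prime powers $f'(p^r)=f(p^r)$ by the stability argument. For $n=p_1^{r_1}\cdots p_k^{r_k}$, applying (**) to $f$ with the factorisation $n=p_i^{r_i}\cdot(n/p_i^{r_i})$ yields $f(n)\equiv f(p_i^{r_i})\pmod{p_i^{r_i}}$ for each $i$; since $0\le f(n)<n$, the uniqueness in CRT forces $f(n)=f'(n)$. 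The main (and only slightly delicate) obstacle is the CRT-plus-truncation computation verifying (**) for composite $mn$; everything else is routine bookkeeping.
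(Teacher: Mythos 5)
Your proposal is correct and follows essentially the same route as the paper: the forward map is the one from Fact \ref{fact417}, the inverse extracts $f_p(r)$ as the top base-$p$ digit of $f(p^r)$ (which coincides with the paper's formula $f_p(r)=\bigl(f(p^r)-f(p^{r-1})\bigr)/p^{r-1}$, since by (**) $f(p^{r-1})$ is the remainder of $f(p^r)$ modulo $p^{r-1}$), and both verifications of (*), (**) and of $\Phi\circ\Psi=\mathrm{id}$ via CRT uniqueness match the paper's argument. The only difference is cosmetic: you check $\Psi\circ\Phi=\mathrm{id}$ explicitly where the paper leaves it implicit.
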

\begin{proof} 
For every prime number $p$, let $f_p$ be a mapping from $\NN$ in 
$\{0,\dots,p-1\}$, and let $f$ be the mapping defined in Fact \ref{fact417}. Then $f$ satisfies (*). 
Let $m=p_1^{r_1}\cdots p_k^{r_k}$ and $n=p_1^{s_1}\cdots p_k^{s_k}$ (where one of $r_i$ or $s_i$ can be 
equal to $0$). We let $i\in \{1,\dots k\}$. By the construction of $f\left(p_i^{r_i+s_i}\right)$ and 
$f\left(p_i^{s_i}\right)$, it follows 
that $p_i^{s_i}$ divides $f\left(p_i^{r_i+s_i}\right)-f\left(p_i^{s_i}\right)$. Now, since 
$f\left(p_i^{r_i+s_i}\right)$ is congruent to 
$f(mn)$ modulo $p_i^{r_i+s_i}$ and $f\left(p_i^{s_i}\right)$ is congruent to $f(n)$ modulo $p_i^{s_i}$, we deduce that 
$f(mn)-f(n)$ is congruent to $f\left(p_i^{r_i+s_i}\right)-f\left(p_i^{s_i}\right)$ modulo $p_i^{s_i}$. Therefore 
$p_i^{s_i}$ divides $f(mn)-f(n)$. Consequently, $n$ divides $f(mn)-f(n)$. Since $f(n)<n$, we conclude that 
$f(n)$ is the remainder of the euclidean division of $f(mn)$ by $n$. This proves that $f$ satisfies (**). \\ 
\indent Conversely, let $f$ be a mapping from $\NN\backslash \{ 0\}$ in $\NN$ which satisfies (*) and (**). 
For every prime $p$ we set $f_p(1)=f(p)$, and for $r\in \NN\backslash \{0\}$ we set 
$\displaystyle{f_p(r)=\frac{f\left(p^r\right)-f\left(p^{r-1}\right)}{p^{r-1}}}$. By (**), $f_p(r)$ is an integer 
(it is the quotient of the euclidean division of $f(p^r)$ by $p^{r-1}$). 
Now, $f\left(p^r\right)<p^r$, hence $\displaystyle{\frac{f\left(p^r\right)-f\left(p^{r-1}\right)}{p^{r-1}}<p}$, so it 
belongs to $\{0,\dots,p-1\}$. 
By construction, $f\left(p^r\right)=f_p(1)+pf_p(2)+\cdots+p^{r-1}f_p(r)$. We show that 
$f\left(p_1^{r_1}\cdots p_k^{r_k}\right)$ is the unique integer in $\{0,\dots,p_1^{r_1}\cdots p_k^{r_k}-1\}$ 
which is congruent to every $f\left(p_i^{r_i}\right)$ modulo $p_i^{r_i}$. Since $0\leq f\left(p_1^{r_1}\cdots p_k^{r_k}\right)
<p_1^{r_1}\cdots p_k^{r_k}$, it suffices to prove that every $p_i^{r_i}$ divides 
$f\left(p_1^{r_1}\cdots p_k^{r_k}\right)-f\left(p_i^{r_i}\right)$. Now, this follows from (**). 
\end{proof} 
\begin{Remark}\label{rk423} Let $(G,R)$ be a cyclically ordered group and $p$ be a prime such that 
$G/H_p$ is discrete. By Lemma \ref{rk240}, for every positive integers $m$, $n$, $f_{G,p}(n)$ is the 
remainder of the euclidean division of $f_{G,p}(m+n)$ by $p^n$ and 
$p$ doesn't divide $f_{G,p}(n)$. Hence in the same way as in the proof of Lemma \ref{lm238} there is a 
function $f_p: \; \NN\backslash\{0\} \rightarrow \{0,\dots, p-1\}$ such that 
for every $n\in \NN\backslash \{0\}$, 
$f_{G,p}(n)=f_p(1)+pf_p(2)+\cdots+p^{n-1}f_p(n)$. Since $p$ doesn't divide $f_{G,p}(n)$, we have $f_p(1)\neq 0$. 
\end{Remark}
\indent Note that the linear part of the cyclically ordered group $\UU\overrightarrow{\times} \QQ$
is $\{1\}\overrightarrow{\times} \QQ$, so $U(\UU\overrightarrow{\times} \QQ)$ is 
isomorphic to $\UU$. 
The unwound of $\UU\overrightarrow{\times} \QQ$ is isomorphic 
to $\RR\overrightarrow{\times} \QQ$. The convex subgroups of this unwound are $\{(0,0)\}$, 
$\{0\}\overrightarrow{\times} \QQ$, which is isomorphic to $l(\UU\overrightarrow{\times} \QQ)_{uw}$, 
and $\RR\overrightarrow{\times} \QQ$. So, if $G$ is a subgroup of $\UU\overrightarrow{\times} \QQ$, 
then $\mathcal{CD}(G)$ contains at most three elements. \\[2mm]
\indent In the following 
we let $G$ be a nonlinear subgroup of $\UU\overrightarrow{\times} \QQ$, which 
is isomorphic to $\QQ$ in the language of groups, and $\theta\in \RR$, $0\leq a\in \QQ$, $f\; :\; 
\NN\backslash\{0\}\rightarrow \NN$ as in Lemma \ref{a236}, and $f_p$ ($p$ prime) be the family of functions defined 
in Lemma \ref{lm239}. 
\begin{Remark}\label{rek423} If $\theta\in \QQ\pi$, then there is $\frac{m}{n}>0$ in $\QQ$ such that 
$\varphi_1\left(\frac{m}{n}x_0\right)\in 2\pi\ZZ$. So, by taking $\frac{m}{n}x_0$ instead of $x_0$ in the 
proof of Lemma \ref{a236} we can assume that $\theta=0$. 
\end{Remark}
\begin{Proposition}\label{lem423} The cyclically ordered group $G$ is c-archimedean if, and only if,  
$\theta\notin \QQ\pi$. If this hods, then it is c-isomorphic to a subgroup $G'$ of $\UU\overrightarrow{\times} \QQ$ 
such that $\theta'=\theta$, $f'=f$ and $a'=0$. 
\end{Proposition}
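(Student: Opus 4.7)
The plan is to analyze the intersection $G \cap (\{1\}\overrightarrow{\times}\QQ)$, which is the kernel of the homomorphism $\pi_1 : G \to \UU$ given by projection onto the first coordinate. From the explicit form of the generators in Lemma $\ref{a236}$, a nonzero element $(\exp(im(\theta+2f(n)\pi)/n), ma/n)$ of $G$ lies in this kernel if and only if $m\theta \in \pi\QQ$; for $m \neq 0$ this amounts to $\theta \in \QQ\pi$. So $\ker\pi_1 \neq \{0\}$ precisely when $\theta \in \QQ\pi$.

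First I will handle the case $\theta \in \QQ\pi$: then $\ker\pi_1$ is nontrivial and, since $G$ is nonlinear, proper. A direct unpacking of the three clauses in Definition $\ref{def124}$ shows that $\ker\pi_1 \subseteq \{1\}\overrightarrow{\times}\QQ$ is c-convex in $G$: if $h = (1,y) \in \ker\pi_1$ is in the positive cone, i.e.\ $y > 0$, then the lexicographic clauses force any $g = (u,x) \in G$ with $R(0,g,h)$ to satisfy $u = 1$, so $g \in \ker\pi_1$. Hence $G$ has a nontrivial c-convex proper subgroup, and by Fact $\ref{fact17}$(1) $G$ is not c-archimedean.

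Next, assuming $\theta \notin \QQ\pi$, I have $\ker\pi_1 = \{0\}$, so $\pi_1$ is injective and any two distinct elements of $G$ differ in their first coordinates. Hence for any three pairwise distinct elements of $G$, the first two clauses of Definition $\ref{def124}$ never apply, so $R$ restricted to $G$ is simply the pull-back of $R_\UU$ via $\pi_1$. Thus $\pi_1$ is a c-embedding of $G$ into $\UU$, and $G$ is c-archimedean by Fact $\ref{fact17}$(1).

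For the last assertion I will take $G'$ to be the subgroup of $\UU\overrightarrow{\times}\QQ$ generated by $\{(\exp(i(\theta+2f(n)\pi)/n), 0) : n \in \NN\setminus\{0\}\}$; the hypothesis $\theta \notin \QQ\pi$ ensures, via Lemma $\ref{lm238}$, that $G'$ is a well-defined subgroup isomorphic to $\QQ$, and then by Lemma $\ref{a236}$ its parameters are $\theta' = \theta$, $f' = f$, $a' = 0$. The map $\psi : G \to G'$, $(u,x) \mapsto (u,0)$, is a bijective group homomorphism; applying the argument of the previous paragraph to $G'$ shows that the cyclic order on $G'$ is likewise determined by first coordinates, which $\psi$ preserves. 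Hence $\psi$ is a c-isomorphism. The main technical point will be the c-convexity verification in the second paragraph; once that is done, everything else follows from identifying $G$ with its image under $\pi_1$.
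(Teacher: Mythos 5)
Your proposal is correct and follows essentially the same route as the paper: both reduce the c-archimedean question to whether $G\cap(\{1\}\overrightarrow{\times}\QQ)$ (the linear part) is trivial, and both build $G'$ by killing the second coordinate and observing that, when first coordinates are pairwise distinct, the lexicographic cyclic order is just the pullback of the order on $\UU$. The only slight looseness is your elementwise ``if and only if $m\theta\in\pi\QQ$'' for kernel membership (the precise condition is $m(\theta+2f(n)\pi)/n\in 2\pi\ZZ$, and producing a nonzero kernel element when $\theta\in\QQ\pi$ requires choosing $m$ to clear denominators, using $a>0$); the set-level conclusion you draw from it is nonetheless correct.
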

\begin{proof} 
Assume that $\theta\notin \QQ\!\cdot\!\pi$. Then, 
$\displaystyle{{\rm exp}\left(im\frac{\theta+2f(n)\pi}{n}\right)=1\Leftrightarrow m\frac{\theta+2f(n)\pi}{n}\in 2\pi\ZZ}$. 
This implies $m\theta \in \QQ\pi$, hence $m=0$. It follows that 
$\displaystyle{{\rm exp}\left(im\frac{\theta+2f(n)\pi}{n}\right)=1}$ if, and only if, $m=0$. Hence $l(G)=\{(1,0)\}$ and 
$(G,R)$ is c-archimedean. Since it is infinite, it is dense. \\
\indent Assume that $\theta \in \QQ\!\cdot\!\pi$. Recall that this implies $a>0$. 
By Remark \ref{rek423} we can assume that 
$\theta=0$. Then $l(G)$ contains the $\left(1,\frac{ma}{n}\right)$'s where 
$m\frac{f(n)}{n}\in \ZZ$. In particular, $l(G)\neq \{(1,0)\}$, so 
$G$ is not c-archimedean. \\
\indent Now, assume that $\theta\notin \QQ\pi$ and let $G'$ be the subgroup of $\UU\overrightarrow{\times} \{0\}$ 
generated by the set $$\displaystyle{\left\{ \left(\varphi_1\left(\frac{x_0}{n}\right),0 \right) 
\; :\; n\in \NN\backslash \{0\}\right\}}.$$ 
\indent Then $\theta'=\theta$ and $f'=f$. We proved above that $\varphi_1$ is 
one-to-one, hence the groups $G$ and $G'$ are isomorphic. Furthermore, since both of $\varphi_1$ and $\varphi_2$ 
are one-to-one, for any $x_1$ $x_2$ in $\QQ$ we have $(\varphi_1(x_1),\varphi_2(x_1))\neq (\varphi_1(x_2),\varphi_2(x_2))
\Leftrightarrow x_1\neq x_2$.  
Now, let $x_1$, $x_2$, $x_3$ in $\QQ$ such that 
$(\varphi_1(x_1),\varphi_2(x_1))\neq (\varphi_1(x_2),\varphi_2(x_2))\neq (\varphi_1(x_3),\varphi_2(x_3))
\neq (\varphi_1(x_1),\varphi_2(x_1))$. This equivalent to $\varphi_1(x_1)\neq \varphi_1(x_2)\neq \varphi_1(x_3)
\neq \varphi_1(x_1)$. Hence $R((\varphi_1(x_1),\varphi_2(x_1)),(\varphi_1(x_2),\varphi_2(x_2)),(\varphi_1(x_3),\varphi_2(x_3)))$ 
holds in $\UU\overrightarrow{\times} \QQ$ if, and only if, $R(\varphi_1(x_1),\varphi_1(x_2),\varphi_1(x_3))$ hods 
in $\UU$. Therefore, $G$ and $G'$ are c-isomorphic. 
\end{proof}
\indent By \cite[Lemma 5.1, Theorem 5.3]{JPR}, there is one and only one c-embedding of each c-archimedean cyclically 
ordered group in $\UU$. Hence if $\QQ \frac{\theta}{\pi}\cap \QQ\frac{\theta'}{\pi}=\emptyset$, 
then $(G,R)$ and $(G',R')$ are not c-isomorphic. Hence there are $2^{\aleph_0}$ non isomorphic c-archimedean cyclic orders 
on $\QQ$. Now, since any c-archimedean cyclically ordered group is c-regular, by Theorem \ref{thm215}, 
all c-archimedean dense torsion-free divisible abelian cyclically ordered 
groups are elementarily e\-qui\-va\-lent. \\[2mm] 
\indent Now, we turn to the non-c-archimedean case. By Lemma \ref{a236} and Remark \ref{rek423} 
we can assume that $\theta=0$ and $a>0$. 
\begin{Lemma}\label{lmav424} Assume that $G$ is not c-archimedean, and let $p$ be a prime. 
Then $H_p=l(G)$ if, and only if, $f_p$ is the $0$ mapping. Otherwise, $H_p=\{(1,0)\}$. 
\end{Lemma}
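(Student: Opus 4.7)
The plan is to reduce the lemma to a concrete divisibility statement inside $\QQ$. By Remark \ref{rek423} we may assume $\theta = 0$, and since $G$ is not c-archimedean we have $a>0$ (Proposition \ref{lem423}). The second-coordinate projection then yields a group isomorphism $G \to a\QQ \cong \QQ$ (uniqueness of representations comes from Lemma \ref{lm238}), and under this isomorphism $l(G) = G\cap(\{1\}\overrightarrow{\times}\QQ)$ corresponds to the subgroup
$$L = \{m/n \in \QQ \; :\; m\in\ZZ,\; n\in\NN\backslash\{0\},\; n \mid mf(n)\}$$
of $\QQ$.

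Next I would observe that since $L$ embeds in the archimedean ordered group $\QQ$, its only convex subgroups are $\{0\}$ and $L$ itself. Therefore the greatest $p$-divisible convex subgroup $H_p$ of $l(G)$ equals $l(G)$ if $L$ is $p$-divisible (as an abstract abelian group) and equals $\{(1,0)\}$ otherwise. The whole lemma thus reduces to the single equivalence: $L$ is $p$-divisible if and only if $f_p$ is identically $0$.

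For necessity, note that $1\in L$ (take $m=n$); iterating $p$-divisibility in $L$ forces $1/p^r \in L$ for every $r\geq 1$. Substituting $m=1$, $n=p^r$ into the defining condition and using $f(p^r)=f_p(1)+pf_p(2)+\cdots+p^{r-1}f_p(r)$, an induction on $r$ shows $f_p(r)=0$ for all $r$. For sufficiency, assume $f_p\equiv 0$; the Chinese remainder description of $f$ in Fact \ref{fact417} yields $p^{v_p(n)}\mid f(n)$ for all $n$. Given $m/n\in L$, write $n=p^sn'$ with $\gcd(p,n')=1$ and $f(n)=p^s g$, so the hypothesis $n\mid mf(n)$ becomes $n'\mid mg$. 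Property (**) of Lemma \ref{a236} yields $f(pn)=qn+f(n)=p^s(qn'+g)$ for some $q\geq 0$, and $p^{s+1}\mid f(pn)$ forces $p\mid qn'+g$; since $\gcd(p,n')=1$ one concludes $pn'\mid m(qn'+g)$, i.e., $pn\mid mf(pn)$, so $m/(pn)\in L$.

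The main technical obstacle is this sufficiency direction, where one has to combine three pieces of information about $f$, namely its CRT-decomposition, property (**), and the defining condition of $L$, in exactly the right way to transfer $p$-divisibility from purely $p$-power denominators to arbitrary denominators. The other steps are essentially unwinding definitions.
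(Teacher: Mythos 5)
Your proof is correct and follows essentially the same route as the paper: both reduce the lemma to deciding when the linear part (an archimedean subgroup of $\QQ$, hence with no nontrivial proper convex subgroups) is $p$-divisible, and both identify this with $f(p^r)=0$ for all $r$, i.e.\ $f_p\equiv 0$. The only difference is that the paper tests $p^r$-divisibility of the single element $(1,a)$ and leaves implicit that this suffices for a rank-one torsion-free group, whereas you verify $p$-divisibility of every element of $L=\{m/n : n\mid mf(n)\}$ directly via (**) and the Chinese-remainder description of $f$; your sufficiency computation fills in a step the paper glosses over.
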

\begin{proof} Saying that $H_p=l(G)$ is equivalent to saying that $l(G)$ is a $p$-divisible group. 
This is turn is equivalent to saying that for every $r\in \NN$ the element $(1,a)$ is $p^r$-divisible within $l(G)$, that is, 
there is $\frac{m}{n}\in \QQ$ such that ${\rm exp}\left(im\frac{2f(n)\pi}{n}\right)=1$ and $p^r\frac{m}{n}a=a$. 
Now, ${\rm exp}\left(im\frac{2f(n)\pi}{n}\right)=1$ and $p^r\frac{m}{n}a=a\Leftrightarrow \frac{m}{n}=\frac{1}{p^r}$ 
and ${\rm exp}\left(i\frac{2f(p ^r)\pi}{p^r}\right)=1$. Since $0\leq f(p^r)\leq p^r-1$, this in turn is equivalent 
to $\frac{m}{n}=\frac{1}{p^r}$ and $f(p^r)=0$. One can prove by induction that $\forall r\in \NN\backslash \{0\}$ 
$f(p^r)=0$ is equivalent to $\forall n\in \NN\backslash \{0\}\;f_p(n)=0$. \\
\indent Since $l(G)$ is isomorphic to $\QQ$, its only proper subgroup is $\{(1,0)\}$, hence $H_p\neq l(G)\Leftrightarrow 
H_p=\{(1,0)\}$. 
\end{proof} 
\begin{Corollary}\label{prop424} For every subset $\mathcal{P}'$ of the set $\mathcal{P}$ 
of primes we can chose $f$ so that, 
for every $p\in\mathcal{P}'$,  
$H_p=l(G)$, and, for every $p\in \mathcal{P}\backslash \mathcal{P}'$, $H_p=\{(1,0)\}$. 
In particular, there are $2^{\aleph_0}$ non elementarily equivalent dense nonlinear cyclic orders on $\QQ$. 
\end{Corollary}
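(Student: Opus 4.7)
The plan is to invoke the one-to-one correspondence of Lemma \ref{lm239} to translate arbitrary choices of the family $\{f_p\}_{p \in \mathcal{P}}$ into admissible functions $f$, and then apply Lemma \ref{lmav424}, which characterizes $H_p = l(G)$ by the condition $f_p \equiv 0$. Given $\mathcal{P}' \subseteq \mathcal{P}$, I would set $f_p \equiv 0$ for $p \in \mathcal{P}'$ and pick $f_p$ to be any fixed non-zero mapping into $\{0,\dots,p-1\}$ for $p \in \mathcal{P} \setminus \mathcal{P}'$ (for instance $f_p(1) = 1$ with $f_p(n) = 0$ for $n > 1$). Lemma \ref{lm239} produces the associated $f$ satisfying (*) and (**). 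Applying Lemma \ref{lm238} with $\theta = 0$ and any fixed $a \in \QQ$ with $a > 0$ then yields a subgroup $G$ of $\UU \overrightarrow{\times} \QQ$ that is group-isomorphic to $\QQ$; transporting the induced cyclic order through this group isomorphism endows $\QQ$ with a cyclic order. Lemma \ref{lmav424} now gives $H_p = l(G)$ exactly when $p \in \mathcal{P}'$, and $H_p = \{(1,0)\}$ otherwise, establishing the first assertion.

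For the second assertion, restrict to $\mathcal{P}' \subsetneq \mathcal{P}$, so that at least one $f_p \not\equiv 0$ and hence $U(G) \neq \{1\}$, making $G$ nonlinear. The element $(1,a)$ belongs to $l(G)$ with $a > 0$; since $l(G)$ is a c-convex subgroup of the divisible group $G \simeq \QQ$, it is itself divisible, and a non-trivial divisible linearly ordered group is dense. Hence $l(G)_{uw}$ is dense in itself, which forces ${\rm uw}(G)$ to be dense, so by Lemma \ref{lm12} the cyclically ordered group $(G,R)$ is dense. Let $(G_1,R_1)$ and $(G_2,R_2)$ denote the constructions arising from two distinct strict subsets $\mathcal{P}_1, \mathcal{P}_2$ of $\mathcal{P}$, and assume toward contradiction that $(G_1,R_1) \equiv (G_2,R_2)$. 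By Proposition \ref{propunsens}, the indexed mapping $H_p^{G_1} \mapsto H_p^{G_2}$, together with $\{0_{G_1}\} \mapsto \{0_{G_2}\}$ and $G_1 \mapsto G_2$, extends to a preorder isomorphism $\mathcal{CD}(G_1) \to \mathcal{CD}(G_2)$. WLOG pick $p \in \mathcal{P}_1 \setminus \mathcal{P}_2$; then $H_p^{G_1} = l(G_1)$ is sent to $H_p^{G_2} = \{0_{G_2}\}$, which is also the image of $\{0_{G_1}\}$. Since $(1,a) \in l(G_1)$ gives $l(G_1) \neq \{0_{G_1}\}$, this violates injectivity of the putative isomorphism, contradiction. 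Because there are $2^{\aleph_0}$ strict subsets of $\mathcal{P}$, we obtain $2^{\aleph_0}$ pairwise non-elementarily-equivalent dense nonlinear cyclic orders on $\QQ$.

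The main obstacle is the separation step, where I must argue that the indexed mapping $H_p \mapsto H_p'$ cannot extend to a preorder isomorphism when $\mathcal{P}_1 \neq \mathcal{P}_2$; the key observation is that the canonical assignments $\{0_{G_1}\} \mapsto \{0_{G_2}\}$ and $G_1 \mapsto G_2$ are forced, so any discrepancy in where some $H_p$ lands (between $\{0\}$ and $l(G)$) immediately collapses two distinct elements of $\mathcal{CD}(G_1)$ onto one, breaking bijectivity.
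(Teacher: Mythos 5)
Your construction of $f$ from the family $\{f_p\}$ and the application of Lemmas \ref{lm239}, \ref{lm238} and \ref{lmav424} to get the dichotomy $H_p=l(G)$ versus $H_p=\{(1,0)\}$ is exactly the paper's argument, and your separation step (two different subsets $\mathcal{P}_1\neq\mathcal{P}_2$ force the indexed map $H_p\mapsto H_p'$ to collapse $\{0_{G_1}\}\subsetneq l(G_1)$ onto a single element, so no isomorphism of $\mathcal{CD}$ exists) is a correct unpacking of the appeal to Theorem \ref{thm221}/Proposition \ref{propunsens}.

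However, your density argument has a genuine gap. You claim that $l(G)$ is divisible ``since $l(G)$ is a c-convex subgroup of the divisible group $G\simeq\QQ$.'' Divisibility does not pass to c-convex subgroups: if $g\in l(G)$ and $nh=g$, the element $h$ need not lie in $l(G)$ --- indeed that failure is precisely what the functions $f_{G,p}$ measure ($U(h)=e^{2if_{G,p}(n)\pi/p^n}\neq 1$). The paper's own examples show the claim is false: a discrete divisible cyclically ordered group (Remark \ref{r}, or Proposition \ref{lm241}(1) in this very setting) has $l(G)$ discrete, hence not divisible. In particular, for $\mathcal{P}'=\emptyset$ your choice $f_p(1)=1$ for all $p$ yields, by Proposition \ref{lm241}(1), a \emph{discrete} cyclic order, so your assertion that every $\mathcal{P}'\subsetneq\mathcal{P}$ produces a dense order is wrong at that endpoint; you excluded $\mathcal{P}'=\mathcal{P}$ (correctly, for nonlinearity) but not $\mathcal{P}'=\emptyset$. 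The correct argument, which is the one the paper uses, is local: restrict to \emph{nonempty} $\mathcal{P}'$, pick $p\in\mathcal{P}'$; then $H_p=l(G)$ says $l(G)$ is $p$-divisible, and a nontrivial $p$-divisible linearly ordered group cannot have a smallest positive element, so $l(G)$, and hence $(G,R)$, is dense. Since there are still $2^{\aleph_0}$ nonempty (proper) subsets, the cardinality conclusion survives once this is repaired.
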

\begin{proof} For every  $p\in \mathcal{P}'$ we let $f_p$ be the $0$ mapping and for every 
$p\in \mathcal{P}\backslash \mathcal{P}'$ we let $f_p$ be a function from $\NN\backslash \{0\}$ to 
$\{0,\dots,p-1\}$ such that $f_p(1)\neq 0$. We let $f$ be defined in the same way as 
in Lemma \ref{lm239}. By Lemma \ref{lmav424}  
for every $p\in \mathcal{P}$ we have $H_p=l(G)$ and for every $p\in \mathcal{P}\backslash \mathcal{P}'$ 
we have $H_p=\{(1,0)\}$. \\
\indent Therefore, if $\mathcal{P}'$ is nonempty, then there is $p$ such that $l(G)$ is $p$-divisible, 
so it it dense. So the cyclic order is dense. Since there are $2^{\aleph_0}$ nonempty subsets of 
$\mathcal{P}$, by Theorem \ref{thm221} there $2^{\aleph_0}$ pairwise  non elementarily equivalent 
dense nonlinear cyclic orders on $\QQ$. 
\end{proof}
\indent The following provides a necessary and sufficient condition for $G$ being discrete. 
\begin{Proposition}\label{lm241} \begin{enumerate}
\item $G$ is discrete if, and only if, for every prime $p$ the mapping $f_p$ is not the zero mapping and there is 
only a finite number of primes $p$ such that $f_p(1)=0$. If this holds, then we can choose the real number $a$ such that 
for every prime $p$ we have $f_p(1)\neq 0$. 
\item Assume that for every prime $p$ we have $f_p(1)\neq 0$.
Then, for every prime $p$ and every $n\in \NN$, $H_p=\{0\}$ and $f(p^n)=f_{G,p}(n)$. 
\item For every family of functions $f_p: \; \NN\ \backslash \{0\}\rightarrow \{0,\dots,p-1\}$ 
with $f_p(1)\neq 0$ ($p$ prime), there is a discrete cyclically ordered group isomorphic to $\QQ$ 
such that the family of $f_{G,p}$ is constructed as in Remark \ref{rk423}. Therefore, 
there $2^{\aleph_0}$ pairwise  non elementarily equivalent discrete cyclic orders on $\QQ$. 
\end{enumerate}
\end{Proposition}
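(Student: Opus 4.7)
The plan is to read off $l(G)$ directly from the explicit generators and then translate the resulting combinatorial condition into a statement about the $f_p$'s. By Remark \ref{rek423} we may assume $\theta=0$, so $a>0$ and $G$ is generated by the elements $\varphi(x_0/n)=\bigl(e^{2i\pi f(n)/n},a/n\bigr)$. A general element of $G$ has the form $\varphi(mx_0/n)=\bigl(e^{2i\pi m f(n)/n},ma/n\bigr)$, and it lies in $l(G)$ iff $n\mid m f(n)$. Writing $m/n$ in lowest terms as $m'/n'$ and using property (**) of $f$ (which gives $f(n)\equiv f(n') \pmod{n'}$), this reduces to $f(n')=0$. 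Setting $N=\{n\in\NN\setminus\{0\}:f(n)=0\}$, one checks using (**) that $N$ is closed under divisors and least common multiples, and $l(G)=\{(1,ma/n):m\in\ZZ,\,n\in N\}$. By Lemma \ref{lm12}, $G$ is discrete iff this subgroup of $\{1\}\overrightarrow{\times}\QQ$ is discrete, iff $N$ has a maximum in the divisibility order.

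The next step translates boundedness of $N$ to the $f_p$'s via Fact \ref{fact417}: $f(n)=0$ iff $f(p_i^{r_i})=0$ for every prime power $p_i^{r_i}$ exactly dividing $n$, iff $f_{p_i}(j)=0$ for all $1\le j\le r_i$. Setting $R_p=\sup\{r\ge 0:f_p(j)=0\text{ for every }j\le r\}$, one has $p^r\in N$ iff $r\le R_p$, so by closure under divisors and $\mathrm{lcm}$, $N$ is bounded iff every $R_p$ is finite (no $f_p$ is identically zero) and $R_p=0$ for all but finitely many $p$ (i.e., $f_p(1)\ne 0$ for all but finitely many $p$). This yields the characterization in (1). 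For the rescaling claim, once $G$ is discrete we rerun Lemma \ref{a236} with $x_0$ chosen to be the preimage of the smallest positive element $1_G$: then $\varphi_2(x_0)=a$ is the minimum positive element of $l(G)$, so for every $n>1$ one has $\varphi(x_0/n)\notin l(G)$ (else $a/n<a$ would be a smaller positive element), which forces $f(n)\ne 0$, and in particular $f_p(1)=f(p)\ne 0$ for every prime $p$.

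For (2), the hypothesis $f_p(1)\ne 0$ for all $p$ gives $R_p=0$, so $N=\{1\}$, $l(G)=\ZZ\cdot(1,a)$, and Lemma \ref{lmav424} yields $H_p=\{0_G\}$; hence $1_{G/H_p}=1_G=(1,a)$. The unique $h\in G$ with $p^n h=(1,a)$ is $\varphi(x_0/p^n)=\bigl(e^{2i\pi f(p^n)/p^n},a/p^n\bigr)$, so $U(h)=e^{2i\pi f(p^n)/p^n}$, which compared with the defining equation $U(h)=e^{2i\pi f_{G,p}(n)/p^n}$ and the fact that $f(p^n)=f_p(1)+\cdots+p^{n-1}f_p(n)\in\{1,\dots,p^n-1\}$ (since $f_p(1)\ne 0$), gives $f(p^n)=f_{G,p}(n)$.

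For (3), given any family $\{f_p\}$ with $f_p(1)\ne 0$, assemble $f$ via Fact \ref{fact417} and construct $G\subseteq\UU\overrightarrow{\times}\QQ$ via Lemma \ref{lm238} with $\theta=0$ and any $a>0$; by (1), $G$ is discrete and isomorphic to $\QQ$, and by (2), $\{f_{G,p}\}$ recovers the given $\{f_p\}$ as in Remark \ref{rk423}. Since there are $2^{\aleph_0}$ admissible families and distinct families yield distinct $\{f_{G,p}\}$, Theorem \ref{nth1} delivers $2^{\aleph_0}$ pairwise non-elementarily equivalent discrete cyclic orders on $\QQ$. The main obstacle throughout is the first paragraph: correctly identifying $N$ from the generators by exploiting property (**), and then combining with the Chinese remainder construction of $f$ to reduce discreteness to an explicit combinatorial condition on the $f_p$'s.
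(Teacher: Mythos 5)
Your proposal is correct and follows essentially the same route as the paper: both arguments rest on the observation that $(1,ma/n)\in l(G)$ exactly when $f$ vanishes at the reduced denominator, use Lemma \ref{lmav424} to identify $H_p$, read off $f_{G,p}(n)=f(p^n)$ from the explicit $p^n$-th division of $(1,a)$, and rescale $a$ by the maximal denominator to normalize $f_p(1)\neq 0$. Your packaging of the discreteness criterion through the divisor- and lcm-closed set $N=\{n:f(n)=0\}$ is a tidy reorganization of the paper's case analysis (coprimality of $f(n)$ and $n$, the infinitely-many-$f_p(1)=0$ case, and the explicit smallest element $(1,a/n_0)$), but not a genuinely different method.
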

\begin{proof} 
(1) Assume that for every prime $p$ we have $f_p(1)\neq 0$. 
Since for every prime $p$ we have $f_p(1)\neq 0$, the integers $f(p)=f_p(1)$ and $p$ are coprime. 
For every $r\geq 2$, $f(p ^r)$ and $p$ are coprime, since 
$f(p)$ is the remainder of the euclidean division of $f(p^r)$ by $p^{r-1}$. 
Let $p_1<\cdots<p_k$ be primes, $r_1,\dots ,r_k$ be positive integers, and $n=p_1^{r_1}\cdots p_k^{r_k}$. Further, let  
$u$, $v$ in $\ZZ$ such that $uf(p_1^{r_1})+vp_1=1$, and $q\in \NN$ such that $f(n)=qp_1^{r_1}+f(p_1^{r_1})$. Then 
$1=uf(n)+p_1\left(v-qup_1^{r_1-1}\right)$. Therefore $f(n)$ and $p_1$ are coprime. The same holds with $p_2,\dots, p_k$, hence 
$f(n)$ and $n$ are coprime. Therefore, 
if $\displaystyle{m\frac{2\pi f(n)}{n}\in 2\pi\ZZ}$, then $n$ divides $m$. Consequently, either 
$\displaystyle{\frac{m}{n}a\leq 0}$, or 
$\displaystyle{\frac{m}{n}a\geq a}$. It follows that $(G,R)$ is discrete, 
with smallest positive element $(1,a)$. Conversely, assume that $(1,a)$ is the smallest element of $l(G)$, 
and let $p$ be a prime. Since $\left(1,\frac{a}{p}\right)<(1,a)$, $l(G)$ doesn't contain the element $\left(1,\frac{a}{p}\right)$ 
of $\UU\overrightarrow{\times}\QQ$. Therefore, $\displaystyle{e^{i\frac{2\pi f(p)}{p}}\neq 1}$, which is equivalent to 
$f(p)\neq 0$.  \\ 
\indent 
Assume that $G$ is discrete. Then for every prime $p$ we have $H_p=\{(1,0)\}$. By Lemma \ref{lmav424}, 
the functions $f_p$ are different from the zero mapping. \\
\indent Assume that no function $f_p$ is the zero 
mapping, but that there are infinitely many primes $p$ such that $f_p(1)=0$. Let $n$ be a positive integer such that 
$\left(1,\frac{a}{n}\right)$ belongs to the positive cone of $l(G)$. This is equivalent to $f(n)=0$. Indeed, since 
$0\leq f(n)\leq n-1$, $\frac{2f(n)\pi}{n}\in2 \ZZ\pi\Leftrightarrow f(n)=0$. Now, we let $p$ be a prime 
which does not divide $n$ and such that $f_p(1)=0$. Since $f(np)$ is congruent to $f(n)$ modulo $n$ and to 
$f(p)=f_p(1)$ modulo $p$, we have $f(np)=0$. Hence $(1,\frac{a}{np})\in l(G)$, with $(1,0)<(1,\frac{a}{np})<
(1,\frac{a}{n})$. Therefore $l(G)$ has no smallest positive element, and $l(G)$ is dense. This proves that 
$G$ is dense. \\
\indent It remains the case where there is only finitely many primes $p_1,\dots, p_k$ such that, 
for $i\in\{1,\dots,k\}$, $f_{p_i}(1)=0$. Since $f_{p_i}$ is not the zero mapping, there is a greatest positive 
integer $r_i$ such that $f_{p_i}(r_i)=0$. Let $p$ be a prime and $n$, $r$ be positive integers such that 
$p^r$ divides $n$. Since $f(p^r)$ is the remainder of the euclidean division of $f(n)$ by $p^r$, if $f(p^r)\neq 0$, 
then $f(n)\neq 0$. Set $n_0=p_1^{r_1}\cdots p_k^{r_k}$. By construction, we have 
$f(n_0)=0$, hence $\left(1,\frac{a}{n_0}\right)\in l(G)$. 
If $\frac{m}{n}$ is a positive rational number such that $\frac{ma}{n}<\frac{a}{n_0}$, then 
$n>n_0$. Let $n>n_0$. If the primes which divide $n$ belong 
to $\{p_1,\dots, p_k\}$, then there is $i\in\{1,\dots,k\}$ such $p_i^{r_i+1}$ divides $n$. Therefore, 
$f(n)\neq 0$. If some $p$ not in $\{p_1,\dots, p_k\}$ divides $n$, then $f(n)\neq 0$, since $f(p)\neq 0$. 
So $(1,\frac{ma}{n})\notin l(G)$. Consequently, $\left(1,\frac{a}{n_0}\right)$ is the 
smallest element of $l(G)$. By taking $\frac{m}{n_0}$ instead of $a$, in the same way as above we get a 
new function $f$ such that, for every prime $p$, $f(p)\neq 0$. \\ 
\indent (2) By Lemma \ref{lmav424}, for every prime $p$ we have $H_p=\{0\}$. 
Recall that $f_{G,p}(n)$ is the element of $\{1,\dots, p^n-1\}$ such that the element $h$ 
such that $p^nh=1_{G/H_p}$ satisfies $\displaystyle{U(h)={\rm exp}\left(im\frac{2f_{G,p}(n)\pi}{p^n}\right)}$. 
From the definition of $f$, we have $f_{G,p}(n)=f(p^n)$. \\
\indent (3) We consider a family of functions $f_p: \; \NN\backslash \{0\}\rightarrow \{0,\dots,p-1\}$ 
with $f_p(1)\neq 0$ ($p$ prime). By Lemma \ref{lm239} we can assume that the function $f$ is generated by the 
family of $f_p$'s. Hence for every $n\in \NN\backslash\{0\}\; f(p^n)=f_{G,p}(n)$. 
Consequently, the family of $f_{G,p}$ is constructed as in Remark \ref{rk423}. 
This implies that there are $2^{\aleph_0}$ distinct families of functions $f_{G,p}$. \\
\indent Let $(G,R)$, $(G',R')$ be discrete cyclically ordered groups isomorphic to $\QQ$ such that for 
some prime $p$ we have $f_{G,p}\neq f_{G',p}$. 
Then, by Theorem \ref{thm221}, $(G,R)$ and $(G',R')$ are not elementarily equivalent. 
Hence there are $2^{\aleph_0}$ pairwise non elementarily equivalent discrete cyclic orders on $\QQ$. 
\end{proof}
\begin{Fact}\label{fctdd}
Let $\Gamma$ be a linearly ordered divisible abelian group and $(G,R)$ be a discrete cyclically ordered group 
which is isomorphic to $\QQ$. Let $(G',R')$ be the 
lexicographic product $G\overrightarrow{\times} \Gamma$. Then $(G',R')$ is not discrete, for every 
prime $p$ we have $H_p'=\{0_G\}\times \Gamma$, $G'/H_p'\simeq G\simeq G/H_p$  is discrete 
and $f_{G',p}=f_{G,p}$. Hence, if some family 
of functions is the family of $f_{G,p}$'s for some discrete divisible abelian cyclically ordered group, then it is 
also the family of $f_{G,p}$'s for some dense one. 
\end{Fact}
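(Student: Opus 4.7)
My plan is first to pin down the linear part of $G'$ and then to verify the four claimed identities in turn; everything will be driven by the observation that $K:=\{0_G\}\times \Gamma$ is a c-convex subgroup of $G'$. Indeed, directly from the definition of $R'$, the restriction of $R'$ to $K$ is the linear cyclic order on $\Gamma$, and the canonical epimorphism $G'\to G'/K$ is a c-isomorphism onto $(G,R)$. Consequently $U(G')\simeq U(G)$, so $(g,x)\in l(G')$ if and only if $g\in l(G)$; thus $l(G')=l(G)\times \Gamma$ as a subgroup, and one checks that the induced linear order equals the lexicographic order on $l(G)\overrightarrow{\times}\Gamma$.

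Non-discreteness of $G'$ is then immediate: $K\subseteq l(G')$ has no smallest positive element because $\Gamma$ is divisible. For the identity $H_p'=K$, I would observe that $K$ is $p$-divisible (as $\Gamma$ is) and convex in $l(G')$. Any convex subgroup of $l(G)\overrightarrow{\times}\Gamma$ strictly containing $K$ must have the form $C\times \Gamma$ for a nontrivial convex subgroup $C$ of $l(G)$, and then $p$-divisibility of $C\times \Gamma$ forces $p$-divisibility of $C$. But $(G,R)$ is a discrete divisible cyclically ordered group, so as noted in the proof of Lemma \ref{rk240}(1) we have $H_p=\{0_G\}$, i.e., $l(G)$ has no nontrivial $p$-divisible convex subgroup. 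Hence $H_p'=K$, and the canonical epimorphism yields $G'/H_p'\simeq G=G/H_p$, which is discrete.

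Finally, to show $f_{G',p}=f_{G,p}$, I would apply Proposition \ref{n128} to $G'$ with the representative $g':=(1_G,0_\Gamma)\in l(G')$, whose class modulo $H_p'$ corresponds, under $G'/H_p'\simeq G$, to $1_G=1_{G/H_p}$ and is therefore the smallest positive element $1_{G'/H_p'}$. Any $h'=(h_r,y)\in G'$ with $p^rh'=g'$ must satisfy $p^ry=0_\Gamma$, and hence $y=0_\Gamma$ by torsion-freeness of $\Gamma$, while $p^rh_r=1_G$. Since $U((h_r,0_\Gamma))=U(h_r)$ under the isomorphism $U(G')\simeq U(G)$, and $U(h_r)=e^{2if_{G,p}(r)\pi/p^r}$ by the definition of $f_{G,p}$, the uniqueness part of Proposition \ref{n128} yields $f_{G',p}(r)=f_{G,p}(r)$. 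The only real work is the identification $l(G')=l(G)\overrightarrow{\times}\Gamma$ together with the classification of convex subgroups of a lexicographic product, both standard but worth writing out to avoid hand-waving; the final assertion in the Fact then follows by combining Proposition \ref{prop215} (every discrete divisible abelian cyclically ordered group has a discretely cyclically ordered $\QQ$ as an elementary substructure) with Theorem \ref{thm221} (elementary equivalence preserves the $f_{G,p}$'s).
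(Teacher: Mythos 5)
Your argument is correct, and in fact the paper states this as a \emph{Fact} with no proof at all, so there is nothing to compare it against; your write-up supplies exactly the natural missing argument: identify $l(G')$ as $l(G)\overrightarrow{\times}\Gamma$, use the classification of convex subgroups of a lexicographic product together with $H_p=\{0_G\}$ (which holds since $l(G)\simeq\ZZ$ is discrete) to get $H_p'=\{0_G\}\times\Gamma$, and then push the defining equation $p^r h'=(1_G,0_\Gamma)$ through $U(G')\simeq U(G)$ to get $f_{G',p}=f_{G,p}$. Two small points: the appeal to $U(G')\simeq U(G)$ logically comes \emph{after} the identification of $l(G')$, not before (you flag this yourself, and the cleanest route is that $K=\{0_G\}\times\Gamma$ is a c-convex normal subgroup with $G'/K\simeq G$, so $l(G')$ is the preimage of $l(G)$); and the non-discreteness of $G'$ requires $\Gamma$ to be nontrivial, an hypothesis the Fact omits but clearly intends. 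For the final assertion, the detour through Proposition \ref{prop215} is fine, though the same lexicographic construction applied directly to an arbitrary discrete divisible abelian cyclically ordered group would work verbatim.
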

\noindent 
{\it Proof of Theorem \ref{uncount}}. (1) By \cite[Theorem 5.10]{Zh}, $\Gamma$ is 
cyclically orderable if, and only if $T(\Gamma)$ embeds 
in $T(\UU)$ and $\Gamma/T(\Gamma)$ is orderable. 
By \cite[Corollary 5 on p.\ 36]{F}, any abelian torsion-free group admits a linear order. 
By Remark \ref{r}, $T(\Gamma)$ is divisible. \\ 
\indent (2) Assume that $\Gamma$ is divisible and not trivial. 
By \cite[Remark 5.4]{GiLL}, every c-archimedean cyclically ordered group 
c-embeds in a unique way in $\UU$. Hence all cyclic orders on $\Gamma$ are c-isomorphic. 
Without loss of generality, we can assume that $\Gamma$ is a subgroup of $T(\UU)$. Then, there is a prime $p$ 
such that $e^{2i\pi/p}$ belongs to $\Gamma$. We prove by induction that it contains $e^{2i\pi/p^n}$ 
for every positive integer $n$. Assume that $e^{2i\pi/p^n}\in \Gamma$. 
Since $\Gamma$ is divisible, it contains a $p$-root of $e^{2i\pi/p^n}$, say $e^{2i\pi/p^{n+1}}e^{2ik\pi/p}$, 
with $k\in \{0,1,\dots, p-1\}$. Assume that $k\neq 0$. Then $e^{2i\pi/p^{n+1}}e^{2ik\pi/p}(e^{2i\pi/p})^{p-k}=
e^{2i\pi/p^{n+1}}$ belongs to $\Gamma$. 
We can construct by induction infinitely many group 
isomorphisms between $T(\UU)$ and $\Gamma$. For every prime $p$ and positive integer $n$, 
we send $e^{2i\pi/p^n}$ to any of the 
$p-1$ primitive $p$-th root of the image of $e^{2i\pi/p^{n-1}}$ in $\Gamma$. 
So, this gives that 
$2^{\aleph_0}$ isomorphisms between $T(\UU)$ and $\Gamma$, each one gives rise to 
a cyclic order on $\Gamma$.  \\
\indent (4) Assume that  $\Gamma$ is torsion-free. 
Hence it is a $\QQ$-vector space, and there are two divisible abelian subgroups $\Gamma'$ and 
$\Gamma''$ such that $\Gamma=\Gamma'\oplus\Gamma''$ and $\Gamma''$ is isomorphic to $\QQ$. 
Recall that any abelian torsion-free group admits a linear order. Hence we can 
assume that $\Gamma'$ is linearly ordered. Now, let $(G,R)$ be a cyclically ordered group which is isomorphic to 
$\QQ$, then $\Gamma$ is isomorphic to the cyclically ordered group $G'=G\overrightarrow{\times} \Gamma'$. 
In the same way as in Fact \ref{fctdd}, if $(G,R)$ is discretely cyclically ordered, 
then for every prime $p$ we have $f_{G',p}=f_{G,p}$. Since there are $2^{\aleph_0}$ pairwise 
different families of $f_{G,p}$'s, 
there are $2^{\aleph_0}$ pairwise non elementarily equivalent cyclic orders on $\Gamma$. \\ 
\indent Note that $H_p'=H_p\overrightarrow{\times}\Gamma'$, hence by Corollary \ref{prop424} we can have  
$2^{\aleph_0}$ non isomorphic families of $\mathcal{CD}(G')$. Hence we can have $2^{\aleph_0}$ non pairwise 
elementarily equivalent nonlinear cyclic orders such that all the $G'/G_p'$ are dense. \\ 
\indent Above construction is not the only possible. For example, we can also send $1$ to $e^{i\theta}$, where 
$\theta\notin 2\pi\ZZ$. For $\theta$ and $\theta'$ which are not congruent modulo $2\pi\ZZ$, 
we get non-c-isomorphic cyclic orders. So there uncountably many non-isomorphic such cyclic orders. 
If $\Gamma$ is isomorphic to 
a countable product $\prod_{n\in\NN}\Gamma_n$ of groups isomorphic to $\QQ$, we can take embeddings 
of the $\Gamma_n$'s in $\UU\overrightarrow{\times} \QQ$ such that their images have trivial intersections. Then 
$\Gamma$ is isomorphic to the subgroup of $\UU\overrightarrow{\times} \QQ$ generated by these images.  \\
\indent (3) Since $\Gamma/T(\Gamma)$ is abelian torsion-free and divisible, it is a $\QQ$-vector space. 
Furthermore, its dimension is positive, since $\Gamma$ contains non-torsion elements. 
We let $\mathcal{B}$ be a basis of $\Gamma/T(\Gamma)$ and $\mathcal{B}'$ be a subset of 
$\Gamma$ such that the restriction of the canonical epimorphism $\Gamma\twoheadrightarrow \Gamma/T(\Gamma)$ 
induces a one-to-one mapping between $\mathcal{B}'$ and $\mathcal{B}$. Then, the subspace $\Gamma'$ generated by 
$\mathcal{B}'$ is a divisible abelian subgroup of $\Gamma$ which is isomorphic to $\Gamma/T(\Gamma)$. 
One can check that $\Gamma=\Gamma'\oplus T(\Gamma)$. 
We let $\Gamma'=\Gamma_1'\oplus \Gamma_2'$, where ${\rm dim}(\Gamma_2')=1$ (hence $\Gamma_2'\simeq \QQ$). 
We embed $\Gamma_2'$ in $\UU$. Since $\Gamma_2'\cap T(\Gamma)=\{0_{\Gamma}\}$, this 
gives rise to a cyclic order on $\Gamma_2'\oplus T(\Gamma)$. So the lexicographic product 
$(\Gamma_2'\oplus T(\Gamma))\overrightarrow{\times}\Gamma_1'$ is a cyclically group which is isomorphic to $\Gamma$. 
Now, we saw after Proposition \ref{lem423} that we can embed $\Gamma_2'$ in uncountably many in ways $\UU$ 
so to get non-isomorphic 
cyclic orders. Hence this gives rise to $2^{\aleph_0}$ non-isomorphic cyclic orders on $\Gamma$. 
Now, if $T(\Gamma)\simeq T(\UU)$, then $\Gamma$ is c-divisible.  
Hence, by Theorem \ref{thm215}, all these cyclic orders are elementarily equivalent. 
\hfill $\qed$ 
\subsection{The families $\mathcal{CD}(G)$.}
Note that $\mathcal{CD}(G)$ induces an equivalence relation on the set of all primes, where $p$ and 
$q$ are e\-qui\-va\-lent if, and only if, $H_p=H_q$. Hence it induces a partition of this set. This partition 
is linearly ordered in the following way. We set 
$\alpha<\beta $ if, and only if, for $p\in\alpha$ and $q\in\beta$ we have $H_p\subsetneq H_q$. \\ 
\indent If $H_p=\{0_G\}$ for every prime $p$, then the partition is trivial; this holds if $(G,R)$ is discrete.  
\begin{Proposition} 
\begin{enumerate} 
\item Every linearly ordered partition of the set of all prime numbers is 
induced by some $\mathcal{CD}(G)$, where $(G,R)$ is a dense divisible abelian nonlinear cyclically ordered group. \\
If this chain has a smallest element $\alpha_1$, then we can assume that, for $q\in \alpha_1$, 
$H_q=\{0_G\}$ or not. If this partition has a greatest element $\alpha_0$, then we can assume that for 
$p\in\alpha_0$ we have $H_p=l(G)$, so $G/H_p$ is dense. We can also assume that $G$ is c-$p$-divisible for 
every $p\in \alpha_0$. 
\item For every linearly ordered partition $A$ of the set of all prime numbers which has a greatest element $\alpha_0$ 
and for every family of functions $f_p$ from $\NN\backslash\{ 0\}$ to $\{ 0, \dots,p-1\}$ ($p\in \alpha_0$) 
with $f_p(1)\neq 0$, 
there is a divisible abelian nonlinear cyclically ordered group $(G,R)$ such that $A$ is the partition induced 
by the $H_p$'s, for every $p\in \alpha_0$ $G/H_p$ is discrete and for every $p\in \alpha_0$, $r\in \NN\backslash \{0\}$ we have 
$f_{G,p}(r)=f_p(1)+pf_p(2)+\cdots+p^{r-1}f_p(r)$. 
\end{enumerate}
\end{Proposition}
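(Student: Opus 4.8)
The plan is to realize the prescribed data inside the linear part of $G$, constructing $(G,R)$ as a subgroup of a lexicographic product $\UU\overrightarrow{\times}\Lambda_0$ that generalises the rank-one constructions of Subsection~\ref{sec4}. Write the linearly ordered partition as a chain of blocks $(\alpha_i)_{i\in I}$, where $I$ carries the given linear order and the $\alpha_i$ are the blocks of primes. I would take $\Lambda_0$ to be the lexicographically ordered sum $\bigoplus_{i\in I}\QQ$, a divisible $\QQ$-vector space whose proper nontrivial convex subgroups include the initial segments $\Lambda_{<i}=\bigoplus_{j<i}\QQ$, so that this chain of convex subgroups is order-isomorphic to $I$. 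For each block $i$ I attach one ``twisted ray'', namely the subgroup generated by the elements $\bigl(\exp(i(\theta_i+2f^{(i)}(n)\pi)/n),\,e_i/n\bigr)$ for $n\in\NN\backslash\{0\}$, where $e_i$ is the $i$-th coordinate vector and $f^{(i)}$ is a twist built from a family $f^{(i)}_p$ as in Fact~\ref{fact417}; I then let $G$ be the subgroup generated by all these rays. Since each ray is a divisible group, $G$ is divisible, and the angles $\theta_i$ are chosen as in Lemma~\ref{a236} so that $G$ contains no nontrivial root of unity; hence $G$ is torsion-free, and it is nonlinear because the $\UU$-components give $U(G)\neq\{1\}$.

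The crucial point is that, exactly as in Lemma~\ref{lmav424} and Corollary~\ref{prop424}, the twist $f^{(i)}$ governs the $p$-divisibility of the $i$-th level of $l(G)=G\cap(\{1\}\overrightarrow{\times}\Lambda_0)$: I choose $f^{(i)}$ so that its family $f^{(i)}_p$ is the zero map exactly for $p\notin\alpha_i$, which makes the $i$-th level of $l(G)$ $p$-divisible precisely when $p\notin\alpha_i$. Granting this, for $p\in\alpha_i$ every level below $i$ is $p$-divisible while level $i$ is not, so the greatest $p$-divisible convex subgroup $H_p$ of $l(G)$ equals $l(G)\cap\Lambda_{<i}$; consequently the partition of the primes induced by $p\mapsto H_p$ is exactly the given $A$, which is the required realization, and density of $G$ follows from Lemma~\ref{lm12} once the bottom level is arranged dense. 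The two flexibility clauses of (1) are obtained by minor adjustments: to force $H_q=\{0_G\}$ for the least block I take its level to be the bottom of $l(G)$, whereas to force $H_q\neq\{0_G\}$ I prepend below it an honestly divisible $\QQ$-ray (with $f\equiv 0$); and to force $H_p=l(G)$ for the greatest block $\alpha_0$ I omit its level from $l(G)$ and carry $\alpha_0$ on the $\UU$-direction with $f^{(\alpha_0)}\equiv 0$, so that $G/H_p=U(G)$ is divisible, hence dense.

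For part (2) the greatest block $\alpha_0$ must instead yield a discrete $G/H_{p}$ with the prescribed $f_{G,p}$. Here I realize $\alpha_0$ by the discrete rank-one construction of Proposition~\ref{lm241}(3) and Remark~\ref{rk423}: the top ray is taken to be a discrete cyclic order on $\QQ$ whose characteristic family is the given $f_p$, so that $f_{G,p}(r)=f_p(1)+pf_p(2)+\cdots+p^{r-1}f_p(r)$, while all lower blocks are built dense as in the previous paragraph. That these local data persist in the global group and that $G/H_{\alpha_0}$ is exactly the prescribed discrete group is then read off from Lemma~\ref{rk240}, Proposition~\ref{n128}, and the definability of $H_p$ and of discreteness provided by Lemma~\ref{propHp}.

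The main obstacle is the simultaneous verification that the group $G$ generated by all the rays is divisible \emph{and} that $l(G)$ carries exactly the prescribed per-level divisibility, with no accidental divisibility created by the interaction of the levels. Because the rays share the $\UU$-direction, one must check that they do not interfere — I would arrange the $\theta_i$ to be rationally independent modulo $\pi$ across distinct blocks — and then carry out the higher-rank analogue of the computations in Lemmas~\ref{lm238}, \ref{lm239} and~\ref{lmav424}, showing that $l(G)\cap\Lambda_{<i}$ is indeed the greatest $p$-divisible convex subgroup for every $p\in\alpha_i$. Once this is established, $(G,R)$ is a torsion-free divisible nonlinear cyclically ordered group whose $\mathcal{CD}(G)$ and functions $f_{G,p}$ are the prescribed ones.
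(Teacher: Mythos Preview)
Your approach for part~(1) is genuinely different from the paper's and can be made to work, but the mechanism you identify for avoiding interference is wrong. If the $\theta_i$ are rationally independent over $\QQ\pi$ and nonzero, then an element $\bigl(\prod_i\phi_i(r_i),\sum_i r_ie_i\bigr)$ lies in $l(G)$ only when every $r_i=0$, so $l(G)$ collapses to $\{0\}$ and all $H_p$ are trivial. What you should do instead is take every $\theta_i=0$. Then non-interference comes for free from the primary decomposition of $T(\UU)$: with $f^{(i)}_p\neq 0$ iff $p\in\alpha_i$, the image of $\phi_i$ sits in the $\alpha_i$-primary torsion of $\UU$, and since the $\alpha_i$ are disjoint these primary components meet trivially, so $\prod_i\phi_i(r_i)=1$ forces each $\phi_i(r_i)=1$. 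Hence $l(G)=\bigoplus_i l(G_i)$ exactly, and the computation of the $H_p$ goes through as you outline. The paper takes a different route: it builds the unwound directly as a carefully chosen subgroup $\Gamma$ of $\overleftarrow{\prod}_{\alpha}\QQ_\alpha$ (where $\QQ_\alpha$ is the localisation away from $\alpha$), with a specified cofinal element $z$, and sets $G=\Gamma/\langle z\rangle$. Your construction is arguably more transparent once the primary-decomposition observation is made.

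For part~(2), however, there is a real gap. To make the top ray $G_{\alpha_0}$ discrete via Proposition~\ref{lm241}, you must take $f^{(\alpha_0)}_p(1)\neq 0$ for \emph{every} prime $p$, not just those in $\alpha_0$. But then $\phi_{\alpha_0}$ surjects onto all of $T(\UU)$, and the primary-decomposition argument fails: for any $p\notin\alpha_0$ there exist $r_{\alpha_0}\in\QQ\setminus\ZZ$ with denominator $p$-power and lower-level $r_j$ such that $\phi_{\alpha_0}(r_{\alpha_0})\prod_{j<\alpha_0}\phi_j(r_j)=1$. Concretely, the projection of $l(G)$ onto the $\alpha_0$-coordinate has image $\ZZ[1/p:p\notin\alpha_0]$, so $l(G)/(l(G)\cap\Lambda_{<\alpha_0})$ is dense, not discrete. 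Since $H_p=l(G)\cap\Lambda_{<\alpha_0}$ for $p\in\alpha_0$, this makes $G/H_p$ dense, contradicting what you need. The paper handles part~(2) by abandoning the $\UU\overrightarrow{\times}\Lambda_0$ picture and instead replacing the top factor of the unwound by a group $\Delta_{\alpha_0}'$ built from ${\rm uw}(G_0)$, where $G_0$ is the prescribed discrete cyclic order on $\QQ$; the divisibility of $G$ modulo $\langle z\rangle$ is then checked by hand on generators. Your sketch does not supply a substitute for this step.
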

\begin{proof} In Proposition \ref{lm241} we constructed examples where the partition contains 
only one class, $H_p=\{0_G\}$ for every prime $p$, and $(G,R)$ either dense or discrete. In the case 
where it is discrete, we also proved that the family of functions $f_{G,p}$ can be any family 
satisfying the required conditions. Corollary \ref{prop424}, gives an example where the partition contains 
only one class, $H_p=l(G)$ for every prime $p$. 
In the following we assume that the partition contains at least two classes. \\[2mm]
\indent (1) 
Let $(A,\leq)$ be an ordered partition of the set of all primes, which contains at least two classes. \\[2mm]
\indent (a) Construction of the unwound $(\Gamma,z)$. 
For $\alpha\in A$ we 
denote by $\QQ_{\alpha}$ the subgroup of $\QQ$ generated by $\displaystyle{\left\{\frac{1}{p^n}\;:\; 
p\notin \alpha,\; n\in \NN\right\}}$. The group $\QQ_{\alpha}$ is $p$ divisible if, and only if, 
$p\notin \alpha$, and if $p\in \alpha$, then $1$ is not $p$-divisible. For every prime $p$ we let 
$\alpha(p)$ denote the unique $\alpha \in A$ such that $p\in \alpha$. \\
\indent We denote by $\overleftarrow{\prod}_{\alpha\in A}\QQ_{\alpha}$ the additive group 
$\prod_{\alpha\in A}\QQ_{\alpha}$ together with the inverse lexicographic order.  
The nontrivial convex subgroups of 
$\overleftarrow{\prod}_{\alpha\in A}\QQ_{\alpha}$ have the form 
$$\overleftarrow{\prod}_{\alpha\in A_1}\QQ_{\alpha}\overleftarrow{\times}
\overleftarrow{\prod}_{\alpha\in A_2}\{0\},$$ 
where $A_1<A_2$ is an ordered partition of $A$. 
Now, such a group is $p$ divisible if, and only if, $\alpha(p)\in A_2$. So the family of 
maximal $p$-divisible convex subgroups of $\overleftarrow{\prod}_{\alpha\in A}\QQ_{\alpha}$ 
induces the ordered partition $(A,\leq)$. \\
\indent Let 
$z=(z_{\alpha})_{\alpha\in A}$ be the element of $\overleftarrow{\prod}_{\alpha\in A}\QQ_{\alpha}$ 
such that for every $\alpha$ we have $z_{\alpha}=1$. 
Let $\alpha_0\in A$. The element $x=(x_{\alpha})_{\alpha\in A}$ such that $x_{\alpha_0}=1$ and, 
for $\alpha\neq \alpha_0$, $x_{\alpha}=0$ is denoted by $1_{\alpha_0}$.\\
\indent The {\it support} of an element $x=(x_{\alpha})_{\alpha\in A}$ is the set of $\alpha\in A$ such that 
$x_{\alpha}\neq 0$ (the support of $0$ is $\emptyset$). \\
\indent For $\alpha\in A$, we let $\Delta_{\alpha}$ be the subroup of elements $x$ of 
$\overleftarrow{\prod}_{\alpha\in A}\QQ_{\alpha}$ such that for every $\alpha'\neq \alpha$ we have 
$x_{\alpha'}=0$. 
Then $\sum_{\alpha\in A}\Delta_{\alpha}$ is the 
subgroup of all elements of $\prod_{\alpha\in A}\QQ_{\alpha}$ with finite support. We denote by 
$\Gamma_0$ the subgroup generated by $z$ and the 
$\displaystyle{\frac{1}{p^n}\left(z-1_{\alpha(p)}\right)}$'s, 
where $p$ is prime, $n\in \NN\backslash \{0\}$. We set $\displaystyle{\Gamma=\Gamma_0+\sum_{\alpha\in A}\Delta_{\alpha} }$. 
Note that $z$ is cofinal in $\Gamma$. We show that it is not divisible by any prime. \\
\indent The group $\sum_{\alpha\in A}\Delta_{\alpha}$ is generated by the $\frac{1}{m}1_{\alpha}$'s, where, 
for $p\in \alpha$, the integers  $p$ and $m$ are coprime. So, if $x=\frac{1}{m}1_{\alpha(p)}$, then the denominator of 
$x_{\alpha(p)}$ is not divisible by $p$. If $x=\frac{1}{p^n}(z-1_{\alpha(p)})$, then $x_{\alpha(p)}=0$, and $x_{\alpha}=
\frac{1}{p^n}$ for $\alpha\neq \alpha(p)$. Now, the group $\Gamma$ is generated by $z$, the 
$\frac{1}{p^n}(z-1_{\alpha(p)})$'s and the $\frac{1}{m}1_{\alpha}$'s, where $m$ and $p$ are coprime. 
Hence, for any $x\in\Gamma$ and $p$ prime, if $x_{\alpha(p)}\neq 0$, then its 
denominator is not divisible by $p$. In particular, $z$ is not divisible by any prime. \\[2mm]
\indent (b) We let $G=\Gamma/\langle z\rangle$. This group is torsion-free, since $z$ is 
not divisible by any prime. We show that the cyclically ordered group 
$G$ is nonlinear. We saw after the definition of the wound-round 
(Subsection \ref{subsect21}) that if $G$ is linearly cyclically ordered, then it is isomorphic to 
$(\ZZ\overrightarrow{\times} G)/\langle (1,0_G)\rangle$, where $\overrightarrow{\times}$ denotes the 
lexicographic product of linearly ordered groups. By uniqueness of the unwound, it follows that $G$ is 
linearly cyclically ordered if, and only if, $\Gamma$ is order isomorphic to $\ZZ\overrightarrow{\times} 
l(G)_{uw}$, where $l(G)_{uw}$ is the greatest proper convex subgroup of $\Gamma$. Therefore, $G$ is linearly ordered 
if, and only if, for every cofinal $x\in \Gamma$ there exists a positive integer $n$ such that $x-nz$ belongs 
to the greatest proper convex subgroup of $\Gamma$. If $A$ is infinite, then 
the greatest 
convex subgroup of $\Gamma$ is $\sum_{\alpha\in A}\Delta_{\alpha}$. For every prime $p$, $\frac{1}{p}(z-1_{\alpha(p)})$ 
is cofinal in $\Gamma$. Now, for every positive integer $n$, 
$\frac{1}{p}(z-1_{\alpha(p)})-nz$ does not belong to $\sum_{\alpha\in A}\Delta_{\alpha}$, since it has an infinite support. 
Therefore $G$ is nonlinear. 
If $A$ is finite, then it has a greatest element $\alpha_0$. Then $\Gamma=\bigoplus_{\alpha\in A}\Delta_{\alpha}$ 
and its greatest convex subgroup is $\bigoplus_{\alpha\neq \alpha_0}\Delta_{\alpha}$. We let $p$ be a prime not in 
$\alpha_0$. So $\frac{1}{p}(z-1_{\alpha(p)})$ is cofinal in $\Gamma$. Now, for every positive integer $n$, 
$\frac{1}{p}(z-1_{\alpha(p)})-nz$ does not belong to $\bigoplus_{\alpha\neq \alpha_0}\Delta_{\alpha}$. 
Consequently, $G$ is nonlinear. \\[2mm]
\indent (c) We show that $G$ is divisible by proving that for every prime $p$ and 
every $n\in\NN\backslash \{0\}$, each generator of $\Gamma$ is divisible 
by $\frac{1}{p^n}$ modulo $\langle z\rangle$. We start with $\frac{1}{m}1_{\alpha}$ where, for every $p\in\alpha$, 
$m$ and $p$ are coprime. Clearly, 
if $p\notin \alpha$, then $\frac{1}{m}1_{\alpha}$ is divisible by $p^n$. 
Assume that $p\in\alpha$. We have 
$$\displaystyle{1_{\alpha}=p^n\left(1_{\alpha}+\left(p^n-1\right)\frac{1}{p^n}\left(z-1_\alpha\right)\right)
+\left(1-p^n\right)z}.$$ 
Therefore $1_{\alpha}$ is divisible by $p^n$ modulo $\langle z\rangle$. 
Now, since $m$ and $p^n$ are coprime, there exist integers $u$ and $v$ such that 
$up ^n+vm=1$. Then 
$\displaystyle{\frac{1}{m}1_{\alpha}=v1_{\alpha}+p^n\frac{u}{m}1_{\alpha}}$. 
Hence $\frac{1}{m}1_\alpha$ is divisible by $p^n$ modulo $\langle z\rangle$. \\
\indent We turn to $\displaystyle{\frac{1}{q^m}\left( z-1_{\alpha(q)}\right) }$. If $p=q$, then 
$\displaystyle{\frac{1}{q^m}\left( z-1_{\alpha(q)}\right)=p^ n\frac{1}{p^{m+n}}\left( z-1_{\alpha(p)}\right)}$. 
Assume that $p$, $q$ are coprime and let $u$, $v$ be integers such that $up^n+vq^m=1$. Then, 
$$p^n\left(\frac{u}{q^m}\left( z-1_{\alpha(q)}\right)+\frac{v}{p^n}\left( z-1_{\alpha(p)}\right)\right)
+v(1_{\alpha(p)}-1_{\alpha(q)})=$$
$$=\frac{1}{q^m}\left(\left(up^n+vq^m\right)z-up^n1_{\alpha(q)}-vq^m1_{\alpha(p)}\right)
+\frac{1}{q^m}\left(vq^m1_{\alpha(p)}-vq^m1_{\alpha(q)}\right)=$$
$$=\frac{1}{q^m}\left(up^n+vq^m\right)(z-1_{\alpha(q)})=\frac{1}{q^m}(z-1_{\alpha(q)}).$$
\indent Since $1_{\alpha(p)}$ and $1_{\alpha(q)}$ are divisible by $p^n$ modulo $\langle z\rangle$, 
this proves that $\frac{1}{q^m}(z-1_{\alpha(q)})$ is divisible by $p^n$ modulo $\langle z\rangle$. \\
\indent Since the unwound $\Gamma$ of $G$ is dense, $G$ is dense. \\[2mm]
\indent (d) Now, we look at the subgroups $(H_p)_{uw}$ of $\Gamma$. Since $\Gamma$ 
is a subgroup of $\overleftarrow{\prod}_{\alpha\in A}\QQ_{\alpha}$, $(H_p)_{uw}$ is the set of $x\in \Gamma$ such that 
$\alpha\geq \alpha(p)\Rightarrow x_{\alpha}=0$. \\
\indent Since $\mathcal{CD}(G)$ is isomorphic to the preordered family 
$\{\{(0,0_G)\},\; {\rm uw}(G),\; (H_p)_{uw}\; : \; p \mbox{ prime }\}$, it defines the ordered partition $A$. \\[2mm]
\indent (e) If the partition has a smallest element $\alpha_1$, then for $p\in \alpha_1$ we have $H_p=\{0_G\}$. Then we can take the lexicographical product 
$G'=G\overrightarrow{\times}\QQ$. The partition is the same, but it contains a divisible c-convex proper subgroup. Hence 
for every prime $p$, $H_p'\neq\{ 0\}$. \\
\indent If the partition has no greatest element, then every $G/H_p$ is dense, since there is 
no maximal $H_p$.  Assume that the partition has a greatest element $\alpha_0$. 
Then, for $p\in \alpha_0$ the greatest $p$-divisible convex subgroup of $\Gamma$ 
is equal to the subset of elements $x$ such that $x_{\alpha_0}=0$. 
Now, this is also the greatest convex subgroup of $\Gamma$, and we know that it is isomorphic to $l(G)$. Therefore, 
$H_p=l(G)$, hence $G/H_p\simeq U(G)$ is divisible, so it is dense. In order to get a group $G'$ with torsion elements, 
in above construction  take $\overleftarrow{\prod}_{\alpha\in A\backslash\{\alpha_0\}}\QQ_{\alpha}$ instead of 
$\overleftarrow{\prod}_{\alpha\in A}\QQ_{\alpha}$, and let $\Gamma'$ be the group obtained in the same way as 
$\Gamma$. Then $\Gamma'$ is $p$-divisible if, and only if, $p\in \alpha_0$. So $G'=\Gamma'/\langle z\rangle$ 
is c-$p$-divisible if, and only if, $p\in \alpha_0$. \\[2mm]
\indent (2) Let $A$ be a linearly ordered partition of the set of all prime numbers which has a greatest element $\alpha_0$ 
and a family of functions $f_p$ from $\NN\backslash\{ 0\}$ to $\{ 0, \dots,p-1\}$ ($p\in \alpha_0$) with $f_p(1)\neq 0$.  
For every prime $p\notin \alpha_0$, we set $f_p(1)=1$, and for every $r>1$ we set $f_p(r)=0$. 
We let $f$ be the function defined in the same way as in the proof of Lemma \ref{lm239}. 
Denote by $G_0$ the cyclically ordered group 
$$\displaystyle{
\left\{\left({\rm exp}\left(im\frac{2f(n)\pi}{n}\right),\frac{m}{n}\right)\; :\; m\in \ZZ,\; n\in 
\NN\backslash \{0\}\right\}}$$
defined in Lemma \ref{lm238}. By Proposition \ref{lm241}, $G_0$ is discrete. By Proposition \ref{lem423}, it is not c-arcimedean. 
Since it embeds in $\UU\overrightarrow{\times}\QQ$, its linear part is isomorphic to a discrete subgroup of 
$\QQ$. Hence $l(G_0)\simeq \ZZ$. \\ 
\indent  (a) Construction of the unwound. 
We embed the group ${\rm uw}(G_0)$ in its 
divisible closure, and we let $\Delta_{\alpha_0}'$ be the subgroup generated by ${\rm uw}(G_0)$ and 
the $\frac{1}{p^n}z_{G_0}$'s, where $p\notin \alpha_0$ is a prime and $n\in \NN\backslash \{0\}$. 
The greatest convex subgroup $l(G_0)_{uw}$ of ${\rm uw}(G_0)$ is isomorphic to $l(G_0)$, so to $\ZZ$. Now, $z_{G_0}$ 
does not belong to this subgroup. Hence the greatest convex subgroup of $\Delta_{\alpha_0}'$ is isomorphic to $\ZZ$. 
It follows that $\Delta_{\alpha_0}'$ is discrete. 
We take $\overleftarrow{\prod}_{\alpha \in A\backslash\{\alpha_0\}}\QQ_{\alpha}\overleftarrow{\times}\Delta_{\alpha_0}'$, 
instead of $\overleftarrow{\prod}_{\alpha\in A}\QQ_{\alpha}$. \\
\indent For every $\alpha \in A$, we let $\Delta_{\alpha}$ the subgroup of elements $x$ such that 
$x_{\alpha'}=0$ for every $\alpha'\neq \alpha$. If $\alpha\neq \alpha_0$, then $1_{\alpha}$ is defined as in the 
first part of this proof. We let $1_{\alpha_0}$ be 
the element $x$ such that $\alpha\neq\alpha_0$ implies $x_{\alpha}=0$, and $x_{\alpha_0}=z_{G_0}$. 
We have $\Delta_{\alpha_0}\simeq\Delta_{\alpha_0}'$ and, for $\alpha\neq \alpha_0$, $\Delta_{\alpha}\simeq \QQ_{\alpha}$. \\
\indent Let 
$z=(z_{\alpha})_{\alpha\in A}$ the element such that for every $\alpha\neq\alpha_0$ we have $z_{\alpha}=1$, 
and $z_{\alpha_0}=z_{G_0}$. Note that in $\Delta_{\alpha_0}'$ the element $z_{G_0}$ is not divisible by any 
$p\in \alpha_0$, and it is divisible by every $p^n$ such that $p\notin \alpha$, and $n\in \NN\backslash \{0\}$. 
We denote by 
$\Gamma_0$ the subgroup generated by $z$ and the 
$\displaystyle{\frac{1}{p^n}\left(z-1_{\alpha(p)}\right)}$'s, 
where $p$ is prime, $n\in \NN\backslash \{0\}$, and $\alpha(p)$ denotes the unique $\alpha \in A$ such that 
$p\in \alpha(p)$. We set $\displaystyle{\Gamma=\Gamma_0+\sum_{\alpha\in A}\Delta_{\alpha} }$. \\
\indent (b) Since $z$ is cofinal in $\Gamma$, we can consider the cyclically ordered group $G=\Gamma/\langle z\rangle$. 
In the same way as in (1) (b), $G$ is nonlinear.  
We show that $G$ is torsion-free and divisible. \\
\indent In the same way as in (1) (a), $z$ is not divisible by any integer in $\Gamma$, so $G$ is torsion-free. 
Furthermore,  for 
$\alpha\in A$, the elements $\frac{1}{m}1_{\alpha}$ and $\frac{1}{q^m}(z-1_{\alpha(q)})$ are 
divisible by every $\frac{1}{p^n}$ modulo $\langle z\rangle$. In order to prove that $G$ is divisible, 
it remains to look at the  elements of $\Delta_{\alpha_0}$. Note that $\Delta_{\alpha_0}$ is generated 
by the $\frac{1}{m}1_{\alpha_0}$ (where the gcd of $m$ and every $p\in\alpha_0$ is $1$) and the elements 
$x$ such that $x_{\alpha_0}\in {\rm uw}(G_0)$ and for $\alpha\neq \alpha_0$ we have 
$x_{\alpha}=0$. So it is sufficient to focus on those $x$. Since ${\rm uw}(G_0)$ is divisible modulo 
$\langle z_{G_0}\rangle$, there is $y\in \Delta_{\alpha_0}$ and $k\in \ZZ$ such that 
$p^ny_{\alpha_0}=x_{\alpha_0}+kz_{G_0}$. Then $p^ny=x+k\! \cdot\! 1_{\alpha_0}$. 
Since $1_{\alpha_0}$ is divisible by $p^n$ modulo $\langle z\rangle$, so is $x$. \\
\indent (c) We look at the maximal $p$-divisible convex subgroups $\Gamma_p$ of $\Gamma$. 
If $p\notin \alpha_0$, then they are constructed in the same way as in (1). 
If $p\in \alpha_0$, then in the same way as in (1) we see that $\Gamma_p\supseteq \Gamma\cap 
 \overleftarrow{\prod}_{\alpha \in\backslash\{\alpha_0\}}\QQ_{\alpha}\overleftarrow{\times}\{0_G\}$. We prove that 
this inclusion is an equality. 
Since $G_0$ is not c-archimedean, ${\rm uw}(G_0)$ is not archimedean. Since $z_{G_0}$ 
is cofinal in  ${\rm uw}(G_0)$, so are the $\frac{1}{p^n}z_{G_0}$. Therefore, the greatest proper convex subgroup of 
$\Delta_{\alpha_0}'$ is $l(G_0)_{uw}$, which is isomorphic to $\ZZ$. 
It follows that the greatest proper convex subgroup of $\Gamma$ is $\Gamma\cap 
 \overleftarrow{\prod}_{\alpha \in\backslash\{\alpha_0\}}\QQ_{\alpha}\overleftarrow{\times}l(G_0)_{uw}$, 
and there is no convex subgroup between $\Gamma\cap 
 \overleftarrow{\prod}_{\alpha \in\backslash\{\alpha_0\}}\QQ_{\alpha}\overleftarrow{\times}\{0_G\}$ and $\Gamma\cap 
 \overleftarrow{\prod}_{\alpha \in\backslash\{\alpha_0\}}\QQ_{\alpha}\overleftarrow{\times}l(G_0)_{uw}$. 
Since $l(G_0)$ is discrete, 
the greatest proper convex subgroup is not divisible by any prime. So above inequality is an equality. \\
\indent (d) Now, $\Gamma={\rm uw}(G)$, hence the greatest proper convex subgroup $l(G)_{uw}$ of $\Gamma$ 
is isomorphic to $l(G)$. Consequently, 
the partition induced by $G$ on the set of prime numbers is $A$. Let $p\in \alpha_0$, $q\notin \alpha_0$ 
and $n\in \NN\backslash\{0\}$. Then $\frac{1}{p^n}(z-1_{\alpha_0})\in \Gamma_p$ and 
$\frac{1}{q^n}(z-1_{\alpha_(q)})-\frac{1}{q^n}\!\cdot\!1_{\alpha_0}\in \Gamma_p$. 
Clearly, $z$ and the elements of $\displaystyle{\sum_{\alpha\in A}\Delta_{\alpha}}$ are congruent modulo 
$\Gamma_p$ to an element of $\Delta_{\alpha_0}$. 
It follows that every element of $\Gamma$ is congruent modulo 
$\Gamma_p$ to an element of $\Delta_{\alpha_0}$. Therefore, $\Gamma/\Gamma_p\simeq \Delta_{\alpha_0}'$ 
is discrete. \\
\indent (e) Finally, we turn to the functions $f_{G,p}$, where $p\in \alpha_0$. By the definition 
of the group $G_0$, we have $f_{G_0,p}=f_p$. 
Let $x$ be the element of  $\Delta_{\alpha_0}$ such that $x_{\alpha_0}$ is the smallest positive element 
of $\Delta_{\alpha_0}'$. Then the class $x+\Gamma_p$ is the 
smallest positive element of $\Gamma/\Gamma_p$. Let $r\in \NN\backslash\{ 0\}$. 
By Lemma \ref{lm235} there is $y'\in {\rm uw}(G_0)$ such that 
$x_{\alpha_0}=p^ry'+f_p(r)z_{G_0}$. We denote by $y$ the element of $\Delta_{\alpha_0}$ such that $y_{\alpha_0}=y'$. 
Then $x=p^ry+f_p(r)1_{\alpha_0}$. So $x=p^ry-f_p(r)(z-1_{\alpha_0})+f_p(r)z$. Since $p\in \alpha_0$,  
$t=\frac{1}{p^r}(z-1_{\alpha_0})$ belongs to $\Gamma$. Therefore $x=p^r(y-f_p(r)t)+f_p(r)z$. By Lemma \ref{lm235} 
again, this shows that $f_p(r)$ is congruent to $f_{G,p}(r)$ modulo $p^r$. Now, since both of $f_p(r)$ and 
$f_{G,p}(r)$ belong to $\{0,\dots, p^r-1\}$, we have $f_p(r)=f_{G,p}(r)$. 
This proves that the function $f_{G,p}$ is equal to $f_p$. 
\end{proof}
\section{cyclically minimal cyclically ordered groups.}\label{sec5}
\indent 
In the case of real numbers, each definable subset is a finite union of intervals, it is definable by a 
quantifier-free formula in the language of order. The general study of linearly ordered algebraic 
structures having such a property has been done by A.\ Pillay and C.\ Steinhorn as o-minimal structures 
(\cite{PiSt}). Afterwards M.\ Dickmann introduced the notion of weakly o-minimal structures (\cite{Di}). 
In a very large context D.\ Macpherson and S.\ Steinhorn looked at analogues (\cite{MSt}). In the case of 
cyclically ordered structures the analogue is the notion of cyclically minimal structures. They proved that a 
cyclically ordered group $(G,R)$ is cyclically minimal if, and only if, it is abelian and  its unwound is 
divisible. Lucas proved independently this theorem. He deduced from Section \ref{sec3} that this condition is sufficient. 
He proved the converse in several lemmas, which also characterized the weakly cyclically minimal structures. 
Some of these lemmas contained errors, so we had to make changes in the proof of Lucas. 
These results of Lucas remained unpublished. 
Later, weakly cyclically minimal cyclically ordered groups were also studied in \cite{KV15}, where it was proved that every 
weakly minimal cyclically ordered group is abelian. \\
\indent  Since the groups are not necessarily abelian, 
we take here the multiplicative notation for the group law, however we speak of divisible group and of elements 
divisible by n (with this convention $x$ is divisible by $n$ if $\exists y\mbox{ } y^n=x$).
\begin{Proposition}\label{n51}
Each c-divisible abelian cyclically ordered group is cyclically minimal. 
\end{Proposition}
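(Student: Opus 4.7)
The plan is to apply the quantifier-elimination for $\mathrm{TcD}$ established in Theorem \ref{thm31}(3) and then verify that each atomic formula in one variable with parameters defines a finite union of singletons and open intervals. Let $X \subseteq G$ be parametrically definable; by quantifier elimination, $X$ is defined by a quantifier-free formula $\varphi(x,\bar a)$ in the language $\{\cdot,{}^{-1},e,R\}$, which may be put in disjunctive normal form. I would first record that the family $\mathcal F$ of finite unions of singletons and open intervals of $G$ is closed under finite Boolean combinations: unions are immediate, intersections follow because the intersection of two open intervals in a cyclic order is a union of at most two open intervals (and the intersection of a singleton with anything has at most one point), and complements are handled using that $(G,R)$ is dense and infinite (Lemma \ref{finiteodense} together with Proposition \ref{rek119}), so that $G\setminus\{a\}=\{g\}\cup I(a,g)\cup I(g,a)$ for any $g\neq a$, and $G\setminus I(a,b)=\{a,b\}\cup I(b,a)$. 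Hence it suffices to treat atomic literals.

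Since $G$ is abelian, each term in the variable $x$ has the form $x^n g$ with $n\in\ZZ$ and $g$ a parameter. Thus the atomic formulas are of two kinds: equalities, which reduce to equations $x^m=h$, and ternary relations $R(x^{n_1}g_1,x^{n_2}g_2,x^{n_3}g_3)$. An equation $x^m=h$ has at most $|m|$ solutions, because c-divisibility gives $T(G)\simeq T(\UU)$ (Proposition \ref{rek119}), so the kernel of $x\mapsto x^m$ is the finite group $T(\UU)_{|m|}$; this already produces a member of $\mathcal F$.

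For the ternary case I would pass to the unwound $\tilde G=\mathrm{uw}(G)$, which by Proposition \ref{sup} is a divisible linearly ordered abelian group and hence is o-minimal. Fixing lifts $\tilde g_i$ of $g_i$ and letting a lift $\tilde x$ of $x$ range in the fundamental domain $[0_{\tilde G}, z_G)$, the relation $R(x^{n_1}g_1,x^{n_2}g_2,x^{n_3}g_3)$ unwraps, via the definition of the wound-round cyclic order in Subsection \ref{subsect21}, into a finite disjunction, over cyclic permutations $\sigma\in A_3$ and over finitely many admissible integer shifts $k_1,k_2,k_3$, of conjunctions of linear inequalities $0\leq \tilde x^{n_{\sigma(i)}}\tilde g_{\sigma(i)}z_G^{-k_i}<z_G$ together with a chain $\tilde x^{n_{\sigma(1)}}\tilde g_{\sigma(1)}z_G^{-k_1}<\tilde x^{n_{\sigma(2)}}\tilde g_{\sigma(2)}z_G^{-k_2}<\tilde x^{n_{\sigma(3)}}\tilde g_{\sigma(3)}z_G^{-k_3}$. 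By o-minimality each such system defines a finite union of intervals in $\tilde G$, and pushing them down via the projection $\tilde G\to G$ yields a finite union of open intervals together with finitely many boundary singletons, so the $x$-set lies in $\mathcal F$.

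The main obstacle will be justifying that the integer shifts $k_i$ genuinely range over a finite set. This is controlled by the restriction of $\tilde x$ to $[0_{\tilde G}, z_G)$: each power $\tilde x^{n_i}$ then lies in a bounded interval of $\tilde G$ depending only on $n_i$ and $z_G$, so combined with the fixed parameters $\tilde g_i$ the product $\tilde x^{n_i}\tilde g_i$ stays in a bounded region, and only finitely many $k_i$ can simultaneously satisfy $0\le \tilde x^{n_i}\tilde g_i z_G^{-k_i}<z_G$. Once this finiteness is secured, o-minimality of the divisible ordered abelian group $\tilde G$ completes the reduction to a finite union of intervals, and the argument closes.
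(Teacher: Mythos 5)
Your proof is correct, and it shares the paper's skeleton: both invoke the quantifier elimination of Theorem \ref{thm31}, reduce to atomic formulas in one variable, and analyse the ternary relations by lifting to the unwound with representatives in the fundamental domain $[0_{{\rm uw}(G)},z_G)$, where the integer shifts range over a finite set. Where you genuinely diverge is in how the resulting systems are shown to carve out intervals. The paper first normalises every ternary atomic formula to one of the two shapes $R(a,x^n,b)$ and $R(a,x^m,bx^n)$ and then computes the endpoints of the solution intervals explicitly, as expressions in $a^{1/n}$, $b^{1/n}$, the roots of unity $\zeta_n^k$ and minima and maxima of such elements; you instead treat a general $R(x^{n_1}g_1,x^{n_2}g_2,x^{n_3}g_3)$ uniformly and appeal to o-minimality of the divisible ordered abelian group ${\rm uw}(G)$ --- which here amounts only to the convexity of the solution set of a finite system of inequalities linear in $\tilde x$ --- to conclude that each disjunct defines an interval. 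This is essentially the Macpherson--Steinhorn route (minimality of the unwound plus interpretability), which the paper explicitly mentions and deliberately avoids, grafted onto the quantifier-elimination reduction: your version is shorter and less error-prone, while the paper's version exhibits the endpoints of the definable sets explicitly. A small merit of your write-up is that you make explicit the closure of the class of finite unions of singletons and open intervals under Boolean combinations (complements via $G\setminus\{a\}=\{g\}\cup I(a,g)\cup I(g,a)$ and $G\setminus I(a,b)=\{a,b\}\cup I(b,a)$), a step the paper leaves implicit; conversely, when pushing intervals of the fundamental domain back down to $G$ you should note the seam case, namely that an interval of ${\rm uw}(G)$ abutting both $0_{{\rm uw}(G)}$ and $z_G$ projects onto $G\setminus\{e_G\}$ rather than onto a single open interval, but this set is again in your class $\mathcal F$, so nothing is lost.
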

\begin{proof} First, note that a c-divisible abelian cyclically ordered group is infinite since its 
torsion subgroup is isomorphic to $T(\UU)$. 
In \cite{MSt}, D.\ Macpherson and C.\ Steinhorn asserted that this follows from the minimality of 
the divisible abelian linearly ordered groups and the interpretability of a cyclically ordered group 
in its unwound. This interpretability has been studied more in detail in 
\cite[Lemma 4.3]{GiLL}. Lucas asserted that this  result can be obtained using the elimination 
of quantifiers in c-divisible 
abelian cyclically ordered groups (Theorem \ref{thm31}). We prove this assertion, that is 
we show that any quantifier-free 
formula defines a finite union of intervals and singletons. 
First, note that a c-divisible abelian cyclically ordered group is infinite, since its torsion subgroup is.  
A quantifier-free formula is a boolean 
combination of formulas such that $ax^n=b$ and $R(ax^m,bx^n,cx^p)$. Since, $ax^n=b\Leftrightarrow 
a=bx^{-n}$, we can assume that $n\geq 1$. The formula $ax^n=b$ is 
equivalent to $x^n=ba^{-1}$, and it defines the set $n$-th roots of $ba^{-1}$, which contain $n$ 
elements, by properties of the subgroup of torsion elements of a c-divisible abelian cyclically 
ordered group. If $m=n=p$, then $R(ax^m,bx^n,cx^p)$ is equivalent to $R(a,b,c)$. If $m=n\neq p$, 
then $R(ax^m,bx^n,cx^p)$ is equivalent to $R(bc^{-1},x^{p-n},ac^{-1})$. The cases $m\neq n=p$ and 
$n\neq p=m$ are similar. If $m\neq n\neq p\neq m$, and $m=\min(m,n,p)$, then $R(ax^m,bx^n,cx^p)$ is 
equivalent to $R(ab^{-1},x^{n-m},cb^{-1}x^{p-m})$, where $n-m>0$ and $p-m>0$. The cases $n=\min(m,n,p)$ and 
$p=\min(m,n,p)$ are similar. So, it remains to prove that the formulas 
$R(a,x^n,b)$ and $R(a,x^m,bx^n)$ (where in the second case $m$, $n$ are positive integers) define unions of open intervals. \\
\indent We let $y$ be the element of 
the unwound $\Gamma={\rm uw}(G)$ such that $x$ is the image of $y$ in $G=\Gamma/\langle z_G\rangle$ 
and $e_{\Gamma}\leq y<z_G$. 
We also denote by $a'$ (resp.\ $b'$) the element of $[e_{\Gamma},z_G[$ such that its image is $a$ (resp.\ $b$). 
Assuming $n\geq 1$, we let   
$a^{1/n}$ (resp.\ $b^{1/n}$, resp.\ $\zeta_n$) be the image of $(a')^{1/n}$ (resp.\ $(b')^{1/n}$, 
resp.\ $z_G^{1/n}$). 
Assume that $n\geq 1$. By properties of the unwounds $R(a,x^n,b)$ holds if, and only if, there is $k\in \ZZ$ such that 
$e_{\Gamma}\leq a'<y^nz_G^{-k}<b'<z_G$, or $e_{\Gamma}\leq y^nz_G^{-k}<b'<a'<z_G$ or $e_{\Gamma}\leq b'<a'<y^nz_G^{-k}<z_G$. 
This in turn is 
equivalent to 
$(a')^{1/n}z_G^{k/n}<y<(b')^{1/n}z_G^{k/n}$, or 
$z_G^{k/n}\leq y<(b')^{1/n}z_G^{k/n}<(a')^{1/n}z_G^{k/n}$ or 
$(b')^{1/n}z_G^{k/n}<(a')^{1/n}z_G^{k/n}<y<z_G^{(1+k)/n}$. 
Note that $e_{\Gamma}\leq y<z_G$ and $e_{\Gamma}< y^nz_G^{-k}<z_G$ imply $0\leq k \leq n-1$. 
It follows that 
$R(a,x^n,b)$ is equivalent to $\bigvee_{0\leq k\leq n-1}R(a^{1/n}\zeta_n^k,x,b^{1/n}\zeta_n^k)$, which 
defines a finite union of intervals. \\
\indent Assume that $n<0$. Then $R(a,x^n,b)\Leftrightarrow R\left(b^{-1},(x^{-1})^{-n},a^{-1}\right)$. 
This is equivalent to saying that $x^{-1}$ belongs to a finite union of open intervals. 
Now, $x^{-1}\in I(c,d) \Leftrightarrow x\in I(d^{-1},c^{-1})$. Hence $R(a,x^n,b)$ defines a finite  
union of open intervals. \\
\indent We assume that $m>n$. The formula $R(a,x^m,bx^n)$ holds if, and only if, there are integers $k$, $l$ such 
that $e_{\Gamma}\leq a'<y^mz_G^{-k}<b'y^nz_G^{-l}<z_G$ or $e_{\Gamma}\leq y^mz_G^{-k}<b'y^nz_G^{-l}<a'<z_G$ or 
$e_{\Gamma}\leq b'y^nz_G^{-l}<a'<y^mz_G^{-k}<z_G$. \\
\indent In the same way as above, this implies $0\leq k\leq m-1$, and 
$e_{\Gamma}\leq y<z_G$ and $e_{\Gamma}< b'y^nz_G^{-l}<z_G$ imply $0\leq l \leq n$. \\
\indent $\bullet$ $e_{\Gamma}\leq a'<y^mz_G^{-k}<b'y^nz_G^{-l}<z_G$ is equivalent to 
$$(a')^{1/m}z_G^{k/m}<y,\;y<(b')^{1/(m-n)}z_G^{(k-l)(m-n)}\mbox{ and }y<(b')^{-1/n}z_G^{(l+1)/n}.$$  
$$\mbox{We let }c_{k,l}'=\min \left((b')^{1/(m-n)}z_G^{(k-l)(m-n)},(b')^{-1/n}z_G^{(l+1)/n}\right)\mbox{ and }
c_{k,l}=c_{k,l}'\!\cdot\!\langle z_G\rangle.$$ 
If $l<n$, then $(b')^{-1/n}z_G^{(l+1)/n}<z_G$, and if 
$l=n$, then $(b')^{1/(m-n)}z_G^{(k-n)(m-n)}<z_G$. It follows that $c_{k,l}'<z_G$. Hence 
$e_{\Gamma}\leq a'<y^mz_G^{-k}<b'y^nz_G^{-l}<z_G$ is equivalent to $R(a^{1/m}\zeta_m^k,x,c_{k,l})$. \\
\indent $\bullet$ $e_{\Gamma}\leq y^mz_G^{-k}<b'y^nz_G^{-l}<a'<z_G$ is equivalent to $y=z_G^{k/m}$ or:  
$$z_G^{k/m}<y, \; y<(b')^{1/(m-n)}z_G^{(k-l)(m-n)}\mbox{ and }y<(a')^{1/n}(b')^{-1/n}z_G^{l/n}.$$ 
$$\mbox{We let }d_{k,l}'=\min \left((b')^{1/(m-n)}z_G^{(k-l)(m-n)},(a')^{1/n}(b')^{-1/n}z_G^{l/n}\right)\mbox{ and }
d_{k,l}=d_{k,l}'\!\cdot\!\langle z_G\rangle.$$ 
If $l<n$, then $(a')^{1/n}(b')^{-1/n}z_G^{l/n}<z_G$, and 
if $l=n$, then $(b')^{1/(m-n)}z_G^{(k-n)(m-n)}<z_G$. Therefore $d_{k,l}'<z_G$. Hence 
$e_{\Gamma}\leq y^mz_G^{-k}<b'y^nz_G^{-l}<a'<z_G$ is equivalent to $x=\zeta_m^k$ or 
$R(\zeta_m^k,x,d_{k,l})$. \\
\indent $\bullet$ $e_{\Gamma}\leq b'y^nz_G^{-l}<a'<y^mz_G^{-k}<z_G$ is equivalent to 
$y=(b')^{-1/n}z_G^{l/n}$ or: 
$$(b')^{-1/n}z_G^{l/n}<y,\; y<(a')^{1/n}(b')^{-1/n}z_G^{l/n},\; 
(a')^{1/m}z_G^{k/m}<y\mbox{ and }y<z_G^{(k+1)/m}.$$ 
$$\mbox{We let }
r_{k,l}'=\max\left((b')^{-1/n}z_G^{l/n},(a')^{1/m}z_G^{k/m}\right)\mbox{ and }
r_{k,l}=r_{k,l}'\!\cdot\!\langle z_G\rangle.$$  
If $k<m-1$, then 
$z_G^{(k+1)/m}<z_G$, and if $l<n$, then $(a')^{1/n}(b')^{-1/n}z_G^{l/n}<z_G$. So, if 
$k<m-1$ or $l<n$, then we set $s_{k,l}'=\min\left((a')^{1/n}(b')^{-1/n}z_G^{l/n},z_G^{(k+1)/m}\right)$, and 
$r_{k,l}=r_{k,l}'\!\cdot\!\langle z_G\rangle$. 
We have $s_{k,l}'<z_G$. Then 
$e_{\Gamma}< b'y^nz_G^{-l}<a'<y^mz_G^{-k}<z_G$ is equivalent to $R(r_{k,l},x,s_{k,l})$. 
If $k=m-1$ and $l=n$, then $z_G^{(k+1)/m}=z_G$.  
$e_{\Gamma}< b'y^nz_G^{-n}<a'<y^mz_G^{1-m}<z_G$ is equivalent to $r_{k,l}'<y<(a')^{1/n}(b')^{-1/n}z_G^{l/n}$, 
but we don't know whether $(a')^{1/n}(b')^{-1/n}z_G^{n/n}<z_G$ or not. This inequality is equivalent to $a'<b'$. 
If $a'<b'$, then we set $s_{m,n}'=(a')^{1/n}(b')^{-1/n}z_G$ and $s_{m,n}=s_{m,n}'\!\cdot\!\langle z_G\rangle$. 
Therefore $e_{\Gamma}< b'y^nz_G^{-n}<a'<y^mz_G^{1-m}<z_G$ is equivalent to $R(r_{m,n},x,s_{m,n})$. 
If $a'\geq b'$,  then $e_{\Gamma}< b'y^nz_G^{-n}<a'<y^mz_G^{1-m}<z_G$ is equivalent to 
$r_{m,n}'<y<z_G$, which in turn is equivalent to $R(r_{m,n},x,e_G)$. We set $s_{m,n}=e_G$. \\ 
\indent Consequently, $R(a,x^m,bx^n)$ is equivalent to the disjunction for $k\in \{0,\dots, m-1\}$ 
and $l\in\{0,\dots,n\}$ of: $R(a^{1/m}\zeta_m^k,x,c_{k,l})$ or $x=\zeta_m^k$ or $R(\zeta_m^k,x,d_{k,l})$ 
or $R(r_{k,l},x,s_{k,l})$. \\
\indent Assume that $m<n$. Then $R(a,x^m,bx^n)\Leftrightarrow R\left(ba^{-1},(x^{-1})^n,b(x^{-1})^m\right)$. 
We proved above that this is equivalent to saying that $x^{-1}$ belongs to a finite union of open intervals and 
singletons. In the same way as in the case of the formula $R(a,x^n,b)$, 
it follows that $R(a,x^m,bx^n)$ defines a finite  
union of open intervals and singletons. 
\end{proof}
\indent Recall that a c-convex subset is a subset $J$ such that either $J$ is a singleton or 
for every $g\neq g'$ in $J$, either $I(g,g')\subseteq J$ or $I(g',g)\subseteq J$. \\
\indent We saw in Remark \ref{rk51} that a c-convex subset is not 
necessarily a singleton or an open interval~: the linear part of a cyclically ordered group 
is a c-convex subset and if it is nontrivial, then it is 
neither open interval nor finite unions of open intervals or singleton. 
There also exist definable c-convex subset which are neither an open intervals nor a finite union of open intervals or 
singletons. For example the subgroups $H_p$, and the subsets defined by the formulas ${\rm argbound}_n$, 
before the proof of Proposition \ref{n128}. Indeed, consider the lexicographic product $\UU\overrightarrow{\times} D$, 
where $D$ is an abelian linearly ordered group 
(see Definition \ref{def124}). In this cyclically ordered group, the formula ${\rm argbound}_n$ defines the open interval 
$I\left(({\rm exp}(\frac{2\pi}{n+1}),0),
({\rm exp}(\frac{2\pi}{n}),0)\right)$. Now, if $G$ is a subgroup of $\UU\overrightarrow{\times} D$ 
such that $U(G)$ does not contain ${\rm exp}\left(\frac{2\pi}{n+1}\right)$ or 
${\rm exp}\left(\frac{2\pi}{n+1}\right)$, and $U(G)$ is infinite, then it is not a finite union 
of open intervals or singletons. 
\begin{Proposition}\label{n52}(Lucas)
For each $n\in \NN\backslash\{0\}$ and 
each nontrivial divisible abelian linearly ordered group $D$, the cyclically ordered group 
$T(\UU)_n\overrightarrow{\times}D$ is weakly 
cyclically minimal.
\end{Proposition}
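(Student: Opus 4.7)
The plan exploits the decomposition $G = T(\UU)_n \overrightarrow{\times} D$ into the $n$ slices $S_\zeta := \{\zeta\}\times D$, $\zeta \in T(\UU)_n$. Unwinding Definition \ref{def124} shows that each $S_\zeta$ is c-convex in $G$, that a bounded interval $I_G((\zeta,d_1),(\zeta,d_2))$ with $d_1<d_2$ equals $\{(\zeta,d):d_1<d<d_2\}$, and hence that the bijection $d\mapsto(\zeta,d)$ transports the linear order on $D$ to the cyclic order induced on $S_\zeta$. In particular, every subset of $S_\zeta$ which is convex in the $D$-coordinate is c-convex in $G$ (cf.\ also Remark \ref{rk51}). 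Since $D$ is divisible and multiplication by $n$ annihilates $T(\UU)_n$, one has $l(G) = \{1\}\times D = nG$, so $l(G)$ is $0$-definable and each slice $S_\zeta$ is definable with one parameter.

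The central reduction is the following claim, proved by induction on formula complexity: for every formula $\phi(x_1,\dots,x_k;\bar y)$ in the language of cyclically ordered groups, every $\bar a\in G$, and every $(\zeta_1,\dots,\zeta_k)\in T(\UU)_n^k$, the set
$$\{(d_1,\dots,d_k)\in D^k \; :\; G \models \phi((\zeta_1,d_1),\dots,(\zeta_k,d_k);\bar a)\}$$
is definable in the linearly ordered abelian group $(D,+,\leq)$ with parameters from $D$. For atomic formulas, since terms in the group language are $\ZZ$-linear and the group operation on $T(\UU)_n \times D$ is componentwise, each term evaluates to a pair whose first coordinate depends only on the $\zeta_i$ (and parameters) and whose second coordinate is an affine $\ZZ$-linear expression in the $d_i$; a term equality thus reduces to a truth value combined with an equality in $D$, and the relation $R(t_1,t_2,t_3)$ unfolds, by the definition of the lexicographic cyclic order, into a finite case-split on the triple of first coordinates, each case yielding either a constant truth value or a strict inequality among the second coordinates in $D$. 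Boolean combinations are transparent, and an existential quantifier $\exists y\,\phi$ is replaced by the finite disjunction $\bigvee_{\omega\in T(\UU)_n}\exists d\in D\,\phi(\ldots,(\omega,d);\bar a)$, which by the inductive hypothesis is still $(D,+,\leq)$-definable.

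Applying the claim with $k=1$, each slice-trace $X_\zeta := \{d\in D : G\models \phi((\zeta,d);\bar a)\}$ is definable in $(D,+,\leq)$. Since $(D,+,\leq)$ is a divisible abelian linearly ordered group it is o-minimal (the linearly-ordered case recalled just after the definition of weak cyclic minimality), so each $X_\zeta$ is a finite union of singletons and intervals of $D$. Transporting through $d\mapsto(\zeta,d)$ gives a finite union of singletons and convex subsets of the slice $S_\zeta$, all of which are c-convex in $G$ by the first paragraph. Taking the union of these decompositions over $\zeta\in T(\UU)_n$ writes the definable set $X$ as $\bigsqcup_\zeta(X\cap S_\zeta)$, a finite union of c-convex subsets of $G$, proving weak cyclic minimality.

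The chief difficulty is the quantifier step in the induction: one must keep the claim uniform over all arities $k$, so that $\exists y$ over $G$ can be traded, via the finiteness of $T(\UU)_n$, for a finite disjunction of existentials over $D$. Once this reduction is in place, the proposition reduces to the o-minimality of $(D,+,\leq)$, which is classical.
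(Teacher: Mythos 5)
Your proof is correct and follows essentially the same route as the paper: both decompose a definable set along the $n$ cosets of $l(G)\simeq D$, reduce each trace to a set definable in the divisible abelian linearly ordered group $D$, and invoke o-minimality of the latter. Your induction on formula complexity (trading $\exists y$ over $G$ for a finite disjunction of existentials over $D$ via the finiteness of $T(\UU)_n$) simply makes explicit the relativization step that the paper asserts in one line (``Hence $E_k$ is definable in $l(G)$ equipped with the language of ordered groups'').
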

\begin{proof} We denote by $(G,R)$ the cyclically ordered group $T(\UU)_n\overrightarrow{\times}D$. 
Since $D$ is nontrivial, $G$ is infinite. 
Let $E$ be a subset of $G$. Then $E$ can be written as a disjoint union $E=E_0\cup \left(e^{i\frac{2\pi}{n}},e_D\right)E_1\cup\cdots 
\cup \left(e^{i\frac{2\pi}{n}},e_D\right)^{n-1}E_{n-1}$, where $E_0,E_1,\dots ,E_{n-1}$ 
are subsets of $l(G)=\{1\}\overrightarrow{\times}D$. 
Now, in $G$, the element $\left(e^{i\frac{2\pi}{n}},e_D\right)$ is definable by the formula 
$R(e_G,x,\dots,x^{n-1})$ and $x^n=e_G$. Hence all 
the elements of $T(\UU)_n\overrightarrow{\times}\{e_D\}$ are definable. The subgroup $l(G)$ is definable by the 
formula $x=e_G$ or $R(e_G,x,x^2,\dots,x^n)$ or $R(e_G,x^{-1},x^{-2},\dots,x^{-n})$. Assume that $E$ is definable 
by a formula $\varphi(x)$ in the language $\{\cdot,R,e,^{-1}\}$. Then, for $k\in \{0,1,\dots, n-1\}$ 
the set $E_k$ is definable by the formula 
$x\in l(G)$ and $\varphi\left(\left(e^{i\frac{2\pi}{n}},e_D\right)^kx\right)$. 
Now, if, $x$, $y$, $z$ belong to $l(G)$, then $R(x,y,z)$ is equivalent 
to either $x<y<z$ or $y<z<x$ or $z<x<y$. Hence $E_k$ is definable in $l(G)$ equipped with the language of 
ordered groups. Since $l(G)$ is abelian and 
divisible, it is minimal. Therefore, $E_k$ is a finite union of open intervals. 
We saw in Remark \ref{rk51} that the bounded open intervals of $(l(G),<$) are open intervals of $(G,R)$, hence they are 
c-convex. Now, every open interval of $(l(G),<)$ is an increasing union of bounded open intervals, so it is c-convex. 
\end{proof}
\indent Now, we look at necessary conditions. 
In the remainder of this section $(G,R)$ is a cyclically ordered group which is not necessarily abelian. 
We take Lucas's approach to show that if $(G,R)$ is weakly cyclically minimal, then $l(G)$ is 
divisible and abelian. \\
\begin{defi}
If $H$ is a subset of $(G,R)$ and $h\in H$, then we define the {\it c-convex component} of $H$ which contains $h$ to 
be the greatest c-convex subset of $(G,R)$ containing $h$ and contained in $H$. It is denoted by $C(h,H)$.
\end{defi}
\begin{Fact}\label{fct53} For $g$, $h$ in $H$, we have $g\in C(h,H)\Leftrightarrow h\in C(g,H)$. It follows that 
$C(g,H)\neq C(h,H)\Leftrightarrow C(g,H)\cap C(h,H)=\emptyset$. \end{Fact}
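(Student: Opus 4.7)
The plan is to exploit the defining maximality of $C(h,H)$ as the greatest c-convex subset of $G$ that contains $h$ and is contained in $H$. Suppose $g \in C(h,H)$. Then $C(h,H)$ is itself a c-convex subset of $G$ containing $g$ and contained in $H$, so by maximality of $C(g,H)$ we obtain $C(h,H) \subseteq C(g,H)$; in particular $h \in C(g,H)$. Exchanging the roles of $g$ and $h$ gives the converse, which proves the first biconditional.

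The second assertion then comes almost for free. Running the inclusion argument above in both directions under the hypothesis $g \in C(h,H)$ (equivalently $h \in C(g,H)$) yields $C(h,H) \subseteq C(g,H)$ and $C(g,H) \subseteq C(h,H)$, hence $C(g,H) = C(h,H)$. For the stated equivalence, one direction is immediate: if $C(g,H)\cap C(h,H)=\emptyset$, then $C(g,H)\neq C(h,H)$ because both sets are nonempty (they contain $g$, respectively $h$). For the other direction, take any $k \in C(g,H)\cap C(h,H)$; the strengthened first part applied twice gives $C(g,H) = C(k,H) = C(h,H)$, contradicting $C(g,H)\neq C(h,H)$.

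The only real subtlety is whether the definition of $C(h,H)$ makes sense, i.e.\ whether a \emph{greatest} such c-convex subset really exists. This reduces to checking that the union of an arbitrary family of c-convex subsets of $G$ sharing a common point $h$ is again c-convex. Using the linear order $<_h$ induced by $R(h,\cdot,\cdot)$ on $G\backslash\{h\}$, one handles the possible pairs $(g,g')$ in the union by a short case split on which of the two arcs $I(h,g)$, $I(g,h)$ lies in the member c-convex set containing $g$ (and similarly for $g'$); in each case one of $I(g,g')$ or $I(g',g)$ is covered by the union of at most two of the member sets, since $I(g',g)=\{h\}\cup I(h,g)\cup I(g',h)$ when $g<_h g'$. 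This is the only place requiring any work, and once it is in hand the proof of Fact~\ref{fct53} reduces to the two paragraphs above.
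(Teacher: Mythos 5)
Your proof is correct. The paper states this Fact without any proof, so there is no argument to compare against; your maximality argument is the natural (essentially the only) one, and you correctly identified that the sole substantive point is the well-definedness of $C(h,H)$, which your lemma — that a union of c-convex subsets of $G$ sharing the common point $h$ is again c-convex, via the decomposition $I(g',g)=I(g',h)\cup\{h\}\cup I(h,g)$ when $R(h,g,g')$ holds — settles.
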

\indent A subset $H$ of $G$ is said to be {\it symmetric} if for every $g\in G$ we have $g\in H\Leftrightarrow g^{-1}\in H$. 
\begin{Lemma}(Lucas)\label{lm55}
Let $H$ be a symmetric subset of $(G,R)$ which contains $e$. 
\begin{enumerate}
\item $C(e,H)$ is symmetric.
\item $C(e,H)=\{h\in G\; :\; I(e,h)\subseteq H\}\cup \{h\in G\; :\; I(e,h^{-1})\subseteq H\}$. 
\item If $H$ is a subgroup, then $C(e,H)$ is a subgroup of $G$, and for every $h\in H$ we have 
$C(h,H)=hC(e,H)=C(e,H)h$. 
\end{enumerate}
\end{Lemma}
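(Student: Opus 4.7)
The plan hinges on two invariance properties of the cyclic order. First, compatibility makes each translation $L_h:g\mapsto hg$ and $R_h:g\mapsto gh$ an $R$-automorphism of $G$, so it carries c-convex subsets to c-convex subsets and $I(g,g')$ to $I(hg,hg')$ (respectively $I(gh,g'h)$). Second, inversion $g\mapsto g^{-1}$ is an $R$-anti-automorphism, i.e.\ $R(g_1,g_2,g_3)\Leftrightarrow R(g_3^{-1},g_2^{-1},g_1^{-1})$, a standard property that one reads off from the unwound, where inversion descends from the order-reversing involution on $\mathrm{uw}(G)$ (combined with the fact that $z_G$ is central). In particular, if $A$ is c-convex then so is $A^{-1}$, and $I(g,g')^{-1}=I(g'^{-1},g^{-1})$.

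For (1), the set $C(e,H)^{-1}$ is c-convex, lies in $H^{-1}=H$, and contains $e^{-1}=e$; by maximality $C(e,H)^{-1}\subseteq C(e,H)$, and inverting once more yields equality. For (2) ($\subseteq$), let $h\in C(e,H)$ with $h\neq e$. C-convexity of $C(e,H)$ applied to the pair $e,h$ forces either $I(e,h)\subseteq C(e,H)\subseteq H$ (so $h$ lies in the first set), or $I(h,e)\subseteq C(e,H)\subseteq H$, in which case $I(e,h^{-1})=I(h,e)^{-1}\subseteq H^{-1}=H$ places $h$ in the second set. For the converse ($\supseteq$), assume $I(e,h)\subseteq H$ (the other case is symmetric by (1)); since $C(e,H)\subseteq H$ the right-hand side must be understood implicitly restricted to $h\in H$. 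Then $S:=\{e\}\cup I(e,h)\cup\{h\}$ is a c-convex subset of $H$ containing $e$: writing $<_e$ for the linear order on $G\setminus\{e\}$ induced by $R(e,\cdot,\cdot)$, every pair of points of $S$ is bracketed by $e$ and $h$ in this order, so the open $<_e$-interval between them sits inside $I(e,h)\subseteq S$. By maximality $S\subseteq C(e,H)$, whence $h\in C(e,H)$.

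For (3), assume $H$ is a subgroup and $h\in H$. The translation $L_h$ is an $R$-automorphism of $G$ carrying $H$ onto itself, hence sends each c-convex component of $H$ to a c-convex component of $H$; since $L_h(e)=h$, it sends $C(e,H)$ to $C(h,H)$, yielding $hC(e,H)=C(h,H)$, and $R_h$ similarly gives $C(e,H)h=C(h,H)$. For the subgroup property, pick $g_1,g_2\in C(e,H)$; since $g_1\in C(e,H)\cap C(g_1,H)$, Fact~\ref{fct53} forces $C(g_1,H)=C(e,H)$, so the identity above reads $C(e,H)=g_1C(e,H)$, whence $g_1g_2\in C(e,H)$. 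Closure under inverses is (1). The main obstacle is the inversion identity: it is not stated explicitly in the preamble, so one must either cite it as a known property of cyclically ordered groups or derive it carefully through the unwound (where cofinality and centrality of $z_G$ make the computation straightforward); a secondary subtlety is the implicit restriction $h\in H$ in the statement of (2), which otherwise admits boundary points $h\notin H$ with $I(e,h)\subseteq H$.
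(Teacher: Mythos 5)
Your proof is correct and follows essentially the same route as the paper's: the paper likewise relies on the inversion anti-automorphism (its step ``$R(h^{-1},t,e)$ so $R(e,t^{-1},h)$'') and on left/right translations preserving c-convexity, though it argues (1) by an element-wise case analysis and the inclusion $C(h,H)\subseteq hC(e,H)$ via part (2), where you more efficiently apply maximality to $C(e,H)^{-1}$ and to the images of components under $L_h$, $R_h$. Your remark that the converse inclusion in (2) only holds under the implicit restriction $h\in H$ is a fair catch and matches the paper's own proof, which silently begins that direction with ``let $h\in H$''.
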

\begin{proof}
(1) Let $h\in C(e,H)$ such that $h\neq h^{-1}$. We prove that $h^{-1}\in C(e,H)$. 
We have either $I(e,h)\subseteq C(e,H)$ or 
$I(h,e)\in C(e,H)$. \\
\indent (a) Assume that $I(e,h)\subseteq C(e,H)$. \\
\indent 
If $R(e,h^{-1},h)$ holds, then $h^{-1} \in C(e,H)$. Assume that $R(e,h,h^{-1})$ holds. 
Let $t\in I(h^{-1},e)$. Then we have $R(h^{-1},t,e)$ so $R(e,t^{-1},h)$. Hence $t^{-1} \in C(h,H)\subseteq H$. 
Since $H$ is symmetric, we have and $t\in H$. Consequently, $I(h^{-1},e)\subseteq C(e,H)$. 
The subset $\{h^{-1}\} \cup I(h^{-1},e) \cup \{e\}$ is c-convex. It contains $e$, and is contained in 
$H$ hence it is contained in $C(e,H)$, so $h^{-1} \in C(e,H)$. \\
\indent (b)
Assume that $I(h,e)\subseteq C(e,H)$.\\
\indent 
If $R(h,h^{-1},e)$ holds, then $h^{-1} \in C(e,H)$. Assume that $R(e,h^{-1},h)$ holds. Let $t\in I(e,h^{-1})$.  
Then $R(e,t,h^{-1})$ so $R(e,h,t^{-1})$ and $R(h,t^{-1},e)$. In the same way as in (a), 
this proves that $I(h^{-1},e)\subseteq C(e,H)$, and we conclude $h^{-1} \in C(e,H)$. \\
\indent (2) Let $h\in C(e,H)$. Since $C(e,H)$ is c-convex, we have either $I(e,h)\subseteq C(e,H)$ or 
$I(h,e)\subseteq C(e,H)$. Since $C(e,H)$ is symmetric, then $I(h,e)\subseteq C(e,H)$ implies 
$I(e,h^{-1})\subseteq C(e,H)$. Now, let $h\in H$. If $I(e,h)\subseteq H$, then 
$\{ e\}\cup I(e,h)\cup\{ h\}$ is a c-convex subset of $G$ contained in $H$, so it is contained 
in $C(e,H)$. Since $H$ is symmetric, we have $h^{-1}\in H$, hence in the same way if $I(e,h^{-1}) 
\subseteq H$, then $h^{-1}\in C(e,H)$, so $h\in C(e,H)$. \\
\indent (3) Let $h\in H$. For every c-convex subset $F$ of $G$, $hF$ is c-convex. 
So $hC(e,H)$ is c-convex and contains $h$. Hence by the maximality of $C(h,H)$ we have $hC(e,H)\subseteq C(h,H)$. 
Conversely, let $h'\in C(h,H)$. Then either 
$I(h,h')\subseteq H$ or $I(h',h)\subseteq H$. Since $H$ is a subgroup, this is equivalent to 
either $I(e,h^{-1}h')\subseteq H$ or $I(h^{-1}h',e)\subseteq H$. Since $h^{-1}h'\in H$, by (2), this 
is equivalent to $h^{-1}h'\in C(e,H)$. Therefore, $h'\in hC(e,H)$. 
Symmetrically, we can prove that $C(h,H)=C(e,H)h$.  \\
\indent Let $h$, $h'$ in $C(e,H)$. Since $C(h,H)$ is the greatest c-convex subset which contains $h$, 
we have $C(h,H)=C(e,H)=C(h',H)$. Furthermore, $e\in C(e,H)$. Hence 
$h\!\cdot\!h'\in h\!\cdot\!h'C(e,H)=hC(h',H)=hC(e,H)=C(h,H)=C(e,H)$. 
This proves that $C(e,H)$ is a subgroup of $G$. 
\end{proof}
\begin{Lemma}(Lucas)\label{wk}
Assume that $(G,R)$ is weakly cyclically minimal and let $H$ be a definable subset. 
If $H\cap l(G) $ is a subgroup of $l(G)$, then it is a convex subgroup of $l(G)$.
\end{Lemma}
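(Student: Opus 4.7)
The plan is to transport the problem into the linearly ordered group $l(G)$, equipped with its induced cyclic order (which makes it a linearly cyclically ordered group), and there exploit Lemma \ref{lm55} together with classical properties of convex subgroups. Set $H' := H \cap l(G)$. Weak cyclic minimality, applied to the definable set $H$, yields $H = J_1 \cup \cdots \cup J_k$ with each $J_i$ a c-convex subset of $G$, so $H' = \bigcup_i (J_i \cap l(G))$. The key preliminary observation is that each $J_i \cap l(G)$ is c-convex inside $l(G)$: given $a < b$ in $J_i \cap l(G)$, c-convexity of $J_i$ in $G$ gives $I(a,b) \subseteq J_i$ or $I(b,a) \subseteq J_i$, and intersecting with $l(G)$ gives either $(a,b) \subseteq J_i \cap l(G)$ or $\{g \in l(G) : g < a \text{ or } g > b\} \subseteq J_i \cap l(G)$; these are precisely the two arcs between $a$ and $b$ in the cyclically ordered group $l(G)$. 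Consequently $H'$ is a finite union of c-convex subsets of $l(G)$, and so has only finitely many c-convex components there.

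Since $H'$ is a subgroup of $l(G)$, it is symmetric and contains $e$, so Lemma \ref{lm55}(3), applied inside the cyclically ordered group $l(G)$, tells us that $C := C(e, H')$ is a subgroup of $l(G)$ and that every c-convex component of $H'$ has the form $hC = Ch$ for some $h \in H'$. Hence $[H' : C]$ is finite.

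Next I would verify that $C$ is convex in the linear order of $l(G)$. Given $a < b$ in $C$ and $h \in l(G)$ with $a < h < b$, left-multiplying by $a^{-1}$ reduces to $a = e$. The c-convexity of $C$ in $l(G)$ yields either $(e, b) \subseteq C$, in which case $h \in C$ directly, or $\{g \in l(G) : g < e \text{ or } g > b\} \subseteq C$, in which case $b^{-1} < h^{-1} < e$ forces $h^{-1} \in C$, hence $h \in C$. Thus $C$ is a convex subgroup of $l(G)$.

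To conclude, I invoke the classical fact that convex subgroups of a linearly ordered group are normal and yield torsion-free quotients: $l(G)/C$ is linearly ordered and hence torsion-free, so the finite subgroup $H'/C$ must be trivial, giving $H' = C$. I expect the main obstacle to be the convexity step for $C$: it is precisely where the subgroup hypothesis is used in a nontrivial way, via the inversion $h \mapsto h^{-1}$, to rule out the ``exterior'' arc being the only one inside $C$. The first preliminary step also requires some care, since it is what lets one replace weak cyclic minimality in $G$ by a manageable c-convex decomposition of $H'$ inside $l(G)$.
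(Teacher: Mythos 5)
Your proof is correct in substance and reaches the conclusion by a genuinely different organization than the paper's. The paper works with the c-convex components of $H$ inside $G$: it first shows (using an element $h_1\in l(G)\setminus H$, whence a preliminary case split on whether $H\cap l(G)=l(G)$) that $C(h,H)=C(h,H\cap l(G))$ for $h\in H\cap l(G)$, then applies Lemma \ref{lm55}(3) to get that $C(e,H)$ is a convex subgroup of $l(G)$, and finally derives a contradiction by exhibiting \emph{infinitely many} pairwise disjoint components $h^nC(e,H)$ of $H$ when $H\cap l(G)\neq C(e,H)$. You instead transport everything into the linearly cyclically ordered group $l(G)$ at the outset, via the observation that a c-convex subset of $G$ meets $l(G)$ in a c-convex subset of $l(G)$; this lets you bound the number of c-convex components of $H'$ by the number of pieces in the decomposition of $H$ and finish by pigeonhole on the cosets of $C=C(e,H')$. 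Your route avoids the paper's step (a) entirely, at the cost of having to check that Lemma \ref{lm55} applies to the restricted structure and that finiteness of the index suffices; the paper's route trades these for the explicit construction of an infinite family of components.

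One justification should be repaired: you assert that convex subgroups of a linearly ordered group are normal. This is false in general, and would in any case be circular here, since the abelianness of $l(G)$ is only established afterwards (Proposition \ref{wdiv}) using the present lemma. Fortunately you do not need normality of $C$ in $l(G)$: Lemma \ref{lm55}(3) already gives $hC=Ch$ for every $h\in H'$, so the cosets of $C$ in $H'$ form a finite group; and the implication ``$h^k\in C$ for some $k\geq 1$ implies $h\in C$'' follows directly from convexity of $C$ together with $e\leq h\leq h^k$ (taking $h\geq e$ without loss of generality, since $C$ is symmetric). With that rewording the endgame is complete. A very minor further point: when passing from ``$H'$ is a finite union of c-convex subsets of $l(G)$'' to ``$H'$ has finitely many c-convex components'', you should note that each piece $J_i\cap l(G)$, being c-convex and contained in $H'$, lies inside a single component, so the pairwise disjoint components number at most $k$.
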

\begin{proof} If $H\cap l(G)=l(G)$, then it is a convex subgroup of $l(G)$. In the following, we assume that 
$H\cap l(G)\neq l(G)$ and we let $H'=H\cap l(G)$. \\
\indent (a) We show that for every $h\in H'$, $C(h,H)=C(h,H')$. Let $h\in H'$. Since $H'\subseteq H$, 
we have $C(h,H')\subseteq C(h,H)$. Now, 
there is $e< h_1\in l(G)\backslash H$. Since $H'$ is a subgroup, we have 
$h_1^{-1}h \in l(G)\backslash H$, $h_1h\in l(G)\backslash H$, and $h_1^{-1}h<h<h_1h$. Since $C(h,H)$ is 
c-convex, we have $C(h,H)\subseteq (h_1^{-1}h,h_1h)\subseteq l(G)$. Therefore $C(h,H)\subseteq H\cap l(G)=H'$. 
By the maximality of $C(h,H')$, we have $C(h,H)=C(h,H')$. \\
\indent (b) By Lemma \ref{lm55} (3), $C(e,H)=C(e,H')$ is a subgroup of $G$. So, it is a convex subgroup of $l(G)$. \\
%
%
\indent (c) We prove that $H'=C(e,H)$, so it is a convex subgroup of $l(G)$. 
We assume that $H'\neq C(e,H)$, and we show that the number of the 
c-convex components of $H\cap l(G)$ is infinite. By (a), $C(e,H)=C(e,H')\subseteq H'$. Hence there is 
$h\in H'\backslash C(e,H)$. 
Then $C(h,H)\cap C(e,H)=\emptyset$ (Fact \ref{fct53}). Assume that $h>e$ in $l(G)$. Then 
$e<h<h^2<\cdots<h^n<\cdots$. Since $C(e,H)$ is a convex subgroup of $l(G)$, for every 
$n\in \NN\backslash\{0\}$ we have $h^n\notin C(e,H)$. Therefore, $\forall n\in \NN\backslash\{0\}$ 
$C(h^n,H)\cap C(e,H)=\emptyset$. By Lemma \ref{lm55} (3), 
for every positive integer $n$ we have $h^nC(e,H)=h^nC(e,H')=C(h^n,H')=C(h^n,H)$. 
Let $n$, $q$ in $\NN\backslash\{0\}$. 
Therefore, $C(h^{n+q},H) \cap C(h^n,H)= h^{n+q}C(e,H)\cap h^nC(e,H))=h^n(h^qC(e,H)\cap C(e,H))= 
h^n(C(h^q,H)\cap C(e,H))=\emptyset$. 
So the $C(h^n,H)$'s are pairwise disjoint.  
Consequently, $H$ has infinitely many c-convex components. Since $H$ is definable, this contradicts the fact that 
$G$ is weakly cyclically minimal. 
The case $h<e$ is similar. 
\end{proof}
\begin{Proposition}\label{wdiv}(Lucas)
If $(G,R)$ is weakly cyclically minimal, then 
$l(G)$ is abelian and divisible. 
\end{Proposition}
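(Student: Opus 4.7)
The plan is to apply Lemma \ref{wk} twice: with $H$ taken to be the centralizer of a fixed element to establish abelianness, and with $H$ taken to be the set of $n$-th powers to establish divisibility.

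For abelianness of $l(G)$, fix $a\in l(G)$ and set $H:=Z_G(a)=\{g\in G: ga=ag\}$, a definable subgroup of $G$. By Lemma \ref{wk}, $Z_G(a)\cap l(G)$ is a convex subgroup of the linearly ordered group $(l(G),<)$; since it contains $a$, it contains every $b\in l(G)$ with $|b|\leq|a|$. Given arbitrary $a,b\in l(G)$, after swapping we may assume $|a|\leq|b|$, and then the previous step applied to $b$ in place of $a$ gives $a\in Z_G(b)\cap l(G)$, i.e., $ab=ba$. So $l(G)$ is abelian.

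For divisibility, fix $n\geq 2$ and set $H:=nG=\{h^n:h\in G\}$, which is definable. The goal is to prove $H\cap l(G)$ is a subgroup of $l(G)$: closure under inversion is immediate, and closure under multiplication follows once one knows that $(h_1h_2)^n=h_1^nh_2^n$ whenever $h_1^n,h_2^n\in l(G)$. Once $H\cap l(G)$ is known to be a subgroup, Lemma \ref{wk} forces it to be a convex subgroup of $l(G)$; since it contains $b^n$ for every $b\in l(G)$, it has arbitrarily large positive elements, and any convex subgroup of the linearly ordered group $l(G)$ containing arbitrarily large positive elements must be all of $l(G)$. Hence every $a\in l(G)$ has an $n$-th root in $G$. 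To upgrade such a root into $l(G)$, one writes $h^n=a$ with $U(h)$ a root of unity, chooses a torsion $t\in G$ with $U(t)=U(h)^{-1}$ and $t^n=e$, and computes $(ht)^n=h^nt^n=a$ with $U(ht)=1$ (using centrality of $l(G)$, see below), yielding an $n$-th root $ht\in l(G)$.

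The main obstacle is showing $(h_1h_2)^n=h_1^nh_2^n$ for the $h_i$ with $h_i^n\in l(G)$: in a general cyclically ordered group $G$ these elements need not commute. The remedy, which is apparently where Lucas's original argument required correction, is to prove first that $l(G)$ is central in $G$. For any $g\in G$, conjugation by $g$ restricts to an order-preserving automorphism $\sigma_g$ of $l(G)$ (since $l(G)$ is characteristic and its cyclic order is linear). When $U(g)$ has finite order $m$ in $U(G)$, one has $g^m\in l(G)$, so $\sigma_g^m=\sigma_{g^m}$ is trivial by the abelianness established above, hence $\sigma_g$ has finite order; but an order-preserving automorphism of a linearly ordered group of finite order must be the identity (otherwise iterating a strict shift contradicts finite order). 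The case where $U(G)$ contains an element of infinite order has to be ruled out using weak cyclic minimality, for instance by producing from such an element a definable set with infinitely many c-convex components. With centrality of $l(G)$ in hand, multiplication-closure follows and the divisibility argument goes through.
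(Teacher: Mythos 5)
Your treatment of abelianness coincides with the paper's: apply Lemma \ref{wk} to the definable centralizer of $g$ and use that the convex subgroups of $l(G)$ form a chain. The divisibility half, however, has a genuine gap. You take $H=nG=\{h^n:h\in G\}$ and propose to obtain closure of $H\cap l(G)$ under multiplication from the identity $(h_1h_2)^n=h_1^nh_2^n$, claiming this follows once $l(G)$ is shown to be central. It does not: centrality of $l(G)$ only tells you that $h_1^n$ and $h_2^n$ are central, not that $h_1$ and $h_2$ commute. Since $G/l(G)\simeq U(G)$ is abelian one gets $[h_1,h_2]\in l(G)$, so at that stage $G$ is merely nilpotent of class at most $2$, whence $(h_1h_2)^n=h_1^nh_2^n[h_2,h_1]^{\binom{n}{2}}$, with no reason for the commutator factor to vanish. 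Two further steps are also unsupported: (i) your ``upgrade the root into $l(G)$'' step invokes a torsion element $t$ with $t^n=e$ and $U(t)=U(h)^{-1}$, whose existence is not available at this point ($G$ could a priori be torsion-free); (ii) the case where $U(G)$ contains an element of infinite order cannot be ``ruled out'' --- $\UU$ itself is cyclically minimal and $U(\UU)=\UU$ has such elements --- so your centrality argument, which needs $U(g)$ of finite order, does not cover all the groups under consideration (in that case the paper instead proves $G$ itself abelian, but only later, in Proposition \ref{lem59}, via Lemma \ref{wBB}).

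The paper circumvents all of this with a different definable set: $I\! D_n(G)$ consists of $e$ together with those $g$ satisfying $R(g^{-1},e,g)$ that admit a witness $h$ with $R(e,h,g)$ and $h^n=g$ (and symmetrically for $R(g,e,g^{-1})$). Forcing the witness into the interval $I(e,g)$ (resp.\ $I(g,e)$) forces it into $l(G)$ whenever $g\in l(G)$, because $l(G)$ is c-convex and $I(e,g)\subseteq l(G)$ for $g>e$ in $l(G)$. Hence $I\! D_n(G)\cap l(G)$ is exactly the set of $n$-th powers of elements of $l(G)$, which is a subgroup because $l(G)$ has already been shown abelian; Lemma \ref{wk} makes it convex, cofinality makes it all of $l(G)$, and the $n$-th roots lie in $l(G)$ by construction. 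You would need either this device or a genuinely new argument to close the gaps above.
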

\begin{proof} Let $g$ in $l(G)$, and $Com(g)$ be the definable subgroup $Com(g)=\{h\; : \; hg=gh \}$. 
By Lemma \ref{wk}, $Com(g)\cap l(G)$ is a convex subgroup of $l(G)$, and we know that $g\in Com(g)\cap l(G)$. 
It follows that the convex subgroup generated by $g$ is contained in $Com(g)$. Since this holds for 
any $g\in l(G)$, we deduce that $l(G)$ is abelian. \\
\indent For each positive integer $n$ consider\\
\indent 
 $I\! D_n(G)=
\{ e \}\cup \{g\; : \; R(g^{-1},e,g)\; \&\; \exists h\; (R(e,h,g) \;\&\; h^n =g) \}$\\
\indent  
$ \cup \{ g\; : \; R(g,e,g^{-1}) \;\&\; \exists h\; (R(g,h,e) \;\&\; h^n =g) \}$.\\
\indent 
It is a definable set. Now, $I\! D_n(G)\cap l(G)=
\{ e \}\cup \{g\in l(G)\; : \; e<g\; \&\; \exists h\in l(G)\; h^n =g \}
\cup \{ g\in l(G)\; : \; g<e\;\&\; \exists h\in l(G)\;  h^n =g\}$ is the subgroup of $n$-th powers of elements of the 
abelian group $l(G)$. 
So by Lemma \ref{wk} $I\! D_n(G)\cap l(G)$ is a convex subgroup of $l(G)$. 
Moreover it is cofinal in $l(G)$, so, for each $n$, 
$I\! D_n(l(G))\cap l(G)=l(G)\subseteq I\! D_n(G)$. Hence each element of $l(G)$ is $n$-divisible within $l(G)$. 
\end{proof}
\indent We turn to the other necessary conditions, starting with a lemma. 
\begin{Lemma}\label{wBB}(Lucas)
Assume that $(G,R)$ is weakly cyclically minimal. Let $D$ be a definable subset of $(G,R)$ and $D'=G\backslash D$. 
Then $U(D)$ and $U(D')$ are not both dense in $\mathbb{U}$. 
\end{Lemma}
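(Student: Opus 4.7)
The plan is to argue by contradiction: assuming both $U(D)$ and $U(D')$ are dense in $\UU$, I will exhibit, for every integer $n \geq 2$, at least $n$ pairwise distinct c-convex components of $D$. Since a subset that is a finite union of $k$ c-convex subsets has at most $k$ c-convex components (distinct components are disjoint by Fact \ref{fct53}, and each given c-convex piece is contained in exactly one component), this would force $D$ to have infinitely many c-convex components, contradicting weak cyclic minimality.

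Fix $n \geq 2$. First I would choose $2n$ pairwise disjoint open arcs $A_1, A_1', A_2, A_2', \ldots, A_n, A_n'$ in $\UU \setminus \{1\}$ arranged in this cyclic order on the circle. By the density of $U(D)$ and of $U(D')$ in $\UU$, I can then pick $d_i \in D$ with $U(d_i) \in A_i$ and $d_i' \in D'$ with $U(d_i') \in A_i'$. The $2n$ resulting images $U(d_i), U(d_i')$ are pairwise distinct and all different from $1$, so by the correspondence recalled at the end of Subsection \ref{subsec21} (for $1 \neq U(g) \neq U(h) \neq 1$, $R(e,g,h)$ holds in $(G,R)$ iff $R(1,U(g),U(h))$ holds in $U(G)$), the cyclic order of $d_1, d_1', d_2, d_2', \ldots, d_n, d_n'$ in $(G,R)$ matches the cyclic order of their images in $\UU$.

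Next I will argue that $d_1, \ldots, d_n$ lie in pairwise distinct c-convex components of $D$. It suffices to show that $d_i$ and $d_{i+1}$ (indices modulo $n$) lie in different components for each $i$: the open interval $I(d_i, d_{i+1})$ contains $d_i' \in D'$, and (using $n \geq 2$) the open interval $I(d_{i+1}, d_i)$ contains $d_{i+1}' \in D'$, so neither interval is contained in $D$; consequently no c-convex subset of $D$ can contain both $d_i$ and $d_{i+1}$. This produces at least $n$ distinct c-convex components of $D$, and since $n$ was arbitrary, $D$ has infinitely many, yielding the desired contradiction.

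I foresee no significant obstacle: the only non-routine input is the compatibility of the cyclic order $R$ on $G$ with the projection $U$ for elements having nontrivial image in $U(G)$, which is already recorded in Subsection \ref{subsec21}. Once this is in hand, the interlacing of alternating $D$- and $D'$-representatives via density and the counting of c-convex components are entirely routine.
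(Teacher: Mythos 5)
Your argument is correct, but it follows a genuinely different route from the paper's. The paper fixes one c-convex piece $C$ of the finite decomposition of $D$ and shows that, if $U(D')$ is dense in $\UU$, then $U$ must be constant on $C$: otherwise one could interpolate points $h_1',h_2'$ of $D'$ on both sides of two points of $C$ with distinct images (using the same compatibility of $R$ with $U$ that you invoke), contradicting c-convexity. Hence $U(D)$ is finite --- a conclusion stronger than mere non-density --- and the symmetric statement for $D'$ follows by exchanging roles. You instead assume both images dense and interlace $n$ alternating representatives of $D$ and $D'$ around the circle to force $n$ pairwise disjoint c-convex pieces of $D$, contradicting the finiteness of the decomposition; this is symmetric in $D$ and $D'$ from the start and arguably more visual, at the cost of being slightly longer and not yielding the finiteness of one of the two images. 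One small repair: your reduction ``it suffices to show that $d_i$ and $d_{i+1}$ lie in different components'' is not valid as a general principle (separating adjacent points does not by itself separate all pairs), but it is harmless here, since the very same interval argument applies verbatim to an arbitrary pair $i\neq j$: $I(d_i,d_j)$ contains $d_i'$ and $I(d_j,d_i)$ contains $d_j'$, so the $d_i$ do lie in pairwise distinct c-convex pieces and the pigeonhole count goes through.
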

\begin{proof} The definable set $D$ is a finite union of c-convex subsets. Let $C$ be such a c-convex subset, 
$g$, $h$ in $C$ and assume that $U(D')$ is dense in $\UU$. We prove that $U(g)=U(h)$. Indeed, otherwise there are $h_1'$, $h_2'$ in 
$D'$ such that $R(U(g),U(h_1'),U(h))$ and $R(U(h),U(h_2'),U(g))$ hold in $U(G)$. This implies that 
$R(g,h_1',h)$ and $R(h,h_2',g)$ hold in $(G,R)$, 
which contradicts the fact that $C$ is c-convex. It follows that $U(D)$ is finite. So it is not dense in $\UU$. 
Now, $D'$ is definable by the formula $g\notin D$. So in the same way we can prove that if $U(D)$ is 
dense in $\UU$, then $U(D')$ is finite. 
\end{proof}
\begin{Corollary}\label{cor412} 
Let $(G,R)$ be a divisible abelian nonlinear cyclically ordered group which is not c-divisible. Then 
$(G,R)$ is not weakly cyclically minimal. 
\end{Corollary}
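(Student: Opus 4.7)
The plan is to apply Lemma \ref{wBB} to a carefully chosen definable set, using Lemma \ref{ouf} to produce the required density on both sides.

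First, I would observe that the hypotheses force $G$ to be torsion-free. Indeed, by Proposition \ref{rek119}, a divisible abelian cyclically ordered group that is not torsion-free has torsion subgroup c-isomorphic to $T(\UU)$; equivalently (Corollary \ref{corapcdiv} together with the discussion in subsection 2.3), such a group is c-divisible. Since $(G,R)$ is divisible and not c-divisible, it must therefore be torsion-free. Combined with the hypothesis that $(G,R)$ is nonlinear, this puts us exactly in the setting of Lemma \ref{ouf}.

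Next, fix any prime $p$ and consider the first-order definable subset
\[
D=\{g\in G\; :\; \exists h\in G,\;(R(0_G,h,g,-g)\;\text{or}\;R(-g,g,h,0_G))\;\text{and}\;ph=g\},
\]
whose complement $D'=G\setminus D$ is defined by the dual universal formula. These are precisely the two sets whose $U$-images appear in Lemma \ref{ouf}, so that lemma provides immediately that both $U(D)$ and $U(D')$ are dense in $\UU$.

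Finally, I would invoke Lemma \ref{wBB}: if $(G,R)$ were weakly cyclically minimal, then for any definable $D\subseteq G$ at least one of $U(D)$, $U(G\setminus D)$ would fail to be dense in $\UU$. The previous paragraph contradicts this, so $(G,R)$ is not weakly cyclically minimal. No real obstacle arises here; the entire content of the corollary has already been packaged into Lemmas \ref{ouf} and \ref{wBB}, and the only point requiring a moment's care is verifying that ``not c-divisible'' together with ``divisible'' yields torsion-freeness, which is a direct consequence of Proposition \ref{rek119}.
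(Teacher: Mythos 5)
Your proposal is correct and follows essentially the same route as the paper: establish torsion-freeness from Proposition \ref{rek119}, take the definable set $D$ of elements $p$-divisible ``without turning the circle'' as in Lemma \ref{ouf}, and derive a contradiction with Lemma \ref{wBB}. The only difference is that you spell out the appeal to Lemma \ref{wBB} explicitly, which the paper leaves implicit.
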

\begin{proof}  Since $G$ is not c-divisible, its torsion subgroup is not isomorphic to $T(\UU)$. 
Hence there is a prime $p$ such that $G$ does not contain any $p$-torsion element. 
Let $D =\{ g\in G,\; :\;  \exists h\in G,\; (R(e,h,g,g^{-1}) 
\mbox{ or } R(g^{-1},g,h,e)) \mbox{ and } h^p=g\}$. 
By Lemma \ref{ouf}, both of $U(D)$ and $U(G\backslash D)$ are dense in $\UU$. 
It follows that $(G,R)$ is not weakly cyclically minimal. 
\end{proof}
\indent Not that if $(G,R)$ is c-divisible, then the set $D$ defined above is equal to $G\backslash \{ e\}$. 
Hence $G\backslash D$ is finite. 
\begin{Proposition}\label{lem59}(Lucas, and independently Kulpeshov and Verbovskiy) 
If $(G,R)$ is weakly cyclically minimal, then it is abelian.
\end{Proposition}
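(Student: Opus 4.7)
My plan is to argue by contradiction. Suppose $(G,R)$ is weakly cyclically minimal but not abelian, and fix $g \in G$ with $Com(g) := \{h \in G : gh = hg\} \subsetneq G$. Since $Com(g)$ is a definable subgroup and $G$ is weakly cyclically minimal, Lemma \ref{lm55}(3) yields that its c-convex component $C_0 := C(e,Com(g))$ is a c-convex subgroup and $Com(g) = \bigsqcup_{i<n} h_i C_0$ is a finite disjoint union of cosets. Since $C_0 \subsetneq G$ (else $Com(g) = G$), $C_0$ is a c-convex proper subgroup, hence $C_0 \subseteq l(G)$; Lemma \ref{wk} applied to $Com(g)$ then identifies $C_0$ with $F := Com(g) \cap l(G)$, a convex subgroup of $l(G)$. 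By Proposition \ref{wdiv}, $l(G)$ is abelian and divisible, and the inner automorphism $c_g : h \mapsto g h g^{-1}$ restricts to an order-preserving automorphism of $l(G)$ with fixed subgroup $F$; since $U(G) \subseteq \UU$ is abelian, $c_g$ also fixes each coset $hl(G)$ setwise.

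To reach a contradiction I apply Lemma \ref{wBB} to the definable set
\[
D \;=\; \{\, h \in G : R([g,h]^{-1},\, e,\, [g,h]\,) \,\},
\]
which captures that the commutator $[g,h] \in l(G)$ is strictly positive in the linear order on $l(G)$. When $F \subsetneq l(G)$ the order-preserving automorphism $c_g|_{l(G)}$ strictly moves some $x_0 \in l(G)$, so $[g,x_0]$ has a definite nonzero sign in $l(G)$; divisibility of $l(G)$ then yields, in every coset $hl(G)$ of $G$, elements with $[g,h] > e$ and elements with $[g,h] < e$ (by translating $h$ by suitable elements of $l(G)$ and using the commutator identity $[g, hx] = [g,h]\cdot h [g,x] h^{-1}$). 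When $F = l(G)$ the element $g$ commutes with all of $l(G)$, and the identity $[g,h^{-1}] = h^{-1}[g,h]^{-1}h$ paired with order-preservation of $c_{h^{-1}}$ on $l(G)$ shows that $h \in D$ iff $h^{-1} \notin D$, so $D$ and $G \setminus D$ meet the same $U$-cosets. In either sub-case $U(D)$ and $U(G\setminus D)$ coincide, up to finitely many points, with $U(G)$; since $G$ is nonlinear (otherwise $G = l(G)$ is already abelian by Proposition \ref{wdiv}), $U(G)$ is a nontrivial subgroup of $\UU$, and if $U(G)$ is infinite it is dense in $\UU$, contradicting Lemma \ref{wBB}. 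The residual case $U(G) = T(\UU)_n$ is handled as follows: $Com(g)$ has finite index in $G$, so $F$ has finite index in $l(G)$; since $l(G)$ is divisible abelian and therefore has no proper finite-index subgroup, $F = l(G)$, and then the crossed homomorphism $h \mapsto [g,h]$ descends through $U(G) = T(\UU)_n$ into the torsion-free $l(G)$, forcing it to vanish and giving $Com(g) = G$, a contradiction.

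The main obstacle is the density step: establishing precisely that both $U(D)$ and $U(G \setminus D)$ are dense in $\UU$ when $g$ is non-central. This rests on combining (i) the order-theoretic behaviour of $c_g|_{l(G)}$ on the divisible abelian ordered group $l(G)$ with a possibly nontrivial proper convex fixed subgroup $F$, (ii) the divisibility of $l(G)$ which furnishes translates of any $h$ with prescribed commutator signs, and (iii) the dichotomy that every subgroup of $\UU$ is either finite (equal to some $T(\UU)_n$) or dense in $\UU$. Reconciling the finite-$U(G)$ regime with the density argument then requires the additional observation that finite-index subgroups of a divisible abelian group are trivial, which is precisely what collapses the remaining case to $Com(g) = G$.
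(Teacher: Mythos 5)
Your strategy---contradicting Lemma \ref{wBB} with a definable set built from commutators---is in the spirit of the paper's argument, but the set you choose, $D=\{h\in G : [g,h]>e \mbox{ in } l(G)\}$, creates a density problem that your sketch does not solve, and the claims you use to bridge it are false. In the sub-case $F=l(G)$ the map $h\mapsto[g,h]$ is constant on each coset $hl(G)$ (since $[g,hx]=[g,h]\cdot h[g,x]h^{-1}=[g,h]$ for $x\in l(G)$), so every $l(G)$-coset lies entirely in $D$, entirely in $\{h:[g,h]<e\}$, or entirely in $Com(g)$; hence $U(D)$ and $U(G\setminus D)$ are \emph{disjoint}, not ``the same up to finitely many points.'' The symmetry $h\in D\Rightarrow h^{-1}\notin D$ only yields $U(D)^{-1}\subseteq U(G\setminus D)$, which says nothing about the density of $U(D)$ itself. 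In the sub-case $F\subsetneq l(G)$, your claim that divisibility of $l(G)$ produces elements of both commutator signs in \emph{every} coset $hl(G)$ is unjustified: the image of the homomorphism $x\mapsto [g,x]$ on the abelian group $l(G)$ is a nontrivial subgroup but need not be cofinal in $l(G)$ (an order-preserving automorphism of $\QQ\overrightarrow{\times}\QQ$ such as $(a,b)\mapsto(a,a+b)$ has displacement subgroup $\{0\}\times\QQ$), so when $[g,h]$ lies above that subgroup you cannot flip its sign inside the coset. Even where the argument could be repaired via $Com(g)\cdot D\subseteq D$, you would still need $U(Com(g))$ to be dense, which requires first replacing $g$ by a non-central element whose image in $\UU$ has infinite order; you never perform this reduction, and no such element exists when $U(G)$ is an infinite torsion subgroup of $T(\UU)$ --- a case your dichotomy (``$U(G)$ infinite'' versus ``$U(G)=T(\UU)_n$'') silently assimilates to the density argument, where it fails.

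The paper's proof avoids all of this with a different case split. If $U(G)$ contains an element of infinite order, it first manufactures a non-central $g_0$ with $U(g_0)$ of infinite order (using $h\,Ncom(g)\subseteq Ncom(g)$ for $h\in Com(g)$) and applies Lemma \ref{wBB} directly to the complementary pair $Com(g_0)$, $Ncom(g_0)$: the first contains $\langle g_0\rangle$ and hence has dense $U$-image, the second contains a translate $h\,Com(g_0)$ and hence also has dense $U$-image; no analysis of commutator signs is needed. If instead $U(G)\subseteq T(\UU)$, it invokes Proposition \ref{wdiv} to get $l(G)$ abelian and divisible and then the embedding $G\hookrightarrow U(G)\overrightarrow{\times}l(G)$ of \cite{JPR}, whose target is abelian. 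Your finite-$U(G)$ endgame (no proper finite-index subgroup of a divisible group, then vanishing of $h\mapsto[g,h]$) is salvageable, but the vanishing requires the order-preservation of conjugation on $l(G)$ --- a sum of $n$ conjugates of a strictly positive element cannot equal $e$ --- and not mere torsion-freeness: in $\ZZ^2$ under the coordinate swap one has $(1,-1)+(-1,1)=0$ with $(1,-1)\neq 0$.
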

\begin{proof} For every $g\in G$ we let $Com(g)=\{ h\in G \; : \; hg=gh\}$ and $Ncom(g)=\{ h\in G \; : \; hg\neq gh\}$. 
Note that $Com(g)=\{ h\in G \; : \; g^{-1}hg=h\}=\{ h\in G \; : \; ghg^{-1}=h\}= Com(g^{-1})$.
We start listing some properties of $Com(g)$ and $Ncom(g)$.  \\ 
\indent (1) 
Assume that $Ncom(g)\neq \emptyset$. If $h\in Com(g)$, then $h\, Ncom(g)\subseteq Ncom(g)$. 
Indeed, if $g^{-1}g'g\neq g'$, then $g^{-1}hg'g=hg^{-1}g'g\neq hg'$.  
Hence $g'\in Ncom(g)\Rightarrow hg'\in Ncom(g)$. 
\indent If $h\in Ncom(g)$, then $h\, Com(g)\subseteq Ncom(g)$. Indeed, let $g'\in Com(g)$. Then $ghg'g^{-1}=ghg^{-1}g'\neq hg'$. \\
\indent (2)  We prove that if $(G,R)$ is not abelian and $U(G)$ contains torsion-free elements, then there is 
$g_0\in G$ such that $U(g_0)$ is torsion-free and $Ncom(g_0)\neq \emptyset$. 
Let $g\in G$ such that $Ncom(g)\neq \emptyset$. 
Since $Com(g)\cup Ncom(g)=G$, we have $U(Com(g))\cup U(Ncom(g))=U(G)$. 
So $U(Com(g))$ or $U(Ncom(g))$ contains a torsion-free element. 
We prove that in any case $Ncom(g)$ contains an element $g_0$ such that $U(g_0)$ is torsion-free.
Let $g'\in Ncom(g)$ and $h\in Com(g)$ be such that $U(g')$ is a torsion element and $U(h)$ is torsion-free. Let $g_0=hg'$. 
Then, $U(g_0)$ is torsion-free. Since $h\, Ncom(g)\subseteq Ncom(g)$, $g_0\in Ncom(g)$. So, $gg_0\neq g_0g$, and 
$Ncom(g_0)\neq \emptyset$. \\
\indent (3)  Assume that $G$ is weakly cyclically minimal and $U(G)$ contains a torsion-free element. 
If $G$ is not abelian, then there is 
$g_0$ in $G$ such that $Ncom(g_0)\neq \emptyset$ and $U(g_0)$ is torsion-free. Since $Com(g_0)$ contains the subgroup 
generated by $g_0$, $U(Com(g_0))$ is dense in $\UU$. Now, for $h\in Ncom(g_0)$ we have $h \, Com(g)\subseteq Ncom(g)$. 
Hence $U(Ncom(g))$ is dense in $\UU$. This contradicts Lemma \ref{wBB}. So $G$ is abelian. \\
\indent (4)  Assume that $G$ is weakly cyclically minimal and $U(G)\subseteq T(\UU)$. By Proposition \ref{wdiv} $l(G)$ is divisible. 
Hence by \cite[Lemmas 5.1 and 6.7]{JPR}, $G$ embeds in the lexicographic product 
$U(G)\overrightarrow{\times} l(G)$. Since $l(G)$ is abelian (Proposition \ref{wdiv}), $U(G)\overrightarrow{\times} l(G)$ 
is abelian. Therefore $G$ also is abelian. 
\end{proof}
\begin{Proposition}(Lucas)\label{lmfinite}
If $(G,R)$ is weakly cyclically minimal and not divisible, then $G\cong 
T(\UU)_n\overrightarrow{\times}l(G)$, for some $n\in \mathbb{N}\backslash \{0,1\}$, 
and $l(G)$ is nontrivial, abelian, divisible.
\end{Proposition}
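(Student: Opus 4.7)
The plan is to combine the results already established in Section~\ref{sec5} with a density argument applied via Lemma~\ref{wBB}. By Propositions~\ref{lem59} and \ref{wdiv}, $G$ is abelian and $l(G)$ is abelian and divisible. Divisibility of $l(G)$ allows one to lift through the short exact sequence $\{e\}\to l(G)\to G\to U(G)\to \{1\}$: given $g\in G$ and $n\geq 1$, pick $\zeta\in U(G)$ with $\zeta^n=U(g)$, lift to some $h_0$ with $U(h_0)=\zeta$, and then solve $k^n=g\cdot h_0^{-n}$ inside $l(G)$. Thus $G$ is divisible if and only if $U(G)$ is divisible, so the hypothesis that $G$ is not divisible forces $U(G)\neq\{1\}$ (in particular the cyclic order is nonlinear) and $pU(G)\subsetneq U(G)$ for some prime $p$.

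The crucial step is to show that $U(G)$ is finite. Consider the definable set $D=\{g\in G : \exists h\; h^p=g\}$. The same lifting argument shows $U(D)=pU(G)$ and $U(G\setminus D)=U(G)\setminus pU(G)$. Suppose, for contradiction, that $U(G)$ is infinite. Being an infinite subgroup of $\UU$, it is dense in $\UU$. The kernel of the endomorphism $x\mapsto x^p$ on $U(G)$ is $U(G)\cap T(\UU)_p$, a group of order at most $p$; hence $pU(G)$ is infinite, therefore also dense in $\UU$. Any coset of $pU(G)$ contained in $U(G)\setminus pU(G)$ is then dense as well, so both $U(D)$ and $U(G\setminus D)$ are dense in $\UU$, contradicting Lemma~\ref{wBB}. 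Consequently $U(G)$ is finite, and being a finite subgroup of $\UU$ it equals $T(\UU)_n$ for some integer $n\geq 1$; since $U(G)\neq\{1\}$, $n\geq 2$.

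To conclude, $l(G)$ must be nontrivial, for otherwise $G=U(G)=T(\UU)_n$ would be finite, contradicting the convention that a weakly cyclically minimal structure is infinite. Since $U(G)=T(\UU)_n\subseteq T(\UU)$ and $l(G)$ is divisible, by \cite[Lemmas 5.1 and 6.7]{JPR} (already invoked in the proof of Proposition~\ref{lem59}) there is a c-embedding $\varphi:G\hookrightarrow T(\UU)_n\overrightarrow{\times}l(G)$ whose restriction to $l(G)$ is the canonical identification of $l(G)$ with the linear part $\{1\}\times l(G)$ of the target. For each $\zeta\in T(\UU)_n$ choose $g_\zeta\in G$ with $U(g_\zeta)=\zeta$ and write $\varphi(g_\zeta)=(\zeta,a_\zeta)$; then
$$\{\zeta\}\times l(G)\;=\;\varphi(g_\zeta)\cdot\varphi(l(G))\;\subseteq\;\varphi(G),$$
and taking the union over $\zeta\in T(\UU)_n$ yields surjectivity of $\varphi$, giving the desired c-isomorphism $G\simeq T(\UU)_n\overrightarrow{\times}l(G)$.

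I expect the main obstacle to be the density step of the second paragraph: one must correctly use divisibility of $l(G)$ to obtain the identity $U(pG)=pU(G)$, so that the failure of divisibility in $G$ transfers to a failure of divisibility in $U(G)$ at some prime $p$, and then verify that both $pU(G)$ and its complement in $U(G)$ are dense in $\UU$ in order to apply Lemma~\ref{wBB}.
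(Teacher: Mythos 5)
Your proof is correct and follows essentially the same route as the paper: both arguments invoke Propositions \ref{wdiv} and \ref{lem59}, then apply Lemma \ref{wBB} to the definable set of $p$-th (resp.\ $n$-th) powers to force $U(G)$ to be finite, and finally use divisibility of $l(G)$ to realize $G$ as $T(\UU)_n\overrightarrow{\times}l(G)$. The only cosmetic differences are that the paper takes any $n$ for which $G$ is not $n$-divisible and translates $D_n$ by an element outside it (rather than first transferring non-divisibility to $U(G)$ at a prime via the lifting argument), and that for surjectivity it exhibits an explicit finite complement $\langle gh^{-1}\rangle$ of $l(G)$ instead of your coset-covering of the target.
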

\begin{proof} 
By Propositions \ref{wdiv} and \ref{lem59} $l(G)$ is abelian, divisible and $G$ is abelian. 
Assume that $U(G)$ is infinite. Let $n$ be a positive integer such that $(G,R)$ is not $n$-divisible, 
and $D_n$ (resp.\ $D_n'$) be 
the set of elements which are $n$-divisible (resp.\ not $n$-divisible). Since $g^n\in D_n$ for every $g\in G$, 
$U(D_n)$ is infinite. Therefore it is dense in $\mathbb{U}$. Let $g'\in D_n'$. Then $g'D_n\subseteq D_n'$. 
It follows that $U(D_n')$ is infinite, so it is dense. This contradicts Lemma \ref{wBB}. Therefore, 
$U(G)$ is finite, and it embeds in $T(\UU)$. 
Since $l(G)$ is divisible, by \cite[Lemmas 5.1 and 6.7]{JPR}, $G$ embeds in the lexicographic product 
$U(G)\overrightarrow{\times} l(G)$. We show that this embedding is onto. 
First, note that since $(G,R)$ is weakly cyclically minimal, $G$ is infinite. Hence $l(G)$ is nontrivial. 
Since $U(G)$ is a finite subgroup of $T(\UU)$, there exists a positive integer $n$ such that 
$U(G)$ is c-isomorphic to $T(\UU)_n$. Since $G$ is not divisible and 
$\{1\}\overrightarrow{\times} l(G)$ is divisible, we have $n>1$. 
We let $g\in G$ be such that $U(g)=e^{2i\pi/n}$. Then $U(g^n)=1$, hence $g^n\in l(G)$. 
Since $l(G)$ is divisible (Proposition \ref{wdiv}) 
there exists $h\in l(G)$ such that $g^n=h^n$. Now, $G$ is abelian (Proposition \ref{lem59}), so $(gh^{-1})^n=e$.
Therefore, 
$G$ contains a subgroup $H$ which is isomorphic to $U(G)\simeq G/l(G)$, hence 
$G=H\oplus l(G)$.  It follows that $G$ is 
c-isomorphic to $T(\UU)_n\overrightarrow{\times}l(G)$. 
\end{proof}
{\it Proof of Theorems \ref{th513} and \ref{th514}}. Let $(G,R)$ be a cyclically ordered group. \\
\indent Assume that $G$ is not divisible. By Propositions \ref{n52} and \ref{lmfinite}, 
$(G,R)$ is weakly cyclically minimal if, and only if $l(G)$ is nontrivial, abelian, divisible, and 
there is an integer $n>1$ such that $G\cong T(\UU)_n\overrightarrow{\times}l(G)$. 
Now, this group is not cyclically minimal. 
Indeed, in this cyclically ordered group, $l(G)$ is definable by the formula $x=e$ or $R(e,x,x^2,\dots,x^n)$ 
or $R(e,x^{-1},x^{-2},\dots,x^{-n})$. So, $l(G)$ is a definable subset which is not a finite union of 
open interval and singletons. \\ 
\indent Now, we assume that $G$ is divisible. \\
\indent If $(G,R)$ is abelian and c-divisible, then by Proposition \ref{n51}
it is cyclically minimal. In particular, it is weakly cyclically minimal. \\
\indent If $G$ is not abelian or if $(G,R)$ is abelian non linear and not c-divisible, then, by Proposition \ref{lem59} 
and by Corollary \ref{cor412}, $(G,R)$ is not 
weakly cyclically minimal. Hence it is no cyclically minimal. \\
\indent Finally, assume that $(G,R)$ is linearly cyclically ordered. Then $G\simeq l(G)\simeq T(\UU)_1\overrightarrow{\times} l(G)$. 
By Propositions \ref{n52} and \ref{wdiv}, $(G,R)$ is weakly cyclically minimal if, and only if, it is abelian and divisible. 
Now, it is not cyclically minimal. For example, the set $\{g\in G\; :\; g>e_G\}$ is defined by the formula 
$R(e_G,g,g^2)$, but it is not a finite union of bounded intervals and singletons. Note that in a c-divisible abelian 
cyclically ordered group, $R(e_G,g,g^2)\Leftrightarrow R(e_G,g,\zeta_2)$, where $\zeta_2\neq e_G$ and $\zeta_2^2=e_G$. 
\hfill $\qed$ \\[2mm]
\end{document}